\newcommand{\arrowOut}{
\tikz \draw[-stealth, line width=1.5pt] (-1pt,0) -- (1pt,0);
}
\newcommand{\Z}{\mathbb{Z}}
\newcommand{\C}{\mathbb{C}}
\newcommand{\R}{\mathbb{R}}
\newcommand{\mac}{\mathcal C}
  \newcommand{\maq}{\mathcal Q}
\newcommand{\Hom}{\mathrm{Hom}}
\newcommand{\End}{\mathrm{End}}
\newcommand{\Aut}{\mathrm{Aut}}
\newcommand{\Ob}{\mathrm{Ob}}
\newcommand{\Set}{\mathrm{Set}}
\newcommand{\Grpd}{\mathrm{Grpd}}
\newcommand{\GRPd}{\mathrm{GRPd}}
\newcommand {\repGrpd} {\mathrm{repGrpd}}
\newcommand{\Top}{\mathrm{Top}}
\newcommand {\Lan}[0] {\mathrm{Lan}}
\newcommand {\Vect}[0] {\mathrm{Vect}}
\newcommand{\Graph}[0]{\mathrm{Graph}}
\newcommand{\pack} [0]{\mathrm{Pack}}
\newcommand{\inv}[0]{{-1}}
\newcommand{\oo}[0]{\otimes}
\newcommand{\id}[0]{\mathrm{id}}
\newcommand{\tr}[0]{\mathrm{tr}}
\newcommand{\Fib}{\mathrm{Fib}}
\newcommand{\A}{\mathcal{A}}
\newcommand{\B}{\mathcal{B}}
\newcommand{\G}{\mathcal{G}}
\renewcommand{\H}{\mathcal{H}}
\newcommand{\CAT}{\mathrm{CAT}}
\newcommand{\HOM}{\mathrm{HOM}}
\newcommand{\V}{\mathbf{V}}
\newcommand{\op}{\mathrm{op}}
\newcommand{\T}{\mathcal{T}}
\newcommand{\K}{\mathcal{K}}
\renewcommand{\L}{\mathcal{L}}
\newcommand{\Act}{\mathrm{Act}}
\newcommand{\Mod}{\mathrm{GrMod}}
\newcommand{\PC}{\mathrm{PC}}
\newcommand{\fib}{\mathrm{fib}}
\newtheorem{theorem}{Theorem}[section]
\newtheorem*{theorem*}{Theorem}
\newtheorem{lemma}[theorem]{Lemma}
\newtheorem{proposition}[theorem]{Proposition}
\newtheorem{corollary}[theorem]{Corollary}
\newtheorem{definition}[theorem]{Definition}
\newtheorem{remark}[theorem]{Remark}
\theoremstyle{definition}
\newtheorem{example}[theorem]{Example}
\newcommand*\stdthebibliography{}
\let\stdthebibliography\thebibliography
\renewcommand{\thebibliography}[1]{
\stdthebibliography{#1}\setlength{\itemsep}{-2pt}} 
\def\mytitle{A geometrical description of untwisted 3d Dijkgraaf-Witten TQFT with defects}
\def\myauthors{J.~Faria Martins, C.~Meusburger}
\begin{document}

\begin{center}
  {\LARGE\mytitle}

  \vspace{1em}

  {\large
    Jo\~ao Faria Martins\footnote{{\tt J.FariaMartins@leeds.ac.uk}, orcid \url{https://orcid.org/0000-0001-8113-3646}}\\
    School of Mathematics\\
     University of Leeds\\
      Leeds, LS2 9JT, 
      United Kingdom\\[+2ex]
    Catherine Meusburger\footnote{{\tt catherine.meusburger@math.uni-erlangen.de}, orcid \url{https://orcid.org/0000-0003-3380-3398}}\\
     Department Mathematik \\
  Friedrich-Alexander-Universit\"at Erlangen-N\"urnberg \\
  Cauerstra\ss e 11, 91058 Erlangen, Germany\\[+4ex]
 }

{April 7, 2026}

\end{center}
\begin{abstract}
We give a simple, geometric and explicit construction of 3d untwisted Dijkgraaf-Witten theory with defects of all codimensions. It is given as a symmetric monoidal functor from a defect cobordism category into the category of finite-dimensional complex vector spaces. 
The objects of this category are  oriented stratified surfaces and its morphisms are equivalence classes of stratified   cobordisms, both labelled with higher categorical data. 
This TQFT is constructed in terms of geometric quantities such as fundamental groupoids and bundles  and requires neither state sums  on triangulations  nor diagrammatic calculi for higher categories.
It is obtained from a functor that assigns to each defect surface a representation of a gauge groupoid and to each defect cobordism a fibrant span of groupoids and an intertwiner between the groupoid representations at its boundary.  It is constructed by homotopy theoretic methods and allows for an explicit computation of examples. In particular, we show how the 2d 
part of this defect TQFT gives a simple description of defects of all codimensions in Kitaev's quantum double model.
\end{abstract}

\section{Introduction}

\textbf{Context}\\
Defects in (topological) quantum field theories and the associated categorical symmetries are of strong interest from a  mathematical and  from a mathematical physics perspective.  They arise naturally as domain walls and field insertions in conformal field theories, see for instance Fr\"ohlich, Fuchs, Runkel and Schweigert \cite{FFRS},  Kapustin and Saulina \cite{KS}  and Fuchs, Schweigert, Wood and Yang \cite{FSWY}.
Defects also feature 
 in models from condensed matter physics and topological quantum computing such as Levin-Wen models \cite{LW} and Kitaev's quantum double models \cite{K}, which were identified as the 2d part  of a  topological quantum field theory (TQFT) of Turaev-Viro \cite{TV,BW} type by Balsam and Kirillov \cite{BK}.  The  inclusion of defects into  these models was  achieved by  Kitaev and Kong \cite{KK}, who also identified their algebraic data.

Defects also play an essential role in modern approaches to topological symmetries in quantum field theory pioneered by  Freed, Moore and Teleman \cite{FMT}. 
Similar defect structures are also considered in the framework of (stratified) factorisation homology \cite{AFT} and were applied there by Keller and M\"uller \cite{KM} to describe factorisation homology of surfaces with bundles and by Jordan, Le, Schrader and Shapiro \cite{JLSS} to construct generalisations of quantum decorated character varieties.  

 In addition to these motivations from  topology and mathematical physics  defects in TQFTs are  of interest from a  representation theoretical perspective. They give new insights into  representation theoretical and categorical concepts such as Frobenius-Schur indicators and Brauer-Picard group(oid)s of tensor categories, see for instance Farnsteiner and Schweigert \cite{FS}, Priel \cite{Pr} and Fuchs, Priel, Schweigert and Valentino \cite{FPSV}.

In this article we focus on defects in 3d TQFTs. 
A  $d$-dimensional TQFT  is a symmetric monoidal functor $\mathcal Z:\mathrm{Cob}_d\to \Vect_\C$ from a cobordism category $\mathrm{Cob}_d$, whose objects are closed $(d-1)$-manifolds and whose morphisms  equivalence classes of cobordisms between them.  In contrast,  a  defect TQFT involves manifolds and cobordisms with distinguished marked submanifolds that are labelled with higher categorical data. 
They are encoded in a defect cobordism category $\mathrm{Cob}^{\mathrm{def}}_d$ whose objects and morphisms are \emph{stratified $(d-1)$-manifolds} and \emph{stratified cobordisms} with higher categorical defect data. This makes them interesting also  from a  topological viewpoint. Whereas TQFTs give rise to manifold invariants, defect TQFTs describe  
 manifolds with embedded submanifolds of various dimensions, such as embedded surfaces or knots and links.

Defects in 3d TQFTs were investigated extensively.  Defect lines in Turaev-Viro TQFTs \cite{TV,BW} were first considered by Turaev and Virelizier \cite{TVr, TVr2} and by Balsam and Kirillov \cite{BK0}.  Fuchs, Schweigert and Valentino showed in \cite{FSV0}  how topological defects in 3d TQFTs of Reshetikhin-Turaev \cite{RT} type are related to the Witt group of modular fusion categories.  In \cite{FSV} this  was then specialised to Dijkgraaf-Witten TQFT \cite{DW}.  
Line and surface defects in Reshetikhin-Turaev TQFTs  were treated by Carqueville, Runkel and Schaumann in \cite{CRS} and domain walls in Reshetikhin-Turaev TQFTs by
Koppen, Mulevi\v{c}ius, Runkel and Schweigert in \cite{KMRS}.  

An approach to 3d defect TQFTs via diagrammatic calculi was initiated by Carqueville, Meusburger and Schaumann in \cite{CMS} based on the Gray category diagrams developed by Barrett, Meusburger and Schaumann in \cite{BMS}. These diagrams also feature  in the orbifold approach to defect TQFTs by Carqueville, Runkel and Schaumann \cite{CRS2, CRS3} and in  \cite{CMR+}.
State sum models for Turaev-Viro TQFTs \cite{TV,BW} with defects of all codimensions {were constructed} by Meusburger in \cite{M} and in more generality by Carqueville and M\"uller \cite{CM}   in the orbifold approach.

 \textbf{Motivation}\\
Despite the number of results on defect TQFTs in the literature, their applications to gauge  theory and  to geometrical and topological questions are hindered by the fact that the algebraic data is quite involved. They are usually constructed with advanced categorical methods and rely heavily on diagrammatic calculi for higher categories. The resulting formulation is often rather abstract and implicit  and  involves auxiliary choices such as triangulations or cell decompositions.
This makes it difficult to interpret defect TQFTs in  terms of
  gauge theoretical or geometric  concepts such as bundles, even in those cases where the underlying TQFT has a clear geometric interpretation.

 \textbf{Dijkgraaf-Witten TQFT with defects}\\
In this article we address this problem for the simplest example of a 3d defect TQFT of Turaev-Viro-Barrett-Westbury type, namely  untwisted Dijkgraaf-Witten TQFT \cite{DW}. The theory without 
defects admits a direct geometrical construction in terms of  $G$-bundles for finite groups $G$.  In the language of Quinn's finite total homotopy TQFT \cite[Section 4]{Q}, it describes the homotopy content of function spaces and groupoid cardinality~\cite{FMP1,FMP,BHW}.
As it is directly related to Kitaev's quantum double model \cite{K} for the group algebra $\C[G]$ and the associated Levin-Wen models, the associated defect TQFT can be applied to describe  defects in these models.

The categorical data labelling  defects in 3d Dijkgraaf-Witten TQFT is largely understood and can be derived from the general defect data for Turaev-Viro-Barrett-Westbury TQFTs by specialising to spherical fusion categories $\mathrm{Vec}_G^\omega$ of $G$-graded vector spaces with a 3-cocycle $\omega: G^{\times 3} \to\C^\times$. This leads to the following labelling of the strata of  defect surfaces and defect cobordisms  with algebraic defect data
\begin{compactitem}
\item $\mathrm{codim}$ 0-strata: finite groups (and 3-cocycles),
\item $\mathrm{codim}$ 1-strata:  finite sets with group actions  (and 2-cocycles),
\item $\mathrm{codim}$ 2-strata: (projective) representations of action groupoids for group actions,
\item $\mathrm{codim}$ 3-strata:  intertwiners between them.
\end{compactitem}
In the following, we do not consider the most general data. We require all cocycles to be trivial, which implies that the strata of codimension 2 and 3 are simply labelled with representations of action groupoids and intertwiners between them. 
We expect that our methods can be extended to the case with non-trivial cocycles, but this will be technically more involved, and we leave it for future work.

 \textbf{Main results}\\
Our main result is the explicit  construction of a Dijkgraaf-Witten  TQFT with defects -  a symmetric monoidal functor $\mathcal Z:\mathrm{Cob}_3^{\mathrm{def}}\to \Vect_\C$ from a
 cobordism category  of stratified  surfaces and  stratified cobordisms with defect data  into the category of vector spaces (Theorem \ref{cor:limtot}).
 
 The construction does not use  diagrammatic calculi for higher categories and does not require choices of triangulations or cell decompositions,  although we work in the PL framework.  It also
 does not rely on  previous results on  Turaev-Viro TQFTs with defects  such as the ones in \cite{M,CM} and thus  gives an independent construction of  defect Dijkgraaf-Witten TQFT.  As in Morton \cite{Mo}, which motivated some of our constructions,  our defect TQFT 
  is   formulated entirely in terms of elementary geometrical and topological
  quantities such as bundles,  fundamental groupoids,  action groupoids for group actions on sets  and representations of the latter. 
  
 While most of the results are obtained by homotopy theoretic methods, they can be expressed in a simple way and have a clear geometric interpretation. In particular, the  formalism  allows one to efficiently and straightforwardly compute the vector spaces associated to  stratified surfaces and the linear maps assigned to cobordisms between them. This is rather difficult in other formalisms, and we illustrate this with a number of examples. Although choices of triangulations or cell decompositions are not required, we show that 
 making such choices results in a simple graph gauge theoretical formulation of the defect TQFT.
 This  allows in particular  a direct application to quantum double models with defects.
  We hope that our formalism can help to bridge a gap  between more abstract approaches and concrete geometric applications.

 \textbf{Construction of the defect TQFT}\\ 
The  TQFT is constructed in two steps.
 The first  step  involves only the defect data associated to strata of codimensions 0 and 1, referred to as \emph{classical defect data}. 
 It constructs with this data for each stratified surface or cobordism $X$ a gauge groupoid $\A^X\sslash \G^X$, an action groupoid with gauge configurations as objects and gauge transformations as morphisms.
We  show (Theorem \ref{th:classcobfunc}) that this assignment defines a symmetric monoidal functor $C:\mathrm{Cob}_3^{\mathrm{def}}\to\mathrm{span}^{\mathrm{fib}}(\Grpd)$ into the category of fibrant spans of essentially finite groupoids.  The fibrancy of these spans has a direct gauge theoretical interpretation, namely that gauge transformations at the boundary extend to the bulk.
 
Concretely, these gauge groupoids are obtained as follows. We encode the combinatorics of the stratification of $X$ in a graded graph $Q^X$, whose vertices represent the strata and whose edges describe local incidence of lower-dimensional strata to higher-dimensional ones. We also consider the category $\maq^X$ freely generated by this graph. 
The classical defect data then defines a functor $D^X:\maq^X\to \Grpd$ that assigns to each stratum a groupoid constructed from the classical defect data. 
By associating  to each stratum $s$ of $X$ a {topological space} $\hat s$
constructed from $s$,
 we obtain another functor $T^X:\maq^X\to \Grpd$  that describes the topology of the stratification and assigns to each stratum $s$ the fundamental groupoid of $\hat s$.  The gauge groupoid $\A^X\sslash \G^X$ is then defined as the end of the functor $\HOM_\Grpd (T^X,D^X):\maq^{Xop}\times\maq^X\to \Grpd$.

 The second step in the construction also takes into account the defect data assigned to strata of codimensions 2 and 3, in the following called \emph{quantum defect data}. With this data, we assign to each stratified surface $\Sigma$ a representation $F_\Sigma:\A^\Sigma\sslash \G^\Sigma\to \Vect_\C$ of its action groupoid. 
 
For each
 stratified cobordism $\Sigma_0\xrightarrow{i_0} M\xleftarrow{i_1} \Sigma_1$ with associated  span $\A^{\Sigma_0}\sslash G^{\Sigma_0}\xleftarrow{P_0} \A^M\sslash \G^M \xrightarrow{P_1} \A^{\Sigma_1}\sslash \G^{\Sigma_1}$
 the defect data associated to its 1-skeleton  defines a natural transformation $\mu^M:F_{\Sigma_0}P_0\Rightarrow F_{\Sigma_1}P_1$.
We show (Proposition \ref{prop:deffunc}) that these assignments define a symmetric monoidal functor $G:\mathrm{Cob}_3^{\mathrm{def}}\to \mathrm{span}^{\mathrm{fib}}(\repGrpd)$, where $\mathrm{span}^{\mathrm{fib}}(\repGrpd)$ is the
symmetric monoidal category whose objects are representations of essentially finite groupoids and whose morphisms are given by fibrant spans together with natural transformations. 

By combining this functor with a functor $L:\mathrm{span}^{\mathrm{fib}}(\repGrpd)\to \Vect_\C$ that is analogous to the functors constructed by Haugseng \cite{Ha}, Trova \cite{T} and Schweigert and Woike \cite{SW1} and given by the limit on the objects, we obtain the defect TQFT $\mathcal Z=LG:\mathrm{Cob}_3^{\mathrm{def}}\to \Vect_\C$ in Theorem \ref{cor:limtot}.

\smallskip
 \textbf{Applications and examples}\\
The first application of our results is in the context of Kitaev's quantum double models \cite{K}. 
As shown in \cite{BK},  Dijkgraaf-Witten TQFT for a finite group $G$  is directly related to Kitaev's quantum double \cite{K} and the associated Levin-Wen models \cite{LW} for the group algebra $\C[G]$. More specifically, this TQFT assigns to an oriented surface $\Sigma$ 
the \emph{ground state} or \emph{protected space} of the quantum double model. It is also shown in \cite{BK} that codim 2 defects describe excitations in these models. The full defect data was identified in \cite{KK} in the more general context of Levin-Wen models for  unitary spherical fusion categories.

Our  defect TQFT  assigns to each stratified surface with defect data a vector space that can be viewed as  generalised ground state of these models in the presence of defects and coincides with it, if the defect data is chosen to be trivial. It therefore provides a canonical definition of a generalised ground state. It also gives an efficient way to compute it that does not require choices of triangulations or cell-decompositions. 
 In Section \ref{subse:qudoublemods}  we compute this vector space explicitly for a number of examples and  show how various defect constellations (excitations, domain walls, domain wall excitations) arise from our formalism via the specific choices of defect data and stratifications.

In Section \ref{subsec:defcobs}   we treat  a number of 3d examples.  We  derive a general formula for the number our defect TQFT assigns to a closed stratified 3-manifold with defect data and also
 consider cobordisms with boundaries.
 In all cases, the results are given in terms of simple geometrical quantities such as group homomorphisms from fundamental groups into finite groups, fixed points of group actions or representations of certain stabiliser groups. Choosing  transparent defect data reduces the defect TQFT  to the one without defects. 

 \textbf{Structure of the article}\\
Section \ref{sec:grpds} is purely algebraic. It contains the required background on groupoids in Section \ref{sec:groupoidbasic}, describes the categories $\mathrm{span}^{\mathrm{fib}}(\Grpd)$ and $\mathrm{span}^{\mathrm{fib}}(\repGrpd)$ in Section \ref{sec:fibspangrpd} and constructs  the symmetric monoidal functor $L:\mathrm{span}^{\mathrm{fib}}(\repGrpd)\to\Vect_\C$ used in the definition of the defect TQFT in Section \ref{subsec:quantumquinn}. 

In Section \ref{sec:stratgraph} we introduce the stratifications required in the definition of the defect cobordism category and their description by graded graphs. These stratifications are oriented stratifications of PL manifolds, subject to certain conditions on the
neighbourhoods of strata. We introduce a notion of local strata that describes the incidence of lower-dimensional strata to higher-dimensional ones. We then show how stratifications can be encoded in graded graphs.
Section \ref{sec:cobcat} then introduces the defect data for untwisted Dijkgraaf-Witten theory and defines the associated defect cobordism category. 

Section \ref{sec:gauge} introduces the gauge groupoids of stratified surfaces and cobordisms. In Section \ref{sec:fundgrpd} we construct  the functor $T^X:\maq^X\to\Grpd$ that describes the fundamental groupoids of the PL manifolds associated to the strata of $X$.  Section \ref{subsec:pack} introduces an auxiliary category whose objects describe the functors $D^X,T^X:\maq^X\to \Grpd$ associated to a stratification, and whose morphisms are induced by certain  graph maps and natural transformations. In Section \ref{subsec:gaugegrpd} we then define the gauge groupoid of a stratified surface or cobordism. 
We give an explicit description of gauge configurations  in terms of functors from  fundamental groupoids assigned to  strata into groupoids constructed from their defect data. Gauge transformations are  natural transformations between such functors that respect the maps relating  different strata.  
We show that gauge configurations and gauge transformations are determined uniquely by the functors and natural transformations for strata of codimension $\leq 1$ and $0$, respectively. 

In Section \ref{subsec:redgaugegrpd} we give a simplified description of the gauge groupoid in terms of fundamental groupoids with basepoints. 
In Section \ref{subsec:geomdescript} we show that this can be simplified further for certain stratifications, such as triangulations or their duals. The theory then becomes a generalised graph gauge theory for a graph $\Gamma$ that is   the 1-skeleton of the Poincar\'e  dual to the stratification. The gauge groupoid  is then  given as the action groupoid
$
\mathcal A\sslash \mathcal G=\left(\Pi_{e\in E_\Gamma} M_e\right)\sslash  \left(\Pi_{v\in V_\Gamma} G_v\right)
$,
where $M_e$ is the set assigned to the codim 1-stratum dual to an edge $e\in E_\Gamma$ and $G_v$ the group assigned to the codim 0-stratum dual to a vertex  $v\in V_\Gamma$.

Section \ref{sec:defect} constructs the defect TQFT  $\mathcal Z:\mathrm{Cob}_3^{\mathrm{def}}\to\Vect_\C$. In Section \ref{subsec:classdeftqft} we show that the gauge groupoids of stratified surfaces and cobordisms define a symmetric monoidal  functor $C:\mathrm{Cob}_3^{\mathrm{def}}\to \mathrm{span}^{\mathrm{fib}}(\Grpd)$. After proving some auxiliary results in Section \ref{subsec:edgeproj}, we then show in Section \ref{subsec:quantumtqft} that the full defect data of the theory promotes it  to a symmetric monoidal  functor  $G:\mathrm{Cob}_3^{\mathrm{def}}\to \mathrm{span}^{\mathrm{fib}}(\repGrpd)$ and that composition with the functor   $L:\mathrm{span}^{\mathrm{fib}}(\repGrpd)\to \Vect_\C$ from Section \ref{subsec:quantumquinn} yields a defect TQFT. We show that it  reduces to the usual Dijkgraaf-Witten TQFT in its formulation \`a la Quinn \cite{Q} in the absence of defects. 

Section \ref{sec:examples} contains  examples that illustrate the formalism. After some background on the defect data in Section \ref{sec:defexamples}, we give a detailed description of the vector spaces assigned to stratified surfaces in Section \ref{subse:qudoublemods} and relate them to defects in quantum double models. Section \ref{subsec:defcobs} contains examples of defect cobordisms.  The appendix contains some purely categorical results that are used in the proof of Theorem \ref{th:classcobfunc}.

The reader mainly interested in applications and examples may focus on Sections \ref{subsec:redgaugegrpd}, \ref{subsec:geomdescript} and \ref{sec:examples}.

\section{Spans of groupoids and groupoid representations}
\label{sec:grpds}

\subsection{Groupoids and groupoid representations}
\label{sec:groupoidbasic}

Recall that the category $\Grpd$  of groupoids is complete and cocomplete, that it is a cartesian  monoidal category and that it 
 is cartesian closed \cite[Section 1.1]{Ke}. The exponential of  a groupoid $\mathcal H$ by a groupoid $\mathcal G$ is the groupoid $\mathcal H^{\mathcal G}=\GRPd(\G,\H)$ of functors from $\mathcal G$ to $\mathcal H$ and natural transformations between them. Given groupoids $\mathcal{G},\mathcal{H}$ and $\mathcal{K}$, we have a canonical isomorphism of groupoids, natural in $\mathcal{G},\mathcal{H}$ and $\mathcal{K}$,
 \begin{equation}\label{eq:tensor-hom-grpd}
 \GRPd(\mathcal G\times\mathcal H,\mathcal K)\cong \GRPd\big(\mathcal G, {\K^\H}\big).
 \end{equation}

We have a functor $\Grpd^{op}\times \Grpd \to \Grpd$ that sends a pair $(\H,\K)$ to $\K^\H$.   For a functor $F: \G \to \H$ of groupoids  we denote by $F^*:=\GRPd(F,\id_\K) : \K^{\H}\to \K^\G$ the functor that precomposes with $F$. More generally,
we have a composition functor $\GRPd(\G,\H) \times \GRPd(\H,\K) \to \GRPd(\G,\K)$
that corresponds to a functor
$M: \GRPd(\G,\H) \to  \GRPd(\G,\K)^{ \GRPd(\H,\K)}$ 
under the isomorphism in  \eqref{eq:tensor-hom-grpd}.  This functor $M$ sends a functor $F:\G\to H$ to $M(F)=F^*$ and a natural transformation  $\eta: F_1 \Rightarrow F_2$ to the natural transformation  $\eta^*:=M(\eta): F_1^* \Rightarrow F_2^*$ with components  $\eta^*_f=f\eta  : f F_1 \Rightarrow f F_2$ for objects $f: \H \to \K$ in $\GRPd(\H,\K)$.

The category $\Grpd$ has a model category structure.  A functor $F:\mathcal E\to\mathcal B$ is  a  \textbf{fibration} if for every morphism $h: B'\to F(E)$ in $\mathcal B$  with $E \in \Ob\mathcal E$,  there is a morphism $g: E'\to E$ with $F(g)=h$. It is  a  \textbf{cofibration} if it is injective on the objects. It is a  \textbf{weak equivalence} if it is an equivalence of groupoids, see for instance \cite{Hollander}. 

We denote by $\bullet$ the terminal  and by $\emptyset$ the initial object in the category $\Grpd$. The  \textbf{interval groupoid}  is the groupoid $I$ with objects 0,1 and unique morphism  $d\colon 0\to 1$. 
The   \textbf{arrow groupoid} $\mathcal G^I$ for a groupoid $\mathcal G$ has as objects morphisms  in $\mathcal G$. Morphisms  in $\mathcal G^I$ from $f: G_0\to G_1$ to 
$f': G'_0\to G'_1$ are pairs $(f_0,f_1)$ of morphisms $f_0: G_0\to G'_0$ and $f_1: G_1\to G'_1$ with 
$f'\circ f_0=f_1\circ f$.
There are  functors $p_0, p_1: \mathcal G^I\to\mathcal G$ given by $p_i(f)=G_i$ and $p_i(f_0,f_1)=f_i$. 
The groupoid  morphism $\langle p_0,p_1 \rangle\colon \mathcal{G}^I \to \mathcal{G}\times \mathcal{G}$ induced by the universal property of the product is a fibration of groupoids.

For each groupoid $\mathcal G$ there are functors $\iota^0=(\id_\mathcal G, 0),\iota^1=(\id_\mathcal G, 1): \mathcal G\to \mathcal G\times I$ and a natural transformation
$\iota^d=(\id_{\mathcal G}, d): \iota^0\Rightarrow\iota^1$.
Functors $F: \mathcal G\times I\to \mathcal H$ correspond to  natural transformations $F^d: F^0\Rightarrow F^1$ between functors $F^0=F\iota^0,F^1=F\iota^1: \mathcal G\to\mathcal H$.

For a groupoid $\mathcal B$ the slice category $\Grpd_{\mathcal B}=\Grpd/\mathcal B$ of \textbf{groupoids over $\mathcal B$} has as objects  morphisms $p: \mathcal{G} \to \mathcal B$ in $\Grpd$.
Morphisms from $p: \mathcal{G} \to \mathcal B$ to $p': \mathcal{G}' \to \mathcal B$ are functors $f: \mathcal{G}\to \mathcal{G}'$ with $p'f=p$, called  \textbf{functors over $\mathcal B$} and denoted $f:p\to_{\mathcal B} p'$. A  \textbf{natural transformation $\eta: f \Rightarrow_{\mathcal{B}} f'$ over $\mathcal B$}  from $f:p\to_{\mathcal B} p'$ to $f':p\to_{\mathcal B} p'$  is a natural transformation $\eta: f\Rightarrow f'$ with $p'(\eta_G)=\id_{p(G)}$ for all $G\in \Ob\mathcal G$.

For any groupoid $\mathcal B$ the category $\Grpd_{\mathcal B}$  has an induced model category structure, where a functor over $\mathcal B$ is a fibration, cofibration or weak equivalence if and only if the underlying 
functor between
groupoids is, see \cite[Theorem 7.6.5]{Hirschhorn}. In particular, an object $p:\mathcal{G} \to \mathcal B$ of $\Grpd_{\mathcal B}$ is fibrant if and only if the functor $p:  \mathcal{G} \to \mathcal B$ is a fibration, and any groupoid over $\mathcal B$ is cofibrant in $\Grpd_{\mathcal B}$.  

\begin{lemma}\label{lem:doldgrp} Let $p:\mathcal G\to\mathcal B$ and $p':\mathcal G'\to\mathcal B$ be fibrations in $\Grpd$. Then any functor $f:p\to_{\mathcal B} p'$  of groupoids over $\mathcal B$ that is an equivalence of groupoids is an equivalence in $\Grpd_{\mathcal B}$.  
\end{lemma}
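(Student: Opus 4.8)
\emph{Plan.} The statement is the groupoid analogue of Dold's theorem that a fibre-preserving map between fibrations which is a homotopy equivalence is a fibre homotopy equivalence, and I would deduce it from the model structure on $\Grpd_{\mathcal B}$ recalled above. Since $p$ and $p'$ are fibrations, they are fibrant objects of $\Grpd_{\mathcal B}$, and every object of $\Grpd_{\mathcal B}$ is cofibrant; moreover $f$, being an equivalence of groupoids, is a weak equivalence in $\Grpd_{\mathcal B}$. By the general Whitehead theorem valid in any model category --- a weak equivalence between objects that are simultaneously fibrant and cofibrant admits a homotopy inverse --- there is a functor $g\colon p'\to_{\mathcal B} p$ over $\mathcal B$ together with homotopies $gf\sim \id_{\mathcal G}$ and $fg\sim\id_{\mathcal G'}$ in $\Grpd_{\mathcal B}$. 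It then remains only to translate these homotopies into the statement that $f$ is an equivalence in $\Grpd_{\mathcal B}$.

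The key step is therefore to identify the homotopy relation in $\Grpd_{\mathcal B}$ with the relation ``naturally isomorphic by a natural transformation over $\mathcal B$''. For a fibrant object $q\colon\mathcal K\to\mathcal B$ I would build an explicit path object from the arrow groupoid $\mathcal K^I$: let $P_{\mathcal B}\mathcal K$ be the pullback of $q^I\colon\mathcal K^I\to\mathcal B^I$ along the constant functor $\mathcal B\to\mathcal B^I$, so that its objects are the arrows $\kappa\colon K_0\to K_1$ of $\mathcal K$ with $q(\kappa)=\id$. The diagonal $\mathcal K\to P_{\mathcal B}\mathcal K$, $K\mapsto\id_K$, is a section of the equivalence $p_0$ and hence, by two-out-of-three, a weak equivalence, while $\langle p_0,p_1\rangle\colon P_{\mathcal B}\mathcal K\to\mathcal K\times_{\mathcal B}\mathcal K$ is a fibration: a morphism $(h_0,h_1)$ with target $\langle p_0,p_1\rangle(\kappa)$ lifts to the morphism with source $h_1^{\inv}\kappa h_0$, which is again vertical precisely because $q(h_0)=q(h_1)$. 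This exhibits $P_{\mathcal B}\mathcal K$ as a path object, and a right homotopy through it between $u,v\colon p\to_{\mathcal B} q$ is by construction exactly a natural transformation $\eta\colon u\Rightarrow v$ with $q(\eta_G)=\id_{p(G)}$, that is, a natural transformation over $\mathcal B$ in the sense defined above. Since $\mathcal G$ and $\mathcal G'$ are both fibrant and cofibrant, the homotopies produced in the first paragraph may be taken to be right homotopies through such path objects, so $gf\cong_{\mathcal B}\id_{\mathcal G}$ and $fg\cong_{\mathcal B}\id_{\mathcal G'}$, and $f$ is an equivalence in $\Grpd_{\mathcal B}$.

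The point where care is genuinely needed --- and which is the real content of the lemma --- is the passage to the base $\mathcal B$. That $f$ has a quasi-inverse $g_0$ as a functor of groupoids is immediate, but $g_0$ need not lie over $\mathcal B$: one only has $p g_0\cong p'$. I would correct this by hand using that $p$ is a fibration, lifting the comparison isomorphism $p'\Rightarrow p g_0$ objectwise to replace $g_0$ by a strictly fibre-preserving $g$ with $pg=p'$ and $g\cong g_0$; this is exactly the explicit mechanism behind the abstract argument above. The subtler and non-formal point is the \emph{verticality of the homotopies}: an arbitrary natural isomorphism $gf\cong\id_{\mathcal G}$ projects under $p$ to a natural automorphism of $p$ that need not be an identity, and it cannot in general be removed pointwise, since a natural automorphism of $p$ need not lift to one of $\id_{\mathcal G}$. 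It is precisely here that fibrancy of both $p$ and $p'$, together with the lifting axioms encoded in the path object above, is indispensable, and this is why the hypothesis that $p$ and $p'$ are fibrations cannot be dropped.
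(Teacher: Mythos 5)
Your proof is correct and follows essentially the same strategy as the paper's: invoke the Whitehead lemma in the slice model structure on $\Grpd_{\mathcal B}$ (where $p$ and $p'$ are fibrant because they are fibrations, and every object is cofibrant), then identify the abstract homotopy relation in $\Grpd_{\mathcal B}$ with natural isomorphisms over $\mathcal B$. The only difference is a dual implementation detail: the paper computes homotopies via the cylinder object $\mathcal G\times I$ with structure map $p\pi_1$, whereas you use the path object obtained by pulling back $\mathcal K^I\to\mathcal B^I$ along the constant functor --- both correctly exhibit homotopy over $\mathcal B$ as a vertical natural isomorphism, so the two arguments coincide in substance.
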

\begin{proof} 
By the Whitehead lemma \cite[Theorem 7.5.10]{Hirschhorn}  the functor $f: p\to_{\mathcal B} p'$ over $\mathcal B$  
is a homotopy equivalence of groupoids over $\mathcal B$. Now observe that for any  object $p: \mathcal{G} \to \mathcal B$ of $\Grpd_{\mathcal B}$, a cylinder object is the groupoid $\mathcal{G}\times I$, together with the functor 
$p\pi_1: \mathcal{G}\times I  \to \mathcal B$, where $\pi_1 : \mathcal{G}\times I \to\mathcal{G}$ is {the projection}. This allows one to pass from a homotopy equivalence in $\Grpd_{\mathcal B}$ to an equivalence of groupoids over $\mathcal B$.  
\end{proof}

We denote by $\Vect_\C$ the category of finite-dimensional vector spaces over $\C$. A functor $\rho:\mathcal G\to\Vect_\C$ from a groupoid $\mathcal G$ is called a  \textbf{representation} of $\mathcal G$ and a natural transformation $\eta: \rho\Rightarrow \rho'$ between such functors an  \textbf{intertwiner}. 
For a representation $\rho: \mathcal G\to \Vect_\C$ we denote by $\rho^*:\mathcal G\to \Vect_\C$ the  \textbf{dual representation} given by $\rho^*(G)=\rho(G)^*$ for $G\in\Ob\mathcal G$  and  $\rho^*(f)=\rho(f^\inv)^*: \rho^*(G)\to \rho^*(G')$ on the morphisms $f: G\to G'$. The  \textbf{trivial representation} of a groupoid $\mathcal G$ is given by the constant functor $\C:\mathcal G\to\Vect_\C$ that assigns to each object the vector space $\C$ and to each morphism the identity map $\id_\C$.

For any groupoid $\mathcal G$ the category $\Vect_\C^{\mathcal G}$ of representations of $\mathcal G$ and intertwiners between them inherits a symmetric monoidal structure from $\Vect_\C$, with the pointwise tensor product. The dual object of a representation $\rho: \mathcal G\to \Vect_\C$ is the dual representation $\rho^*:\mathcal G\to \Vect_\C$ and the evaluation and coevaluation are induced 
by the evaluation and coevaluation of $\Vect_\C$. The category $\Vect_\C^{\mathcal G}$ also inherits a canonical pivotal structure from $\Vect_\C$ and thus becomes a spherical category. 

The  \textbf{action groupoid} for an action $\rhd: G\times M\to M$ of a group $G$ on a set $M$ is denoted $M\sslash G$ and has
 \begin{compactitem}
 \item as objects  elements of the set $M$,
 \item as morphisms from $m\in M$ to $m'\in M$  group elements $g\in G$ with $g\rhd m=m'$.
 \end{compactitem}

 For a
 $G\times H^{op}$-set $M$  we denote by  $\rhd: (G\times H)\times M\to M$, $(g,h)\rhd m=g\rhd m\lhd h^\inv$
 the associated left action of  $G\times H$ on $M$ with action groupoid $M\sslash G\times H$.

Functors between action groupoids can be constructed from group homomorphisms and maps between sets that satisfy an equivariance condition with respect to the group actions and group homomorphisms. Such functors can be described more formally in terms of the following category.
 
\begin{definition}\label{def:mod}  The category $\Mod$ has 
\begin{compactitem}
\item as objects pairs $(G,M)$ of a group $G$ and a $G$-set $M$,
\item as morphisms $(\phi,f):(G,M)\to (G',M')$ pairs of a group homomorphism $\phi: G \to G'$ and a map $f: M\to M'$ such that 
$f(g\rhd m)=\phi(g)\rhd f(m)$ for all $g\in G$ and $m\in M$.
\end{compactitem}
\end{definition} 

\begin{lemma} \label{lem:actfunctset}
There is a functor $\Act: \Mod \to \Grpd$ that sends 
\begin{compactitem}
\item an object $(G,M)$ to the action groupoid $M\sslash G$,
\item a morphism $(\phi,f): (G,M)\to (G', M')$ to the functor $F: M\sslash G\to M'\sslash G'$ given by $f: M\to M'$ on the objects and by $\phi: G\to G'$ on the morphisms. 
\end{compactitem}
\end{lemma}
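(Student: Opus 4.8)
The plan is to verify directly that $\Act$ respects the two pieces of structure required of a functor: that each morphism of $\Mod$ is sent to a genuine functor of groupoids, and that these assignments are compatible with identities and composition in $\Mod$.

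First I would check that for a morphism $(\phi, f): (G,M) \to (G', M')$ of $\Mod$, the prescribed data ($f$ on objects and $\phi$ on morphisms) really defines a functor $F: M\sslash G \to M'\sslash G'$. The only nontrivial point is well-definedness on morphisms. A morphism of $M\sslash G$ from $m$ to $m'$ is an element $g \in G$ with $g \rhd m = m'$; its image $\phi(g)$ must be a morphism from $f(m)$ to $f(m')$ in $M'\sslash G'$, that is, it must satisfy $\phi(g) \rhd f(m) = f(m')$. This is exactly the equivariance condition in Definition \ref{def:mod}, since $\phi(g) \rhd f(m) = f(g \rhd m) = f(m')$. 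Functoriality of $F$ — preservation of identities and composites — then follows immediately from the fact that $\phi$ is a group homomorphism: the identity at $m$ is the unit $e_G$, which is sent to $\phi(e_G) = e_{G'}$, the identity at $f(m)$; and the composite of $g: m \to m'$ with $g': m' \to m''$ is the product $g'g$ in $G$, which is sent to $\phi(g'g) = \phi(g')\phi(g)$, the composite of their images.

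Next I would check that $\Act$ preserves identities and composition at the level of $\Mod$ itself. The identity on $(G,M)$ is $(\id_G, \id_M)$, which acts as the identity on the objects and morphisms of $M\sslash G$ and hence yields the identity functor. For composition, given $(\phi, f): (G,M) \to (G',M')$ and $(\phi', f'): (G',M') \to (G'',M'')$, their composite in $\Mod$ is $(\phi'\phi,\, f'f)$, and the associated functor acts as $f'f$ on objects and $\phi'\phi$ on morphisms; this is precisely the composite $\Act(\phi',f') \circ \Act(\phi,f)$.

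There is no real obstacle here: the content of the lemma is entirely captured by the equivariance condition, which is tailored to make $F$ well-defined on morphisms, and everything else is formal bookkeeping following from the group homomorphism axioms and the definition of composition in an action groupoid. The only point requiring a moment's care is the direction convention for morphisms in the action groupoid, namely that $g: m \to m'$ means $g \rhd m = m'$; once this is fixed, all the verifications above read off directly.
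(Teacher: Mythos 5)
Your proof is correct and complete: the paper states this lemma without proof, treating it as a routine verification, and your argument is exactly the standard one that would fill that gap — the equivariance condition $f(g\rhd m)=\phi(g)\rhd f(m)$ makes $F$ well-defined on morphisms, and functoriality plus preservation of identities and composition are immediate from the group homomorphism axioms. Nothing is missing.
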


\subsection{Fibrant spans of groupoids and spans of groupoid representations}
\label{sec:fibspangrpd}

In this section we assemble the background and some relevant facts on spans of groupoids and groupoid representations. Recall first the definition of a span in a category $\mac$ and of the associated category of spans in a category $\mathcal C$ with pullbacks.

\begin{definition}\label{Def:span} Let $\mac$ be a category.
A \textbf{span}  in  $\mac$ from $X_1\in\Ob\mac$ to $X_2\in\Ob\mac$ is a diagram $X_1 \xleftarrow{P_1} A \xrightarrow{P_2} X_2$ in $\mac$. Two spans $X_1 \xleftarrow{P_1} A \xrightarrow{P_2} X_2$ and $X_1 \xleftarrow{P'_1} A' \xrightarrow{P'_2} X_2$ from $X_1$ to $X_2$ are called equivalent, and we write
 $$(X_1 \xleftarrow{P_1} A \xrightarrow{P_2} X_2) \sim (X_1 \xleftarrow{P'_1} A' \xrightarrow{P'_2} X_2),$$
if there is an isomorphism $f: A\to A'$ such that the following diagram commutes
$$
\begin{gathered}
\xymatrix@R=4pt{
 & A  \ar[dd]_\cong^{f}\\
X_1  \ar@{<-}[ru]^{P_1 } \ar@{<-}[rd]_{P'_1}  & & X_2 \ar@{<-}[lu]_{P_2} \ar@{<-}[ld]^{P'_2}\\
& A'}
 \end{gathered}.
 $$
 \end{definition}

\begin{lemma}
If $\mac$ has pullbacks then there is a category $\mathrm{span}(\mac)$ with the same objects as $\mac$ and with equivalence classes of spans from $X_0$ to $X_1$ as morphisms from $X_0$ to $X_1$. The composition is via pullback
$$ [ X_2\xleftarrow{Q_1}   B \xrightarrow{Q_2}  X_3]\circ [ X_1\xleftarrow{P_1}  A\xrightarrow{P_2}  X_2]=[ X_1\xleftarrow{P_1T_1} 
 A\times_{ X_2}  B \xrightarrow{Q_2 T_2}  X_3 ],$$
where $T_1:  A\times_{ X_2} {B} \to A$ and 
$T_2:  A\times_{ X_2} {B} \to B$
denote pullback projections. 
 The identity on an object is given by the equivalence class of the identity span: $$\id_A=[A \xleftarrow{\id_A} A \xrightarrow{\id_A} A].$$
 \end{lemma}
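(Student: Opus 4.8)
The plan is to verify the category axioms directly, with all the essential content supplied by the universal property of pullbacks. I would begin by checking that $\sim$ is genuinely an equivalence relation on spans from $X_0$ to $X_1$, so that the hom-collections of equivalence classes are well defined. Reflexivity is witnessed by $\id_A$; symmetry by replacing the witnessing isomorphism $f$ with $f^\inv$, which makes the reversed triangles commute; and transitivity by composing two witnessing isomorphisms, the composite again being an isomorphism compatible with the legs.

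The crucial step is to show that the proposed composition is well defined, and here there are two logically independent points. First, the pullback $A\times_{X_2} B$ is determined only up to canonical isomorphism: any two pullbacks of the same cospan are related by a unique isomorphism commuting with both projections, and this isomorphism exhibits the two resulting composite spans as equivalent. Second, the composite must not depend on the chosen representatives. If $(X_1\xleftarrow{P_1} A\xrightarrow{P_2} X_2)\sim(X_1\xleftarrow{P'_1} A'\xrightarrow{P'_2} X_2)$ via $f\colon A\to A'$, and similarly $B\sim B'$ via $g\colon B\to B'$, then the universal property induces an isomorphism $A\times_{X_2} B\to A'\times_{X_2} B'$ compatible with all the pullback projections, which in turn witnesses the equivalence of the two composite spans. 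Both arguments are diagram chases driven solely by the universal property.

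I would then establish associativity, which is the step I expect to require the most bookkeeping. Given composable spans with apexes $A,B,C$ over $X_1,X_2,X_3,X_4$, both $(C\circ B)\circ A$ and $C\circ(B\circ A)$ have apexes that are iterated pullbacks of the diagram obtained by juxtaposing the two cospans. The pasting lemma for pullbacks—that a pullback square glued onto a pullback square is again a pullback—identifies each iterated pullback, up to canonical isomorphism respecting the outer legs, with the single limit $A\times_{X_2} B\times_{X_3} C$. This canonical isomorphism is precisely the witness required for the equivalence of the two bracketings, so associativity holds.

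Finally, the unit laws follow because pulling back along an identity is trivial: the pullback $A\times_A B$ formed along $\id_A$ is canonically isomorphic to $B$ through the projection $T_2$, with the composite legs $P_1 T_1$ and $Q_2 T_2$ reducing to the original legs of $B$, and symmetrically for composition on the other side. The only genuine obstacle is the diagrammatic bookkeeping in associativity, but the pasting lemma collapses it to a statement about a single limit, leaving no real difficulty beyond careful tracking of the projection morphisms.
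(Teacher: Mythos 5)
Your proof is correct. The paper states this lemma without any proof, treating it as the classical construction of the span category, and your argument supplies exactly the standard justification: the universal property of pullbacks gives well-definedness of composition (both with respect to the choice of pullback and the choice of representatives), the pasting lemma for pullbacks identifies both bracketings of a triple composite with the limit of the juxtaposed zigzag and hence yields associativity, and pulling back along $\id$ gives the unit laws. The only cosmetic point is that in the unit-law step the projection witnessing the isomorphism $A\times_A B\cong B$ is the one onto $B$ (which projection that is depends on the side on which the identity span is composed), but this does not affect the argument.
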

 Dually, we can define cospans in a category $\mac$ and the category $\mathrm{cospan}(\mac)$, if $\mac$ is a category with pushouts.

In the following, it will be essential 
 the spans under consideration
are \textbf{fibrant}. The category $\Lambda=(0\leftarrow 1 \rightarrow 0')$ is an inverse  category in the sense of \cite[\S 5]{Hovey}. Consequently, the category $\Grpd^\Lambda$, whose objects are spans of groupoids,  
 can be provided with the  \emph{injective}
model category structure, using the nomenclature of \textit{loc cit}.  A fibrant span is  a fibrant object with respect to this model category structure. More concretely, this is stated as follows.

\begin{definition}
A \textbf{span} of groupoids $\mathcal G_1\xleftarrow{F_1} \mathcal G\xrightarrow{F_2}\mathcal G_2$  is called \textbf{fibrant}, if  the 
induced functor  $\langle F_1,F_2\rangle: \mathcal G\to \mathcal G_1\times\mathcal G_2$ is a fibration. 
\end{definition}

We will only consider spans of essentially finite groupoids.  A groupoid $\mathcal G$ is called \textbf{essentially finite}, if it is equivalent to a \textbf{finite} groupoid, a groupoid with finitely many objects and morphisms.
In an essentially finite groupoid there are only finitely many isomorphism classes of objects and all Hom-sets are finite.

Fibrant spans of essentially finite groupoids also 
form a category, which we will see later is symmetric monoidal.
However, its identity morphisms are not given by the identity spans that define the identity morphisms in  
$\mathrm{span}(\Grpd)$. Those  spans are in general not fibrant. 
Instead, the identity morphism on a groupoid $\G$ is defined by  the arrow groupoid
 $\G^I$  and its projection functors $p_0,p_1:\G^I\to \G$ from Section \ref{sec:groupoidbasic}. 

\pagebreak

\begin{lemma} \label{def:spangrpd}We have a  symmetric monoidal category $\mathrm{span}^{\rm fib}(\Grpd)$,  which is given as follows: 
\begin{compactitem}
\item   Objects are essentially finite groupoids $\mathcal G$.
\item   Morphisms from $\mathcal G_1$ to $\mathcal G_2$   are equivalence classes of fibrant spans $\mathcal G_1\xleftarrow{P_1}  \mathcal G\xrightarrow{P_2} \mathcal G_2$ of essentially finite groupoids. 

Two spans 
are equivalent, 
$(\mathcal G_1\xleftarrow{P_1} \mathcal  G\xrightarrow{P_2} \mathcal G_2) \sim_\fib (\mathcal G_1\xleftarrow{P'_1} \mathcal  G'\xrightarrow{P'_2} \mathcal G_2)$,
 if there are functors $F: \mathcal G\to\mathcal G'$ and $F':\mathcal G'\to \mathcal G$ with $P'_i F=P_i$ and $P_iF'=P'_i$ that form an equivalence over $\mathcal G_1\times \mathcal G_2$:  there are natural isomorphisms $\eta: F'F\Rightarrow \id_{\mathcal G}$ and $\eta': FF'\Rightarrow \id_{\mathcal G'}$ with $P_i \eta=\id_{P_i}$ and $P'_i\eta'=\id_{P'_i}$.

\item Composition of morphisms is  by  pullback, where $T_1: \mathcal H\times_{\mathcal G_2} \mathcal{K} \to \mathcal{H}$ and $T_2: \mathcal H\times_{\mathcal G_2} \mathcal{K} \to \mathcal{K}$ denote pullback projections: 
$$ [\mathcal G_2\xleftarrow{Q_1}  \mathcal K\xrightarrow{Q_2} \mathcal G_3]\circ [\mathcal G_1\xleftarrow{P_1} \mathcal H\xrightarrow{P_2} \mathcal G_2]=[\mathcal G_1\xleftarrow{P_1T_1} 
\mathcal H\times_{\mathcal G_2} \mathcal K \xrightarrow{Q_2 T_2} \mathcal G_3 ].$$

\item  The identity morphism on $\mathcal G$ is $[\mathcal G\xleftarrow{p_0} \mathcal G^I\xrightarrow{p_1} \mathcal G]$.

\item The symmetric monoidal structure is  given by  the product of groupoids.  The associativity and unit constraints and the symmetric braiding of $\mathrm{span}^{\rm fib}(\Grpd)$ are obtained from those of $\Grpd$, as an isomorphism $ f: \mathcal{G} \to \mathcal{G}'$ of groupoids defines a fibrant span  $$[\mathcal G\xleftarrow{p_0} \mathcal G^I\xrightarrow{f\,p_1} \mathcal{G}'].$$
\end{compactitem}
\end{lemma}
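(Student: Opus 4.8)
The plan is to verify the category axioms in turn --- composition is well defined on $\sim_\fib$-classes, is associative, and the arrow-groupoid spans are two-sided units --- and then to install the symmetric monoidal structure coming from the product of groupoids. The recurring technical tool will be Lemma \ref{lem:doldgrp}: to show that two fibrant spans from $\G_1$ to $\G_2$ are $\sim_\fib$-equivalent it suffices to exhibit a functor between their apices that lies over $\G_1\times\G_2$ and is an equivalence of the underlying groupoids; applying Lemma \ref{lem:doldgrp} with base $\G_1\times\G_2$ upgrades it to an equivalence in $\Grpd_{\G_1\times\G_2}$, which is precisely the data demanded by $\sim_\fib$. I will also use that $\Grpd$ carries the folk model structure in which every object is fibrant, so it is right proper and strict pullbacks along fibrations compute homotopy pullbacks.

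\textbf{Composition is well defined.} Given fibrant spans $\G_1\xleftarrow{P_1}\H\xrightarrow{P_2}\G_2$ and $\G_2\xleftarrow{Q_1}\K\xrightarrow{Q_2}\G_3$, I first note that $P_2$ and $Q_1$ are fibrations, being composites of $\langle P_1,P_2\rangle$ and $\langle Q_1,Q_2\rangle$ with the (fibration) product projections; hence the pullback legs $T_1,T_2$ are fibrations. To see the composite is fibrant I solve the lifting problem for $\langle P_1T_1,Q_2T_2\rangle$ directly: given $(x,y)\in\Ob(\H\times_{\G_2}\K)$ and $(a,c)\colon(g_1,g_3)\to(P_1(x),Q_2(y))$, I lift $(a,\id)$ through $\langle P_1,P_2\rangle$ to $\alpha\colon x'\to x$ with $P_2(\alpha)=\id$ and lift $(\id,c)$ through $\langle Q_1,Q_2\rangle$ to $\gamma\colon y'\to y$ with $Q_1(\gamma)=\id$; then $P_2(\alpha)=Q_1(\gamma)$, so $(\alpha,\gamma)$ is the desired lift. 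Essential finiteness is preserved because fibrancy makes the strict pullback a homotopy pullback, and the homotopy fibre product of essentially finite groupoids over an essentially finite one has finitely many isomorphism classes of objects and finite automorphism groups. Finally, $\sim_\fib$-invariance follows since a span equivalence $\H\to\H'$ over $\G_1\times\G_2$ induces, by functoriality of pullback, a functor on apices over $\G_1\times\G_3$ that is an equivalence of groupoids by right properness, whence Lemma \ref{lem:doldgrp} gives a $\sim_\fib$-equivalence of composites.

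\textbf{Associativity and units.} Associativity is the canonical isomorphism of iterated pullbacks $(\H\times_{\G_2}\K)\times_{\G_3}\L\cong\H\times_{\G_2}(\K\times_{\G_3}\L)$ over $\G_1\times\G_4$, which is a fortiori a $\sim_\fib$-equivalence, with fibrancy of both sides covered above. The unit laws are the substantive point, precisely because the unit on $\G$ is the arrow-groupoid span $[\G\xleftarrow{p_0}\G^I\xrightarrow{p_1}\G]$ rather than the generally non-fibrant identity span: here $\langle p_0,p_1\rangle$ is a fibration by Section \ref{sec:groupoidbasic} and $\G^I\simeq\G$, so this is a fibrant span of essentially finite groupoids. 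For the left unit I form the composite $\G_1\xleftarrow{P_1T_1}\H\times_{\G_2}\G_2^I\xrightarrow{p_1T_2}\G_2$ and the functor $F\colon\H\to\H\times_{\G_2}\G_2^I$, $x\mapsto(x,\id_{P_2(x)})$. One checks that $F$ lies over $\G_1\times\G_2$, is fully faithful, and is essentially surjective, since any $(x,f)$ with $f\colon P_2(x)\to b$ is isomorphic to $F(x)$ via the morphism $(\id_x,(\id_{P_2(x)},f))$ of the pullback groupoid. Thus $F$ is an equivalence over $\G_1\times\G_2$ between fibrant spans, and Lemma \ref{lem:doldgrp} identifies the composite with the original span; the right unit is symmetric.

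\textbf{Symmetric monoidal structure.} On objects the product of essentially finite groupoids is essentially finite. On morphisms I take the levelwise product of spans; a product of fibrations is a fibration and a product of essentially finite groupoids is essentially finite, so this is again a fibrant span, manifestly compatible with $\sim_\fib$. Functoriality in both span arguments follows from the interchange of products and pullbacks, and the unit span is preserved because $(\G\times\G')^I\cong\G^I\times\G'^I$ over $(\G\times\G')\times(\G\times\G')$. The unit object is the terminal groupoid $\bullet$, and associator, unitors and braiding are induced by the canonical isomorphisms of the cartesian product in $\Grpd$; these are isomorphisms of apices over the relevant products, hence $\sim_\fib$-equivalences, and the pentagon, triangle and hexagon identities are inherited from the symmetric monoidal structure of $\Grpd$. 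I expect the unit laws to be the main obstacle, since the non-obvious choice of $\G^I$ as unit is exactly what makes composition homotopically meaningful, and their verification is what genuinely relies on Lemma \ref{lem:doldgrp} rather than on a strict computation.
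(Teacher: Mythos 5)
Your proof is correct, and it supplies in full the verification that the paper outsources: the paper's own proof of this lemma is essentially a citation to \cite{Tor} and \cite{FMP} together with the remark that the argument transfers to groupoids via Lemma \ref{lem:doldgrp} and that fibrancy is precisely what makes both the composition and the unit law work. Your treatment fleshes out exactly that skeleton --- direct solution of the lifting problem for $\langle P_1T_1, Q_2T_2\rangle$, essential finiteness via the homotopy pullback (which one can also extract from Proposition \ref{prop:omnibus} applied to the fibration $T_2$), and Lemma \ref{lem:doldgrp} to upgrade base-preserving equivalences of groupoids to $\sim_\fib$-equivalences, in particular for the unit law via $F(x)=(x,\id_{P_2(x)})$. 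It is worth noting that your unit argument (fully faithful plus essentially surjective, then Lemma \ref{lem:doldgrp}) is genuinely lighter than what the paper does in the decorated analogue, Lemma \ref{def:spanrepgrpd}, where an explicit inverse $F'$ must be constructed from chosen lifts; that extra work is forced there by the strict conditions $\sigma' F=\sigma$ and $\sigma F'=\sigma'$ on the natural transformations, and your shortcut is exactly what suffices in the undecorated case. Two presentational points to tighten: in the $\sim_\fib$-invariance of composition you can avoid right properness entirely, since the natural isomorphisms of a span equivalence over $\G_1\times\G_2$ project to identities in $\G_2$ and therefore induce strict natural isomorphisms between the induced functors on pullbacks; and the monoidal coherences (associator, unitors, braiding) are not ``equivalences of apices'' but morphisms of $\mathrm{Span}^{\rm fib}(\Grpd)$, so each isomorphism $\phi:\G\to\G'$ of $\Grpd$ should be represented by a fibrant span such as $\G\xleftarrow{p_0}\G^I\xrightarrow{\phi p_1}\G'$, after which naturality and the pentagon, triangle and hexagon identities are inherited from the cartesian structure of $\Grpd$ --- a routine fix that does not affect the substance of your argument.
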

\begin{proof} This is proven in \cite[Section 3.2]{Tor}, in the dual context of cofibrant spans of topological spaces and in \cite[Section 3.2] {FMP} in the context of homotopy finite spans of homotopy finite spaces. The argument generalises 
to the context of groupoids via Lemma \ref{lem:doldgrp}. Note that the fibrancy condition on spans is essential to ensure that the composition of fibrant spans of essentially finite groupoids is a  fibrant span of essentially finite groupoids. It is also required to ensure  that $[\mathcal G\xleftarrow{p_0} \mathcal G^I\xrightarrow{p_1} \mathcal G]$ indeed acts as an identity  and  that the associativity, unit constraints and  braiding are natural isomorphisms.
\end{proof}

We  now consider a category that is closely related to $\mathrm{span}^{\mathrm{fib}}(\Grpd)$, where the objects are representations of groupoids and morphisms are given by fibrant spans of groupoids, together with certain natural transformations. Identity morphism on a groupoid $\G$ are again defined  by  the arrow groupoid
 $\G^I$  and its projection functors $p_0,p_1:\G^I\to \G$. 
For this, note  that any  representation $\rho: \G \to \Vect_\C$ defines functors $\rho p_0,\rho p_1: \G^I\to \Vect_\C$ 
and  a natural transformation  
$\sigma^\rho:\rho p_0\Rightarrow \rho p_1$ with components $\sigma^\rho_f=\rho(f)$ for all objects $f$ of $\G^I$.

\begin{lemma}\label{def:spanrepgrpd}
There is a symmetric monoidal category $\mathrm{span}^{\fib}(\repGrpd)$,   given as follows:
\begin{compactitem}
\item Objects are pairs $(\mathcal G,\rho)$ of an essentially finite  groupoid $\mathcal G$ and a functor $\rho:\mathcal G\to\mathrm{Vect}_\C$.
\item  Morphisms from $(\mathcal G_1,\rho_1)$ to $(\mathcal G_2,\rho_2)$ are  equivalence classes of pairs $(\mathcal G_1\xleftarrow{P_1} \mathcal G\xrightarrow{P_2} \mathcal G_2,\sigma)$ of a fibrant span $\mathcal G_1\xleftarrow{P_1} \mathcal G\xrightarrow{P_2} \mathcal G_2$ of essentially finite groupoids and a natural transformation $\sigma: \rho_1 P_1\Rightarrow \rho_2P_2$. Two such pairs are equivalent $$(\mathcal G_1\xleftarrow{P_1} \mathcal G\xrightarrow{P_2} \mathcal G_2,\sigma)\sim_{\fib} (\mathcal G_1\xleftarrow{P'_1} \mathcal G'\xrightarrow{P'_2} \mathcal G_2,\sigma'),$$ if there are functors $F:\mathcal G\to\mathcal G'$ and $F':\mathcal G'\to \mathcal G$  with $P'_i F=P_i$, $P_iF'_i=P'_i$,  that form an equivalence over $\mathcal G_1\times\mathcal G_2$ and satisfy $\sigma' F=\sigma$ and $\sigma F'=\sigma'$. 

\item   Composition of morphisms  given by  pullback and composition of natural transformations
$$ [(\mathcal G_2\xleftarrow{Q_1}\mathcal K\xrightarrow{Q_2} \mathcal G_3,\tau )]\circ [(\mathcal G_1\xleftarrow{P_1}\mathcal H\xrightarrow{P_2} \mathcal G_2, \sigma)]=\big[\big(\mathcal G_1\xleftarrow{P_1T_1} \mathcal H\times_{\mathcal G_2} \mathcal K\xrightarrow{Q_2T_2} \mathcal G_3, (\tau T_2 )\circ (\sigma T_1)\big )\big].$$ 

\item The identity morphism on $(\mathcal G,\rho)$ is  $[(\mathcal G\xleftarrow{p_0}\mathcal G^I\xrightarrow{p_1} \mathcal G, \sigma^\rho)]$ with 
$\sigma^\rho_f=\rho(f)$ for all morphisms $f$ in $\mathcal G$.

\item The symmetric monoidal structure is given by the product of groupoids and  the tensor product in $\Vect_\C$. 
\end{compactitem}
\end{lemma}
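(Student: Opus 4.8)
The plan is to verify that the proposed data—objects, morphisms, composition, identities, and the monoidal structure—indeed assemble into a symmetric monoidal category, building on Lemma \ref{def:spangrpd}, which already furnishes the symmetric monoidal category $\Span^\fib(\Grpd)$ underlying the groupoid layer. The key observation is that an object and morphism of $\Span^\fib(\repGrpd)$ is one of $\Span^\fib(\Grpd)$ decorated with a representation and a natural transformation, so there is a forgetful functor $U:\Span^\fib(\repGrpd)\to\Span^\fib(\Grpd)$ that forgets $\rho$ and $\sigma$. Much of the categorical structure can then be inherited from the already-established structure on $\Span^\fib(\Grpd)$, and the genuinely new content is confined to the behaviour of the natural transformations.

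First I would check that $\sim_\fib$ is an equivalence relation on the decorated spans: reflexivity and symmetry are immediate, and transitivity follows by composing the mediating functors and pasting the natural isomorphisms, exactly as for $\Span^\fib(\Grpd)$, while verifying that the extra conditions $\sigma'F=\sigma$ and $\sigma F'=\sigma'$ are preserved under composition of mediating equivalences. Next I would show that composition is well defined on equivalence classes: given representatives, the pullback $\mathcal H\times_{\mathcal G_2}\mathcal K$ and its projections are exactly those of $\Span^\fib(\Grpd)$, so fibrancy and essential finiteness of the composite follow from Lemma \ref{def:spangrpd}; it then remains to confirm that the composite natural transformation $(\tau T_2)\circ(\sigma T_1):\rho_1P_1T_1\Rightarrow\rho_3Q_2T_2$ is a legitimate natural transformation (its source and target match because $P_2T_1=Q_1T_2$ on the pullback, which is what makes the vertical composite typecheck) and that replacing $\sigma,\tau$ by equivalent data yields an equivalent composite.

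The heart of the argument is associativity and the unit laws. For associativity, the span part is handled by Lemma \ref{def:spangrpd} via the canonical isomorphism of iterated pullbacks; I would then check that under this isomorphism the two bracketings of the pasted natural transformation agree, which is a routine interchange/associativity computation for vertical and whiskered composites of natural transformations. For the unit laws the subtlety lies in the nontrivial identity $[(\mathcal G\xleftarrow{p_0}\mathcal G^I\xrightarrow{p_1}\mathcal G,\sigma^\rho)]$: on the groupoid level Lemma \ref{def:spangrpd} already shows $\mathcal G^I$ acts as identity, so one obtains a mediating equivalence $F$ between the composite span and the original; the new step is to verify that under this $F$ the pasted transformation involving $\sigma^\rho_f=\rho(f)$ reduces to the original $\sigma$, which uses precisely the naturality square defining $\sigma^\rho$ together with the compatibility $\sigma F=\cdots$ built into $\sim_\fib$. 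This unit-law compatibility—showing that whiskering with the arrow-groupoid transformation $\sigma^\rho$ is absorbed correctly up to the equivalence relation—is the step I expect to be the main obstacle, since it is where the specific choice $\sigma^\rho_f=\rho(f)$ must interact exactly with the pullback mediating equivalence.

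Finally, I would treat the symmetric monoidal structure. The tensor product of objects is $(\mathcal G_1,\rho_1)\otimes(\mathcal G_2,\rho_2)=(\mathcal G_1\times\mathcal G_2,\rho_1\boxtimes\rho_2)$ and on morphisms it combines the product of spans (from Lemma \ref{def:spangrpd}) with the external product of the natural transformations $\sigma_1,\sigma_2$; functoriality of $\otimes$, the associativity and unit constraints, the symmetry, and the coherence (pentagon and hexagon) axioms all reduce to the corresponding facts in $\Span^\fib(\Grpd)$ together with the evident compatibilities of $\boxtimes$ on representations, so these require only that the decorating natural transformations compose and commute as expected. Throughout, the strategy is to let $U$ carry all the genuinely hard groupoid-theoretic content back to Lemma \ref{def:spangrpd} and to check by direct, but routine, calculation that the natural-transformation decorations are compatible at each stage.
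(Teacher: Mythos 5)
Your overall architecture matches the paper's: well-definedness of composition, associativity, transitivity of $\sim_\fib$ and the monoidal structure are indeed routine (the paper dispatches them in one sentence), and the unit law is the crux. But at exactly that crux your plan has a genuine gap, in two respects. First, you cannot obtain the mediating equivalence by black-boxing the groupoid-level unit law of Lemma \ref{def:spangrpd}. That lemma only asserts that \emph{some} equivalence over $\G_1\times\G_2$ exists between $\K$ and $\K\times_{\G_2}\G_2^I$; for an arbitrary mediating functor $G$ the required identity $\sigma' G=\sigma$ simply fails (pre-compose any valid mediating equivalence with a nontrivial automorphism of the span over the base and the condition is destroyed). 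One must work with the specific $F$ induced via the universal property of the pullback by $\id_\K$ and the constant-path inclusion $Iq_2:\K\to\G_2^I$, for which $T_1F=\id_\K$ and $T_2F=Iq_2$; only then does $(\sigma^{\rho_2}T_2F)\circ(\sigma T_1F)=\sigma$ follow, using that $\sigma^{\rho_2}$ is the identity on constant paths. So this part of Lemma \ref{def:spangrpd}'s proof must be re-opened, not inherited through the forgetful functor.

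Second, and more seriously, the relation $\sim_\fib$ demands \emph{both} $\sigma'F=\sigma$ and $\sigma F'=\sigma'$, and your phrase ``together with the compatibility $\sigma F=\cdots$ built into $\sim_\fib$'' is circular: these compatibilities are the proof obligations, not available hypotheses, and your plan never addresses the $F'$ direction at all. This is precisely where the paper spends its technical effort: it constructs an explicit quasi-inverse $F'$ by choosing, for each object $(k,f)$ of the pullback, a lift $\hat f_k$ of $(1_{q_1(k)},f)$ along the fibration $\langle q_1,q_2\rangle$, normalized so that lifts of identities are identities; this yields $F'F=\id_\K$, a natural isomorphism $FF'\Rightarrow\id$ over $\G_1\times\G_2$ built from the inverses of the lifts, and then $\sigma F'=\sigma'$ by naturality of $\sigma$ together with $\rho_1q_1(\hat f_k)=\id$. (Alternatively, once you have \emph{any} inverse $F'$ over the base with natural isomorphism $\eta':FF'\Rightarrow\id$ whose components project to identities — which requires invoking Lemma \ref{lem:doldgrp}, hence knowing $F$ is an equivalence — the second condition follows formally: naturality of $\sigma'$ at $\eta'_{x'}$ gives $\sigma'_{x'}=\sigma'_{FF'(x')}=(\sigma'F)_{F'(x')}=(\sigma F')_{x'}$.) Your proposal contains neither the explicit construction nor this formal argument, so the step you yourself flag as the main obstacle remains unproved.
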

\begin{proof} That the composition  descends to the quotient and is associative is proven with no difficulty. So we will focus on the unit axiom, which is not straightforward.

1.~Let  $\mathcal G_1\xleftarrow{q_1} \mathcal K\xrightarrow{q_2} \mathcal G_2$ be a fibrant span of essentially finite groupoids, $\rho_1: \G_1 \to \Vect_\C$ and $\rho_2: \G_2 \to \Vect_\C$ representations and  $\sigma: \rho_1q_1\Rightarrow\rho_2q_2$
a natural transformation. We show that
 $$(\mathcal G_1\xleftarrow{q_1} \mathcal K\xrightarrow{q_2} \mathcal G_2,\sigma) \sim_{\mathrm{fib}} {(\mathcal G_2\xleftarrow{p_0}\mathcal G_2^I\xrightarrow{p_1} \mathcal G_2, \sigma^{\rho_2})} \circ (\mathcal G_1\xleftarrow{q_1} \mathcal K\xrightarrow{q_2} \mathcal G_2,\sigma).$$
For this we consider the following diagram, which commutes up to the three natural transformations shown and where the bottom quadrilateral is a pullback diagram
\begin{align}\label{eq:pbdiag}
\xymatrix{ &&&\Vect_\C &&&\\  &\G_1\ar[dl]_{\rho_1} \ar[rru]^{\rho_1}&&\K  \xtwocell[u]{}<>{ <1> \sigma } \ar[d]^F\ar[ll]_-{q_1} \ar[rr]^{q_2} &&\G_2 \ar[dr]^{\rho_2}  \ar[ull]_{\rho_2} \\\Vect_\C &&\K\ar[lu]_-{q_1} \ar[dr]^{q_2} \xtwocell[ll]{}<>{  ^<0> \sigma }
   & \K \times_{\G_2}\G_2^I \ar[l]_-{T_1} \ar[r]^-{T_2} & \G_2^I \ar[dl]_-{p_0} \ar[ur]^-{p_1} \xtwocell[rr]{}<>{ ^<-1> {\sigma^{\rho_2}}\quad } && \Vect_\C\\\
& && \G_2. \ar[ulll]^{\rho_2}  \ar[urrr]_{\rho_2}   && 
 } 
 \end{align}
The functor $F$ is induced via the universal property of the pullback by the identity functor $\id_\K:\K\to \K$ and the functor $ I q_2: \K \to \G_2^I$, where $I: \G_2 \to \G_2^I$ is the inclusion that sends objects of $\G_2$ to constant functors and morphisms to constant natural transformations. The commuting diagram implies
 $ (\sigma^{\rho_2} T_2F) \circ (\sigma T_1F)=\sigma$.
 
2.~By Lemma \ref{lem:doldgrp}, 
there exists  a functor $F': \K\times_{\G_2} \G_2^I \to \K$ that also makes the diagram commute, up to the shown natural transformations, and such that there are natural isomorphisms over $\G_1 \times \G_2$
$$F'F\xRightarrow[\G_1 \times \G_2]{\eta}{ \id_\K} \qquad \textrm{ 
and } \qquad 
FF'\xRightarrow[\G_1 \times \G_2]{\eta'}{ \id_{\K\times_{\G_2} \G_2^I}} .$$
To complete the proof, we need an explicit expression for such a functor $F'$, obtained as follows. 
For each object  
 $(k,f)$ of $\K\times_{\G_2} \G_2^I$, where $k\in \Ob\K$ and $f: q_2(k) \to y$ is a morphism in $\G_2$, we fix a lift $\hat{f}_k: k \to k_f$ of $(1_{q_1(k)},f)$ with respect to  the fibration $\langle q_1,q_2 \rangle: \K \to \G_1 \times \G_2 $.  We choose the lifts such that $(\hat 1_x)_k=1_k$, whenever ${q_2}(k)=x$.
We then define
  $$F'\left (\vcenter{ \xymatrix{ k\ar[d]_g \\k'} } ,  \vcenter{    \xymatrix{ q_2(k) \ar[d]_{q_2(g)} \ar[r]^{f} & y \ar[d]^{h}  \\ 
    q_2(k') \ar[r]_{f'}  &y' }}  \right) = \left( \vcenter{\xymatrix{ k_f \ar[d]^{\hat{f}_{k'}\circ  g \circ \hat{f}_k^{-1}} \\ {k_f'} }}\right ) .  $$
 It follows directly that this  defines a functor $F': \K\times_{\G_2}\G_2^I\to \K$. By construction, the functor $F:\K\to \K\times_{\G_2} \G_2^I$ in diagram \eqref{eq:pbdiag} is given by 
 $$F\left (\vcenter{ \xymatrix{ k\ar[d]_g \\k'} } \right)= \left (\vcenter{ \xymatrix{ k\ar[d]_g \\k'} } ,  \vcenter{    \xymatrix{ q_2(k) \ar[d]_{q_2(g)} \ar[r]^{1_{q_2(k)}} & q_2(k) \ar[d]^{q_2(g)}  \\ 
    q_2(k') \ar[r]_{1_{q_2(k')}}  &q_2(k') }}  \right).  $$
With our condition on the lifts, we then have $F'F=\id_\K$ and  
$$
FF'\left (\vcenter{ \xymatrix{ k\ar[d]_g \\k'} } ,  \vcenter{    \xymatrix{ q_2(k) \ar[d]_{q_2(g)} \ar[r]^{f} & y \ar[d]^{h}  \\ 
    q_2(k') \ar[r]_{f'}  &y' }}  \right)=\left (\vcenter{\xymatrix{ k_f \ar[d]^{\hat{f}_{k'}\circ  g \circ \hat{f}_k^{-1}} \\ {k_f'} }} ,  \vcenter{    \xymatrix{ y \ar[d]_{f\circ q_2(g)\circ f^\inv} \ar[r]^{1_{y}} & y \ar[d]^{f\circ q_2(g)\circ f^\inv}  \\ 
    y' \ar[r]_{1_y}  & y' }}  \right).
$$
The inverses of the lifts $\hat f_k: k\to k_f$ then define a natural transformation $\eta': FF'
\xRightarrow[\G_1\times\G_2]{}
\id_{\K\times_{\G_2} \G_2^I}$. 
Therefore, $F$ and $F'$ define an equivalence over $\G_1\times\G_2$.

3.~The natural transformation for  the composite  span is $\sigma'=({\sigma^{\rho_2}} T_2)\circ (\sigma T_1): \rho_1q_1T_1\Rightarrow \rho_2p_1 T_2$. It remains to show that $\sigma F'=\sigma'$ and $\sigma' F=\sigma$. From 1.~we have $\sigma'F=(\sigma^{\rho_2} T_2F)\circ (\sigma T_1F)=\sigma$. Using the naturality of $\sigma$ and that $\rho_1q_1(\hat f_k)=1_{\rho_1q_1(k)}$, we obtain  for all objects $(k,f)$ in $\K\times_{\G_2} \G_2^I$
\begin{align*}
(\sigma F')_{(k,f)}=\sigma_{k_f}=\rho_2q_2(\hat f_k)\circ \sigma_k\circ \rho_1 q_1(\hat f_k)^\inv=({\sigma^{\rho_2}} T_2)_{(k,f)}\circ \sigma_k=({\sigma^{\rho_2}} T_2)_{(k,f)}\circ (\sigma T_1)_{(k,f)}=\sigma'_{(k,f)}.
\end{align*}
\end{proof}

\begin{remark}\label{rem:subcatrem}
The symmetric monoidal category $\mathrm{span}^{\mathrm{fib}}(\Grpd)$ from  Lemma \ref{def:spangrpd} can be viewed as a subcategory of $\mathrm{span}^{\mathrm{fib}}(\repGrpd)$, whose objects are pairs $(\G, \C:\G\to\Vect_\C)$ of a groupoid $\G$ and the constant functor $\C:\G\to\Vect_\C$ and whose morphisms are equipped with identity natural transformations.
\end{remark}

In the following, it  will sometimes be more practical to use  different representatives of the identity morphisms in $\mathrm{span}^\fib(\Grpd)$ and in $\mathrm{span}^\fib(\repGrpd)$. 
Those are obtained from the functor $\Pi_1:\Top\to \Grpd$ that assigns to a topological space its fundamental groupoid and to a continuous map the induced functor between fundamental groupoids. The inclusions $\iota_0: \bullet \to [0,1]$, $\bullet\mapsto 0$ and $\iota_1:\bullet\to[0,1]$, $\bullet\mapsto 1$ define functors $\Pi_1(\iota_0), \Pi_1(\iota_1):\bullet\cong \Pi_1(\bullet)\to \Pi_1[0,1]$ and
 for any  groupoid $\G$  a  span of groupoids
\begin{equation}\label{eq:def_ell}
\ell_\G:=\big (\G\cong \G^\bullet \xleftarrow{\Pi_1(\iota_0)^*} \G^{\Pi_1[0,1]} \xrightarrow{\Pi_1(\iota_1)^*} \G^\bullet\cong \G\big).
\end{equation}
As $\langle\iota_0,\iota_1\rangle:\bullet \sqcup \bullet \to \Pi_1[0,1]$ is a cofibration and $\Grpd$ a monoidal model category, with all objects fibrant,  the span $\ell_\G$ is fibrant for any groupoid $\G$.  {As $\Pi_1[0,1]$ is contractible,  $\G^{\Pi_1[0,1]}$ is essentially finite for any essentially finite groupoid $\G$}.

There is  a (unique) natural transformation $[\delta]:\Pi_1(\iota_0)\Rightarrow \Pi_1(\iota_1)$, defined by the homotopy class of any path $\delta:0\to 1$ in $[0,1]$.
Hence for any groupoid $\G$, we have 
a natural transformation 
$[\delta]^*: \Pi_1(\iota_0)^*\Rightarrow\Pi_1(\iota_1)^*$ with components $[\delta]^*_f=f([\delta])$ for any functor $f: \Pi_1[0,1] \to \G$.

\begin{lemma} \label{lem:pi1interval}For all essentially finite groupoids $\G$ and representations $\rho:\G\to\Vect_\C$  we have
\begin{align*}
&(\G \xleftarrow{\Pi_1(\iota_0)^*} \G^{\Pi_1[0,1]} \xrightarrow{\Pi_1(\iota_1)^*}  \G)\sim_{\mathrm{fib}} (\G\xleftarrow{p_0} \G^I\xrightarrow{p_1} \G),\\
&(\G \xleftarrow{\Pi_1(\iota_0)^*} \G^{\Pi_1[0,1]} \xrightarrow{\Pi_1(\iota_1)^*}  \G, \rho [\delta]^*)\sim_{\mathrm{fib}} (\G\xleftarrow{p_0} \G^I\xrightarrow{p_1} \G,\sigma^\rho).
\end{align*}
\end{lemma}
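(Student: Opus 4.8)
The plan is to deduce both equivalences from a single equivalence of groupoids between the interval groupoid $I$ and the fundamental groupoid $\Pi_1[0,1]$, and then to invoke Lemma \ref{lem:doldgrp} to promote it to an equivalence of spans. Concretely, I would define $J\colon I\to\Pi_1[0,1]$ by $0\mapsto 0$, $1\mapsto 1$ and $d\mapsto [\delta]$, where $[\delta]\colon 0\to 1$ is the homotopy class used to define $[\delta]^*$. Both $I$ and $\Pi_1[0,1]$ are contractible groupoids, with a unique morphism between any two objects, so $J$ is fully faithful and essentially surjective, hence an equivalence. Writing $e_0,e_1\colon\bullet\to I$ for the functors $\bullet\mapsto 0$ and $\bullet\mapsto 1$, the construction gives $J\circ e_i=\Pi_1(\iota_i)$ for $i=0,1$.

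Next I would apply the internal hom $\GRPd(-,\G)$. Since $\Grpd$ is cartesian closed, the contravariant $2$-functor $\GRPd(-,\G)$ sends the equivalence $J$ (and its quasi-inverse data) to an equivalence of groupoids $J^*=\GRPd(J,\id_\G)\colon\G^{\Pi_1[0,1]}\to\G^I$. Under the canonical isomorphism $\G^\bullet\cong\G$ one has $p_i=e_i^*$, so contravariant functoriality of $(-)^*$ together with $J\circ e_i=\Pi_1(\iota_i)$ yields $p_i\circ J^*=(J\circ e_i)^*=\Pi_1(\iota_i)^*$. This identity says precisely that $J^*$ is a functor over $\G\times\G$ from the span $\ell_\G$ of \eqref{eq:def_ell} to the identity span $\G\xleftarrow{p_0}\G^I\xrightarrow{p_1}\G$.

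Both spans are fibrant: $\ell_\G$ by the observation preceding the lemma, and the identity span because $\langle p_0,p_1\rangle$ is a fibration. Hence Lemma \ref{lem:doldgrp} promotes the equivalence of groupoids $J^*$ to an equivalence in $\Grpd_{\G\times\G}$, providing a quasi-inverse $F'$ and natural isomorphisms $\eta,\eta'$ over $\G\times\G$. This is exactly the data witnessing the first relation $\sim_{\mathrm{fib}}$. For the decorated statement it then remains to check the two conditions on natural transformations, with $F=J^*$ and $F'$ as above. The condition $\sigma^\rho J^*=\rho[\delta]^*$ follows by comparing components, since $(\sigma^\rho J^*)_f=\sigma^\rho_{f\circ J}=\rho\big(f([\delta])\big)=(\rho[\delta]^*)_f$ for every object $f$ of $\G^{\Pi_1[0,1]}$. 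The remaining condition $\rho[\delta]^*\,F'=\sigma^\rho$ then follows formally: from $\rho[\delta]^*F'=\sigma^\rho(FF')$ and the naturality of $\sigma^\rho$ with respect to the isomorphism $\eta'\colon FF'\Rightarrow\id_{\G^I}$, whose projections to $\G$ are identities, one gets $\sigma^\rho(FF')=\sigma^\rho$.

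The only step that is not purely formal is the passage from an equivalence of total groupoids to an equivalence over the base $\G\times\G$, which is where fibrancy of both spans is essential and where Lemma \ref{lem:doldgrp} does the work; without it $J^*$ would be merely an equivalence of groupoids and not a morphism invertible in $\Span^{\mathrm{fib}}(\Grpd)$. Once that is in place, the decorated compatibilities reduce to the short component computation above, so I expect no further obstacle.
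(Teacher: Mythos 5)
Your proof is correct, and it differs from the paper's at exactly one step: how the equivalence over the base $\G\times\G$ is produced. The paper also starts from the inclusion $F\colon I=\Pi_1([0,1],\{0,1\})\to \Pi_1[0,1]$ (your $J$) and applies $\GRPd(-,\G)$, but instead of appealing to Lemma \ref{lem:doldgrp} it builds a quasi-inverse by hand: choosing for each $x\in[0,1]$ a path $\gamma_x\colon x\to q_x$ with $q_x\in\{0,1\}$ and $q_0=0$, $q_1=1$, it obtains $F'\colon \Pi_1[0,1]\to I$ with $F'F=\id_I$ and a natural isomorphism $d\colon \id_{\Pi_1[0,1]}\Rightarrow FF'$ whose components at $0$ and $1$ are identities; that endpoint normalisation is precisely what makes the induced isomorphisms between $F^*F'^*$, $F'^*F^*$ and the identities live over $\G\times\G$, so no fibrancy input is needed there. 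Your route replaces this construction by Lemma \ref{lem:doldgrp}, correctly checking its hypotheses ($\ell_\G$ is fibrant by the observation preceding the lemma, and $\langle p_0,p_1\rangle$ is a fibration); this is legitimate and is in fact the same pattern the paper itself uses in step 2 of the proof of Lemma \ref{def:spanrepgrpd}. The price of not having an explicit $F'$ is that the second decoration condition must be verified formally, and your argument does this soundly: from $\sigma^\rho J^*=\rho[\delta]^*$ one gets $\rho[\delta]^*F'=\sigma^\rho(FF')$, and naturality of $\sigma^\rho$ along $\eta'\colon FF'\Rightarrow \id_{\G^I}$, whose projections $p_0\eta'$, $p_1\eta'$ are identities, gives $\sigma^\rho(FF')=\sigma^\rho$ — a useful general observation that, for an equivalence over the base, the condition $\sigma F'=\sigma'$ follows automatically from $\sigma'F=\sigma$. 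In sum: the paper's explicit retraction buys independence from the model-categorical machinery and concrete componentwise formulas, while your version is shorter and isolates the only genuinely needed input, namely fibrancy of the two spans.
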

\begin{proof} We show that these spans are equivalent over $\G \times \G$.
For this, note that $I=\Pi_1([0,1],\{0,1\})$  and that the inclusion $F: I\to \Pi_1[0,1]$ defines a morphism of cospans of groupoids 
$$\xymatrix@R=2pt{ && I \ar[dd]^F \\
              &\bullet\ar[ur]^{\Pi_1(\iota'_0)} 
              \ar[dr]_{\Pi_1(\iota_0)\quad} && \bullet  \ar[ul]_{\Pi_1(\iota'_1)} 
              \ar[dl]^{\quad \Pi_1(\iota_1)} \\
                && \Pi_1[0,1],
              }$$
               where $\iota_i$ and $\iota'_i$ for $i=0,1$ denote the maps $\iota_i:\{\bullet\}\to[0,1]$, $\bullet\mapsto i$ and $\iota'_i: \{\bullet\}\to\{0,1\}$, $\bullet\mapsto i$.
We choose for each  $x\in[0,1]$ a path  $\gamma_x:x\to q_x$,  where $q_x \in \{0,1\}$ such that $q_0=0$ and $q_1=1$. By assigning to $x\in [0,1]$ the point $q_x\in\{0,1\}$  
and to the homotopy class $[\delta]:x\to y$ of a path $\delta$ in $[0,1]$ the homotopy class $[\gamma_y]\circ [\delta]\circ [\gamma_x]^\inv:q_x\to q_y$, we obtain a functor $F': \Pi_1[0,1] \to I$ such that $F' F=\id_I$. 
We also obtain a natural isomorphism $g: \id_{\Pi_1[0,1]} \Rightarrow F F'$ with components $g_x=[\gamma_x]: x\to q_x$, such that $g_0=1_0$ and $g_1=1_1$. 
This yields the following equivalence of groupoid spans over $\G \times \G $ 
$$\xymatrix{ && \G^I \ar@/_1pc/[dd]_{F'^*} \ar[ld]_{\Pi_1(\iota'_0)^*=p_0\quad} \ar[rd]^{\quad p_1=\Pi_1(\iota'_1)^*}  
\ar@{<-}@/^1pc/[dd]^{F^*}\\
              &\G  \cong \G^\bullet  &&\G^\bullet \cong \G.    \\
                && \G^{\Pi_1[0,1]}. \ar[ul]^{\Pi_1(\iota_0)^*} \ar[ur]_{\Pi_1(\iota_1)^*}
              } $$
     The unique isomorphism $d:0\to 1$ in  $I=\Pi_1([0,1],\{0,1\})$ is the homotopy class $d=[\delta]$ of any path $\delta:0\to 1$ in $[0,1]$.
            The natural transformation $\sigma^\rho:\rho p_0\Rightarrow \rho p_1$ for a representation $\rho:\G\to \Vect_\C$  is given by $\sigma^\rho=\rho [\delta]^*: \rho\Pi_1(\iota'_0)^*\Rightarrow\rho\Pi_1(\iota'_1)^*$.             With the definition of $F$ and $F'$ this yields 
            $$\sigma^\rho F^*=\rho [\delta]^*: \rho \Pi_1(\iota_0)^*\Rightarrow\rho\Pi_1(\iota_1)^*\qquad\qquad  \rho [\delta]^*F'^*=\sigma^\rho:\rho p_0\Rightarrow\rho p_1.$$
       \end{proof}

\subsection{The quantum Quinn construction}
\label{subsec:quantumquinn}

In this section we construct a symmetric monoidal functor $L: \mathrm{span}^{\mathrm{fib}}(\repGrpd)\to \Vect_\C$.  This functor will play an important role  in the construction of defect TQFTs. As we will show in the following, it
generalises a particular case of the  `degroupoidification' functor in \cite{BHW}. It 
can also be seen as  a natural generalisation of the groupoid-case of Quinn's finite total homotopy TQFT  \cite[Lecture 4]{Q}, see also \cite[Chapter 8]{G-CKT}, where representations of groupoids are incorporated. This motivates  the title of this subsection.

It is also related to the  parallel section functor $\mathrm{Par}: \mathrm{VecBun}_K\mathrm{Grpd}\to\mathrm{Vect}_K$ of Schweigert and Woike \cite[Theorem 3.17]{SW1}, where the category $\mathrm{VecBun}_K\mathrm{Grpd}$ from \cite[Def 3.7]{SW1} is the counterpart of  our category $\mathrm{span}^{\mathrm{fib}}(\repGrpd)$.  The work \cite{SW1} in turn uses a more general functor  constructed by Trova in \cite[Theorem 5.1]{T} and an older construction of such a parallel section functor by Willerton \cite{W}. {The articles \cite{SW1,T}  build on the work
\cite{FHLT} by Freed, Hopkins Lurie and Teleman, the work \cite{Mo} by Morton and the work \cite{Ha} by Haugseng on iterated spans in $(\infty,n)$-categories.}

Besides the fact that these references consider \emph{twisted} Dijkgraaf-Witten theory while we restrict attention to the \emph{untwisted} case, they consider categories built from general spans of groupoids, where the composition of  spans is defined using homotopy pullbacks \cite{BHW}. When using general spans, the  latter is required as otherwise the pullback groupoid obtained when composing spans would not necessarily be invariant under the equivalence of spans in Lemma~\ref{def:spangrpd}. As we take pullbacks along fibrations, we already obtain  that the pullbacks of spans are invariant under such fibred equivalence of spans, up to fibred homotopy. Therefore, our definition of composition of spans retain the required homotopy-theoretical information. For a  general discussion on why pullbacks along fibrations have the correct homotopy type see \cite[\S13.3.1.]{Hirschhorn}.

One advantage of our construction of $\mathrm{span}^{\mathrm{fib}}(\repGrpd)$ is that it 
gives a  concrete and explicit description. 
Moreover, all spans arising in our description of Dijkgraaf-Witten theory are fibrant, and fibrancy has a physical interpretation, namely that gauge transformations extend from the boundary to the bulk.

Our construction requires  some known results on groupoid fibrations by Brown \cite[\S 7.2.1 and  \S 7.2.9]{Br} that we summarise in the following proposition. 
Recall from Section \ref{sec:groupoidbasic} the notion of a fibration of groupoids. The \textbf{fibre} of a functor $F: \A \to \B$ between groupoids at   $B\in\Ob\B$ is the groupoid $\Fib(F,B)$ with 
\begin{align*}
&\Ob\, \Fib(F,B)=\{A\in\Ob\A\mid F(A)=B\}\qquad 
\hom_{\Fib(F,B)}(A,A')=\{f\in \hom_\A(A,A')\mid F(f)=1_B\}.
\end{align*}

For a groupoid $\G$ and  $G\in \Ob\G$ we set $\mathrm{Aut}_\G(G) = \hom_{\mathcal G}(G,G)$. We
write $[G]_\G=\{G'\in\Ob\G\mid G'\cong G\}\subset \Ob \G$ for the isomorphism class of $G$, also referred to as its path-component, and 
$\pi_0(\G)$ for its set of path-components. 

\begin{proposition}  \label{prop:omnibus} \cite[\S 7.2.1 and \S 7.2.9]{Br} Let $p: \A \to \B$ be a fibration of groupoids. 
\begin{compactenum}

\item\label{it2} For any  $A\in \Ob \A$ with $B=p(A)$ there is an exact sequence of groups
$$\{1\}\to \Aut_{\Fib(p,B)}(A)\xrightarrow{\iota} \Aut_{\A}(A) \xrightarrow{p_*} \Aut_{\B}(B),$$
where $\iota$ is induced by the inclusion $ \Fib(p,B) \hookrightarrow\A$, and $p_*$ by the fibration $p: \A \to \B$.

\item\label{actfib} For any  $B\in\Ob\B$ there is a left action $\rhd: \Aut_\B(B)\times \pi_0(\Fib(p,B))\to \pi_0 (\Fib(p,B))$ 
given by
$$
g\rhd [A]_{\Fib(p,B)}=[A_g]_{\Fib(p,B)}
$$
for any  lift $\hat{g}: A_g \to A$ of $g$. 
Furthermore, one has for all $A\in \Fib(p,B)$:
\begin{compactenum}
\item \label{triv}   The action of $p_* ( \Aut_{\A}(A))$ on $\left [ A \right ]_{\Fib(p,B)} $ is trivial.
\item \label{stab}The stabiliser of $\left [ A \right ]_{\Fib(p,B)} $  is $p_*(\Aut_\A(A))$,
\item \label{it2v} The orbit of $\left [ A \right ]_{\Fib(p,B)} $  
is 
$$\Aut_\B(B)\rhd[A]_{\Fib(p,B)}= \big\{ [A']_{\Fib(p,B)}\mid  A' \in [A]_{\A}, p(A)=B \big \},$$
and for any set   of representatives $\mathcal{R}(\B)$ of the path-components of $\B$,  there is a bijection
$$\amalg_{ B \in \mathcal{R}(\B)}\; \pi_0(\Fib(p,B))/ \Aut_{\B}(B)  \to \pi_0(\A),\quad {\Aut_\B(B)\rhd [A]_{\Fib(p,B)}\mapsto [A]_\A}.$$
\end{compactenum}

\item \label{essfin}If $\A$ and $\B$ are essentially finite, then so is $\Fib(p,B)$ for each $B \in \Ob\B$.
\item \label{it4} The cardinalities of the automorphism groups are related by
$$|\Aut_\A(A)|=\frac{|\Aut_{\Fib(p,B)}(A)| \cdot |\Aut_\B(B)|}{ |\Aut_{\B}(B)\rhd [ A  ]_{\Fib(p,B)}|}. $$
\item\label{it5} If $A,A'\in \Ob \Fib(p,B)$ are connected by a morphism in $\A$, then  $\Aut_{\Fib(p,B)}(A)\cong \Aut_{\Fib(p,B)}(A')$.
\end{compactenum}
\end{proposition}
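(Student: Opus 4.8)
The plan is to derive the entire proposition from the single lifting property of the fibration $p$: every morphism of $\B$ with codomain $p(A)$ admits a lift along $p$ with codomain $A$. Restricting $p$ to automorphism groups, the statement becomes the groupoid version of the exact fibre sequence together with the orbit--stabiliser theorem, so the real work is bookkeeping about lifts. For the exact sequence I would first note that $\iota$ is injective because a morphism of $\Fib(p,B)$ is by definition a morphism of $\A$, and that $p_*$ is a homomorphism because $p$ is a functor. Exactness in the middle is then immediate from the definition of the fibre: $\ker p_*=\{f\in\Aut_\A(A)\mid p(f)=1_B\}$ is precisely the image of $\iota$, namely $\Aut_{\Fib(p,B)}(A)$. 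No lifting is needed here.

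The substance is the action. Given $g\in\Aut_\B(B)$ and $A\in\Ob\Fib(p,B)$, I choose a lift $\hat g\colon A_g\to A$ with $p(\hat g)=g$; since the source of $g$ is $B=p(A_g)$, the object $A_g$ lies in $\Fib(p,B)$, so $[A_g]_{\Fib(p,B)}$ makes sense. I would then verify well-definedness in two steps. If $\hat g\colon A_g\to A$ and $\hat g'\colon A'_g\to A$ are two lifts with the same target, then $(\hat g')^{-1}\hat g\colon A_g\to A'_g$ maps to $1_B$ under $p$, hence is a morphism of $\Fib(p,B)$ and $[A_g]=[A'_g]$; and if $k\colon A\to A''$ is a morphism of the fibre, then $k\hat g$ is a lift of $g$ with target $A''$, so the value does not depend on the representative of $[A]_{\Fib(p,B)}$. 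The group-action axioms then follow by composing lifts and invoking this independence, where one must track the composition convention to obtain a left rather than a right action. I expect this verification of well-definedness to be the main obstacle, in the sense that it is the only place where genuine care is required; everything else is formal. Triviality of the action of $p_*(\Aut_\A(A))$ follows since any $f\in\Aut_\A(A)$ is itself a lift of $p(f)$ with $A_{p(f)}=A$, giving $p(f)\rhd[A]=[A]$. Conversely, if $g$ stabilises $[A]$, a lift $\hat g\colon A_g\to A$ together with a fibre morphism $k\colon A_g\to A$ produces $\hat g\,k^{-1}\in\Aut_\A(A)$ with $p_*(\hat g\,k^{-1})=g$, so the stabiliser is exactly $p_*(\Aut_\A(A))$.

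For the orbit and the global bijection I would argue as follows. The orbit of $[A]$ consists of the classes $[A']$ of fibre objects $A'$ that are $\A$-isomorphic to $A$: one inclusion is clear because a lift $\hat g$ is an $\A$-isomorphism $A_g\to A$, and for the other, an $\A$-isomorphism $\psi\colon A'\to A$ with $p(A')=B$ is a lift of $p(\psi)\in\Aut_\B(B)$. The assignment $\Aut_\B(B)\rhd[A]\mapsto[A]_\A$ is then well-defined since a whole orbit lies in a single $\A$-component; it is surjective because any object $X$ can be pushed into $\Fib(p,B)$ by lifting an isomorphism $B\cong p(X)$ for the representative $B\in\mathcal R(\B)$; and it is injective because two representatives in $\mathcal R(\B)$ with isomorphic $\A$-components must coincide, after which the orbit description identifies the two fibre classes. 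Essential finiteness of $\Fib(p,B)$ is then a corollary: its automorphism groups are subgroups of the finite groups $\Aut_\A(A)$ by the exact sequence, and $\pi_0(\Fib(p,B))$ is a disjoint union, indexed by the finitely many $\A$-components meeting the fibre, of finite orbits, hence finite.

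Finally, the cardinality formula combines the exact sequence, which gives $|\Aut_\A(A)|=|\Aut_{\Fib(p,B)}(A)|\cdot|p_*(\Aut_\A(A))|$, with the orbit--stabiliser theorem and the stabiliser computation, which give $|p_*(\Aut_\A(A))|=|\Aut_\B(B)|/|\Aut_\B(B)\rhd[A]|$. The remaining isomorphism is immediate: if $\psi\colon A\to A'$ connects two fibre objects in $\A$, then conjugation $f\mapsto\psi f\psi^{-1}$ carries $\Aut_\A(A)$ to $\Aut_\A(A')$ and restricts to an isomorphism $\Aut_{\Fib(p,B)}(A)\to\Aut_{\Fib(p,B)}(A')$, since $p(\psi f\psi^{-1})=p(\psi)\,1_B\,p(\psi)^{-1}=1_B$ whenever $p(f)=1_B$.
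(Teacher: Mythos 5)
Your proof is correct and follows essentially the same route as the paper: the paper cites Brown for the exact sequence and for claims 2(a)--(b) as "elementary," and its own arguments for 2(c), essential finiteness, the counting formula and the conjugation isomorphism coincide with yours (your finiteness argument via finitely many $\A$-components meeting the fibre, each contributing one finite $\Aut_\B(B)$-orbit, is a slightly streamlined version of the paper's argument with the subgroupoid $E_B$), while you additionally supply the lifting verifications the paper delegates to \cite[\S 7.2.1, \S 7.2.9]{Br}. Your flag about the composition convention is apt: with the literal convention $g\rhd[A]=[A_g]$ for a lift $\hat g\colon A_g\to A$ of $g$, composing lifts gives $(gg')\rhd[A]=g'\rhd\big(g\rhd[A]\big)$, i.e.\ a right action, so to get a genuine left action one should define $g\rhd[A]$ via a lift of $g^{-1}$ with target $A$ (equivalently, a lift of $g$ with source $A$) — a harmless adjustment that affects none of the subsequent claims.
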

\begin{proof}
1.~Claim \ref{it2}.~and claims 2.(a) and 2.(b)  are in   \cite[\S 7.2.1 and  7.2.9]{Br} and can be proven by elementary methods. For claim 2.(c), note that if $A, A' \in \Ob \Fib(p,B)$  {are} in the same path-component in $\A$, then there is a   morphism $f : A' \to A$ in $\A$, and clearly, $p(f) \rhd  [ A  ]_{\Fib(p,B)} =  [ A'  ]_{\Fib(p,B)}$. 

For  claim \ref{essfin}.~we only need to prove that $\Fib(p,B)$ has a finite number of path-components for all $B\in\Ob\B$. Let $E_B$ be the full {subgroupoid} of $\A$ with elements $e \in \Ob\A$ such that $p(e)\in [B]_\B$ {as objects}. If $A,A'\in \Ob E_B$ are in the same {path-component of} $\A$, then  they are in the same path-component {of} $E_B$. In particular, $E_B$ has only a finite number of path-components. Choose  representatives $A_1,\dots,A_n$ of the path-components of  $E_B$.  As $p: \A \to \B$ is a fibration, we can assume that $p(A_1)=\dots= p(A_n)=B$. It follows from 2.(c)  that the union of the orbits of $[A_1]_{\Fib(p,B)},\dots, [A_n]_{\Fib(p,B)}$
is $\pi_0(\Fib(p,B))$. As $\Aut_\B(B)$ is finite, this implies that $\Fib(p,B)$ is finite.

Claim \ref{it4}.~follows from the exact sequence in claim \ref{it2}.~together with the orbit-stabiliser theorem, by applying claim 2.(b). Claim \ref{it5}.~holds, because any morphism $f: A \to A'$ in $\A$ induces an isomorphism
$$\phi_f: \Aut_{\Fib(p,B)}(A)\to \Aut_{\Fib(p,B)}(A'),\; 
h \mapsto f h f^{-1}.$$ 

\end{proof}

We will now assign to every object $\rho:\G\to \Vect_\C$ of the symmetric monoidal category $\mathrm{span}^{\mathrm{fib}}(\repGrpd)$ 
a vector space, namely the limit $\lim \rho$, and to each morphism a linear map. To construct the latter, recall
 that the limit of a functor $\rho: \mathcal G\to\Vect_\C$ from a groupoid $\mathcal G$ is given by
\begin{align}
\label{eq:limexp}
\lim \rho=\bigoplus_{G\in R(\mathcal G)}\;\rho(G)^{\mathrm{Aut}(G)}\qquad  \rho(G)^\mathrm{Aut(G)}=\{x\in \rho(G)\mid \rho(g)x=x\,\forall g\in \mathrm{Aut}(G)\},
\end{align}
where $ R(\mathcal G)$ is a  set of representatives of the isomorphism classes of objects in $\mathcal G$. 
The inclusions and projections for the direct sums in \eqref{eq:limexp}
are denoted 
\begin{align}\label{eq:projinclids}
\iota_{G}:  \rho(G)^{\mathrm{Aut}(G)}\to \lim\rho \qquad \textrm{ and } \qquad \pi_{G}: \lim \rho\to \rho(G)^{\mathrm{Aut}(G)}. 
\end{align}
In this formulation the component morphisms $c_{G'}:\lim\rho\to \rho(G')$ of the limit cone are given by $c_{G'}\circ \iota_G=\rho(f)\vert_{\rho(G)^{\mathrm{Aut}(G)}}$ for any morphism $f: G\to G'$ with $G\in \mathcal R(\G)$ and $G'\in\Ob\G$.

Consider now a  pair $\big(\mathcal G_1\xleftarrow{P_1} \mathcal G\xrightarrow{P_2} \mathcal G_2,\sigma\big)\colon (\G_1,\rho_1) \to (\G_2,\rho_2) $ of a fibrant span $\mathcal G_1\xleftarrow{P_1} \mathcal G\xrightarrow{P_2} \mathcal G_2$ and a natural transformation $\sigma: \rho_1P_1\Rightarrow\rho_2P_2$
as in Definition \ref{def:spanrepgrpd}. Then for any pair of objects $G_1\in \Ob\G_1$ and $G_2\in \Ob\G_2$  and  any object $H\in\Ob \Fib\big ( \langle P_1,P_2 \rangle , (G_1,G_2)\big)$ 
we have a linear map 
$\sigma_H: \rho_1(G_1)\to \rho_2(G_2)$.

\begin{lemma}\label{lem:ind path comp} For any $H\in \G$ with $P_1(H)=G_1$ and $P_2(H)=G_2$ the linear map $\sigma_H: \rho_1(G_1) \to \rho_2(G_2)$ depends only on the path-component of $H$ in the  groupoid   $\Fib\big ( \langle P_1,P_2 \rangle , (G_1,G_2)\big)$.    
\end{lemma}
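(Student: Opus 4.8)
The plan is to derive the statement directly from the naturality of $\sigma$, exploiting the defining property that morphisms of the fibre groupoid $\Fib(\langle P_1,P_2\rangle,(G_1,G_2))$ are precisely those morphisms of $\G$ that $\langle P_1,P_2\rangle$ sends to the identity.

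First I would take two objects $H,H'$ of $\G$ lying in the same path-component of $\Fib(\langle P_1,P_2\rangle,(G_1,G_2))$. By the definition of path-component there is a morphism $\phi: H\to H'$ in this fibre groupoid, and by the definition of the fibre this means that $\phi$ is a morphism of $\G$ with $\langle P_1,P_2\rangle(\phi)=1_{(G_1,G_2)}$, equivalently $P_1(\phi)=1_{G_1}$ and $P_2(\phi)=1_{G_2}$. Since every two objects of a fixed path-component are connected by such a morphism, it suffices to treat this case.

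Next I would apply the naturality of $\sigma:\rho_1P_1\Rightarrow\rho_2P_2$ to $\phi$, which yields the equality $\rho_2P_2(\phi)\circ\sigma_H=\sigma_{H'}\circ\rho_1P_1(\phi)$. Because $P_1(\phi)$ and $P_2(\phi)$ are identity morphisms and $\rho_1,\rho_2$ are functors, functoriality gives $\rho_1P_1(\phi)=\id_{\rho_1(G_1)}$ and $\rho_2P_2(\phi)=\id_{\rho_2(G_2)}$. Substituting these into the naturality square collapses it to $\sigma_H=\sigma_{H'}$, which is exactly the asserted independence of $\sigma_H$ from the representative $H$ within its path-component.

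I do not expect any serious obstacle here, since the argument is a one-line application of naturality once the fibre condition $P_i(\phi)=1_{G_i}$ is recognised. The only points requiring care are the bookkeeping inside the composite functor $\langle P_1,P_2\rangle$ and the orientation of the naturality square, so that one correctly identifies which of the two vertical arrows becomes $\id_{\rho_1(G_1)}$ and which becomes $\id_{\rho_2(G_2)}$.
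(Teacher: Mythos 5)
Your proof is correct and is essentially the paper's own argument: both take a morphism $f\colon H\to H'$ in the fibre $\Fib\big(\langle P_1,P_2\rangle,(G_1,G_2)\big)$, apply the naturality square of $\sigma$, and observe that the vertical maps $\rho_1P_1(f)$ and $\rho_2P_2(f)$ are identities because $P_i(f)=1_{G_i}$, forcing $\sigma_H=\sigma_{H'}$. No gaps; your remark that a single morphism suffices to connect objects in the same path-component of a groupoid is exactly the implicit reduction the paper makes.
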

\begin{proof}
By  naturality of $\sigma_H$ the following diagram commutes for any morphism $f: H \to H'$ in $\Fib\big ( \langle P_1,P_2 \rangle , (G_1,G_2)\big)$ 
\begin{align}\label{eq:diagliftt}
\begin{gathered}\xymatrix{&&(\rho_1 P_1)(H) \ar[rr]^{\sigma_H} \ar[dd]_{\rho_1P_1(f)} &&(\rho_2 P_2)(H) \ar[dd]^{\rho_2P_2(f)}\\& \rho_1(G_1) \ar[ur]^{\id} \ar[dr]_{\id} &&&& \rho_2(G_2) \ar[ul]_{\id} \ar[dl]^{\id}\\
&&(\rho_1 P_1)(H') \ar[rr]_{\sigma_{H'}}  &&(\rho_2 P_2)(H').
}\end{gathered}
\end{align}
As $f: H \to H'$ is a morphism in the fibre, the  vertical arrows  are identities, which yields 
$\sigma_H=\sigma_{H'}$. 
\end{proof}

Let
$\mathcal G_1\xleftarrow{P_1} \mathcal \H\xrightarrow{P_2} \mathcal G_2$ be a fibrant span of essentially finite groupoids. As $\langle P_1, P_2\rangle \colon \H \to \G_1 \times \G_2$ is a fibration and all groupoids are essentially finite,  by Proposition \ref{prop:omnibus}, \ref{essfin}.~the fibre $$\{G_1|\H|G_2\}:=\Fib\big ( \langle P_1,P_2 \rangle , (G_1,G_2)\big)$$ is essentially finite  for all  $G_1 \in \Ob\G_1$ and $G_2\in \Ob\G_2$. Hence, each set $\mathcal{R}(\H,G_1,G_2)$  of representatives of its path-components is finite, and all automorphism groups $\Aut_{\{G_1|\H|G_2\}} (H)$   are finite.

\begin{lemma}\label{lem:smap}
Let 
$G_1 \in \Ob \G_1$ and $G_2\in\Ob\G_2$. The following  linear map  is independent of
the choice of  the set  $\mathcal{R}(\H,G_1,G_2)$ of representatives of  path-components of $\{G_1 | \H|G_2\}$ and takes values in  $\rho_2(G_2)^{\Aut_{\G_2}(G_2)}$
\begin{equation}\label{eq:def Ssigma} S_\sigma^{G_1,G_2}:= \frac{1}{|\Aut_{\G_2}(G_2)|}\sum_{H \in \mathcal{R}(\H,G_1,G_2)} \frac{1}{|\Aut_{\{G_1|\H|G_2\}} (H)|} \; \sigma_H: \rho_1(G_1)\to \rho_2(G_2).
\end{equation}
\end{lemma}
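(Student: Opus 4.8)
The plan is to establish the two assertions of the lemma separately: first that each summand in \eqref{eq:def Ssigma} depends only on the path-component of $H$ in the fibre $\{G_1|\H|G_2\}$, so that $S_\sigma^{G_1,G_2}$ does not depend on the chosen set of representatives, and then that its image lies in the $\Aut_{\G_2}(G_2)$-invariants.

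For the independence from the choice of $\mathcal{R}(\H,G_1,G_2)$, I would combine two inputs. Lemma \ref{lem:ind path comp} already shows that the linear map $\sigma_H$ depends only on the path-component of $H$ in $\{G_1|\H|G_2\}$, so it remains to treat the scalar weight $1/|\Aut_{\{G_1|\H|G_2\}}(H)|$. If $H$ and $H'$ lie in the same path-component of the fibre, they are in particular connected by a morphism in $\H$, so Proposition \ref{prop:omnibus}.\ref{it5} supplies an isomorphism $\Aut_{\{G_1|\H|G_2\}}(H)\cong\Aut_{\{G_1|\H|G_2\}}(H')$ and hence an equality of orders. Thus the entire summand $\tfrac{1}{|\Aut_{\{G_1|\H|G_2\}}(H)|}\sigma_H$ is constant on path-components, and summing over any set of representatives gives the same value. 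This simultaneously allows me to rewrite $S_\sigma^{G_1,G_2}$ as a sum indexed by $\pi_0(\{G_1|\H|G_2\})$, which is finite by Proposition \ref{prop:omnibus}.\ref{essfin}.

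For the invariance, I would prove that $\rho_2(g)\circ S_\sigma^{G_1,G_2}=S_\sigma^{G_1,G_2}$ for every $g\in\Aut_{\G_2}(G_2)$, which is precisely the assertion that the image lies in $\rho_2(G_2)^{\Aut_{\G_2}(G_2)}$. The key input is the left action of $\Aut_{\G_1\times\G_2}((G_1,G_2))=\Aut_{\G_1}(G_1)\times\Aut_{\G_2}(G_2)$ on $\pi_0(\{G_1|\H|G_2\})$ from Proposition \ref{prop:omnibus}.\ref{actfib}. For fixed $g$ and each $H$, I choose a lift $\psi:\tilde H\to H$ of $(\id_{G_1},g^{\inv})$ along the fibration $\langle P_1,P_2\rangle$, so that $[\tilde H]=(\id_{G_1},g^{\inv})\rhd[H]$, with $P_1(\psi)=\id_{G_1}$ and $P_2(\psi)=g^{\inv}$. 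Naturality of $\sigma$ applied to $\psi$ then yields $\sigma_H=\rho_2(g^{\inv})\circ\sigma_{\tilde H}$, that is, $\rho_2(g)\circ\sigma_H=\sigma_{\tilde H}$.

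Finally I would reindex. Since $[H]\mapsto(\id_{G_1},g^{\inv})\rhd[H]$ is a bijection of $\pi_0(\{G_1|\H|G_2\})$, and since $H$ and $\tilde H$ are connected by $\psi$ in $\H$ so that their fibre automorphism groups have equal order by Proposition \ref{prop:omnibus}.\ref{it5}, applying $\rho_2(g)$ to the path-component expression for $S_\sigma^{G_1,G_2}$ merely permutes the weight-preserving summands, giving $\rho_2(g)\circ S_\sigma^{G_1,G_2}=S_\sigma^{G_1,G_2}$. The step I expect to be the main obstacle is precisely this bookkeeping around the action: one must verify that the fibre weights are genuinely preserved under the permutation and keep the conventions for the left action and its chosen lifts consistent, so that the reindexed sum is literally the original one and the normalisation $1/|\Aut_{\G_2}(G_2)|$ enters only as an inert prefactor.
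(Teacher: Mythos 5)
Your proof is correct and takes essentially the same route as the paper's: both establish that the summand $\tfrac{1}{|\Aut_{\{G_1|\H|G_2\}}(H)|}\sigma_H$ is constant on path-components (via Lemma \ref{lem:ind path comp} and Proposition \ref{prop:omnibus}.\ref{it5}), and both prove the $\Aut_{\G_2}(G_2)$-invariance by lifting the automorphism along the fibration $\langle P_1,P_2\rangle$, applying naturality of $\sigma$ to get $\rho_2(g)\circ\sigma_H=\sigma_{\tilde H}$, and reindexing via the $\pi_0$-action of Proposition \ref{prop:omnibus}.\ref{actfib}. The only, immaterial, difference is that you lift $(\id_{G_1},g^{-1})$ to a morphism $\tilde H\to H$ where the paper lifts $(\id_{G_1},g)$ to $g_H\colon H\to H'_g$; since these lifts are mutually inverse, the two reindexings coincide.
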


\begin{proof}
The first statement follows from Lemma \ref{lem:ind path comp}.

For the second choose $g \in \Aut_{\G_2}(G_2)$ and,
 for each $H \in \mathcal{R}(\H,G_1,G_2)$, a morphism  $g_H: H\to H'_g$ in $\H$ such that $P_1(g_H)=1_{G_1}$ and $P_2(g_H)=g$. This is possible 
because $\langle P_1, P_2\rangle: \H \to \G_1 \times \G_2$ is a fibration.

By applying Proposition \ref{prop:omnibus}, \ref{actfib}.~to the
$\Aut_{\G_1 \times \G_2}(G_1,G_2)$-action on $\pi_0(\{G_1|\H|G_2\})$ one finds
that the objects $H'_g$ for  $H \in \mathcal{R}(\H,G_1,G_2)$ form another set of representatives for the path-components of the fibre $\{G_1|\H|G_2\}$. 
We then consider the commuting diagram \eqref{eq:diagliftt} for $H'=H'_g$ and $f=g_H: H\to H'_g$. In this case, the vertical arrow on the left is given by the identity $\rho_1P_1(g_H)=1_{G_1}$ and the vertical arrow on the right by $\rho_2P_2(g_H)=\rho_2(g)$. 
So the linear map $\rho_2(g)\circ  S_\sigma^{G_1,G_2}: \rho_1(G_1) \to \rho_2(G_2)$ can be expressed  as

\begin{align*}
\rho_2(g) \circ  S_\sigma ^{G_1,G_2} &= \frac{1}{|\Aut_{\G_2}(G_2)|}\sum_{H \in \mathcal{R}(\H,G_1,G_2)} \frac{1}{|\Aut_{\{G_1|\H|G_2\}}(H)|}\; \rho_2(g) \circ \sigma_H\\
 & =  \frac{1}{|\Aut_{\G_2}(G_2)|} \sum_{H \in \mathcal{R}(\H,G_1,G_2)} \frac{1}{|\Aut_{\{G_1|\H|G_2\}}(H)|} \;\sigma_{H'_g}\\
  & =  \frac{1}{|\Aut_{\G_2}(G_2)|}\sum_{H \in \mathcal{R}(\H,G_1,G_2)} \frac{1}{|\Aut_{\{G_1|\H|G_2\}}(H'_g)|} \;\sigma_{H'_g}=  S_\sigma ^{G_1,G_2},
\end{align*}
where we used Proposition \ref{prop:omnibus}, \ref{it5}.~in the last step.
\end{proof}

\begin{proposition} \label{prop:lfunctor}
There is a symmetric monoidal functor $L: \mathrm{span}^{\rm fib}(\repGrpd)\to \mathrm{Vect}_\C$ that assigns:
\begin{compactitem}
\item to an object $(\mathcal G,\rho)$ the vector space $\lim\rho$;
\item to a morphism  $[(\mathcal G_1\xleftarrow{P_1}\mathcal H\xrightarrow{P_2} \mathcal G_2, \sigma)]: (\mathcal G_1,\rho_1)\to (\mathcal G_2,\rho_2)$ the linear map 
\begin{align}\label{eq:ldef}
\mathcal{L}(\sigma)&=\sum_{\substack{
G_1 \in \mathcal{R}(\G_1) \\ 
G_2 \in \mathcal{R}(\G_2)} } \iota_{G_2} \circ  S_\sigma^{G_1,G_2} \circ \pi_{G_1}: \lim \rho_1 \to \lim \rho_2.
\end{align}
\end{compactitem}
\end{proposition}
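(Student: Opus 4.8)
The plan is to verify, in order, that $\mathcal{L}(\sigma)$ is a well-defined linear map $\lim\rho_1\to\lim\rho_2$ that descends to the equivalence relation $\sim_\fib$, that $L$ preserves identities and composition, and finally that it is symmetric monoidal. By Lemma \ref{lem:smap} each $S_\sigma^{G_1,G_2}$ is independent of the representatives $\mathcal{R}(\H,G_1,G_2)$ and takes values in $\rho_2(G_2)^{\Aut_{\G_2}(G_2)}$, so that $\iota_{G_2}\circ S_\sigma^{G_1,G_2}\circ\pi_{G_1}$ makes sense with the inclusions and projections of \eqref{eq:projinclids}; fixing the representative sets $\mathcal{R}(\G_1),\mathcal{R}(\G_2)$ once and for all (they define $\lim\rho_i$ via \eqref{eq:limexp}) then makes \eqref{eq:ldef} unambiguous. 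To see that $\mathcal{L}$ descends to $\sim_\fib$, I would note that an equivalence $F:\G\to\G'$ over $\G_1\times\G_2$ with $\sigma'F=\sigma$ restricts to equivalences of the essentially finite fibres $\{G_1|\H|G_2\}\to\{G_1|\H'|G_2\}$ and satisfies $\sigma'_{F(H)}=\sigma_H$; since an equivalence of essentially finite groupoids gives a bijection of path-components preserving automorphism-group cardinalities, the weighted sums defining $S_\sigma^{G_1,G_2}$ and $S_{\sigma'}^{G_1,G_2}$ coincide, whence $\mathcal{L}(\sigma)=\mathcal{L}(\sigma')$.

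For identities, I would evaluate $\mathcal{L}$ on $(\G\xleftarrow{p_0}\G^I\xrightarrow{p_1}\G,\sigma^\rho)$. The fibre $\{G|\G^I|G'\}$ is discrete, with objects the morphisms $f\colon G\to G'$ and only identity automorphisms (a morphism of $\G^I$ projecting to $1_G,1_{G'}$ is a pair of identities), hence empty unless $G\cong G'$; for distinct representatives this forces $G=G'$ and $f\in\Aut_\G(G)$. Thus $S_{\sigma^\rho}^{G,G}=\tfrac1{|\Aut_\G(G)|}\sum_{f\in\Aut_\G(G)}\rho(f)$ is the averaging projector onto $\rho(G)^{\Aut_\G(G)}$, which acts as the identity after precomposition with $\pi_G$, so that $\sum_{G\in\mathcal{R}(\G)}\iota_G\circ S^{G,G}\circ\pi_G=\id_{\lim\rho}$. (Alternatively one may use the representative $\ell_\G$ of Lemma \ref{lem:pi1interval}.)

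The main step is composition. Given composable $(\mathcal G_1\xleftarrow{P_1}\H\xrightarrow{P_2}\mathcal G_2,\sigma)$ and $(\mathcal G_2\xleftarrow{Q_1}\K\xrightarrow{Q_2}\mathcal G_3,\tau)$, the relations $\pi_{G_2'}\iota_{G_2}=\delta_{G_2 G_2'}\,\id$ collapse $\mathcal{L}(\tau)\circ\mathcal{L}(\sigma)$ to $\sum_{G_1,G_3}\iota_{G_3}\big(\sum_{G_2\in\mathcal{R}(\G_2)}S_\tau^{G_2,G_3}\circ S_\sigma^{G_1,G_2}\big)\pi_{G_1}$, so everything reduces to the identity
\[
S^{G_1,G_3}_{(\tau T_2)\circ(\sigma T_1)} = \sum_{G_2\in\mathcal{R}(\G_2)} S_\tau^{G_2,G_3}\circ S_\sigma^{G_1,G_2}.
\]
The key observation will be that the functor $\Phi\colon\{G_1|\H\times_{\G_2}\K|G_3\}\to\G_2$, $(h,k)\mapsto P_2(h)=Q_1(k)$, is a fibration (by fibrancy of both spans one lifts morphisms of $\G_2$ to both $\H$ and $\K$), whose fibre over $G_2$ is exactly the product $\{G_1|\H|G_2\}\times\{G_2|\K|G_3\}$. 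Proposition \ref{prop:omnibus}, \ref{it2v}.~then supplies the bijection $\coprod_{G_2\in\mathcal{R}(\G_2)}\pi_0(\Fib(\Phi,G_2))/\Aut_{\G_2}(G_2)\cong\pi_0(\{G_1|\H\times_{\G_2}\K|G_3\})$, and Proposition \ref{prop:omnibus}, \ref{it4}.~relates $|\Aut(h,k)|$ in the pullback fibre to $|\Aut(h)|\,|\Aut(k)|$, the orbit size and $|\Aut_{\G_2}(G_2)|$. The remaining subtlety, and the main obstacle, is that the weighting must be compatible with the \emph{linear data}: a lift of $g\in\Aut_{\G_2}(G_2)$ yields $\alpha,\beta$ with $P_2(\alpha)=Q_1(\beta)=g$, $P_1(\alpha)=1_{G_1}$, $Q_2(\beta)=1_{G_3}$, so naturality of $\sigma$ and $\tau$ gives $\sigma_{h_g}=\rho_2(g)^{-1}\sigma_h$ and $\tau_{k_g}=\tau_k\,\rho_2(g)$, whence $\tau_{k_g}\sigma_{h_g}=\tau_k\sigma_h$; thus $\tau_k\sigma_h$ is constant along each $\Aut_{\G_2}(G_2)$-orbit. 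Combined with Proposition \ref{prop:omnibus}, \ref{it5}.~(which makes $|\Aut(h)|\,|\Aut(k)|$ constant along an orbit), the weighted orbit sum on the left collapses exactly to the $\tfrac1{|\Aut_{\G_2}(G_2)|}$-weighted product sum on the right. This groupoid-cardinality bookkeeping is where the real work lies.

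Finally, for symmetric monoidality I would use that the external tensor product satisfies $\lim(\rho_1\boxtimes\rho_2)\cong\lim\rho_1\otimes\lim\rho_2$, because invariants under a product group factor as a tensor product and the limit over $\G_1\times\G_2$ splits over $\mathcal{R}(\G_1)\times\mathcal{R}(\G_2)$; on morphisms the relevant fibres and automorphism groups are products, so $S_{\sigma\otimes\sigma'}=S_\sigma\otimes S_{\sigma'}$ and hence $\mathcal{L}(\sigma\otimes\sigma')=\mathcal{L}(\sigma)\otimes\mathcal{L}(\sigma')$. The unit is $\lim(\bullet\xrightarrow{\C}\Vect_\C)=\C$, and compatibility with the associator, unitors and symmetry is inherited from $\Vect_\C$; these coherences are routine to check on the above isomorphisms and are the least delicate part of the argument.
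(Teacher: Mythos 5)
Your proposal is correct and follows essentially the same route as the paper's own proof: well-definedness via Lemma \ref{lem:smap}, the identity computation via the discrete fibre $\{G\mid \G^I\mid G\}$ with the averaging projector, the composition via the fibration $\{G_1\mid \H\times_{\G_2}\K\mid G_3\}\to\G_2$ with product fibres together with the cardinality bookkeeping of Proposition \ref{prop:omnibus} (items 2(c), \ref{it4} and \ref{it5}), descent to $\sim_\fib$ via equivalences over $\G_1\times\G_2$ restricting to equivalences of fibres, and monoidality by the product decomposition. Your explicit naturality computation showing $\tau_K\circ\sigma_H$ is constant along $\Aut_{\G_2}(G_2)$-orbits is the same fact the paper extracts from Lemma \ref{lem:ind path comp}, since orbit-related objects lie in a single path-component of the full fibre.
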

\begin{proof}
1.~We show that $L$ respects the composition of morphisms. For this, we consider a composite
$$[(\mathcal G_2\xleftarrow{Q_1}\mathcal K\xrightarrow{Q_2} \mathcal G_3,\tau )]\circ [(\mathcal G_1\xleftarrow{P_1}\mathcal H\xrightarrow{P_2} \mathcal G_2, \sigma)]  =[(\mathcal G_1\xleftarrow{P_1T_1} \mathcal H\times_{\mathcal G_2} \mathcal K\xrightarrow{Q_2T_2} \mathcal G_3, \tau T_2\circ \sigma T_1)].$$ 
given by the following commuting diagram, where the middle diamond is a pullback
$$\xymatrix{& &&\mathcal H\times_{\mathcal G_2} \mathcal K \ar[dl]_{T_1} \ar[dd]_{P} \ar[dr]^{T_2}   
\\ & &\H\ar[dl]_{P_1} \ar[dr]_{P_2} && \K \ar[dr]^{Q_2} \ar[dl]^{Q_1}\\ & \G_1 & & {\mathcal G_2}  & & \mathcal G_3. } $$ 
We then have a fibration $\langle P_1 T_1,P,Q_2T_2\rangle : \H \times_{\G_2} \K \to \G_1\times \G_2 \times \G_3$, by definition of the pullback. 
 Again by definition of the pullback, 
its fibre  at $(G_1,G_2,G_2)$ is the product $\{G_1\mid \H\mid G_2\}\times\{G_2\mid \K\mid G_3\}$
 of the 
 fibre of $\langle P_1,P_2\rangle$ at $(G_1,G_2)$ and the fibre  of $\langle Q_1,Q_2\rangle$ at $(G_2,G_3)$.
 Moreover,
the  restriction  $P_{G_1,G_3}: \{G_1|\H\times_{\G_2} \K|G_3\} \to \G_2$ of $P$ to the fibre $\{G_1|\H\times_{\G_2} \K|G_3\}$ of $\langle  P_1 T_1,Q_2T_2\rangle: \H\times_{\G_2} \K \to \G_1 \times {\G_3}$ is   a fibration.

For  $G_i\in\Ob\G_i$ choose sets of representatives $\mathcal{R}_{G_1,G_2}=\mathcal{R}(\H,G_1,G_2)$ and   $\mathcal{R}_{G_2, G_3}=\mathcal{R}(\K,G_2,G_3)$ of the path-components of $\{G_1\mid\H\mid G_2\}$ and $\{G_2\mid \K\mid G_3\}$. Choose   also
 a  set of representatives
 $\mathcal{R}(\G_2)$  of the path-components of $\G_2$. 
As $\langle P_1T_1,P,Q_2T_2\rangle \colon \H \times_{\G_2} \K \to \G_1\times \G_2 \times \G_3$ is a fibration, the set
$$Y_{G_1,G_3}:=\bigcup_{G_2\in \mathcal R(\G_2)} \mathcal{R}_{G_1,G_2}\times \mathcal{R}_{G_2,G_3}  $$
intersects each path-component of $\{G_1|\H\times_{\G_2} \K|G_3\}$. It is also clear that for distinct elements $G_2, G'_2\in \mathcal{R}(\G_2)$ the elements $(H,K)$ and $(H',K')$ are in different path-components of $\{G_1|\H\times_{\G_2} \K|G_3\}$ for all $H\in \mathcal{R}_{G_1,G_2}$, $H'\in \mathcal{R}_{G_1,G'_2}$  and $K\in \mathcal{R}_{G_2,G_3}$, $K'\in \mathcal{R}_{G'_2, G_3}$.
We can therefore choose for each $G_1\in\Ob\G_1$ and $G_3\in \Ob\G_3$ a set of representatives $\mathcal{R}_{G_1,G_3}$ of the path-components of $\{G_1|\H\times_{\G_2} \K|G_3\}$ that is contained in $Y_{G_1,G_3}$.

Choose now sets of representatives $\mathcal R(\G_1)$ and $\mathcal R(\G_3)$ of  the path-components in $\G_1$ and  $\G_3$. Then we compute for each 
 $G_1 \in \mathcal{R}(\G_1)$ and $G_3 \in \mathcal{R}(\G_3)$ 
\begin{align*}
 \pi_{G_3}\circ& \L(\tau)\circ \L(\sigma)\circ \iota_{G_1}=\sum_{
G_2 \in \mathcal{R}(\G_2) }   S_\tau^{G_2,G_3} \circ S_\sigma^{G_1,G_2} \\
 &=\sum_{G_2 \in \mathcal{R}(\G_2)} \quad \sum_{H \in \mathcal{R}_{G_1,G_2}} \quad \sum_{K \in \mathcal{R}_{G_2,G_3}} \quad  \frac{|\Aut_{\G_1}(G_1)|^{-1}}{|\Aut_{\{G_1|\H|G_2\}}(H)|}  
 \frac{|\Aut_{\G_2}(G_2)|^{-1}}{|\Aut_{\{G_2|\K|G_3\}}(K)|}  \; \tau_{K} \circ \sigma_{H}\\
  &=\sum_{G_2 \in \mathcal{R}(\G_2)} \quad \sum_{H \in \mathcal{R}_{G_1,G_2}} \quad \sum_{K \in \mathcal{R}_{G_2,G_3}} \quad  \frac{|\Aut_{\G_1}(G_1) |^{-1}}{|\Aut_{\{G_1|\H|G_2\}}(H)|} \frac{|\Aut_{\G_2}(G_2) |^{-1}}{|\Aut_{\{G_2|\K|G_3\}}(K)|} 
  \; \big ((\tau T_2)\circ (\sigma T_1 ) \big)_{(H,K)}. 
\end{align*} 
Applying Proposition \ref{prop:omnibus}, \ref{it4}.~to the fibration $P_{G_1,G_3}: \{G_1|\H\times_{\G_2} \K|G_3\} \to \G_2$ yields 
\begin{align}\label{eq:sumformula}
\big|\Aut_{\{G_1|\H|G_2\}}(H)\big|\,\, \big| \Aut_{\{G_2|\K|G_3\}}(K)\big|= \frac{\big |  \Aut_{\{G_1|\H\times_{\G_2} \K|G_3\}}  ( H,K) \big |\,\, \big|\Aut_{\G_2}(G_2)\rhd [(H,K)]  \big|}{\Aut_{\G_2}(G_2)}
\end{align}
for all  $H\in \mathcal R_{G_1,G_2}$ and $K\in \mathcal R_{G_2,G_3}$, 
where 
$[(H,K)]$ is taken in the $G_2$-fibre of $P_{G_1,G_3}$. 
Hence, we have
\begin{multline}\label{eq:composition-action}
 \pi_{G_3}\circ \L(\tau)\circ \L(\sigma)\circ \iota_{G_1}\\ =  \frac{1}{|\Aut_{\G_1}(G_1) |} \sum_{G_2 \in \mathcal{R}(\G_2)} \, \sum_{H \in \mathcal{R}_{G_1,G_2}} \, \sum_{K \in \mathcal{R}_{G_2,G_3}} \frac{1}{ \big|\Aut_{\G_2}(G_2)\rhd [(H,K)]  \big|}\frac{\big ((\tau T_2)\circ (\sigma T_1 ) \big)_{(H,K)} } 
{\big |  \Aut_{\{G_1|\H\times_{\G_2} \K|G_3\}}  ( H,K) \big |}.  
  \end{multline}
 
We now use the following elementary observation: For any finite group $A$ acting on a finite set $M$ and any $A$-invariant map $f: M \to V$ into some vector space $V$, one has
$$\sum_{[m]\in M/A} f(m)=\sum_{m \in M} \frac{f(m)}{|A \rhd m|}.$$ We apply this to 
the  $\Aut_{\G_2}(G_2)$-action on the path-components of the  $G_2$-fibre of $P_{G_1,G_3}$ in \eqref{eq:composition-action}, which is induced by the $\Aut_{\G_2}(G_2)$-actions on $\pi_0(\{G_1\mid \H\mid G_2\})$ and $\pi_0(\{G_2\mid \K\mid G_3\})$ in the product
$$\pi_0\big(\{G_1\mid \H\mid G_2\}\times \{G_2\mid \K\mid G_3\}\big)\cong\pi_0\big (\{G_1\mid \H\mid G_2\}\big)\times\pi_0\big (\{G_2\mid\K\mid G_3\}\big)\cong \mathcal{R}_{G_1,G_2} \times \mathcal{R}_{G_2,G_3}.$$
Proposition \ref{prop:omnibus}, 2.(c)  gives a bijection 
$$\coprod_{G_2 \in \mathcal{R}(\G_2)} \pi_0\big(\{G_1\mid \H\mid G_2\}\times \{G_2\mid \K\mid G_3\}\big) / \Aut_{\G_2}(G_2)  \to \pi_0\big (\{G_1|\H\times_{\G_2} \K|G_3\}  \big),$$
and applying this to  Equation \eqref{eq:composition-action} shows that $\L$ is compatible with compositions:
\begin{align*}
 \pi_{G_3}\circ \L(\tau)\circ \L(\sigma)\circ \iota_{G_1}=\frac{1}{|\Aut_{\G_1}(G_1)|} \sum_{X \in \mathcal{R}_{G_1,G_3}} \frac{1}{\big|\Aut_{\{G_1\mid \H\times_{\G_2} \K\mid G_3\}}(X)\big|}\;\big((\tau T_2)\circ (\sigma T_1)\big)_{X}.
 \end{align*}

3.~To show that $L$ respects identity morphisms, we consider the identity span $[(\mathcal G\xleftarrow{p_0}{\mathcal G}^I\xrightarrow{p_1}\mathcal G,\sigma^\rho)]$ on $(\mathcal G,\rho)$ with $\sigma^\rho_f=\rho(f)$ for all morphisms $f$ in $\mathcal G$. Choose a set $\mathcal{R}(G)$ 
 of representatives of the path-components of $\G$. Then we have $\hom_\mathcal G(G_1,G_2)=\emptyset$, for $G_1, G_2\in  R(\mathcal G)$, with $G_1\neq G_2$. It follows that
$$ \pi_{G_2}\circ \L(\sigma^\rho)\circ \iota_{G_1}=0.$$
 On the other hand,
{for any} $G\in \Ob\G$ the fibre $\{G| \G^I| G\}=\Fib(\langle p_0,p_1 \rangle, (G,G)) $ has only identity morphisms,  and exactly $|\Aut_\G(G)|$ components. We can thus choose
$\mathcal{R}(\G^I,G,G) = \Aut_\G(G)$ and obtain
\begin{align*}
 \pi_{G}\circ \L(\sigma^\rho)\circ \iota_{G}= \frac{1}{|\Aut_\G(G)|} \sum_{f \in \Aut_\G(G)} \sigma^\rho_f=\id_{\rho(G)^{\Aut_\G(G)}}.
 \end{align*}

4.~Let us now address independence on representatives.  Suppose that
$$(\mathcal G_1\xleftarrow{P_1} \mathcal G\xrightarrow{P_2} \mathcal G_2,\sigma)\sim_{\fib} (\mathcal G_1\xleftarrow{P'_1} \mathcal G'\xrightarrow{P'_2} \mathcal G_2,\sigma').$$
Then there are functors $F:\mathcal G\to\mathcal G'$ and $F':\mathcal G'\to \mathcal G$  with $P'_i F=P_i$, $P_i F'=P'_i$  that form an equivalence over $\mathcal G_1\times\mathcal G_2$ and  satisfy $\sigma' F=\sigma$ and $\sigma F'=\sigma'$. Then for any $x_i\in \G_i$ the restriction $F_{x_i}: \Fib(P_i,x_i)\to \Fib(P'_i, x_i)$ is an equivalence of categories. 
 Applying this to  \eqref{eq:def Ssigma} and using  $\sigma' F=\sigma$ and $\sigma F'=\sigma'$ shows that $\L(\sigma)=\L(\sigma')$.

5.~Compatibility with the symmetric monoidal structure is clear.
\end{proof}

The symmetric monoidal  functor $L:\mathrm{span}^{\mathrm{fib}}(\repGrpd)\to \Vect_\C$ from Proposition \ref{prop:lfunctor} generalises the symmetric monoidal  functor   $L_{\mathrm{class}}: \mathrm{span}(\Grpd)\to \Vect_\C$ for Dijkgraaf-Witten TQFT without defects, constructed in a more general setting by Quinn in \cite[Lecture 4]{Q}, see
also Freed and Quinn \cite{FQ} and Section 4 in Faria Martins and Porter  \cite{FMP}.
If we identify the  symmetric monoidal category $\mathrm{span}^{\mathrm{fib}}(\Grpd)$ from  Lemma \ref{def:spangrpd}  with a subcategory of $\mathrm{span}^{\mathrm{fib}}(\repGrpd)$ as in Remark \ref{rem:subcatrem}, then the restriction of $L$ to this subcategory defines a functor $L_{\mathrm{class}}: \mathrm{span}^{\mathrm{fib}}(\Grpd)\to \Vect_\C$. 

\begin{corollary}\label{cor:quinnclass} There is a symmetric monoidal functor $L_{\mathrm{class}}: \mathrm{span}^{\mathrm{fib}}(\Grpd)\to \Vect_\C$ that assigns
\begin{compactitem}
\item to an essentially finite groupoid $\G$ the free complex vector space $L_\G=\langle \pi_0(\G)\rangle_\C$ generated by $\pi_0(\G)$,
\item to a fibrant span $s=(\G_1\xleftarrow{p_1}\G\xrightarrow{p_2} \G_2)$ the linear map $L_s:\langle \pi_0(\G_1)\rangle_\C\to \langle \pi_0(\G_2)\rangle_\C$ with matrix elements
$$
\langle [G_1]\mid  L_s\mid[G_2]\rangle =\frac 1 {|\mathrm{Aut}_{\G_2}(G_2)|}\sum_{G\in\mathcal R(\G, G_1,G_2)}\frac 1 { |\mathrm{Aut}_{\{G_1\mid \G\mid G_2\}}(G)|}.
$$
\end{compactitem}
\end{corollary}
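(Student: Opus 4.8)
The plan is to realise $L_{\mathrm{class}}$ as the restriction of the functor $L\colon \mathrm{span}^{\mathrm{fib}}(\repGrpd)\to \Vect_\C$ from Proposition \ref{prop:lfunctor} along the embedding of $\mathrm{span}^{\mathrm{fib}}(\Grpd)$ into $\mathrm{span}^{\mathrm{fib}}(\repGrpd)$ described in Remark \ref{rem:subcatrem}. First I would verify that this embedding is symmetric monoidal: its image consists of the pairs $(\G,\C)$ with $\C\colon\G\to\Vect_\C$ the constant functor, equipped with identity natural transformations, and since $\C$ is the monoidal unit of each $\Vect_\C^\G$, the tensor product $(\G_1,\C)\oo(\G_2,\C)$ is again of this form on $\G_1\times\G_2$. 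Thus the subcategory is symmetric monoidal, and $L_{\mathrm{class}}$, being the restriction of a symmetric monoidal functor to a symmetric monoidal subcategory, is automatically a symmetric monoidal functor. No functoriality, monoidality or choice-independence needs to be re-proven, as all of this is already contained in Proposition \ref{prop:lfunctor}.

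It then remains only to unwind the two formulas of Proposition \ref{prop:lfunctor} in this special case. On objects, I would apply the limit formula \eqref{eq:limexp} to the constant functor $\C$: since $\C(G)=\C$ and $\C(g)=\id_\C$ for every morphism $g$, each invariant subspace $\C(G)^{\Aut(G)}$ equals $\C$, so $\lim\C=\bigoplus_{G\in R(\G)}\C$, which is precisely the free vector space $\langle\pi_0(\G)\rangle_\C$ once $R(\G)$ is identified with $\pi_0(\G)$.

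On morphisms, a span in the subcategory carries the identity natural transformation, whose components are $\sigma_H=\id_\C$ for every object $H$ of the apex. Substituting this into the definition \eqref{eq:def Ssigma} of $S_\sigma^{G_1,G_2}$ replaces each $\sigma_H$ by $\id_\C$ and leaves the scalar $\tfrac{1}{|\Aut_{\G_2}(G_2)|}\sum_{H}\tfrac{1}{|\Aut_{\{G_1|\H|G_2\}}(H)|}$, which under the relabelling $\H\rightsquigarrow\G$, $H\rightsquigarrow G$ is exactly the asserted matrix element $\langle[G_1]\mid L_s\mid[G_2]\rangle$; feeding this through \eqref{eq:ldef} reproduces the stated linear map $L_s$. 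The only content of the corollary is therefore this bookkeeping identification of the two formulas in the trivial-coefficient case, so I do not expect any genuine obstacle.
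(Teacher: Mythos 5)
Your proposal is correct and follows exactly the route the paper takes: Corollary \ref{cor:quinnclass} is obtained by restricting the functor $L$ of Proposition \ref{prop:lfunctor} along the symmetric monoidal subcategory inclusion of Remark \ref{rem:subcatrem}, and then specialising the limit formula \eqref{eq:limexp} and the matrix-element formula \eqref{eq:def Ssigma} to the constant functor $\C$ with identity natural transformations, just as you do. Nothing is missing — the paper itself offers no further argument beyond this restriction-and-unwinding.
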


The formula for the matrix elements in Corollary \ref{cor:quinnclass} can be cast in a  simpler form by using  the homotopy content or groupoid cardinality  of an essentially finite groupoid $\H$, see Baez, Hoffnung and Walker \cite{BHW}
$$\chi^\pi(\H)=\sum_{H\in \pi_0(\H)} \frac 1 {|\mathrm{Aut}_\H(H)|}.$$ 
Denoting by $\PC_H(\H)$ for $H\in\Ob\H$ the full subgroupoid of $\H$ with object set $[H]_\H$, we then have 
$$
\langle [G_1]\mid  L_s\mid[G_2]\rangle =\chi^\pi\big(\PC_{G_2}(\G_2)\big) \cdot \chi^\pi\big(\{G_1\mid \G\mid G_2\}\big). 
$$
This 
arises as a specialisation of the functor constructed by Quinn in \cite[Lecture 4]{Q} to the case where the  target spaces are given by classifying spaces of groupoids. For a more detailed  discussion see \cite[\S 3.3, \S 4.3 and \S 8.2]{FMP} and \cite{FMP1}.

\section{Stratifications and graded graphs}
\label{sec:stratgraph}
\subsection{Graded graphs}
\label{subsec:graphs}

In the following all graphs are assumed to be finite. 
We consider the category $\mathrm{Graph}$, whose objects are finite graphs, $Q=(Q_0,Q_1, s,t: Q_1\to Q_0)$, and whose morphisms  $f: Q\to Q'$ are graph maps, given by  maps $f_0: Q_0\to Q'_0$ and $f_1: Q_1\to Q'_1$ with $s'\circ f_1={f_0\circ s}$ and $t'\circ f_1=f_0\circ t$.  The forgetful functor $R:\mathrm{Cat}\to \mathrm{Graph}$ has a left adjoint
 $L:\mathrm{Graph}\to \mathrm{Cat}$ that assigns to a graph $Q$ the
category $\mathcal Q=L(Q)$ freely generated by $Q$ and to a graph map $f: Q\to Q'$ the  induced functor $F=L(f):\mathcal Q\to\mathcal Q'$.

A \textbf{path} of length $k\in \Z_{\geq 0}$ in a graph $Q$ is a finite sequence $q=(e_k,\ldots, e_1)$ of edges $e_i\in Q_1$ with $s(e_{i})=t(e_{i-1})$ for $i=2,\ldots, k$. We write  $s(q)=s(e_1)$ and $t(q)=t(e_k)$ for the starting and target vertex of $q$ and  $q: s(q)\to t(q)$. For  $v\in Q_0$, we denote by $Q^k(v)$ the set of paths of length $k$ starting in $v$  and by $Q(v)=\bigcup_{k=0}^\infty Q^k(v)$ the set of all paths starting in $v$.  

The \textbf{subgraph generated} by $v\in Q_0$ is the  graph $Q^v$ with vertex and edge sets
$$
Q^v_0=\{t(q)\mid  q\in Q(v)\}\qquad Q^v_1=\{ e\in Q_1\mid s(e)\in Q^v_0\}.
$$
The maps $s,t: Q^v_1\to Q^v_0$ are  induced
by the ones of $Q$ and the inclusion $i_v: Q^v\to Q$.

\begin{definition}  A \textbf{graded graph} $(Q,\deg)$ is a   graph $Q=(Q_0,Q_1, s,t: Q_1\to Q_0)$ together with a  \textbf{degree map} $\deg: Q_0\to \Z_{\geq 0}$ such that $\deg\circ t=\deg\circ s-1$. 
\end{definition}

\begin{definition}\label{def:insertion} An \textbf{insertion} $f: (Q,\deg)\to (Q',\deg')$   is a graph map $f:Q\to Q'$  that 
\begin{compactenum}[(i)]
\item  preserves degree: $\deg'\circ  f_0=\deg$,
\item induces graph isomorphisms   $f_v: Q^v\to Q'^{f_0(v)}$ for all $v\in Q_0$. 
 \end{compactenum}
\end{definition}

Clearly, the identity map $\id_Q: (Q,\deg)\to (Q,\deg)$ on a graded graph $Q$ is an insertion. Condition (ii) implies that for an insertion  $f:Q\to Q'$, any edge $q'\in Q'_1$  with $s'(q')=v'\in f_0(Q_0)$ satisfies $q'\in f_1(Q_1)$. More specifically, for each $v\in f_0^\inv(v')$, there is a unique edge $q_v\in Q_1$ with $s(q_v)=v$ and $f_1(q_v)=q'$.
Hence, for any path $q':y'\to z'$ in $Q'$ with $y'\in f_0(Q_0)$ and any $y\in f_0^\inv(y')$, there is a unique path $q_y:y\to z$ whose image is $q$.  This implies that the functor $F:\maq\to\maq'$ induced by an insertion $f:Q\to Q'$ is a discrete opfibration in the sense of \cite[{Section 2.1}]{LR}.

\begin{definition}\label{def:insertive} A functor $F: \maq \to \maq'$ between small categories is called 
 a \textbf{discrete opfibration} if
for every  $q'\in\hom_{\maq'}(y',z')$  and   $y\in F^\inv(y')$ there is a unique  morphism
$q_y$ in $\maq$ with source $y$ and  $F(q_y)=q'$.
\end{definition}

\begin{lemma}\label{lem:I-faithfulandInsertive} 
For any insertion $f: (Q,\deg)\to (Q',\deg')$ the 
functor $F:\mathcal Q\to\mathcal Q'$ is  
a discrete opfibration.
\end{lemma}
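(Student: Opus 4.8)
The plan is to unwind the definition of the free category $\maq = L(Q)$, whose morphisms $y \to z$ are exactly the paths in $Q$ from $y$ to $z$, with the induced functor $F = L(f)$ sending a path $(e_k,\ldots,e_1)$ to $(f_1(e_k),\ldots,f_1(e_1))$. Since $F$ acts edgewise, it preserves the length of paths, so any morphism $q_y$ of $\maq$ with $F(q_y) = q'$ has the same length as $q'$; in particular I never have to compare lifts of different lengths. The heart of the argument is thus to lift an arbitrary path $q' = (e'_k,\ldots,e'_1) : y' \to z'$ in $Q'$ edge by edge, starting from a prescribed vertex $y$ with $f_0(y) = y'$.

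First I would establish the single-edge lifting statement already announced before Definition~\ref{def:insertive}: for any edge $e'$ of $Q'$ whose source $v' = s(e')$ lies in $f_0(Q_0)$, and any $v \in f_0^{-1}(v')$, there is a unique edge $e$ of $Q$ with $s(e) = v$ and $f_1(e) = e'$. This follows directly from condition (ii) of Definition~\ref{def:insertion}: the generated subgraph $Q'^{v'}$ contains, by the definition of $Q'^{v'}_1$, every edge whose source lies in $Q'^{v'}_0$, and in particular it contains $e'$, because its source $v'$ is the base vertex of $Q'^{v'}$ and hence lies in $Q'^{v'}_0$. The graph isomorphism $f_v : Q^v \to Q'^{f(v)} = Q'^{v'}$ is a bijection on both vertices and edges commuting with the source map, so $e := f_{v,1}^{-1}(e')$ is the unique edge in $Q^v_1 \subseteq Q_1$ with $f_1(e) = e'$ and $s(e) = f_{v,0}^{-1}(v') = v$. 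Uniqueness among all edges of $Q$, not merely those of $Q^v$, follows because any edge with source $v$ automatically lies in $Q^v_1$, since $v \in Q^v_0$.

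With this in hand I would construct the lift of $q'$ by induction on its length $k$. The length-zero case is the identity at $y$, which is the unique lift of the identity at $y'$ since lifts preserve length. For the inductive step, suppose $(e'_{i-1},\ldots,e'_1)$ has been lifted to a path ending at a vertex $y_{i-1}$ with $f_0(y_{i-1}) = t'(e'_{i-1}) = s'(e'_i)$. Applying the single-edge lifting statement to $e'_i$ and the vertex $y_{i-1}$ produces a unique edge $e_i$ with $s(e_i) = y_{i-1}$ and $f_1(e_i) = e'_i$; setting $y_i = t(e_i)$ gives $f_0(y_i) = t'(e'_i) = s'(e'_{i+1})$, so the induction continues. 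Concatenating yields a path $q_y = (e_k,\ldots,e_1)$ with source $y$ and $F(q_y) = q'$, and at each stage the lifted edge was uniquely determined by the current endpoint, so $q_y$ is the unique such lift.

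The only genuinely delicate point, and the one I would be most careful to spell out, is the observation that $e'$ belongs to the generated subgraph $Q'^{v'}$ rather than merely to $Q'$; without this, the isomorphism $f_v$ of condition (ii) could not be invoked to lift $e'$. Everything else is a routine induction. Note that condition (i), degree preservation, plays no role in this argument beyond ensuring that $F$ is compatible with the gradings: the discrete opfibration property rests entirely on the graph isomorphisms $f_v$ of condition (ii) together with the fact that $F = L(f)$ preserves path length.
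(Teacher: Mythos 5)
Your proof is correct and follows essentially the same route as the paper, which establishes the lemma in the remark preceding Definition~\ref{def:insertive}: condition (ii) of Definition~\ref{def:insertion} yields unique edge lifts at prescribed source vertices, and these extend inductively to unique path lifts, i.e.\ unique lifts of morphisms in the free category. Your write-up merely makes explicit two details the paper leaves implicit — that $e'$ lies in the generated subgraph $Q'^{v'}$ (via the length-zero path at $v'$) so that the isomorphism $f_v$ applies, and that condition (i) is not needed — both of which are accurate.
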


In the following we will consider graded graphs that have at least one vertex of degree $0$ and such that each vertex of degree 1 has exactly two outgoing edges. These edges  come in two types.

\begin{definition}\label{def:reggraph} A graded graph $(Q,\deg)$ is called \textbf{regular}, if
 each vertex of degree $>0$ has at least one outgoing edge and each vertex of degree  $1$ has exactly two outgoing edges.

 A choice of  \textbf{normals} for a regular graph 
is a partition  $(\deg\circ s)^\inv(1)=L\dot\cup R$ such that
 each  vertex $ v\in Q_0$ with $\deg(v)=1$ has
 one outgoing edge, $l(v): v\to L(v)$, in $L$ and the other one, $r(v): v\to R(v)$, in $R$.
\end{definition}

\subsection{Stratifications}\label{subsec:strats}  

\subsubsection{Homogeneous stratifications}
\label{subsubsec:homogeneous}
Throughout the article we work in the piecewise linear (PL) framework, see for instance Rourke and Sanderson \cite{RS} and Friedman \cite[Appendix B]{Fb}. All manifolds and mappings are PL. Each compact PL manifold is a compact polyhedron and  the geometric realisation of a finite simplicial complex. For every
PL map  between  PL manifolds,  there is a locally-finite decomposition of the domain into simplexes such that 
the map is affine linear on each simplex. Every PL map arises from a simplicial map between  simplicial complexes.
    
 All manifolds in this article are  oriented PL manifolds of dimension $n\leq 3$, possibly open and  with boundaries. Their boundaries are oriented such that their normals
  point outwards.  For better legibility we write \textbf{$n$-manifold}  for an  oriented PL manifold of dimension $n\leq 3$. 
  A (not necessarily connected) closed 2-manifold  is called a \textbf{surface}.

We define a \textbf{filtration} of a Hausdorff space $X$ as in  \cite[Def 2.2.1]{Fb} as a sequence of closed subspaces $\emptyset=X^{-1}\subset X^0\subset \ldots\subset X^n=X$ for some integer $n\geq -1$. The space $X^k$ is called  \textbf{$k$-skeleton},  the connected components of $X^k\setminus X^{k-1}$ are called the  \textbf{$k$-strata} and $n$ is called the \textbf{formal dimension} of $X$.  A homeomorphism  $f: X\to Y$ between filtered spaces $X,Y$ of the same formal dimension is called a \textbf{filtered homeomorphism} if $f(X^k)\subset Y^k$ for all $k=0,\ldots, n$, see \cite[Def 2.3.2]{Fb}. Note that any open subset $U\subset X$ of a filtered space $X$ inherits a filtration with $k$-skeleta $U^k=U\cap X^k$.

\begin{definition} A \textbf{stratification} of an $n$-dimensional  PL  manifold $X$  is a filtration by  closed PL subspaces
$$
\emptyset=X^{-1}\subset X^0\subset X^1\subset \ldots\subset X^n=X
$$
 such that $X^k\setminus X^{k-1}$ is a $k$-dimensional PL manifold  with boundary  $\partial(X^k\setminus X^{k-1})\subset \partial X$.
The \textbf{codimension} of a $k$-stratum $s$ is $\mathrm{codim}(s)=n-k$, and the set of $k$-strata is denoted $S^X_k$.

We  call
0-strata   \textbf{vertices},  1-strata   \textbf{edges},    2-strata   \textbf{planes} and 3-strata  \textbf{regions} of $X$.  
For $n=3$, a 1-stratum that is PL homeomorphic to $S^1$  is called an \textbf{isolated loop}.
\end{definition}

Note that the empty manifold is a $k$-manifold for all $k\in\Z_{\geq 0}$. Thus $X_k=X_{k-1}$ is allowed, and in this case there are no $k$-strata.
As explained for instance in \cite{AH}, see also  \cite[Lemma 2.5.12]{Fb}, for each
stratification of a compact $n$-manifold $X$ there is a simplicial complex $C$ with underlying polyhedron $X$ such that all skeleta of $X$ are subcomplexes of $C$.  This follows, because $X$ and all $k$-skeleta $X^k$ are
compact polyhedra and hence  underlying polyhedra of  simplicial complexes $D$ and $D^k$ \cite[Theorem 2.11]{RS}. By \cite[Addendum 2.12]{RS} there are simplicial subdivisions $C$ of $D$ and $C^k$ of $D^k$  such that all $C^k$ are subcomplexes of  $C$. This implies that for each $k$ the number of $k$-strata is finite, in
 particular   $X^0$ is a finite discrete set.

We also need a condition that ensures  that  neighbourhoods of  points on a stratum are  isomorphic and that the stratification is transversal at the boundary. This is the homogeneity condition below, which
is equivalent to the condition called homogeneous in  \cite[Def 2.24]{BMS}, which in turn restricts the condition called CS in \cite{S,AH}.
For PL manifolds without boundary, imposing this condition yields a special case of the PL stratified pseudomanifolds in \cite[Def.~2.5.13]{Fb}.

In addition to  homogeneity we also impose  a further condition which is not present in \cite[Def 2.24]{BMS}. It is imposed to ensure
that every $k$-stratum is contained in the closure of at least one $l$-stratum for all $k\leq l\leq n$.

For an $m$-sphere $S$ we denote by $cS$ the (open) cone over $S$. Any stratification of $S$ induces a stratification of $cS$  with skeleta $(cS)^{k+1}=c(S^k)$ and the apex $v_c$ of the cone as $0$-skeleton.  The products $cS\times\R^n$ and $cS \times\R^n_+$ of this cone with $\R^n$ and with the closed half-space $\R^n_+=\{x\in \R^n\mid x_n\geq 0\}$ are also  stratified manifolds, in which each $(n+k)$-stratum is a product of $\R^n$ or $\R^n_+$ with a $k$-stratum of $cS$.

\begin{definition}\label{def:homog}
Let $X$ be a stratified manifold. A $k$-stratum $s$ is called \textbf{homogeneous} if there is a stratification of the $(n-k-1)$-sphere $S$ such that:
\begin{compactenum}
\item For any point $x\in s\cap \partial X$ there is an open neighbourhood $U_x$ of $x$ in $X$ and a filtered PL homeomorphism 
$h:(cS)\times\mathbb R^k_+ \to U_x$ with  $h(v_c,0)=x$, whose inverse is a filtered PL homeomorphism.

\item For any point $x \in s \setminus \partial X$ there  is an open neighbourhood $U_x$ of $x$ in $X$ and a filtered PL homeomorphism  $h:(cS)\times\mathbb R^k \to U_x$ with  $h(v_c,0)=x$, whose inverse is a filtered PL homeomorphism.
\end{compactenum}
The filtered PL homeomorphisms $h$ are called \textbf{local charts} and the open neighbourhoods $U_x$  \textbf{special neighbourhoods} of $x$. The stratified sphere $S$ is called the  \textbf{link} of $s$ in $X$.

A homogeneous $k$-stratum is called \textbf{saturated},  if the stratified $(n-k-1)$-sphere $S$
has  at least one $j$-stratum for $j=0,..., n-k-1$.
A stratification  is   homogeneous or saturated, if all  strata are  homogeneous or saturated.
\end{definition}

As we work in the PL setting, the link of each stratum $s$ is determined uniquely, up to isomorphisms of stratified spheres, see for instance \cite[Lemma 2.5.18]{Fb}.

As discussed in \cite{BMS}, the homogeneity condition is automatically satisfied for all $n$-strata and $0$-strata.  In particular, one has $S^0=\{-1,1\}$ and $S^{-1}=\emptyset$, whose cone is $\{\bullet\}$, so the homogeneity condition is void for $k=n$.
It  is equivalent to local flatness for $(n-1)$-strata, which holds for $n\leq 4$ by the Sch\"onflies theorem. Thus, every stratification of a manifold of dimension $n\leq 2$ is automatically homogeneous. A stratification of a  manifold of dimension $n=3$ can be made homogeneous by adding vertices to its 1-skeleton \cite{AH,BMS}. 

Likewise, all $n$-strata and $(n-1)$-strata in a stratification of an $n$-manifold are automatically saturated.  
Stratifications that are  triangulations  or dual to triangulations are always homogeneous and saturated.

As we consider such manifolds only for dimensions $2$ and $3$, we can list all types of neighbourhoods of strata and show examples in Figure \ref{fig:stratsph}.
For $n=2$ the stratified spaces $N=(cS)\times \R^k_+$ are 
\begin{compactitem}
\item $k=2$:  the half-space $N=\R^2_+$ with $N^0=N^1=\emptyset$, 
\item $k=1$: the strip  $N=[-1,1]\times \R_+$ with $N^0=\emptyset$ and  $N^1=\{0\}\times\R_+$,
\item $k=0$: a  disc $N=D^2$ with $N^0=\{0\}$ and radial  1-strata intersecting in $N^0$.
\end{compactitem}
For $n=3$, the stratified spaces $(cS)\times \R^k_+$ are 
\begin{compactitem}
\item  $k=3$:  the half-space $N=\R^3_+$ with $N^0=N^1=N^2=\emptyset$,
\item  $k=2$:  the  space $N=[-1,1]\times \R^2_+$ with $N^0=N^1=\emptyset$ and $N^2=\{0\}\times\R^2_+$,
\item  $k=1$:   $N=D^2\times\R_+$ with $N^0=\emptyset$, $N^1=\{0\}\times\R_+$ and  2-strata intersecting in $N^1$,
\item  $k=0$:  a ball  $N=D^3$ with $N^0=\{0\}$ and   2-strata intersecting in radial 1-strata.
\end{compactitem}

Note that the saturation condition requires that for  $n=2$ and $k=0$ the special neighbourhoods from Definition \ref{def:homog}  contain at least one $1$-stratum and for $n=3$ and $k=0,1$ they contain at least one 1-stratum and at least one 2-stratum.
Note also that the saturation condition  applies to all points in the special neighbourhoods from Definition \ref{def:homog}, including the ones on higher-dimensional strata. 
It follows that each $l$-stratum in the  special neighbourhoods from Definition \ref{def:homog} is contained in the closure of at least one $(l+1)$-stratum and that each $(l+1)$-stratum with $l\geq k$ contains at least one $l$-stratum in its closure. Note also that the unique $k$-stratum in these  neighbourhoods is contained in the closure of all $l$-strata with  $l\geq k$.

\begin{figure}
\begin{center}
\def\svgwidth{.6\columnwidth}
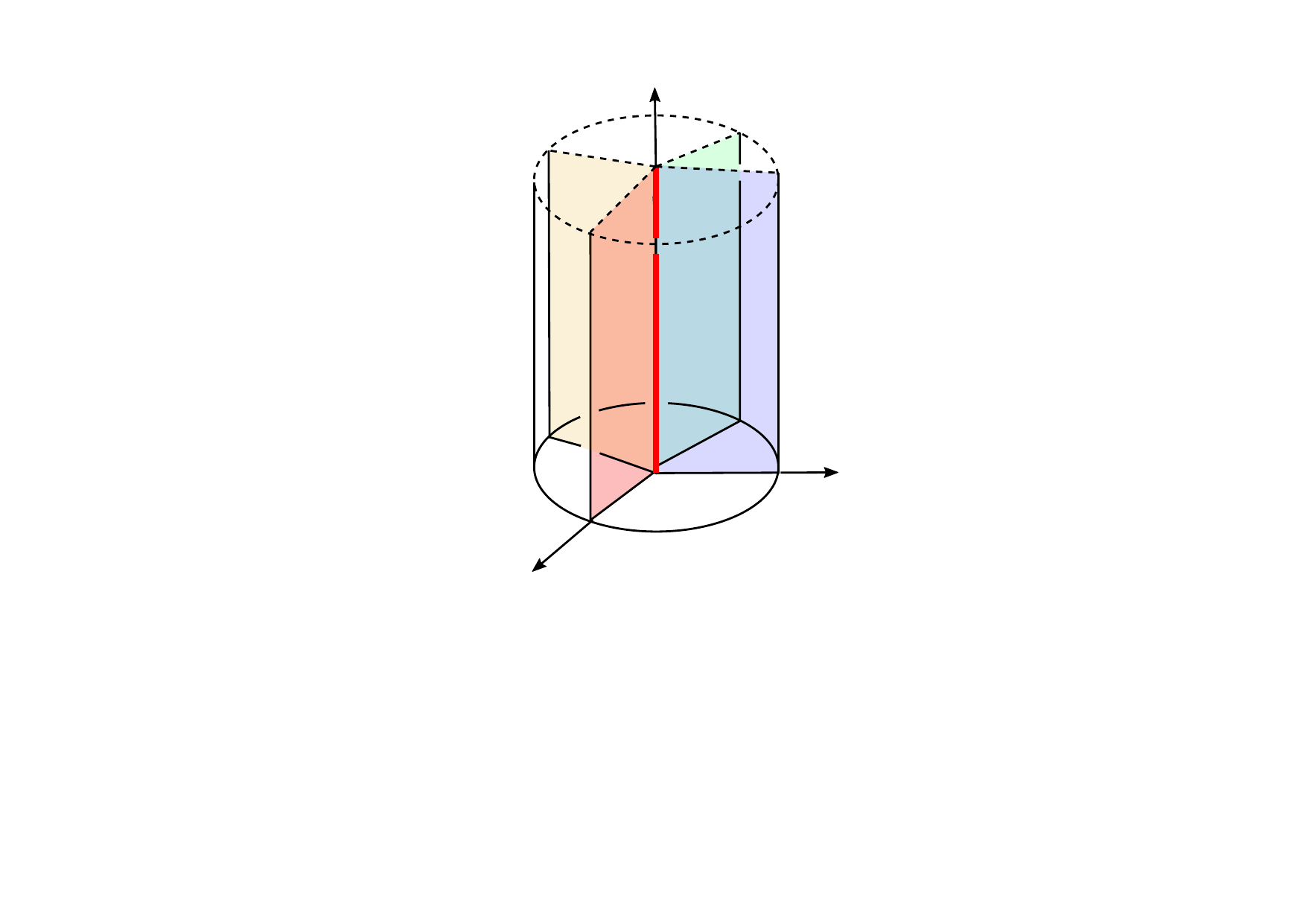
\end{center}
\caption{Neighbourhoods of $k$-strata in a stratified $n$-manifold for $k<n$. The maps $\iota_k: S_k^{\partial X}\to S_{k+1}^X$ from \eqref{eq:embedbound} are induced by the inclusion
 $\iota: \mathbb R^2\to \mathbb R^3$, $(x_1,x_2)\mapsto (x_1,x_2,0)$.}
\label{fig:stratsph}
\end{figure}

\subsubsection{Boundary stratifications}

Let $X$ be an $n$-manifold with a homogeneous and saturated stratification. 
Then the homogeneity condition implies that each $k$-stratum of $X$ is transversal to  $\partial X$, so that no $0$-stratum of $X$ is contained in  $\partial X$. 
It also ensures that a stratification of $X$  induces a homogeneous and saturated stratification of  $\partial X$. 
{If $x\in \partial X\cap s$ for a $k$-stratum $s$}, then any local chart  $h: cS\times\R^k\to  U_x$  restricts to a local chart
$h: cS\times \R^{k-1}\to U_x\cap \partial X$.

\begin{lemma}
A  homogeneous and saturated stratification of an $n$-manifold $X$  induces a homogeneous and saturated  stratification of $\partial X$
$$
\emptyset=\partial X^{-1}\subset \partial X^1\subset\ldots\subset \partial X^{n-1}=\partial X\qquad\qquad \partial X^k=X^{k+1}\cap \partial X.
$$
The subspaces $\partial X^k$ are called  \textbf{boundary $k$-skeleta}, and the connected components of $\partial X^k\setminus \partial X^{k-1}$ are called  \textbf{boundary $k$-strata}. We denote  by $S^{\partial X}_k$ the set of boundary $k$-strata.
\end{lemma}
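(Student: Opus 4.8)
The plan is to verify directly that the sequence $\partial X^k=X^{k+1}\cap\partial X$ meets every clause of the definition of a homogeneous and saturated stratification of the $(n-1)$-manifold $\partial X$, extracting all local data by restricting the ambient local charts of $X$ to $\partial X$. First I would record that this is a filtration by closed PL subspaces: each $X^{k+1}$ is closed and PL in $X$ and $\partial X$ is closed and PL in $X$, so $\partial X^k=X^{k+1}\cap\partial X$ is closed and PL in $\partial X$, and the inclusions $\partial X^{k-1}\subset\partial X^k$ follow from $X^k\subset X^{k+1}$. The top skeleton is $\partial X^{n-1}=X^n\cap\partial X=\partial X$, so the formal dimension is $n-1$, and the bottom one is $\partial X^{-1}=X^0\cap\partial X=\emptyset$; the latter is exactly the transversality fact, recalled just before the lemma, that homogeneity forces every $0$-stratum of $X$ off the boundary.

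Next I would identify and analyse the strata. Since $\partial X^k\setminus\partial X^{k-1}=(X^{k+1}\setminus X^k)\cap\partial X$, every boundary $k$-stratum is a connected component of $s\cap\partial X$ for some $(k+1)$-stratum $s$ of $X$. To see simultaneously that these are $k$-dimensional PL manifolds without boundary and that they are homogeneous, I would use the boundary local charts. For $x\in s\cap\partial X$, homogeneity condition~(1) for $s$ supplies a filtered PL homeomorphism $h\colon cS\times\R^{k+1}_+\to U_x$ with $h(v_c,0)=x$, where $S$ is the $(n-k-2)$-sphere that is the link of $s$. Since $\partial X$ corresponds under $h$ to $cS\times(\R^k\times\{0\})\cong cS\times\R^k$, the restriction of $h$ is a PL homeomorphism $cS\times\R^k\to U_x\cap\partial X$ sending $(v_c,0)$ to $x$, which is the chart shape required by homogeneity condition~(2) (there is no boundary case to consider, as $\partial X$ has empty boundary).

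The one step that requires care is checking that this restricted chart is filtered for the induced skeleta, i.e.\ that it carries the product filtration of $cS\times\R^k$ to the skeleta $\partial X^j$. Using the product-filtration convention recalled before Definition~\ref{def:homog}, the $(j+1)$-skeleton of $cS\times\R^{k+1}_+$ is $(cS)^{j-k}\times\R^{k+1}_+$; intersecting with $cS\times\R^k$ yields $(cS)^{j-k}\times\R^k$, which is exactly the $j$-skeleton of $cS\times\R^k$. Under $h$ this matches $X^{j+1}\cap\partial X=\partial X^j$, so $h$ is indeed a filtered PL homeomorphism of the required form $c\tilde S\times\R^k$ with $\tilde S=S$. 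In particular each boundary $k$-stratum is locally $\{v_c\}\times\R^k$, hence a $k$-manifold with empty boundary, and its link $\tilde S$ in $\partial X$ coincides with the link $S$ of the ambient stratum $s$ (uniqueness of links, cited in the text, makes this identification well defined).

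Finally, saturation is inherited once the links are identified, with no further computation. The ambient $(k+1)$-stratum $s$ is saturated, so its link $S$, an $(n-k-2)$-sphere, carries a $j$-stratum for each $j=0,\dots,n-k-2$. The corresponding boundary $k$-stratum $\tilde s$ is a $k$-stratum of the $(n-1)$-manifold $\partial X$ whose link is $\tilde S=S$, and saturation of $\tilde s$ demands a $j$-stratum of $\tilde S$ for each $j=0,\dots,(n-1)-k-1=n-k-2$; these are literally the same requirements, so $\tilde s$ is saturated. I expect the filtered-restriction matching of the previous paragraph to be the only genuinely delicate point: one must track the product-filtration index shift between $cS\times\R^{k+1}_+$ and $cS\times\R^k$ and use transversality, encoded in the half-space factor $\R^{k+1}_+$ of the boundary charts, to guarantee that no interior stratum touches $\partial X$ and that no boundary stratum acquires a spurious manifold boundary.
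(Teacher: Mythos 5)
Your proof is correct and follows essentially the same route as the paper, which establishes this lemma solely through the remark preceding it: homogeneity charts $h\colon cS\times\mathbb{R}^{k}_+\to U_x$ at boundary points restrict to charts of the form $cS\times\mathbb{R}^{k-1}$ for $\partial X$ with the same link, from which homogeneity and saturation of the boundary stratification follow. You have merely made explicit the skeleton-matching computation and the saturation index count that the paper leaves implicit, and these checks are carried out correctly.
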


Note that boundary $k$-strata of a stratified $n$-manifold $X$  \emph{are not} $k$-strata of $X$. In fact, they are associated with $(k+1)$-strata of $X$.
The homogeneity condition implies that for each boundary $k$-stratum $s\in S_k^{\partial X}$  there is a unique $(k+1)$-stratum  $t\in S_{k+1}^X$ with $s\subset t$. This defines (not necessarily injective) maps 
\begin{align}\label{eq:embedbound}
\iota_k: S_k^{\partial X}\to S_{k+1}^X
\end{align}
that respect the closures of strata:  if a boundary $k$-stratum $s$ is contained in the closure of a  boundary $l$-stratum $t$, then $\iota_k(s)$ is contained in the closure of  $\iota_l(t)$.

\subsubsection{Oriented, {framed} and fine stratifications}\label{subsec:orstrat}
In the following we   require that all strata of codimension $<3$ in a stratified $n$-manifold $X$ are oriented. The $n$-strata and boundary $(n-1)$-strata inherit an orientation from $X$. For the other strata, it is a datum.

\begin{definition} \label{def:orstrat} $\quad$
\begin{compactenum}
\item  An \textbf{oriented stratification} of a surface $\Sigma$ is a homogeneous  and saturated stratification of $\Sigma$ together with the choice of an orientation 
for each edge and vertex.  
\item  An \textbf{oriented stratification} of a  3-manifold $M$  
is a homogeneous and saturated  stratification of $M$ together with the choice of an orientation  for each plane and edge. 
\end{compactenum}
For $(n-1)$-strata, orientation is encoded in the choice of a normal. 
\end{definition}

Note that 
 an oriented stratification of $X$ induces an oriented stratification of  $\partial X$. 
Vertices of $\partial X$ that correspond to starting and target ends of edges in $X$ have negative and positive orientation, respectively, if the normal at $\partial X$ is oriented outwards.

For an $(n-1)$-stratum $s$ in an oriented stratification of an $n$-manifold $X$ we denote by $R(s)$ and $L(s)$ the $n$-strata at the starting and target end of the normal of $s$.   For an edge $e$ in a stratified 3-manifold $X$ that is not an isolated loop, we denote by
 $s(e)$, $t(e)$ the starting and target vertex or boundary vertex of  $e$.

We also impose that all strata of codimension 2 are equipped with a framing. This is needed to ensure that all the bundles of local strata defined in the following subsection are  trivial. As there are stratified manifolds with non-trivial bundles of local strata, cf.~Example \ref{ex:cylinderex0},  not all stratified manifolds admit a framing.

A framing of a codim 2-stratum $s$ is a coherent choice of an $n$-stratum in each special neighbourhood $U_x$ of each point $x\in s$ from Definition \ref{def:homog}. Recall that $U_x$ inherits a filtration from $X$ and that its $n$-strata are the connected components of $U_x\cap (X^n\setminus X^{n-1})$. 
Coherent means that these choices coincide, whenever the special neighbourhoods intersect in a point on $s$. 

\begin{definition} \label{def:framed} Let $X$ be an $n\leq 3$-manifold with an oriented homogeneous and saturated stratification. 
\begin{compactenum}
\item  A \textbf{framing} of an $(n-2)$-stratum $s\in S^X_{n-2}$ is a choice of an $n$-stratum $t_{U_x}$ in each special neighbourhood $U_x$ of each point $x\in s$, such that
 $t_{U_x}\cap W= t_{U_y}\cap W$ for all  special neighbourhoods $U_x$, $U_y$ of $x,y\in s$ and 
 neighbourhoods $W$ of a point $z\in s\cap U_x\cap U_y$. 
 \item The stratification is called \textbf{framed}, if each $(n-2)$-stratum is equipped with a framing.
 \end{compactenum}
\end{definition}

Note that any framed oriented homogeneous stratification of a 3-manifold $X$ induces a framed oriented homogeneous stratification of its boundary $\partial X$. The framing of the 0-strata of $\partial X$ is induced by the framing of the 1-strata of $X$.

\textbf{Convention:} In the following we mean by \textbf{stratified $n$-manifold}  an $n$-manifold with a \emph{framed} \emph{oriented} \emph{homogeneous} and \emph{saturated} stratification, where $n\in\{2,3\}$.
All stratifications considered hereafter  satisfy these conditions unless stated otherwise.
\textbf{Embeddings and homeomorphisms of stratified $n$-manifolds} are filtered PL embeddings and PL homeomorphisms that preserve the orientations of the manifolds and of all strata and the framings of all codimension 2-strata.

It will sometimes be useful to consider stratifications in which  all strata and boundary strata are open balls and every stratum is incident to at least one vertex or boundary vertex. This holds for instance for stratifications that are triangulations or dual to triangulations of $X$.

\begin{definition} A framed homogeneous and saturated stratification of an $n$-manifold $X$ is called  \textbf{fine}, if 
\begin{compactenum}[(i)]
\item all $k$-strata  and boundary $k$-strata of $X$ are open $k$-balls,
\item the closure of each stratum contains at least one vertex or boundary vertex,
\item it induces a fine stratification of $\partial X$.
\end{compactenum}
\end{definition}

\subsubsection{Bundles of local strata}

A stratified $n$-manifold $X$ comes with a notion of  \emph{local $j$-strata}  at any stratum  and at any point  of $X$. The latter are essentially germs, but can be described in a 
more direct way in our setting. 

Recall that any point $x\in X$ is contained in a unique $k$-stratum $s$ of $X$. Any  special neighbourhood $U_x$ of $x\in s$ as in Definition \ref{def:homog} inherits a filtration by closed subspaces from $X$. Its $j$-strata are the connected components of $U_x\cap (X^j\setminus X^{j-1})$ for $j\geq k$. They are in bijection with $(j-k)$-strata in the cone $cS$ and with $j$-strata in the spaces $cS\times \R^k$ or $cS\times \R^k_+$ from Definition \ref{def:homog}. 

Given two special neighbourhoods $U_x$ and $V_x$ of $x\in s$, we identify a $j$-stratum $p$ of $U_x$ and a $j$-stratum $q$ of $V_x$, whenever there is a neighbourhood $W$ of $x$ such that $p\cap W=q\cap W$. This is an equivalence relation on the set of all $j$-strata of special neighbourhoods of $x$.

\begin{definition} \label{def:local_stratum} Let $X$ be a stratified $n$-manifold,  $s$ a stratum of $X$ and $x\in s$.
A \textbf{local $j$-stratum} at $x$ is an equivalence class of  $j$-strata of special neighbourhoods of $x$.
\end{definition}

By the homogeneity condition, the number of local $j$-strata at a point on a $k$-stratum $s$ depends only on $s$. Hence, there is a bijection between local $j$-strata at $x$ and at $y$ for all $x,y\in s$. However, {for stratifications without framings or without orientations,  this bijection need not be} canonical.

The local $j$-strata at points $x\in s$ define a locally trivial bundle $\pi_j:E_j\to s$ with typical fibre $\{1,\ldots, l_j\}$, where $l_j$ is the number of local $j$-strata at any point $x\in s$.
A local trivialisation is given by the local charts of $s$. For each $x\in s$, there is a neighbourhood $W_x$ of $x$ and a PL map $H: W_x \times (cS)\times\mathbb R^k \to X$ such that for each $y \in W_x$ the map $z\mapsto H(y,z)$ is a local chart, with domain either $ (cS)\times\mathbb R^k $, if $z \in s\setminus \partial X$, or  $ (cS)\times\mathbb R^k_+ $, if $z \in \partial X \cap s$. This follows,
because it holds for the stratified manifolds  $(cS)\times\mathbb R^k_+$ in  Figure \ref{fig:stratsph}. The transition functions between the charts are given by the identifications of the local strata with strata in $(cS)\times\mathbb R^k$ or $(cS)\times\mathbb R^k_+$. The  fibre bundle construction theorem, see for instance \cite[Theorem 3.2]{Hus}, yields the fibre bundle.

In our situation the bundle $\pi_j: E_j\to s$ of local $j$-strata at a $k$-stratum $s$ is trivial, i.e.~of the form $E_j=s\times \pi_j^{-1}(x)$ for any $x \in s$.
 This holds trivially for $k=0$, where $s$ contains a single point, and for $k=j$, where there is a single local $k$-stratum at each point $x\in s$.
For $k=n-1$ and $j=n$, we have a canonical identification of local $n$-strata at any two points $x,y\in s$,  because the orientation of $s$ gives a  globally defined normal on $s$. This shows that $E_n= s\times \pi_n^{-1}(x)$ for all $x \in s$.
It remains to consider  $k=1$ and $n=3$, where $s$ is either an interval or an isolated loop.
If $s$ is an interval, then the bundle of local $j$-strata is trivial, as $s$ is contractible.  If $s$ is an isolated loop,  the framing defines a trivialisation.

The triviality of  $\pi_j: E_j\to s$ gives a canonical identification between local strata at any two points $x,y\in s$. 
This allows us to define $j$-local strata \emph{of the stratum} $s$ as path-components of the total space of the bundle $\pi_j: E_j\to s$ and to canonically identify them with 
 local strata \emph{at any point} $x\in s$. 

\begin{definition} \label{def:local_stratum_stratum} Let $X$ be a stratified $n$-manifold.
A \textbf{local $j$-stratum of a stratum $s$} is a path-component of the bundle $\pi_j: E_j\to s$. The set of local $j$-strata at  $s$ is denoted $S^{loc}_j(s)=\pi_0(E_j)$.
The unique element  of $q\cap \pi_j^\inv(x)$ for a local stratum $q$ at $s$ and $x\in s$  is called the \textbf{representative} of $q$ at $x$. 
\end{definition}

\begin{example}\label{ex:cylinderex0} The existence of framings is a necessary condition  {for the triviality of the bundles of local 2- and 3-strata of} an isolated loop. {Let} 
 $X=D^2\times S^1$ be the stratified 3-manifold with boundary $\partial X=S^1\times S^1$ obtained by gluing the top and bottom face of the following stratified solid cylinder with a rotation by $\pi$.
\begin{align*}
\begin{tikzpicture}[scale=.45]
\draw[color=black, line width=1pt] (0,0) ellipse (2cm and 1cm);
\draw[color=red, line width=1pt,  fill=red, fill opacity=.2] (0,4)--(2,4)--(2,0)--(0,0)--(0,4);
\draw[color=red, line width=1pt,  fill=red, fill opacity=.2] (0,4)--(-2,4)--(-2,0)--(0,0)--(0,4);
\node at (-1.2,4)[anchor=south, color=red]{$s$};
\draw[color=red,->,>=stealth, line width=1pt] (1,4)--(.4, 3.6);
\draw[color=red,->,>=stealth, line width=1pt] (-1,4)--(-.4, 4.4);
\draw[color=blue, line width=2pt] (0,4)--(0,0)  node[sloped, pos=0.5, allow upside down]{\arrowOut}  node[pos=.5, anchor=west]{$r$};  
\draw[color=black, line width=1pt] (-2,4)--(-2,0);
\draw[color=black, line width=1pt] (2,4)--(2,0);
\draw[color=black, line width=1pt, ->,>=stealth] (-2.6,0)  arc (180:360: 2.6cm and 1.4cm) node[pos=.5, anchor=north]{$\pi$};
\node at (0, -.5){$t$};
\draw[color=black, line width=1pt] (0,4) ellipse (2cm and 1cm);
\end{tikzpicture}
\end{align*}
Then  $X^1=r=S^1$, $X^2=s\cup r$ is a M\"obius strip and $X^3=D^2\times S^1$ a solid torus. Note that  $s=X^2\setminus X^1$ is orientable, so this stratification of $D^2\times S^1$ can indeed be oriented.
Every \emph{point} $x\in r$  has one local 1-stratum, two local 2-strata and two local 3-strata, but  
 the \emph{stratum} $r$ has  only one local 1-, 2- and 3-stratum. 
In this case, the isolated loop cannot be framed. 
\end{example}

Note  that local $j$-strata at a stratum $s$
are not necessarily in bijection with $j$-strata  $t$ such that $s\subset \bar t$. For  instance, for a vertex $v$ the \emph{local 1-strata} at $v$  are the  \emph{edge ends} incident at $v$, while the \emph{1-strata} at $v$ are the \emph{edges} incident at $v$.  For each  $k$-stratum $s$ and each  $j\geq k$ there is a map
$T_{s,j}: S^{loc}_j(s)\to S^X_j$
that assigns to a local $j$-stratum at $s$ the associated $j$-stratum  of $X$. In general, this map is not injective.  

\begin{definition} \label{eq:tmap}
We write $q:s\to t$ for a
local $j$-stratum $q\in S^{loc}_j(s)$  with associated $j$-stratum $t=T_{s,j}(q)$. 
\end{definition}

\pagebreak
\begin{example} \label{ex:locstrat}
Consider the following stratification of a 2-sphere  with a single 0-stratum $x$, 1-strata $t,u,v,w$ and 2-strata $S,T,U,V,W$.  \\[-35ex]

\begin{align*}\qquad\qquad\qquad\qquad\qquad\qquad
\begin{tikzpicture}[scale=.7]
\draw[color=red, line width=1pt,fill=red, fill opacity=.2] (0,0).. controls (11,-3) and (-3,11).. (0,0);
\draw[line width=1pt, red,->,>=stealth] (-.4,3)--(.1,2.7);
\draw[color=blue, line width=1pt,fill=white] (0,0).. controls (5,1) and (1,5).. (0,0);
\draw[color=blue, line width=0pt,fill=blue, fill opacity=.2] (0,0).. controls (5,1) and (1,5).. (0,0);
\draw[color=cyan, line width=1pt,fill=cyan, fill opacity=.2] (0,0).. controls (-5, 3) and (-5,-3).. (0,0);
\draw[color=violet, line width=1pt,fill=violet, fill opacity=.2] (0,0).. controls (-3,-5) and (3,-5).. (0,0) ;
\draw[fill=black](0,0) circle (.15);
\node at (0,-.3)[anchor=north]{$x$};
\draw[dashed, line width=.5pt] (0,0) circle (2.3);
\node at (-.3,-3.6) [anchor=north, color=violet, anchor=north east]{$w$};
\draw[line width=1pt, violet,->,>=stealth] (0,-3.7)--(0,-4.3);
\node at (0,-3) [color=violet]{$W$};
\node at (-1.1,-1.5) [color=violet]{$w_1$};
\node at (1.1,-1.5) [color=violet]{$w_2$};
\node at (2.5,2.5) [color=blue]{$t$};
\node at (2,2) [color=blue]{$T$};
\draw[line width=1pt, blue,->,>=stealth] (.7,1.8)--(1.2,1.5);
\node at (3,1) [color=red]{$U$};
\node at (4.5,1.3) [color=red]{$u$};
\node at (-4.2,0)[color=cyan, anchor=south]{$v$};
\draw[line width=1pt, cyan,->,>=stealth] (-3.8,0)--(-4.3,0);
\node at (-3,0)[color=cyan]{$V$};
\node at (-1,.8)[color=cyan]{$v_1$};
\node at (-1,-.8)[color=cyan]{$v_2$};
\node at (-3,2.5) {$S$};
\node at (1.5,.1)[color=red]{$U_1$};
\node at (0,1.8)[color=red]{$U_2$};
\node at (1.5,-.7)[color=red]{$u_1$};
\node at (1.3,.9) [color=blue]{$t_1$};
\node at (.8,1.3) [color=blue]{$t_2$};
\node at (-.8,1.5)[color=red]{$u_2$};
\node at (-2, 2)[color=black] {$S_1$};
\node at (-2,-2)[color=black] {$S_2$};
\node at (2,-2)[color=black] {$S_3$};
\end{tikzpicture}
\end{align*}\\[-6ex]
The dashed disc is a neighbourhood of $x$ as in Definition \ref{def:homog}, and the local strata at $x$ are
\begin{compactitem}
\item a single local 0-stratum $x: x\to x$,
\item local 1-strata $t_1, t_2: x\to t$, $u_1,u_2: x\to u$, $v_1,v_2: x\to v$, $w_1,w_2: x\to w$,
\item local 2-strata $S_1,S_2,S_3: x\to S$,  $T: x\to T$, $U_1,U_2: x\to U$, $V: x\to V$ and $W: x\to W$.
\end{compactitem}
\end{example}

\subsubsection{Maps between sets of local strata}

For any local $k$-stratum $q:s\to t$ and $j\geq k$, any local $j$-stratum at $t$ defines a local $j$-stratum at $s$. Intuitively, this follows, because $s\subset \bar t$ and hence any stratum at $t$ is also a stratum at $s$, {and because all bundles of local strata considered in this article are trivial.
 
\begin{lemma}\label{lem:stratinj}  Every local stratum $q:s\to t$ in a stratified $n$-manifold $X$  induces maps
$$
S^q_j: S^{loc}_j(t)\to S^{loc}_j(s)\qquad\qquad {j\in \{0,1,2,3\}.}
$$
\end{lemma}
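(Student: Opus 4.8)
The plan is to build the maps first between the local strata at single points of $t$ and of $s$, and then to descend to path-components using the triviality of the bundles of local strata.

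Write $k=\dim s$ and $m=\dim t$, so that $q\in S^{loc}_m(s)$ and $k\le m$. For $j<m$ the set $S^{loc}_j(t)$ is empty and $S^q_j$ is the empty map, while for $j=m$ the set $S^{loc}_j(t)$ consists of the single local stratum $t\to t$, which I would send to $q$; so the only substantial case is $j>m$. By the triviality of the bundles $\pi_j\colon E_j\to s$ and $\pi_j\colon E_j\to t$ established before Definition \ref{def:local_stratum_stratum}, evaluation over a point gives, for every $x\in s$ and every $y\in t$, canonical bijections between $S^{loc}_j(s)$ and the local $j$-strata at $x$, and between $S^{loc}_j(t)$ and the local $j$-strata at $y$. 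It therefore suffices to produce, for one convenient pair $x\in s$, $y\in t$, a map from the local $j$-strata at $y$ to those at $x$, and then to check in the last step that under these bijections the result is independent of all choices.

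First I would fix $x\in s$ and a special neighbourhood $U_x$ as in Definition \ref{def:homog} (with model $cS\times\R^k$, or $cS\times\R^k_+$ if $x\in\partial X$). The representative of $q$ at $x$ is an $m$-stratum $t_x\subseteq U_x$ contained in $t$; I then choose $y\in t_x$ and, using the parametrised charts $H$ recalled before Definition \ref{def:local_stratum_stratum}, a special neighbourhood $V_y$ of $y$ with $V_y\subseteq U_x$. Since $V_y$ is a special neighbourhood, the conical structure coming from homogeneity shows that every local $j$-stratum at $y$ is represented by a unique $j$-stratum of $V_y$; and because the $j$-strata of $V_y$ and of $U_x$ are the connected components of $V_y\cap(X^j\setminus X^{j-1})$ and of $U_x\cap(X^j\setminus X^{j-1})$ respectively, each $j$-stratum of $V_y$ lies in a unique $j$-stratum of $U_x$, which in turn represents a local $j$-stratum at $x$. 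Composing these assignments defines the desired map on local strata at the chosen points.

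The only delicate point, and the main obstacle, is to check that after transporting through the bijections of the reduction step the resulting map $S^q_j\colon S^{loc}_j(t)\to S^{loc}_j(s)$ does not depend on the choices of $x$, $y$, $U_x$ and $V_y$. Here I would again exploit the parametrised charts: as $x$ ranges over $s$ the neighbourhoods $U_x$, the point $y\in t_x$ and the neighbourhoods $V_y\subseteq U_x$ can be chosen to vary continuously, so that the pointwise assignments of the previous paragraph assemble into a map of bundles covering the induced correspondence between $s$ and a neighbourhood of $s$ inside $t$. Since $E_j\to s$ and $E_j\to t$ are both trivial, this bundle map induces a single map on $\pi_0$ that reproduces the pointwise construction for every admissible basepoint, and this is the sought map $S^q_j$. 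This compatibility argument is exactly where the triviality of the bundles of local strata and the homogeneity condition are indispensable; the construction of the map itself is essentially formal once the point-level picture is in place.
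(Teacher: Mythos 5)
Your proposal is correct and takes essentially the same route as the paper's proof: you build the map pointwise by choosing a special neighbourhood $V_y\subset U_x$ with $y$ in the representative of $q$, identify each $j$-stratum of $V_y$ with the unique $j$-stratum of $U_x$ containing it, and then invoke the triviality of the bundles $\pi_j\colon E_j\to s$ and $\pi_j\colon E_j\to t$ to descend to path-components independently of all choices. The only cosmetic difference is that the paper establishes independence stepwise (of the neighbourhoods, then of $y\in q'$, then of $x\in s$) where you assemble the pointwise assignments into a bundle map via the parametrised charts, but this is the same underlying argument.
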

\begin{proof}
 Let $t$ be a $k$-stratum of $X$. 
 For each $x\in s$ there is a unique local $k$-stratum at $x$ that represents $q$, and for each special neighbourhood $U_x$ as in Definition \ref{def:homog}, there is a unique stratum $q'$ of $U_x$ that represents $q$. 
 
  For any $y\in q'$ we can choose a neighbourhood $U_y$ as in Definition \ref{def:homog} with $U_y\subset U_x$. This identifies every $j$-stratum $w_y$ in $U_y$ with a unique $j$-stratum $w_x$ in $U_x$ and defines a map $S^{q}_{j}: S^{loc}_j(t,y)\to S^{loc}_j(s,x)$, $w_y\mapsto w_x$, where 
$S^{loc}_j(t,y)$ and $S^{loc}_j(s,x)$ denote the sets of local $j$-strata at $x$ and $y$, respectively. Clearly, this does not depend on the choice of the special neighbourhoods $U_x$ and $U_y$.  As the bundle of local $j$-strata of  $t$ is trivial, it also does not depend on the choice of $y\in q'$ and defines a map $S^q_{j}: S^{loc}_j(t)\to \pi_j^\inv(x)$.
 By passing to the path-components of the bundle of local $j$-strata over $s$, we obtain a
  map $S^q_{j}: S^{loc}_j(t)\to S_j^{loc}(s)$  that does not depend on the choice of $x\in s$, as this bundle is trivial.
 \end{proof}

\begin{example}\label{ex:locmaps} For the stratification in Example \ref{ex:locstrat} 
we have 
$$ S^{loc}_0(x)=\{x\}\qquad 
S^{loc}_1(x)=\{t_1,t_2, u_1,u_2, v_1,v_2, w_1,w_2\}\qquad S^{loc}_2(x)=\{S_1,S_2,S_3,T, U_1,U_2, V,W\}.$$ 
The maps associated with the local 1-stratum $u_1: x\to u$  are given by 
$$
S_0^{u_1}=\emptyset:\emptyset\to S_0^{loc}(x),\qquad S_1^{u_1}: \{u\}\to S^{loc}_1(x),\; u\mapsto u_1 \qquad S^{u_1}_2:\{U,S\}\to S^{loc}_2(x),\; U\mapsto U_1, S\mapsto S_3,
$$ and the maps associated with the local 1-stratum $u_2: x\to u$   by 
$$
S_0^{u_2}=\emptyset:\emptyset\to S_0^{loc}(x),\qquad S_1^{u_2}: \{u\}\to S^{loc}_1(x),\; u\mapsto u_2 \qquad S^{u_2}_2:\{U,S\}\to S^{loc}_2(x),\; U\mapsto U_2, S\mapsto S_1.
$$ 
\end{example}

In the following we will sometimes describe local strata $q: s\to t$ at a $k$-stratum $s$ by composable sequences of local strata.
In particular, we consider sequence of local strata that each raise the dimension by one.  

\begin{definition}\label{def:repoflocstrat}
A sequence of local strata
$
s=u_0\xrightarrow{q_1} u_1\to \ldots\xrightarrow{q_m} u_m=t
$ in a stratified $n$-manifold $X$
\textbf{represents} a local stratum $q:s\to t$, if the  induced maps between the sets of local $j$-strata  agree:
$$
S_j^q=S_j^{q_1}\circ \ldots\circ  S_j^{q_m}: S^{loc}_j(t)\to S^{loc}_j(s)\qquad \forall j\in\Z_{\geq 0}.
$$
\end{definition}

\begin{lemma}\label{lem:repsequenceexists}
For each $k$-stratum $s$ of $X$ and  local $l$-stratum $q:s\to t$, there is a sequence  $$s=u_k\xrightarrow{q_{k+1}} u_{k+1}\xrightarrow{q_{k+2}} \ldots \xrightarrow{q_{l-1}} u_{l-1}\xrightarrow{q_{l}} u_{l}=t$$ of local strata  that represents $q$ and such that each local stratum raises the dimension  by one. 
\end{lemma}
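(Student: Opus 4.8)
The plan is to produce the sequence inside a single special neighbourhood and to obtain it from a descending flag of incident strata, using the saturation condition to guarantee at each stage the existence of a stratum one dimension lower in the relevant closure. Since all bundles $\pi_j\colon E_j\to s$ of local strata are trivial, Definition \ref{def:local_stratum_stratum} lets me construct everything at a single point: I fix $x\in s$ and a special neighbourhood $U_x\cong cS\times\R^k$ as in Definition \ref{def:homog}, and let $Q$ be the $l$-stratum of $U_x$ representing $q\colon s\to t$. The $k$-stratum of $U_x$ through $x$ is the spine $\{v_c\}\times\R^k$, and it is the \emph{unique} $k$-stratum of $U_x$, because the apex $v_c$ is the only $0$-stratum of the cone $cS$.

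I would argue by induction on the gap $l-k$. The cases $l-k\in\{0,1\}$ are trivial, giving the empty sequence and the sequence $s\xrightarrow{q}t$ respectively, so suppose $l-k\geq 2$. By the saturation condition (every $(i+1)$-stratum with $i\geq k$ contains an $i$-stratum in its closure, applied here inside $U_x$), the closure $\bar Q$ contains an $(l-1)$-stratum $P$; this $P$ represents a local $(l-1)$-stratum $r\colon s\to w$, and since $P\subset\bar Q$ the stratum $Q$ determines near $P$ a local $l$-stratum $q_l\colon w\to t$. Applying the induction hypothesis to $r$, whose gap is $l-1-k$, yields a sequence $s=u_k\xrightarrow{q_{k+1}}\cdots\xrightarrow{q_{l-1}}u_{l-1}=w$ representing $r$ with each step raising the dimension by one; appending $q_l$ produces the candidate sequence for $q$. (Iterating the choice of $P$ down to dimension $k+1$ is automatic: each intermediate stratum again has a codimension-one stratum in its closure by saturation, and the flag necessarily terminates at the spine, since that is the only $k$-stratum of $U_x$.)

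The main obstacle is the verification of the representation condition in Definition \ref{def:repoflocstrat}. By the inductive identity $S^r_j=S^{q_{k+1}}_j\circ\cdots\circ S^{q_{l-1}}_j$, it suffices to prove the two-step relation $S^q_j=S^r_j\circ S^{q_l}_j$ for all $j$. This is a transitivity property of the maps built in the proof of Lemma \ref{lem:stratinj}: choosing a point $z\in P$, a point $y\in Q$ close to $z$, and nested special neighbourhoods $U_y\subset U_z\subset U_x$, any local $j$-stratum at $t$ is represented by a $j$-stratum of $U_y$, which lies in a unique $j$-stratum of $U_z$ (this computes $S^{q_l}_j$) and in a unique $j$-stratum of $U_x$ (this computes $S^q_j$, and applying $S^r_j$ afterwards computes the same thing via $U_z$). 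The two identifications agree because the one induced by $U_y\subset U_x$ factors through $U_y\subset U_z\subset U_x$, and triviality of the bundles of local strata ensures none of these assignments depends on the chosen points $x,z,y$, so the relation descends to the strata $s$, $w$ and $t$. This closes the induction.
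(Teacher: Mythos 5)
Your proof is correct and takes essentially the same route as the paper's: both fix a point $x\in s$, work inside a single special neighbourhood $U_x\cong cS\times\R^k$, and use the saturation condition (via the incidence discussion at the end of Section \ref{subsubsec:homogeneous}) to produce a descending flag of strata in the closure of the representative of $q$ down to the spine, the paper handling the gaps $l-k\leq 3$ by an explicit case analysis where you run an induction. Your verification of the representation condition $S^q_j=S^r_j\circ S^{q_l}_j$ via nested special neighbourhoods $U_y\subset U_z\subset U_x$ makes explicit a step that the paper's proof leaves implicit, and it is sound.
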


\begin{proof} This is a consequence of the saturation condition. 
It holds  trivially for all local $l$-strata $q$ with $l= k+1$.  If $l>k+1$, choose a neighbourhood $U_x$ of a point $x\in s$ as in Definition \ref{def:homog}  and  an  $l$-stratum $q'$ in $U_x$ that represents  $q$. Then by the discussion at the end of Section \ref{subsubsec:homogeneous} there is an  $(l-1)$-stratum $u'_{l-1}\subset \overline q'$  in $U_x$ with $ s\subset \overline u'_{l-1}$.  If $l=k+2$, then $s\to u_{l-1}\to t$ is the required sequence of local strata, where $u_{l-1}=[u'_{l-1}]$ denotes the path-component of $u'_{l-1}$. 
If $l=k+3$,  there is an  $(l-2)$-stratum $u'_{l-2}\subset \overline u'_{l-1}$ in $U_x$ with $ s\subset \overline u'_{l-2}$,
  and 
$s\to u_{l-2}\to  u_{l-1}\to t$ with $u_i=[u'_i]$ is the  required sequence of local strata.
\end{proof}

\begin{example} For  the stratification in Example \ref{ex:locstrat} 
\begin{compactitem}
\item the local 2-stratum $U_1: x\to U$ is represented by 
$x\xrightarrow{u_1} u\xrightarrow{U} U$ and $x\xrightarrow{t_1} t\xrightarrow{U} U$, 
\item the local 2-stratum $S_3: x\to S$ is represented by 
$x\xrightarrow{u_1} u\xrightarrow{S} S$ and $x\xrightarrow{w_2} w\xrightarrow{S} S$,
\item  the local 2-stratum $W: x\to W$ is represented by 
$x\xrightarrow{w_1} w\xrightarrow{W} W$ and $x\xrightarrow{w_2} w\xrightarrow{W} W$.
\end{compactitem}
\end{example}

Finally,  note that for a stratified 3-manifold $X$ and the associated boundary stratification of $\partial X$, the inclusions
$\iota_k: S_k^{\partial X}\to S_{k+1}^X$ from  \eqref{eq:embedbound} respect  local strata. For any boundary $k$-stratum $s$, there are canonical bijections
\begin{align}\label{eq:boundinj-locstrat}
\iota_{kj}:  S^{loc}_j(s)\to S^{loc}_{j+1}(\iota_k(s))
\end{align}
that send local $j$-strata at $s$ to local $(j+1)$-strata at the unique $(k+1)$-stratum $\iota_k(s)$ of $X$ that intersects $s$.  
The bijections in \eqref{eq:boundinj-locstrat} are apparent in 
 Figure \ref{fig:stratsph}.

\subsection{Graded graphs from stratifications}
\label{sec:grgraph}

A  compact stratified $n$-manifold $X$   defines a graded graph.
Its vertices  are the strata of $X$, and the degree of a $k$-stratum $s$ is $\deg(s)=\mathrm{codim}(s)=n-k$. Edges starting at a $k$-stratum $s$  are  local $(k+1)$-strata  $q:s\to t$ at $s$ and connect $s$ to the $(k+1)$-stratum $t$.  The normals of  $(n-1)$-strata equip the graph with normals.

\begin{definition}\label{def:embquiv} Let $X$ be a compact stratified $n$-manifold. 
The \textbf{graded graph}  associated to $X$ is the regular graph $Q^X$ 
with vertex set $Q^X_0$ and edge set $Q^X_1$ given by
$$
Q^X_0=\bigcup_{j=0}^n S_j^X, \qquad \textrm{ and } \qquad  Q^X_1=\bigcup_{j=0}^{n-1} \bigcup_{s\in S^X_j} S^{loc}_{j+1}(s).
$$
The maps $s,t: Q_1^X\to Q_0^X$ are given by $s(q)=u$, $t(q)=v$ for a local $(k+1)$-stratum
$q: u\to v$ at $u\in S_k^X$.
The degree map 
  $\deg: Q^X_0\to\Z_{\geq 0}$ is given by $\deg(u)=\mathrm{codim}(u)=n-k$ for $u\in S_k^X$. The normals of the $(n-1)$-strata of $X$  define its normals.
\end{definition}

Note that the saturation condition implies that each vertex $s\in Q^X_0$ with $\deg(s)>0$ has at least one outgoing edge.  Also, each  vertex $s\in Q^X_0$ with $\deg(s)=1$ has exactly two outgoing edges, so $Q^X$ is indeed regular. 
As the  stratification of a $3$-manifold $X$ induces the boundary  stratification of  $\partial X$, it also  defines a graded graph $Q^{\partial X}$ for  $\partial X$. 
Because the inclusions  \eqref{eq:embedbound} preserve degrees and induce the bijections between the sets of local $j$-strata in \eqref{eq:boundinj-locstrat}, they define an insertion.

\begin{lemma}\label{lem:inclusions are insertive}
For any homogeneous and saturated stratification of  a compact $n$-manifold $X$, the maps from \eqref{eq:embedbound} and \eqref{eq:boundinj-locstrat} define an insertion
$i_\partial:  Q^{\partial X}\to  Q^X$, 
and hence   $I_\partial:\maq^{\partial X}\to  \maq^X$ is a discrete opfibration.
\end{lemma}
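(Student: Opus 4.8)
The plan is to verify that the vertex map $\iota_k$ from \eqref{eq:embedbound} together with the edge map supplied by the bijections $\iota_{k,k+1}$ from \eqref{eq:boundinj-locstrat} constitute an insertion $i_\partial\colon Q^{\partial X}\to Q^X$ in the sense of Definition \ref{def:insertion}; the final assertion about $I_\partial$ is then immediate from Lemma \ref{lem:I-faithfulandInsertive}. Concretely, on vertices $i_\partial$ sends a boundary $k$-stratum $s$ to the $(k+1)$-stratum $\iota_k(s)$, and on edges it sends a local $(k+1)$-stratum $q\in S^{loc}_{k+1}(s)$ to the local $(k+2)$-stratum $\iota_{k,k+1}(q)\in S^{loc}_{k+2}(\iota_k(s))$.

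First I would check that $i_\partial$ is a well-defined graph map. Source compatibility is built into \eqref{eq:boundinj-locstrat}, since $\iota_{k,k+1}(q)$ is by construction a local stratum \emph{at} $\iota_k(s)$. Target compatibility amounts to the identity $\iota_{k+1}\big(T_{s,k+1}(q)\big)=T_{\iota_k(s),k+2}\big(\iota_{k,k+1}(q)\big)$, expressing that the maps $\iota_\bullet$ and the bijections $\iota_{\bullet,\bullet+1}$ are compatible with the target-stratum assignment; this follows from the fact that the inclusions \eqref{eq:embedbound} respect closures of strata together with the canonicity of \eqref{eq:boundinj-locstrat}, both read off from the boundary charts depicted in Figure \ref{fig:stratsph}. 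Degree preservation, condition (i) of Definition \ref{def:insertion}, is then a one-line check: a boundary $k$-stratum has codimension $(n-1)-k$ in $\partial X$, while its image $\iota_k(s)$ has codimension $n-(k+1)=(n-1)-k$ in $X$.

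The substantive step is condition (ii): for each boundary $k$-stratum $s$ the induced map $(i_\partial)_s\colon (Q^{\partial X})^s\to (Q^X)^{\iota_k(s)}$ on generated subgraphs is an isomorphism. The geometric heart is that $s$ and $\iota_k(s)$ share the \emph{same} link $S$: restricting a boundary chart $h\colon cS\times\R^{k+1}_+\to U_x$ of $\iota_k(s)$ to $\partial X$ yields a chart $cS\times\R^{k}\to U_x\cap\partial X$ of $s$, so the combinatorics of the local strata of $s$ and of $\iota_k(s)$ are governed by the single stratified sphere $S$, merely shifted by one in dimension. The bijections $\iota_{k,k+1}$ realise this shift on edges, and I would propagate them along paths out of $s$, using Lemma \ref{lem:repsequenceexists} to represent every reachable stratum by a dimension-raising sequence. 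I expect the main obstacle to be exactly the bookkeeping here: one must verify that the family $\{\iota_{k,k+1}\}$ is natural with respect to the incidence maps $S^q_j$ of Lemma \ref{lem:stratinj}, i.e.\ that these bijections intertwine the structure maps at successive strata along a path. Granting this naturality, the bijections glue to a bijection on the edges of the generated subgraphs, and together with the bijection on reachable strata induced by the common link they furnish the required graph isomorphism $(i_\partial)_s$. Once condition (ii) is in place, $i_\partial$ is an insertion and Lemma \ref{lem:I-faithfulandInsertive} yields that $I_\partial\colon\maq^{\partial X}\to\maq^X$ is a discrete opfibration.
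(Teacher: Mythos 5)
Your first half is sound and coincides with what the paper actually does: the paper offers no formal proof of this lemma, justifying it in a single sentence by degree preservation and the bijections \eqref{eq:boundinj-locstrat}, and your checks of source/target compatibility and the codimension count $(n-1)-k=n-(k+1)$ fill in exactly those routine points. The gap is in your final paragraph, at condition (ii) of Definition \ref{def:insertion}. You explicitly ``grant'' the naturality of the bijections with respect to the incidence maps $S^q_j$ of Lemma \ref{lem:stratinj}, so the step you yourself identify as substantive is asserted rather than proved; and, more seriously, even granting it your claimed ``bijection on reachable strata induced by the common link'' does not follow. The common link of $s$ and $\iota_k(s)$ controls only \emph{local} strata; the vertices of the generated subgraphs arise via the target maps $T_{s,j}\colon S^{loc}_j(s)\to S^X_j$, which are not injective, and target-compatible edge bijections at every boundary stratum do not force the vertex map to be injective on $Q^{\partial X,s}_0$. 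Concretely, in a stratified $3$-ball a single connected $2$-stratum $w$ can meet $\partial X$ in two distinct boundary $1$-strata $u_1\neq u_2$ that are both incident to one boundary vertex $s$: take a teardrop cross-section times $[0,1]$ with the cusp line as the edge $e=\iota_0(s)$, and let a second edge $e''$ lying inside the sheet end at a boundary vertex on the bottom teardrop curve, cutting it into two arcs without disconnecting $w$. Then $\iota_1(u_1)=\iota_1(u_2)=w$, the restriction of $i_\partial$ to the subgraph generated by $s$ identifies two vertices, and the generated subgraphs have different vertex counts, so no purely local argument of the kind you sketch (nor a propagation via Lemma \ref{lem:repsequenceexists}) can close this step.

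What does survive --- and is all that is ever used downstream --- is the out-edge property: at every boundary stratum $u$ the map on edges emanating from $u$ is the bijection \eqref{eq:boundinj-locstrat} with targets given by \eqref{eq:embedbound}, and for the induced functor between free categories this alone already yields unique lifts of generating morphisms, hence that $I_\partial$ is a discrete opfibration. This is precisely the consequence of condition (ii) isolated in the discussion preceding Lemma \ref{lem:I-faithfulandInsertive}, and it is the only property of insertions invoked in the paper (unique path lifting, the Kan extension formula, and the gluing argument in Theorem \ref{th:classcobfunc}). So the repair is to replace your final paragraph by the observation that the target-compatible edge bijections at each vertex, which your second paragraph does establish, suffice directly for the discrete-opfibration conclusion, without attempting the generated-subgraph isomorphisms of Definition \ref{def:insertion}(ii).
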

For a fine stratification of a compact  $n$-manifold $X$ the graph $ Q^X$ can be visualised by thickening the stratification.
This can be achieved for instance   by choosing a triangulation that contains all skeleta as subcomplexes and taking the second barycentric subdivision.

\begin{figure}
\begin{center}
\def\svgwidth{.6\columnwidth}
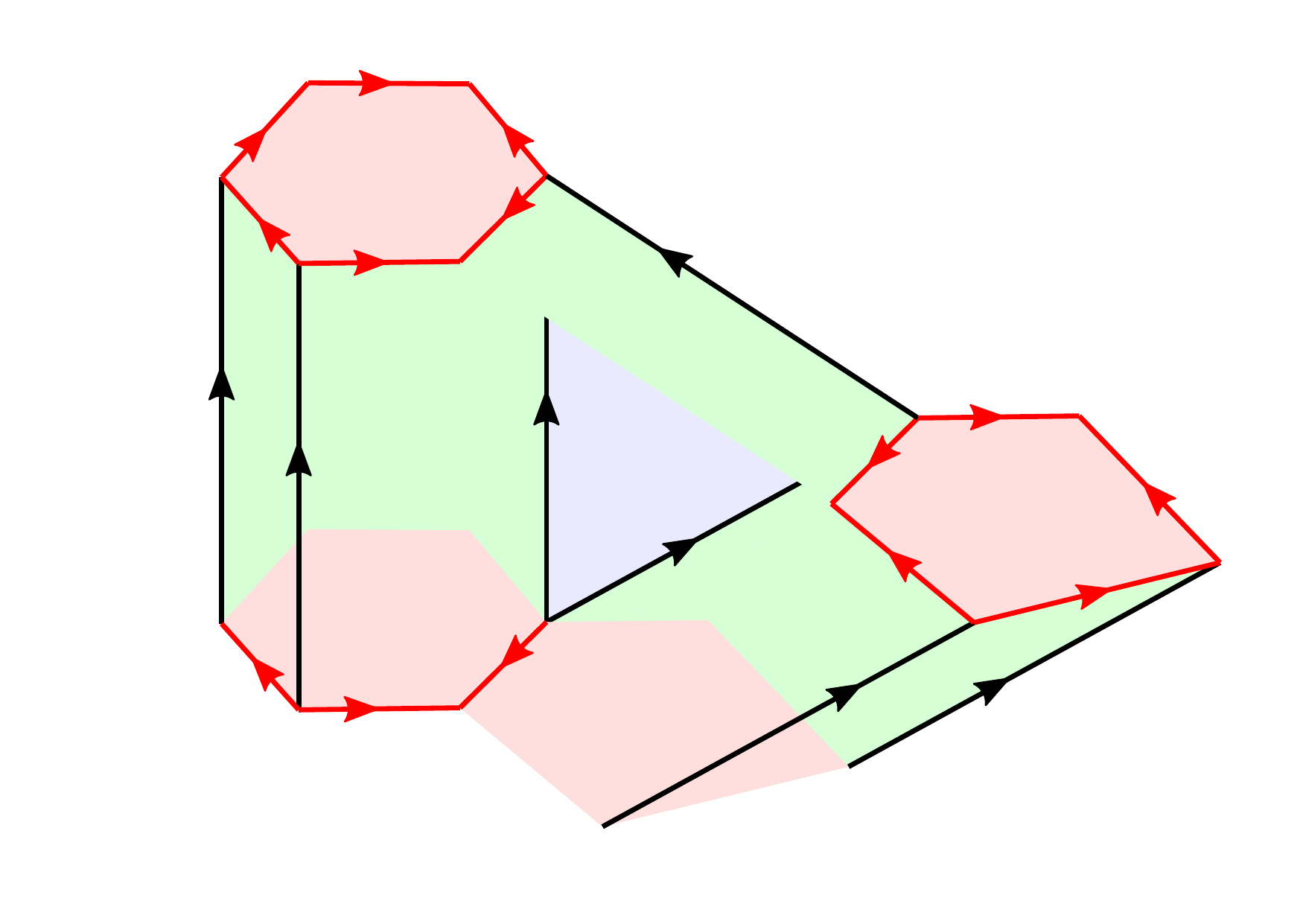
\end{center}
$\quad$\\[-12ex]
\caption{Thickening $M_{th}$ of a stratified  3-manifold $M$ with vertices $u,v,w$, edges $d,e,f$ and a plane $p$.
 }
\label{fig:defect_thick}
\end{figure}

\begin{definition} \label{def:thicksurf}Let $\Sigma$ be a  surface with a fine  stratification. The \textbf{thickened stratification} $\Sigma_{th}$ is the stratification obtained by thickening each edge of $\Sigma$  to a rectangle and each vertex of $\Sigma$ to an $n$-gon, where $n$ is the number of local 1-strata at $v$. Its sets  of planes, edges and vertices  are given by
\begin{align*}
&S^{th}_{k}=\bigcup_{j=0}^{k} \bigcup_{s\in S^\Sigma_j} S^{loc}_{j-k+2}(s),\qquad \textrm{ for } k=0,1,2.
\end{align*}
\end{definition}

The formula in
Definition \ref{def:thicksurf} states that each plane of the thickening $\Sigma_{th}$ corresponds to a unique  vertex, edge or plane of $\Sigma$. We refer to them as red, green and white planes, respectively. 

The edges of $\Sigma_{th}$ are partitioned into two sets. Edges in the first set are called red edges and correspond to pairs  of a vertex $v$ of $\Sigma$ and a local  edge  at $v$. Edges in the second set are called black edges and correspond to pairs  of an edge $e$ of $\Sigma$ and a local plane  at $e$. 
Vertices of $\Sigma_{th}$ are in bijection with pairs  of a vertex  $v$ of $\Sigma$ and a local plane at $v$. The vertex of $\Sigma_{th}$  is contained in the associated plane of $\Sigma$.

Note that each red plane has only red edges, each green plane is a rectangle with two black and two red edges, and each white plane has only black edges. 
Each black or red edge is  in a unique green rectangle.
The orientation of the stratification of $\Sigma$ induces an orientation of the  red edges of $\Sigma_{th}$. They are oriented parallel to the  normal of the  associated edge.

\begin{definition} \label{def:thick3d}Let $M$ be a compact 3-manifold with a fine stratification. The \textbf{thickened stratification} $M_{th}$ is the stratification obtained by thickening each edge and plane to a polygonal cylinder and each vertex to a polyhedron. Its sets of regions, planes, edges and vertices  are
\begin{align*}
S^{th}_{k}=\bigcup_{j=0}^{k}\bigcup_{s\in S^M_j} S^{loc}_{j-k+3}(s)\qquad k=0,1,2,3.
\end{align*}
\end{definition}

The formula in Definition \ref{def:thick3d} states that each region of  the thickening corresponds to a unique region, plane, edge or vertex of $M$. 
Planes of the thickening  correspond to pairs  (i)
  of a vertex $v$ of $M$ and a local edge at $v$,  (ii)   of an edge $e$ of $M$  and a local plane at $e$ and (iii)   of a plane $p$ of $M$ and a local region  at $p$. We refer to them as  red, green and  blue planes of $M_{th}$, respectively.

Edges of $M_{th}$  correspond to pairs (i)   of a vertex $v$ of $M$ and a local plane  at $v$ and (ii)   of an edge $e$ of $M$ and a local region  at $e$. We refer to them as  red and  black edges, respectively. Vertices of $M_{th}$ correspond to pairs  of a  vertex $v$ of $M$ and a local region at $v$. The vertex  of $M_{th}$  is contained in the associated region of $M$.
Red planes contain only red edges,  green planes are  rectangles with two red and two black edges and  blue planes contain only black edges.  
Each black or red edge is contained in a unique green rectangle.

The orientation of the stratification of $M$ defines an orientation for the edges of $M_{th}$.  Red  edges of $M_{th}$ are oriented parallel to the  normal  of the associated plane. Black edges of $M_{th}$ are oriented parallel to the orientation of the associated edge.

Note that a thickening of $M$ defines  a thickening $\partial M_{th}$ of its  boundary stratification such  that the color conventions and orientations match. They are  illustrated in  Figure \ref{fig:defect_thick}.
The relations between (local) strata  of a stratified manifold $X$,  the strata of  $X_{th}$ and  the  graded graph $Q^X$ are summarised in Table \ref{tab:strat}.

\begin{table}
\begin{tabular}{|l|l|l||l|l|l|}
$M$ & $M_{th}$ & $Q^M$ & $\Sigma$ & $\Sigma_{th}$ & $Q^{\Sigma}$\\
\hline & & & & &\\[-1.2ex]
strata & regions &  $Q_0^M$ & strata & planes & $Q^\Sigma_0$\\[+.5ex]
\hline & & & & &\\[-1ex]
$v\in S_0^M$ &$v_{th}$ & $\deg(v)=3$ & $v\in S^\Sigma_0$  & red $v_{th}$ & $\deg(v)=2$\\[+.5ex]
$e\in S_1^M$ & $e_{th}$  & $\deg(e)=2$ & $e\in S^\Sigma_1$   &  green $e_{th}$ & $\deg(e)=1$\\[+.5ex]
$p\in S_2^M$ & $p_{th}$  & $\deg(p)=1$ & $p\in S^\Sigma_2$   &  white $p_{th}$ & $\deg(p)=0$\\[+.5ex]
$r\in S_3^M$ & $r_{th}$  & $\deg(r)=0$ & & &\\[+1ex]
\hline & & & & &\\[-1ex]
pairs  $\Delta\dim=1$ & planes &  $Q^M_1$ & pairs $\Delta \dim=1$& edges &  $Q^\Sigma_1$\\[+.5ex]
\hline & & & & &\\[-1ex]
$(v,q)$, $v\in S^M_0$, $q:v\to e$ & red  & $q:v\to e$  & $(v,q)$, $v\in S^\Sigma_0$, $q:v\to e$ & red  & $q:v\to e$
\\[+.5ex]
$(e,q)$, $e\in S^M_1$, $q:e\to p$ & green  & $q:e\to p$ & $(e,p)$, $e\in S^\Sigma_1$, $q:e\to p$ & black  & $q:e\to p$\\[+.5ex]
$(p,q)$, $p\in S^M_2$, $q:p\to r$ & blue  &  $q:p\to r$ & & & \\[+1ex]
\hline & & & & &\\[-1ex]
pairs $\Delta \dim=2$& edges &   & pairs $\Delta \dim=2$& vertices & \\[+.5ex]
\hline & & & & &\\[-1ex]
$(v,q)$, $v\in S^M_0$, $q:v\to p$ & red  &  & $(v,q)$, $v\in S^\Sigma_0$, $q:v\to p$ & in $p$ & \\[+.5ex]
$(e,q)$, $e\in S^M_1$, $q:e\to r$ & black  &  & & &\\[+1ex]
\hline & & \\[-1ex]
pairs $\Delta \dim=3$ & vertices & \\[+.5ex]
\cline{1-3} & &\\[-1ex]
$(v,q)$, $v\in S^M_0$, $q:v\to r$ & in $r$ & \\[+1ex]
\cline{1-3} 
\end{tabular}
\caption{Stratification, thickening and the associated graded graph for a 3-manifold $M$ and a surface $\Sigma$.}
\label{tab:strat}
\end{table}

\section{Defect cobordism category for untwisted Dijkgraaf-Witten TQFT}\label{sec:defdata-anddefcob}
\label{sec:cobcat}
In this section, we introduce the defect data for Dijkgraaf-Witten TQFT \cite{DW} with defects and the associated category of defect cobordisms.  As Dijkgraaf-Witten TQFT is a special case of Turaev-Viro-Barrett-Westbury TQFT \cite{TV,BW}, for a detailed discussion see Petit \cite{P}, its  defect data  is a special case of the defect data for  general Turaev-Viro-Barrett-Westbury TQFTs \cite{TV,BW}. Turaev-Viro-Barrett-Westbury TQFT with defects was considered in \cite{M, CM} and  assigns the following data to the strata of a manifold of dimension $n=2,3$ 
\begin{compactitem} 
\item codim 0-strata: spherical fusion categories over $\C$,
\item codim 1-strata: finite  semisimple $\C$-linear bimodule categories with bimodule traces,
\item codim 2-strata: $\C$-linear exact bimodule functors,
\item codim 3-strata: bimodule natural transformations.
\end{compactitem}
Here, the spherical fusion categories acting on the bimodule category for a codim 1-stratum are the ones of its local codim 0-strata.
The bimodule functor for  a codim 2-stratum relates the bimodule categories of its local codim 1-strata, and the  bimodule natural transformation for a codim 3-stratum relates the bimodule functors of its local codim 2-strata. The bimodule traces, introduced by Schaumann \cite{S13,S15} and developed further by Fuchs, Schweigert and Schaumann \cite{FSS}, are required for the state sum formulation of the TQFT.

Dijkgraaf-Witten TQFT without defects amounts to specialising the spherical fusion categories to the spherical fusion categories $\mathrm{Vec}^\omega_G$ of $\C$-vector spaces graded by a finite group $G$
and linear maps that preserve the grading. The monoidal structure is given by the 3-cocycle $\omega: G\times G\times G\to\C^\times$ with the trivial cocycle  $\omega\equiv 1$ for  the untwisted case. For $\omega\equiv 1$,  $\mathrm{Vec}_G$ is has a spherical structure induced by the duals in $\Vect_\C$,  see \cite{P}. 

As shown in \cite{EGNO}, Example 7.4.10 and Exercise 7.4.11, finite-dimensional module categories over $\mathrm{Vec}_G$ are given by finite $G$-sets and certain 2-cocycles.  We only consider trivial 2-cocycles in the following. In this case, a finite semisimple $(\mathrm{Vec}_G, \mathrm{Vec}_H)$-bimodule category $\mathcal M$ corresponds to a finite $G\times H^{op}$-set $M$, whose elements are the simple objects   of $\mathcal M$.  
Bimodule functors and natural transformations between such bimodule categories  can be analysed  by considering the images 
and coherence isomorphisms on simple objects. It is shown in \cite[Example 5.11]{L} that bimodule functors $F:\mathcal M\to \mathcal N$ between $(\mathrm{Vec}_G, \mathrm{Vec}_H)$-bimodule categories $\mathcal M$, $\mathcal N$ correspond to representations $\rho:(M\times N)\sslash (G\times H)\to \Vect_\C$ of the associated action groupoid and bimodule natural transformations $\eta: F\Rightarrow F'$ between such functors to intertwiners of such representations.

In the following we do not consider Turaev-Viro-Barrett-Westbury TQFT with or without defects beyond the Dijkgraaf-Witten case and also do not consider twisting by cocycles. We thus assign the following defect data: 
\begin{compactitem}
\item codim 0-strata: finite groups,
\item codim 1-strata: finite sets with group actions,
\item codim 2-strata: representations of action groupoids,
\item codim 3-strata: intertwiners of representations of action groupoids.
\end{compactitem} 
For lack of a better terminology, the  first two layers of data are called  \textbf{classical} defect data,  because they are given by groups and group actions, the last two  \textbf{quantum} defect data,  because they are linear. 

We start by considering the classical defect data. 
The groups acting on the finite set assigned to a codim 1-stratum are specified by its normals from
Definition \ref{def:orstrat}. In the associated graded graph $Q^X$ this corresponds to the edges $l(s): s\to L(s)$ and $r(s): s\to R(s)$ from Definition \ref{def:reggraph}.

\begin{definition}\label{def:class-def-data} Let $X$ be a compact stratified $n$-manifold. 
An \textbf{assignment of classical defect data}  to $X$
\begin{compactitem}
\item {assigns} to each $n$-stratum $t$ a finite group $G_t$,
\item {assigns} to each $(n-1)$-stratum $s$ a finite $G_{L(s)}\times G_{R(s)}^{op}$-set $M_s$.
\end{compactitem}
\end{definition}

Note that an assignment of classical defect data to a compact stratified $n$-manifold $X$ defines an assignment of classical defect data to its boundary $\partial X$. 

We  now show it assigns to each stratum $r$ of $X$ an action groupoid $\mathcal D_r=M_r\sslash G_r$ and to each local stratum $p:u\to v$ a functor $D_p:\mathcal D_u\to \mathcal D_v$.

\textbf{Action groupoids for strata:}
 The  set $M_r$ in the groupoid 
$\mathcal D_r=M_r\sslash G_r$ is  the product of the sets $M_s$ for all local $(n-1)$-strata $x:r\to s$. The group $G_r$ is the product of the groups $G_t$ for all local $n$-strata $q:r\to t$
\begin{align}\label{eq:groupoiddef}
 \mathcal D_r:=M_r\sslash  G_r=\left(\textstyle \prod_{ S^{loc}_{n-1}(r)\ni x:r\to s}  M_s\right) {\sslash} \left(\textstyle \prod_{S^{loc}_n(r)\ni q:r\to t} G_t\right).
 \end{align}
 The action of $G_r$ on $M_r$ is given by the commuting diagram
\begin{align}\label{eq:grpdactdef}
 \xymatrix{
\left(\prod_{S^{loc}_n(r)\ni q:r\to t} G_t\right)\times \left(\prod_{ S^{loc}_{n-1}(r)\ni x:r\to s}  M_s\right) \ar[r]^{\qquad\qquad\rhd}\ar[d]^{\langle \pi_{lx}, \pi_{rx}, \pi_x\rangle }  &   \left(\prod_{ S^{loc}_{n-1}(r)\ni x:r\to s}  M_s\right) \ar[d]^{\pi_x}\\
G_{L(s)}\times G_{R(s)}\times M_s \ar[r]_{\qquad (g_L,g_R, m)\mapsto g_L\rhd m\lhd g_R^\inv}& M_s,
 }
 \end{align}
where $lx=S_n^x(l(s))$ and $rx=S_n^x(r(s))$ are given by the map    $S_n^x: S^{loc}_n(s)=\{l(s), r(s)\}\to S^{loc}_n(r)$  from Lemma \ref{lem:stratinj} for the local $(n-1)$-stratum $x:r\to s$.

 In particular,  $\mathcal D_t=\bullet\sslash G_t$ for each $n$-stratum $t$ and  $\mathcal D_s=M_s\sslash G_{L(s)}\times G_{R(s)}$ for each $(n-1)$-stratum $s$. For an $(n-1)$-stratum $s$ that occurs in several local $(n-1)$-strata $q:r\to s$, the associated set $M_s$ appears several times as a factor in $M_r$. Likewise, if
an $n$-stratum $t$  corresponds to several local $n$-strata $q: r\to t$, the group $G_t$ appears several times  in $G_r$.

\begin{example} For the stratification in Example \ref{ex:locstrat} the action groupoid associated to the $0$-stratum $x$ is
$$\mathcal D_x=M_x\sslash G_x=(M_t^{\times 2}\times M_u^{\times 2}\times M_v^{\times 2}\times M_w^{\times 2})\sslash (G_S^{\times 3}\times G_T\times G_U^{\times 2}\times G_V\times G_W).$$
The factors in the set $M_x$ are in bijection with the local 1-strata at $x$ and the factors in the group $G_x$ with the local 2-strata at $x$. If
we index elements  by the associated local strata,  the  action for $\mathcal D_x$ reads
\begin{align*}
&m_{t_1}\mapsto g_T\rhd m_{t_1}\lhd g_{U_1}^\inv & &m_{t_2}\mapsto g_T\rhd m_{t_2}\lhd g_{U_2}^\inv & &m_{u_1}\mapsto g_{U_1}\rhd m_{u_1}\lhd g_{S_3}^\inv & &m_{u_2}\mapsto g_{U_2}\rhd m_{u_2}\lhd g_{S_1}^\inv
\\
&m_{v_1}\mapsto g_{S_1}\rhd m_{v_1}\lhd g_V^\inv & &m_{v_2}\mapsto g_{S_2}\rhd m_{v_2}\lhd g_V^\inv
& &m_{w_1}\mapsto g_{S_2}\rhd m_{w_1}\lhd g_W^\inv & &g_{w_2}\mapsto g_{S_3}\rhd m_{w_2}\lhd g_W^\inv.
\end{align*}

\end{example}

\textbf{Functors for local strata:}
For a local $l$-stratum $p:u\to v$, the   maps 
$$S_n^p: S^{loc}_n(v)\to S^{loc}_n(u)\qquad S_{n-1}^p: S^{loc}_{n-1}(v)\to S^{loc}_{n-1}(u)$$ from Lemma \ref{lem:stratinj} assign
to each local $n$-stratum  $q:v\to t$  a local $n$-stratum  $q^p:=S_{n}^p(q): u\to t$ and to each  local $(n-1)$-stratum $x:v\to s$  a local $(n-1)$-stratum $x^p:=S_{n-1}^p(x):u\to s$. 
This defines 
 a map $f_p:  M_u\to  M_v$   and
  a group homomorphism $\phi_p:  G_u\to G_v$, given by the commuting diagrams  
\begin{align}\label{eq:fpdef}
\xymatrix{ \displaystyle \prod_{S_{n-1}^{loc}(u)\ni x':u\to s} \!\!\!\!\!\!\!\! M_s \ar[rd]_{\pi_{x^p}} \ar[rr]^{f_p} & & \displaystyle  \prod_{S_{n-1}^{loc}(v)\ni x: v\to s} \!\!\!\!\!\!\!\! M_s\ar[ld]^{\pi_{x}}\\
& M_s
}\qquad
\xymatrix{ \displaystyle \prod_{S_{n}^{loc}(u)\ni q':u\to t} \!\!\!\!\!\!\!\! G_t \ar[rd]_{\pi_{q^p}} \ar[rr]^{\phi_p} & & \displaystyle  \prod_{S_{n}^{loc}(v)\ni q: v\to t} \!\!\!\!\!\!\!\! G_r\ar[ld]^{\pi_{q}}\\
& G_t
}
\end{align}
 for all local $(n-1)$-strata $x: v\to s$ and $n$-strata $q: v\to t$.
As the maps  from Lemma \ref{lem:stratinj} satisfy 
\begin{align}\label{eq:snpid}
S_n^p\circ S^x_{n-1}=S_n^{x^p}: S_n^{loc}(s)\to S_n^{loc}(u)
\end{align}
for all local $(n-1)$-strata $x:v\to s$, 
 and by definition of the action in \eqref{eq:grpdactdef} the
  diagram
\begin{align}\label{eq:locstrat}
\xymatrix{
G_u\times M_u \ar[rrrr]^{\langle\phi_p,f_p\rangle} \ar[rrd]^{\quad\langle \pi_{l(x^p)}, \pi_{r(x^p)},\pi_{x^p} \rangle } \ar[ddd]_{\rhd} & & & &  G_v\times M_v \ar[ddd]^{\rhd} \ar[lld]_{\langle \pi_{lx},\pi_{rx},\pi_x\rangle\quad}\\
& & G_{L(s)}\times G_{R(s)}\times M_s \ar[d]^\rhd\\
& & M_s\\
M_u \ar[rrrr]_{f_p}\ar[rru]^{\pi_{x^p}} && & & M_v\ar[llu]_{\pi_x}
}
\end{align}
 commutes for all local $l$-strata $p: u\to v$, where $l(x^p)=S_n^{x^p}(l(s))$, $r(x^p)=S_n^{x^p}(r(s))$, $lx=S_n^x(l(s))$ and $rx=S_n^x(r(s))$. The lower triangle commutes by \eqref{eq:fpdef}, the quadrilaterals by \eqref{eq:grpdactdef} and the upper triangle by definition of $\phi_p$ and $f_p$ in \eqref{eq:fpdef} and due to identity \eqref{eq:snpid}, which implies $l(x^p)=(lx)^p$ and $r(x^p)=(rx)^p$.

Applying the functor $\Act:\Mod\to \Grpd$ from Lemma \ref{lem:actfunctset}  yields a functor $D_p=\Act(\phi_p,f_p): \mathcal D_u\to\mathcal D_v$.

\textbf{Boundary stratifications:}  For each boundary $k$-stratum $s$ with associated $(k+1)$-stratum $t=\iota_k(s)$ from \eqref{eq:embedbound}, we have the  bijections  $\iota_{kj}: S^{loc}_{j}(s)\to S^{loc}_{j+1}(t)$ from \eqref{eq:boundinj-locstrat}. This implies $M_s=M_t$ and $G_s=G_t$ as well as $f_q=f_p$ and  $\phi_q=\phi_p$ for  local boundary $l$-strata $p:s\to u$ with  associated local $(l+1)$-strata $q:t\to v$. 

Combining these statements and formulating them  in terms of the  graded  graph $Q^X$, the associated category $\maq^X$  and the categories and functors from Definition \ref{def:mod} and Lemma \ref{lem:actfunctset} gives the following.

\begin{lemma} \label{cor:deltabound} An assignment of classical defect data to a compact stratified $n$-manifold $X$  defines 
a functor\linebreak $E^X: \maq^X\to\Mod$ that assigns
\begin{compactitem}
\item to each $k$-stratum $u$ of $X$ the pair $(G_u,M_u)$ defined by \eqref{eq:groupoiddef} and \eqref{eq:grpdactdef},
\item to each local $(k+1)$-stratum $p:u\to v$ the pair $(\phi_p,f_p)$ given by \eqref{eq:locstrat},
\end{compactitem}
and a  functor $D^X=\Act E^X:\mathcal Q^X\to \Grpd$ that assigns
\begin{compactitem}
\item to each $k$-stratum $u$ of $X$ the action groupoid  $\mathcal D_u=\Act(G_u,M_u)=M_u\sslash  G_u$, 
\item to each local $(k+1)$-stratum $p:u\to v$ the functor $D_{p}=\Act(\phi_p,f_p): \mathcal D_u\to\mathcal D_v$.
\end{compactitem}
If $X$ has a boundary, then $E^{\partial X}=E^XI_\partial$ and $D^{\partial X}=D^XI_\partial$, where  $I_\partial:\mathcal Q^{\partial X}\to \mathcal Q^X$ is given in Lemma \ref{lem:inclusions are insertive}.
\end{lemma}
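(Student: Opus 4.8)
The plan is to exploit that $\maq^X=L(Q^X)$ is the \emph{free} category on the graph $Q^X$, so that by the adjunction $L\dashv R$ between $\Graph$ and $\Cat$ recalled in Section \ref{subsec:graphs}, a functor $E^X\colon\maq^X\to\Mod$ is the same datum as a graph morphism $Q^X\to R(\Mod)$ into the underlying graph of $\Mod$. Concretely, this means I only have to specify $E^X$ on vertices and on the generating edges and check that each generating edge is sent to a genuine morphism of $\Mod$; functoriality on arbitrary paths is then automatic, since a free category imposes no relations to verify. So first I would set $E^X(u)=(G_u,M_u)$ on each vertex (stratum) $u$, using \eqref{eq:groupoiddef} and \eqref{eq:grpdactdef}, and $E^X(p)=(\phi_p,f_p)$ on each generating edge (local $(k+1)$-stratum) $p\colon u\to v$, using \eqref{eq:fpdef}.

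The single non-formal point is to verify that $(\phi_p,f_p)$ is indeed a morphism $(G_u,M_u)\to(G_v,M_v)$ in the sense of Definition \ref{def:mod}. That $\phi_p$ is a group homomorphism and $f_p$ a map of sets is immediate from \eqref{eq:fpdef}, where both are defined componentwise through the projections of the products $G_u,G_v$ and $M_u,M_v$. The required equivariance $f_p(g\rhd m)=\phi_p(g)\rhd f_p(m)$ is exactly the commutativity of the outer boundary of diagram \eqref{eq:locstrat}, which has already been established in the preceding discussion (the lower triangle by \eqref{eq:fpdef}, the quadrilaterals by \eqref{eq:grpdactdef}, and the upper triangle by \eqref{eq:snpid}). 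This yields $E^X$, and then $D^X:=\Act\circ E^X$ is a functor as a composite of $E^X$ with the functor $\Act\colon\Mod\to\Grpd$ of Lemma \ref{lem:actfunctset}; unwinding the definition of $\Act$ reproduces $\mathcal D_u=M_u\sslash G_u$ and $D_p=\Act(\phi_p,f_p)$.

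For the boundary compatibility I would argue again at the level of generators. The insertion $i_\partial\colon Q^{\partial X}\to Q^X$ of Lemma \ref{lem:inclusions are insertive} sends each vertex (boundary $k$-stratum) $s$ to $\iota_k(s)$ and each generating edge (local boundary stratum) $p$ to its associated local stratum $q$ in $X$, and $I_\partial\colon\maq^{\partial X}\to\maq^X$ is the induced functor. The identities $M_s=M_t$, $G_s=G_t$, $f_q=f_p$, $\phi_q=\phi_p$ recorded just before the lemma, which come from the bijections $\iota_{kj}$ of \eqref{eq:boundinj-locstrat}, say precisely that $E^X\circ I_\partial$ and $E^{\partial X}$ take the same values on every vertex and every generating edge of $Q^{\partial X}$. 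Since $\maq^{\partial X}$ is free on $Q^{\partial X}$, two functors out of it that agree on generators coincide, giving $E^{\partial X}=E^XI_\partial$; composing with $\Act$ then yields $D^{\partial X}=D^XI_\partial$.

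I do not expect a genuine obstacle here: all the geometric content has been front-loaded into the commutativity of \eqref{eq:locstrat} and the boundary identities derived from \eqref{eq:boundinj-locstrat}, so the lemma is essentially an assembly statement. The only thing that must be got right is the reduction to generators afforded by the freeness of $\maq^X$ and $\maq^{\partial X}$, which is what makes functoriality and the boundary identity automatic rather than something requiring a separate compatibility check.
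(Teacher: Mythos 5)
Your proposal is correct and matches the paper's intended argument: the paper states this lemma without a separate proof, treating it as an assembly of the immediately preceding constructions (the definitions \eqref{eq:groupoiddef}--\eqref{eq:fpdef}, the commutativity of \eqref{eq:locstrat} giving the $\Mod$-morphism property, and the boundary identities from \eqref{eq:boundinj-locstrat}), with functoriality implicit in the freeness of $\maq^X$. Your explicit reduction to generators via the adjunction $L\dashv R$ is exactly the reasoning the paper leaves tacit, and it is carried out correctly.
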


The full assignment of defect data for a  stratified $n$-manifold $X$  is  obtained by supplementing the classical defect data with representations of  action groupoids and intertwiners between them.

\begin{definition} \label{def:defectdata} Let $X$ be a compact stratified $n$-manifold. 
 An \textbf{assignment of defect data} to  $X$  assigns
\begin{compactitem}
\item to each $n$-stratum $t$  a finite group $G_t$,
\item to each $(n-1)$-stratum $s$  a finite $G_{L(s)}\times G_{R(s)}^{op}$-set $M_s$,
\item to each $(n-2)$-stratum $u$   a representation $\rho_u:\mathcal D_u\to \Vect_\C$,
\item to each $(n-3)$-stratum $v$ an intertwiner $\sigma_v: \rho_v^{t}\Rightarrow\rho_v^{s}$, where 
\begin{align}\label{eq:rhovdef}
\rho^s_v=\bigotimes_{s(e): v\to e}\, \rho_{e}D_{s(e)}: \mathcal D_v\to \Vect_\C    \qquad\qquad
 \rho_v^t=\bigotimes_{t(e): v\to e}\, \rho_eD_{t(e)}: \mathcal D_v\to \Vect_\C.
\end{align}
\end{compactitem}
\end{definition}
The representations $\rho_v^s: \mathcal D_v\to \Vect_\C$ in \eqref{eq:rhovdef} are constructed as follows. We consider all outgoing edge ends at $v$, or, equivalently, local 1-strata $s(e): v\to e$ associated with  starting ends of  edges $e$ at $v$. Applying  $D^X:\maq^X\to\Grpd$ associates to each of them a functor $D_{s(e)}: \mathcal D_v\to\mathcal D_e$. Post-composing it with the representation $\rho_e: \mathcal D_e\to \Vect_\C$ yields a representation $\rho_e D_{s(e)}:\mathcal D_v\to \Vect_\C$, and taking the tensor product of these representations yields the representation $\rho^s_v:\mathcal D_v\to \Vect_\C$. The construction of the representation $\rho^t_v:\mathcal D_v\to \Vect_\C$ is analogous but considers  the \emph{incoming} instead of the \emph{outgoing} edge ends at $v$.

Note that  an assignment of defect data to a compact stratified $n$-manifold $X$ defines an assignment of defect data to $\partial X$. This allows one to extend the  notion of a cobordism to stratified manifolds with defect data. Recall from Section \ref{subsec:orstrat} that embeddings and isomorphisms of stratified manifolds are required to respect the orientations of the manifolds and of all strata and the framings of  the codimension 2-strata. 
For stratified manifolds with defect data, we also require that they
respect the defect data:

\begin{definition} Let $\Sigma_0,\Sigma_1$ be stratified surfaces with  defect data. 
A \textbf{defect cobordism} from $\Sigma_0$ to $\Sigma_1$ is a compact stratified  3-manifold $M$ with defect data, together with embeddings of stratified manifolds with defect data $\overline \Sigma_0 \xrightarrow{\iota_0} M \xleftarrow{\iota_1} \Sigma_1$, {that induces an isomorphism of stratified manifolds $\langle \iota_0, \iota_1\rangle: \overline \Sigma_0\amalg \Sigma_1 \to \partial M$.} 
\end{definition}

\begin{definition}\label{def:coddef} The \textbf{defect cobordism category} $\mathrm{Cob}^{\mathrm{def}}_3$ is the symmetric monoidal category with
\begin{compactitem}
\item stratified surfaces with  defect data as objects,
\item equivalence classes of defect cobordisms $ \overline \Sigma_0\xrightarrow{\iota_0} M \xleftarrow{\iota_1} \Sigma_1$  as morphisms from $\Sigma_0$ to $\Sigma_1$, where  $(M,\iota_0,\iota_1)\sim (M',\iota'_0,\iota'_1)$ 
 if there is an isomorphism of stratified manifolds $\phi: M\to M'$ that respects {the defect data} 
 and makes  the following diagram commute
\begin{align}\label{eq:equivcob}\vcenter{
\xymatrix{ &  M\ar[dd]^\phi_\cong &\\
\overline \Sigma_0 \ar[ru]^{\iota_0} \ar[rd]_{\iota'_0} & & \Sigma_1 \ar[lu]_{\iota_1} \ar[ld]^{\iota'_1}\\
& M',
}}
\end{align}
\item composition of morphisms by gluing of stratified manifolds with defect data,
\item the identity morphism on $\Sigma$  given by  $\Sigma\xrightarrow{\iota_0} \Sigma\times[0,1]\xleftarrow{\iota_1} \Sigma$,
where {$\iota_j: \Sigma\to \Sigma\times [0,1]$, $x\mapsto (x,j)$},  $\Sigma\times[0,1]$ has a $(k+1)$-stratum $s\times[0,1]$ for each $k$-stratum $s$ of $\Sigma$, labeled by the same data and {with the  induced  framing}, if 
$s$ is a stratum of codimension 2.

\item the symmetric monoidal structure given by disjoint unions.
\end{compactitem}
\end{definition}

Here and in the following, $\overline \Sigma$ denotes the surface $\Sigma$ with the opposite orientation. In the following, we suppress this orientation reversal when denoting a cobordism by $\Sigma_0\xrightarrow{\iota_0} M\xleftarrow{\iota_1} \Sigma_1$ for better legibility.

Given this definition of the defect cobordism category, we define  an oriented  3d defect TQFT as a symmetric monoidal functor from the defect cobordism category into the category of finite-dimensional vector spaces.

\begin{definition} A 3d \textbf{defect TQFT} is a symmetric monoidal functor $\mathcal Z: \mathrm{Cob}_3^{\mathrm{def}}\to \Vect_\C$.
\end{definition}

We compare our definitions of the defect cobordism category and the resulting notion of a defect TQFT to other versions of  defect cobordism categories   
considered by Carqueville, Meusburger and Schaumann in \cite{CMS},  by Carqueville, Runkel and Schaumann  in \cite{CRS,CRS2, CRS3}, by Carqueville, Mulevi\v{c}ius, Runkel, Schaumann and Scherl \cite{CMR+} and by Carqueville and M\"uller \cite{CM}.
The first difference is that we work with stratified PL manifolds, while these publications consider 
 smooth strata in topological manifolds. We prefer to work in the PL setting, firstly because this is the setting used in \cite{BMS} which is the basis of these constructions. More importantly, the constructions in Section \ref{sec:fundgrpd}, are  simpler and more intuitive in the PL setting. Performing them in the smooth setting would require more work and add technical complications.

The other substantial difference to the defect cobordism category used in \cite{CMS,CRS,CRS2,CRS3, CMR+} is the saturation condition on stratifications in Definition \ref{def:homog}, which is not imposed in those publications. Omitting the saturation condition leads to the appearance of  defect points that are not contained in the closures of 1- or 2-strata and of defect lines that are not contained in the closures of  2-strata. This forces one to either artificially restrict the defect data associated to those points and lines or to 
introduce additional framings by hand, see for instance the discussion on defect lines in \cite[Remark 5.9 (ii)]{CRS}.  

We feel that it is more natural  to exclude such defect lines and points. Firstly, because this avoids the mentioned issues with the framings and restrictions on the defect data. Secondly, the algebraic data on the higher-dimensional strata that contain a given stratum in their closures define the source and target of the higher categorical data on this stratum.  Finally, forbidding such strata does not restrict generality, as such strata can be realised by labelling the incident  strata with \emph{transparent defect data}, see Definition \ref{def:transpdef}.

Our framings from Definition \ref{def:framed} resemble the
$*$-decorations considered in \cite[Def.~5.4]{CRS}, but play a different role. As explained in Section \ref{subsec:orstrat}, we need the framing  to ensure that the bundles of local strata of isolated loops are trivial. As the gluing of cobordisms  respects the framing, 1-strata cannot glue to isolated loops with non-trivial bundles of local strata. 
In contrast, the $*$-decorations in \cite{CRS} are used to consistently assign algebraic data to 1-strata \cite[Remark 5.3]{CRS}. It is then shown that the resulting defect TQFT does not depend on the choice of these $*$-decorations, due to the algebraic properties of the defect data. We do not require the framing to assign defect data. However,  we do not know if our formalism, in particular Theorem \ref{th:classcobfunc}, extends to isolated loops with non-trivial bundles of local strata.

\section{Gauge configurations and gauge transformations}
\label{sec:gauge}

In this section, we associate to each stratified surface and each stratified cobordism with classical defect data a groupoid, the gauge groupoid of the surface or cobordism.  
Its objects are called gauge configurations and its morphisms gauge transformations. 
We show that this groupoid is an action groupoid. 

This groupoid is constructed as follows. We describe each compact stratified $n$-manifold $X$ by a graded graph $Q^X$ as in Section \ref{sec:grgraph}
 and   the associated  category $\maq^X$. We consider the functor $D^X:\maq^X\to \Grpd$ from Lemma \ref{cor:deltabound} 
 that encodes  the algebraic content of the theory, the classical defect data. 
In  Section \ref{sec:fundgrpd} we construct another functor, $T^X:\maq^X\to \Grpd$, that encodes the topological content of the theory,   fundamental groupoids associated to strata, and functors between them associated to local strata. 
Gauge configurations and gauge transformations  assign algebraic content to the topological content  and are given by  natural transformations from the functor $T^X$ to the functor $D^X$ and modifications between them.  

To implement this, we  introduce  in Section \ref{subsec:pack} a category whose objects are triples of a graded graph $Q$ and functors $D,T:\maq\to \Grpd$ and whose morphisms are given by  graph maps and certain natural transformations. The gauge groupoid of a stratified $n$-manifold $X$ is then defined and analysed in Section \ref{subsec:gaugegrpd}.  Sections \ref{subsec:redgaugegrpd} and \ref{subsec:geomdescript}  give a description of the gauge groupoids in terms of fundamental groupoids with basepoints  and a fully combinatorial description for fine stratifications.

\subsection{Fundamental groupoids associated with  stratifications}
\label{sec:fundgrpd}

We now associate to each compact  stratified $n$-manifold $X$ with associated graded graph $Q^X$ a functor $B^X: \maq^X\to \Top$ and a functor $T^X=\Pi_1B^X:Q^X\to \Grpd$. As these functors must also take into account the inclusions of lower-dimensional skeleta into  higher-dimensional ones, it is not possible to work directly with the  strata. Instead, we  associate to each stratum $t$  a  compact {PL manifold $\hat t$, whose interior coincides with $t$}, and to each local stratum $q:t\to u$ a {PL} map $\iota_q:\hat t\to\hat u$ as follows.

 Recall from Section \ref{subsec:strats} that each compact stratified $n$-manifold $X$  has a  triangulation $T$  that contains all skeleta of $X$ as subcomplexes. We fix such a triangulation $T$. When we consider  neighbourhoods $U_x$ as in Definition \ref{def:homog}, we always assume that they are chosen small enough such that each $k$-simplex of $T$ intersects at most one $k$-stratum of $U_x$.

\textbf{Spaces assigned to strata:} The spaces $\hat t$ assigned to $k$-strata $t$ are obtained by first cutting the $k$-skeleton of $X$ along the $(k-1)$-skeleton. 
More precisely, to construct  $\hat t$, we  
 glue all $k$-simplexes $\tau$ of $T$ with $\tau\cap t\neq \emptyset$ along those $(k-1)$-faces $\rho$ with $\rho\cap t\neq \emptyset$. 
This yields  a compact PL manifold with boundary $\hat t$ whose interior is PL homeomorphic to $t$. It is the geometric realisation of a finite simplicial complex.

Sending each $k$-simplex $\hat \tau$ of $\hat t$ to the associated $k$-simplex $\tau$ of $t$ yields a continuous map $f_t: \hat t\to\bar t$ that is injective in the interior of the simplexes. 
Neither the spaces $\hat t$ nor the maps $f_t$ depend on  $T$.

\textbf{Maps assigned to local strata:}
This construction assigns to each local $l$-stratum $q:s\to t$  a {PL map} $\iota_q:\hat s\to \hat t$ as follows.  
For any $k$-simplex $\sigma$ of $\bar s$ and any  $x\in \mathring \sigma$ we can choose a neighbourhood $U_x$ as in Definition \ref{def:homog}, such that each $l$-simplex $\tau$ of $\bar t$ intersects at most one $l$-stratum of $U_x$. Let  $\tau_1,\ldots, \tau_r$ be the $l$-simplexes of $\bar t$ that contain $\sigma$ as a $k$-face and satisfy $q'\cap \tau_j\neq \emptyset$ for 
the unique
$l$-stratum $q'$ of $U_x$ representing $q$. 
Denote by $\hat\tau_1,\ldots,\hat \tau_r$ the associated $l$-simplexes of $\hat t$ with $f_t(\hat \tau_j)=\tau_j$. The simplex $\sigma$ corresponds to a unique $k$-simplex $\hat \sigma$ of $\hat s$, and all $k$-faces $\hat\rho$  of $\hat \tau_1,\ldots,\hat \tau_m$ with $f_t(\hat\rho)=\sigma$  correspond to a single $k$-simplex $\hat\sigma'$ of $\hat t$. 
The assignments $\hat \sigma\to\hat \sigma'$ then define a PL map $\iota_q:\hat s\to\hat t$
that is independent of  $T$ and such that the following diagram commutes for
the inclusions $\iota_{st}:\bar s\to\bar t$ 
\begin{align}\label{eq:commtop}
\vcenter{\xymatrix{ \hat s\ar[d]_{f_s} \ar[r]^{\iota_q}  & \hat t \ar[d]^{f_t} \\
\bar s \ar[r]_{\iota_{st}} & \bar t.
}}
\end{align}

\textbf{Boundary strata:}
 Let $s$ be a  $k$-stratum of $\partial X$ and
 $t=\iota_k(s)$ the
 associated $(k+1)$-stratum of $X$ from \eqref{eq:embedbound}. Then for each $k$-simplex $\sigma$
in  $\partial X$ with $\sigma\cap s\neq \emptyset$ there is a unique 
$(k+1)$-simplex 
$\tau$ in $X$ with $\tau\cap t\neq \emptyset$ and $\tau\cap \partial X=\sigma$. Denote by $\hat \sigma$ and $\hat \tau$ the associated simplexes in $\hat s$ and $\hat t$. Sending $\hat \sigma$  to the corresponding  $k$-face of $\hat \tau$ defines a PL embedding 
 $\iota_{s\partial}: \hat s\to \hat t$ {that} does not depend on $T$.

For a local $l$-stratum $p: s\to u$ of $s$ and the associated local $(l+1)$-stratum $q: t\to v$ at $t$ from \eqref{eq:boundinj-locstrat} this yields  the 
following commuting diagram, where the arrows in the lower quadrilateral are the obvious inclusions
\begin{align}\label{eq:stratboundemb}
\xymatrix{ 
\hat u \ar[d]_{f_u} \ar@/^4ex/[rrr]^{\iota_{u\partial}} & \hat s \ar[l]^{\iota_p} \ar[d]_{f_s} \ar[r]_{\iota_{s\partial}} & \hat t\ar[d]^{f_t} \ar[r]_{\iota_q} & \hat v \ar[d]^{f_v}\\
\bar u \ar@/_4ex/[rrr]_{\iota_{uv}} & \bar s \ar[l]_{\iota_{su}}\ar[r]^{\iota_{st}} & \bar t \ar[r]^{\iota_{tv}} & \bar v.
}
\end{align}
We now describe these constructions in terms of the graded graphs $Q^X$ and $Q^{\partial X}$ and the associated categories $\maq^X$ and $\maq^{\partial X}$. 
The spaces $\hat s$ associated to the strata and the maps $\iota_q: \hat s\to \hat t$ associated to local strata $q:s\to t$ define a functor $B^X:\mathcal Q^X\to \Top$.
Likewise, the corresponding data for the boundary defines  a functor $B^{\partial X}:\maq^{\partial X}\to \Top$.  
The upper quadrilateral in the commuting diagram \eqref{eq:stratboundemb} states that the maps $\iota_{s\partial}: \hat s\to \hat t$  are natural with respect to the morphisms in $\maq^{\partial X}$. 
Denoting by $I_\partial: \mathcal Q^{\partial X}\to \mathcal Q^X$  the functor from Lemma \ref{lem:inclusions are insertive} induced by the insertion $i_\partial: Q^{\partial X}\to Q^X$,  we then obtain the following proposition.

\begin{proposition}\label{prop:tfunctor}  Let $X$ be a compact stratified $n$-manifold. 
There is a functor $B^X:\mathcal Q^X\to \Top$ that assigns
\begin{compactitem}
\item to a $k$-stratum $s$ of $X$ the space $\hat s$,
\item to a local $(k+1)$-stratum $q:s\to t$ the map $\iota_q:\hat s\to\hat t$ from \eqref{eq:commtop}.
\end{compactitem}
Post-composition with  $\Pi_1: \Top\to\Grpd$ yields the functor $T^X=\Pi_1B^X:\mathcal Q^X\to \Grpd$ that assigns
\begin{compactitem}
\item to a $k$-stratum $s$ the groupoid $\Pi_1(\hat s)$,
\item to a local $(k+1)$-stratum $q: s\to t$  the functor $\Pi_1(\iota_q): \Pi_1(\hat s)\to \Pi_1(\hat t)$.
\end{compactitem}
The maps from \eqref{eq:stratboundemb} define natural transformations $\beta^X: B^{\partial X}\Rightarrow B^XI_\partial$ and $\tau^X=\Pi_1\beta^X: T^{\partial X}\Rightarrow T^XI_\partial$
with 
\begin{align}\label{eq:tnatdef}
&\beta^X_s= \iota_{s\partial}: \hat s \to  \hat t, &
&\tau^X_s=\Pi_1(\iota_{s\partial}): \Pi_1(\hat s)\to \Pi_1(\hat t).
\end{align}
\end{proposition}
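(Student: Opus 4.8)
The plan is to exploit that $\maq^X = L(Q^X)$ is the free category on the graph $Q^X$, so that by the universal property of $L$ (the adjunction $L \dashv R$ of Section \ref{subsec:graphs}) a functor out of $\maq^X$ is completely determined by, and freely generated from, an assignment of an object of the target category to each vertex of $Q^X$ and a morphism to each edge, compatible with sources and targets, with no relations left to verify. Concretely, the vertices of $Q^X$ are the strata $s$, to which I assign the spaces $\hat s$ constructed above, and the edges are the local $(k+1)$-strata $q: s\to t$, to which I assign the maps $\iota_q: \hat s \to \hat t$ from \eqref{eq:commtop}. Since each $\iota_q$ is, by the preceding construction, a PL and hence continuous map with source $\hat s$ and target $\hat t$ for $q: s\to t$, the source and target conditions hold automatically, and the universal property yields a unique functor $B^X:\maq^X\to\Top$ with the stated values. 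First I would therefore merely collect the constructions of the $\hat s$ and $\iota_q$ and package them through the adjunction.

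For the second part I would post-compose with $\Pi_1:\Top\to\Grpd$. As a composite of functors, $T^X := \Pi_1 B^X$ is again a functor, and by construction its values on a $k$-stratum $s$ and on a local $(k+1)$-stratum $q:s\to t$ are $\Pi_1(\hat s)$ and $\Pi_1(\iota_q)$, exactly as claimed; nothing further is required here.

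For the natural transformations I would use freeness once more, now to reduce naturality to the generating morphisms. The components $\beta^X_s = \iota_{s\partial}: \hat s \to \hat t$, with $t = \iota_k(s) = I_\partial(s)$, are PL maps, and since $B^X I_\partial(s) = B^X(t) = \hat t$ they are morphisms of the correct type from $B^{\partial X}(s)$ to $B^X I_\partial(s)$. For an edge $p:s\to u$ of $Q^{\partial X}$ with associated local stratum $q = I_\partial(p): t\to v$ under \eqref{eq:boundinj-locstrat}, the naturality square for $\beta^X$ is precisely the upper quadrilateral of diagram \eqref{eq:stratboundemb}, which has already been shown to commute. Because $B^{\partial X}$ and $B^X I_\partial$ are both functors on the free category $\maq^{\partial X}$, commutativity on the generating edges propagates to all morphisms by pasting these squares along composites. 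Hence $\beta^X$ is a natural transformation, and whiskering with $\Pi_1$, i.e. setting $\tau^X := \Pi_1 \beta^X$, produces a natural transformation $T^{\partial X}\Rightarrow T^X I_\partial$ with components $\tau^X_s = \Pi_1(\iota_{s\partial})$.

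The point I would stress is that the genuine content lies not in the proposition itself but in the constructions that precede it: the well-definedness and triangulation-independence of the spaces $\hat s$ and of the maps $\iota_q$ and $\iota_{s\partial}$, together with the commuting diagrams \eqref{eq:commtop} and \eqref{eq:stratboundemb}. Once these are in hand, the statement follows formally from the universal property of the free category. The only verification internal to the proof is the reduction of naturality to generators, and here the sole potential subtlety — that distinct composites in $\maq^{\partial X}$ might impose additional compatibilities — is dissolved precisely by the freeness of $\maq^{\partial X}$, since naturality squares on generators automatically paste to give naturality on arbitrary composites.
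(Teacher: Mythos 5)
Your proposal is correct and matches the paper's approach: the paper states Proposition~\ref{prop:tfunctor} without a separate proof, since, exactly as you argue, the functors $B^X$ and $B^{\partial X}$ are freely generated from the assignments $s\mapsto\hat s$, $q\mapsto\iota_q$ on the graphs $Q^X$, $Q^{\partial X}$, and naturality of $\beta^X$ is checked on generating edges via the commuting upper quadrilateral of diagram~\eqref{eq:stratboundemb}, with $T^X$ and $\tau^X$ obtained by whiskering with $\Pi_1$. Your emphasis that the real content lies in the preceding constructions (well-definedness and triangulation-independence of $\hat s$, $\iota_q$, $\iota_{s\partial}$ and the diagrams~\eqref{eq:commtop} and~\eqref{eq:stratboundemb}) is precisely the paper's implicit stance.
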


\subsection{Categorical description}
\label{subsec:pack}

We  now introduce a category that encodes  the graded graphs $Q^X$  for compact stratified $n$-manifolds $X$ and the associated functors $D^X,T^X:\maq^X\to \Grpd$ from Lemma \ref{cor:deltabound} and Proposition \ref{prop:tfunctor}. This category must take into account the relation between the data for  $X$  and  the data for its boundary $\partial X$. 
While the functors $D^X$ and $D^{\partial X}=D^XI_\partial$ are directly related by the  functor $I_\partial:\maq^{\partial X}\to \maq^X$ from Lemma \ref{cor:deltabound}, the functors $T^X$ and $T^{\partial X}$  involve an additional  natural transformation $\tau^X: T^{\partial X}\Rightarrow T^XI_\partial$ given in \eqref{eq:tnatdef}. Thus, the morphisms in this category are  given by graph maps, together with  natural transformations.

\begin{definition} \label{def:pack}We denote by $\pack$ the category whose 
\begin{compactitem}
\item objects are triples $(Q, T, D)$ of a graph $Q$  and functors $T,D:\mathcal Q\to \Grpd$,
\item  morphisms from $(Q, T, D)$  to $(Q',T',D')$ are pairs $(f, \tau)$ of a graph map $f: Q\to Q'$ with associated functor $F:\mathcal Q\to\mathcal Q'$ satisfying  $D=D'F$, and a natural transformation $\tau\colon T\Rightarrow T'F$,
\item composition of morphisms is given by $(f',\tau')\circ (f,\tau)=\big (f'\circ f, (\tau' F)\circ \tau\big)$.
\end{compactitem}
\end{definition}

Inserting  functors $D,T:\maq\to \Grpd$  into the functor $\GRPd(-,-): \Grpd^{op}\times\Grpd\to \Grpd$ yields a functor $\GRPd(T,D):\maq^{op}\times\maq\to \Grpd$. 
By taking the ends of such functors we then obtain the following proposition.
\begin{proposition}\label{prop:gconfiggrp}There is a functor $G: \pack^{op}\to \Grpd$ that assigns
\begin{compactitem}
\item to an object $\mathcal P=(Q,T,D)$ the end of the functor $\GRPd(T,D): \mathcal Q^{op}\times \mathcal Q\to \Grpd$
$$
G(\mathcal P)=\int_{v\in Q_0} \GRPd\big(T(v),D(v)\big),
$$
\item to a morphism $\phi=(f,\tau):  \mathcal P\to\mathcal P'$ the induced morphism $G(\phi):  G(\mathcal P')\to G(\mathcal P)$ between ends.
\end{compactitem}
\end{proposition}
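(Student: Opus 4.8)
The plan is to build $G$ directly from the universal property of ends, which exist in $\Grpd$ because $\Grpd$ is complete (Section \ref{sec:groupoidbasic}). For an object $\mathcal P=(Q,T,D)$, the integrand $\GRPd(T,D)$ is the composite of $T^{\op}\times D\colon\maq^{\op}\times\maq\to\Grpd^{\op}\times\Grpd$ with the two-variable functor $\GRPd(-,-)\colon\Grpd^{\op}\times\Grpd\to\Grpd$, $(\H,\K)\mapsto\K^\H$, from Section \ref{sec:groupoidbasic}, so the end $G(\mathcal P)=\int_{v\in Q_0}\GRPd(T(v),D(v))$ is well defined. Concretely, an object of $G(\mathcal P)$ is a family of functors $(x_v\colon T(v)\to D(v))_{v\in\Ob\maq}$ satisfying the dinaturality condition $D(g)\circ x_v=x_w\circ T(g)$ for every morphism $g\colon v\to w$ in $\maq$, and I write $\pi_v\colon G(\mathcal P)\to\GRPd(T(v),D(v))$ for the wedge projections.

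For a morphism $\phi=(f,\tau)\colon\mathcal P\to\mathcal P'$ with induced functor $F\colon\maq\to\maq'$, $D=D'F$ and $\tau\colon T\Rightarrow T'F$, I would define $G(\phi)\colon G(\mathcal P')\to G(\mathcal P)$ by exhibiting a wedge from $G(\mathcal P')$ to the integrand of $G(\mathcal P)$. Using the identification $D(v)=D'(F(v))$, set
$$\lambda_v:=\GRPd(\tau_v,\id_{D(v)})\circ\pi'_{F(v)}\colon G(\mathcal P')\to\GRPd(T(v),D(v)),$$
where $\pi'_{v'}$ are the projections of $G(\mathcal P')$ and $\GRPd(\tau_v,\id)$ precomposes with the component $\tau_v\colon T(v)\to T'(F(v))$ (this is well typed since $\pi'_{F(v)}$ lands in $\GRPd(T'(Fv),D'(Fv))=\GRPd(T'(Fv),D(v))$). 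The wedge condition for $g\colon v\to w$ unwinds, after rewriting $D(g)=D'(F(g))$, into the dinaturality of $\pi'$ at $F(g)$ combined with the naturality square $T'(F(g))\circ\tau_v=\tau_w\circ T(g)$ of $\tau$. This is exactly where the two hypotheses $D=D'F$ and the naturality of $\tau$ are used, and I regard this verification as the technical heart of the argument, since it is what converts dinaturality over $\maq'$ into the required dinaturality over $\maq$. The universal property of the end then yields a unique functor $G(\phi)$ with $\pi_v\circ G(\phi)=\lambda_v$ for all $v$.

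Finally I would check functoriality, which becomes a routine projection computation thanks to the characterising equations $\pi_v\circ G(\phi)=\lambda_v$. For the identity $(\id_Q,\id_T)$ one has $F=\id$ and $\tau=\id$, so $\lambda_v=\pi_v$ and $G(\id)=\id$ by uniqueness. For a composite $\phi'\circ\phi=(f'f,(\tau'F)\circ\tau)$ with $\phi=(f,\tau)\colon\mathcal P\to\mathcal P'$ and $\phi'=(f',\tau')\colon\mathcal P'\to\mathcal P''$, I would compare $\pi_v\circ G(\phi'\circ\phi)$ with $\pi_v\circ G(\phi)\circ G(\phi')$. Since $((\tau'F)\circ\tau)_v=\tau'_{F(v)}\circ\tau_v$, the first equals $\GRPd(\tau'_{F(v)}\circ\tau_v,\id)\circ\pi''_{(F'F)(v)}$; the second expands via $\pi_v\circ G(\phi)=\lambda_v$ and $\pi'_{F(v)}\circ G(\phi')=\lambda'_{F(v)}=\GRPd(\tau'_{F(v)},\id)\circ\pi''_{(F'F)(v)}$ to the same expression, using that $\GRPd(-,\id)$ is contravariant so that $\GRPd(\tau_v,\id)\circ\GRPd(\tau'_{F(v)},\id)=\GRPd(\tau'_{F(v)}\circ\tau_v,\id)$. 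Uniqueness in the universal property then gives $G(\phi'\circ\phi)=G(\phi)\circ G(\phi')$, so $G\colon\pack^{\op}\to\Grpd$ is a contravariant functor as claimed.
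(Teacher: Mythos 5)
Your proposal is correct and follows essentially the same route as the paper's proof: you define $G(\phi)$ via the universal property of the end from the wedge obtained by precomposing the projections $\pi'_{F(v)}$ with the components $\tau_v$ (this is exactly what the paper's commuting diagram \eqref{eq:commend} encodes, using $D=D'F$ and the naturality of $\tau$ in the same way), and you establish functoriality by the same uniqueness argument the paper invokes for $(\id,\id)$ and for composites. The only difference is one of explicitness — you write out the wedge verification and the composite computation that the paper compresses into the diagram and a single closing sentence.
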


\begin{proof} We write $D(u)^{T(v)}=\GRPd\big(T(v),D(u)\big)$ 
and denote the morphisms that characterise the ends by 
$$e_v: G(\mathcal P)\to D(v)^{T(v)}\qquad \qquad e'_w: G(\mathcal P')\to D'(w)^{T'(w)}.$$  Then the functor $G(\phi): G(\mathcal P')\to G(\mathcal P)$ is induced by the universal property of the end via the following diagram that commutes for every morphism $h: u\to v$ in $\mathcal Q$
\begin{align}\label{eq:commend}\begin{gathered}
\xymatrix@C=40pt{
G(\mathcal P') \ar@{-->}[rd]^{G(\phi)}\ar[ddd]_{e'_{f(u)}} \ar[rrr]^{e'_{f(v)}}& & & D(v)^{T'(f(v))}\ar[ddd]^{D(1_{v})^{T'(f(h))}} \ar[ld]_{ D(1_v)^{(\tau_v)}}\\
& G(\mathcal P)\ar[d]_{e_u} \ar[r]^{e_v}& D(v)^{T(v)} \ar[d]^{D(1_v)^{T(h)}}\\
& D(u)^{T(u)} \ar[r]_{D(h)^{T(1_u)}}& D(v)^{T(u)}\\
D(u)^{T'(f(u))}\ar[rrr]_{D(h)^{T'(1_{f(u)})}}   \ar[ru]^{ D(1_u)^{(\tau_u)}} & & & D(v)^{T'(f(u))}. \ar[lu]_{D( 1_v)^{(\tau_u)}}
}
\end{gathered}
\end{align}
Functoriality follows  by considering this  for composites in $\pack$ and for $(\id,\id): (Q,T,D)\to (Q,T,D)$.
\end{proof}

\begin{remark}\label{rem:gfunc} The functor $G(\phi): G(\mathcal P')\to G(\mathcal P)$  for a morphism $\phi=(f,\tau):(Q,T,D) \to (Q',T',D')$ from diagram \eqref{eq:commend}
can be described equivalently as the  following composite
\begin{align*}
    &\int_{x' \in \maq'} \GRPd\big(T'(x'),D'(x')\big)
    \xrightarrow{\End_{\maq'}\big(\GRPd(\hat \tau,D')\big)}\\
     &\int_{x'\in \maq'} \GRPd\big(\Lan_F T(x'), D'(x')\big)
    \cong \int_{x \in \maq} \GRPd\big(T(x),D'F(x)\big)= \int_{x \in \maq} \GRPd\big(T(x),D(x)\big).  
\end{align*}
Here, $\hat{\tau}: \Lan_F T\Rightarrow T'$ is induced by 
$\tau: T\Rightarrow T' F$ via the universal property of the left Kan extension, and we applied Lemma \ref{lem:enriched Kan prop} for $\V=\Grpd$. 
\end{remark}

We  give a more explicit construction of the groupoid $G(\mathcal{P})$. Recall from Section \ref{sec:groupoidbasic} the interval groupoid $I$. Taking its tensor product with the identity functor with respect to the cartesian monoidal structure of $\Grpd$ defines
 a functor $(-)\times I : \Grpd \to \Grpd$. Its composites with  a functor $T: \maq \to \Grpd$ and with a natural transformation $\tau: T\Rightarrow T'$ are   denoted  $T\times I=(-\times I)T:\maq\to\Grpd$ and $\tau\times I=(-\times I)\tau: T\times I\Rightarrow T'\times I$. The inclusions $\iota^0_\G,\iota^1_\G:\G\to\G\times I$ for each groupoid $\G$ define natural transformations $\iota^0,\iota^1: T\Rightarrow T\times I$.  Recall also from Section \ref{sec:groupoidbasic} that functors $F: \mathcal G\times I\to \mathcal H$ correspond to  natural transformations between the functors $F^0=F\iota^0,F^1=F\iota^1: \mathcal G\to\mathcal H$. We thus denote them by the same letters.

\begin{lemma} \label{lem:gaugesimplify}$\quad$ 
\begin{compactenum}
\item The groupoid $G(Q,T,D)$ for an object $(Q,T,D)$ from Proposition \ref{prop:gconfiggrp} has 
\begin{compactitem}
\item as objects natural transformations, $A\colon T\Rightarrow D$,
\item as morphisms $\gamma: A^0\Rrightarrow A^1$ natural transformations $\gamma: T\times I\Rightarrow D$ with $\gamma\circ \iota^0=A^0$ and $\gamma\circ \iota^1=A^1$ 
 \begin{align}\label{eq:gaugetrafodiag0}\begin{gathered}
 \xymatrix{ T\ar@{=>}[rd]_{A^0}\ar@{=>}[r]^{\iota^0} & T\times I \ar@{=>}[d]^\gamma & \ar@{=>}[l]_{\quad \iota^1} T \ar@{=>}[ld]^{A^1}\\
 & D.
 }
 \end{gathered}
 \end{align}
 \end{compactitem}
 \item The functor $G(f,\tau): G(Q',T',D')\to G(Q,T,D)$ 
  for a morphism $(f,\tau): (Q,T,D)\to (Q',T',D')$ sends
 \begin{compactitem}
\item  a natural transformation $A': T'\Rightarrow D'$ 
to the natural transformation $ (A'F)\circ \tau: T\Rightarrow D$, 
\item a natural transformation $\gamma': T'\times I\Rightarrow D'$   
to $\gamma=(\gamma'F)\circ (\tau\times I): T\times I\Rightarrow D$. 
\end{compactitem}
 \end{compactenum}
\end{lemma}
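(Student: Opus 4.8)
The plan is to unwind the definition of the end $G(\mathcal P)=\int_{v\in Q_0}\GRPd\bigl(T(v),D(v)\bigr)$ as a universal wedge in $\Grpd$ and to read off its objects and its morphisms separately. For the objects I would apply the defining dinaturality (wedge) condition to objects: a wedge into the functor $\GRPd(T(-),D(-))\colon\maq^{op}\times\maq\to\Grpd$ is a family of functors $A_v\colon T(v)\to D(v)$, one for each $v\in Q_0$, and the wedge condition at a morphism $h\colon u\to v$ of $\maq$ reads $D(h)\circ A_u=A_v\circ T(h)$. This is precisely the data and the naturality squares of a natural transformation $A\colon T\Rightarrow D$, so the universal property of the end identifies $\Ob G(\mathcal P)$ with natural transformations $T\Rightarrow D$. (Since $\maq=L(Q)$ is freely generated one could impose these squares only on the edges of $Q$, but this is not needed for the statement.)

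For the morphisms I would use that the morphisms of any groupoid $\mathcal E$ are the objects of $\mathcal E^I=\GRPd(I,\mathcal E)$, together with the fact that $(-)^I=\GRPd(I,-)$ is a right adjoint to $(-)\times I$ and hence preserves all limits, in particular ends. Combining this with the tensor--hom isomorphism \eqref{eq:tensor-hom-grpd} yields a chain of isomorphisms, natural in $\mathcal P$,
\[
G(\mathcal P)^I\;\cong\;\int_{v\in Q_0}\bigl(D(v)^{T(v)}\bigr)^I\;\cong\;\int_{v\in Q_0}D(v)^{T(v)\times I}\;=\;G(Q,T\times I,D),
\]
where I use that the composite of $T$ with $(-)\times I\colon\Grpd\to\Grpd$ is the functor $T\times I$ introduced before the lemma. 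Applying the objects part already established to the triple $(Q,T\times I,D)$ then identifies $\Ob G(Q,T\times I,D)$, and hence $\Mor G(\mathcal P)$, with natural transformations $\gamma\colon T\times I\Rightarrow D$. Under this identification the source and target functors $p_0,p_1\colon\mathcal E^I\to\mathcal E$ correspond to precomposition with the inclusions $\iota^0,\iota^1\colon T\Rightarrow T\times I$, so that the conditions $\gamma\circ\iota^0=A^0$ and $\gamma\circ\iota^1=A^1$ of diagram \eqref{eq:gaugetrafodiag0} fall out. The one point requiring care is verifying that $p_0,p_1$ really transport to $(\iota^0)^*,(\iota^1)^*$ under the exponential law; this amounts to the interval correspondence recalled in Section~\ref{sec:groupoidbasic}, and I expect this bookkeeping to be the main (though essentially routine) obstacle.

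For the second part I would read off the explicit form of $G(f,\tau)$ from the commuting diagram \eqref{eq:commend} in the proof of Proposition~\ref{prop:gconfiggrp}. Composing the projection $e'_{f(v)}$ with the exponential arrow $D(1_v)^{(\tau_v)}$, which is precomposition with $\tau_v\colon T(v)\to T'(f(v))$, shows that the $v$-component of $G(f,\tau)(A')$ is $A'_{f(v)}\circ\tau_v$; since $(A'F)_v=A'_{F(v)}$, this is exactly the $v$-component of $(A'F)\circ\tau$, giving the stated action on objects. The action on morphisms then follows by applying the objects case to the triple $(Q,T\times I,D)$ and the $\pack$-morphism $(f,\tau\times I)\colon(Q,T\times I,D)\to(Q',T'\times I,D')$ --- noting $(T'\times I)F=(T'F)\times I$ and $D=D'F$ --- and transporting along the isomorphism $G(\mathcal P)^I\cong G(Q,T\times I,D)$ of the first part, using its naturality in $\mathcal P$. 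This yields $\gamma'\mapsto(\gamma'F)\circ(\tau\times I)$, as claimed; alternatively the same formula can be checked directly from \eqref{eq:commend} at the level of the arrow groupoids.
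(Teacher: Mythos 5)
Your proof is correct, and for the objects and for the object part of statement~2 it coincides with the paper's argument: the paper likewise unwinds the end as the subgroupoid of $\prod_{v\in Q_0}D(v)^{T(v)}$ equalising pre- and post-composition, identifies objects with natural transformations $A\colon T\Rightarrow D$, and reads the formula $(A'F)\circ\tau$ off the commuting rectangles of diagram \eqref{eq:commend}, exactly as you do. Where you genuinely diverge is in the treatment of morphisms. The paper proceeds elementwise: a morphism of the end is a family $(\gamma_v)_{v\in Q_0}$ of natural transformations $\gamma_v\colon A^0_v\Rightarrow A^1_v$ subject to $D(h)\gamma_u=\gamma_v T(h)$ for all edges $h\colon u\to v$, which is then repackaged by hand as a single $\gamma\colon T\times I\Rightarrow D$ satisfying \eqref{eq:gaugetrafodiag0}; the morphism formula in statement~2 is again read off directly from \eqref{eq:commend}. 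You instead argue formally: since $(-)^I=\GRPd(I,-)$ is right adjoint to $(-)\times I$ it preserves ends, and the exponential law \eqref{eq:tensor-hom-grpd} gives $G(\mathcal P)^I\cong G(Q,T\times I,D)$, reducing the morphism case to the already-established object case, with the morphism formula of statement~2 obtained by applying the object formula to the $\pack$-morphism $(f,\tau\times I)$ and invoking naturality of the isomorphism. Both routes are sound. Your approach buys economy and robustness (the morphism half and the functor formula come for free from the object half, with no componentwise verification), at the cost of the transport bookkeeping you correctly flag — checking that $p_0,p_1$ correspond to precomposition with $\iota^0,\iota^1$ under the exponential law, which is the interval correspondence recalled in Section~\ref{sec:groupoidbasic} and is indeed routine. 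The paper's elementwise description buys something too: the explicit componentwise conditions $D(h)\gamma_u=\gamma_v T(h)$ are exactly what is used downstream in Lemma~\ref{lem:gexplicit} and Proposition~\ref{eq:gaugeconc}, so the paper's proof doubles as the computation needed later, whereas your argument would still require extracting those components afterwards.
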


\begin{proof}  1.~The functor ${\GRPd(T,D)}: \mathcal Q^{op}\times\mathcal Q\to \Grpd$   in Proposition \ref{prop:gconfiggrp} assigns to a {pair of  vertices $(v,v')\in Q_0\times Q_0$ the groupoid $\Grpd\big(T(v), D(v')\big)=D(v')^{T(v)}$} with functors $A_{v,v'}: T(v)\to D(v')$ as objects and natural transformations $\gamma_{v,v'}: A^0_{v,v'}\Rightarrow A^1_{v,v'}$ as morphisms from $A^0_{v,v'}$ to $A^1_{v,v'}$. 

Its end $\End_{\maq}\GRPd(T,D)$  is the subgroupoid of the product $\prod_{v\in Q_0} D(v)^{T(v)}$ that equalises post-composition with $D(h)$ and pre-composition with $T(h)$ for all edges $h: u\to v$ in $Q$. As $\Ob: \Grpd \to \Set$ preserves limits, objects of $\End_{\maq}\GRPd(T,D)$
are families $(A_v)_{v\in Q_0}$ of functors $A_v: T(v)\to D(v)$ such that the diagram
\begin{align}\label{eq:comm0}
\vcenter{\xymatrix{ T(u) \ar[d]_{A_u}\ar[r]^{T(h)} &T(v) \ar[d]^{A_v}\\
 D(u)\ar[r]_{D(h)} & D(v)
}}
\end{align}
commutes for all edges $h:u\to v$ in $Q$. This is  a  natural transformation $A: T\Rightarrow D$.

 Morphisms from $A^0: T\Rightarrow D$ to $A^1: T\Rightarrow D$ are families $(\gamma_v)_{v\in Q^X_0}$ of natural transformations $\gamma_v: A^0_v\Rightarrow A^1_v$ such that $D(h) \gamma_u=\gamma_v  T(h)$ for all edges $h: u\to v$. This is 
 a natural transformation $\gamma: T\times  I\Rightarrow D$  such that diagram \eqref{eq:gaugetrafodiag0} commutes.
 
2.~The morphisms $e_v: G(Q,T,D)\to D(v)^{T(v)}$ that characterise the end assign to a natural transformation $A: T\Rightarrow D$ the functor $A_v: T(v)\to D(v)$. 
The commuting rectangles on top and on the left in diagram \eqref{eq:commend} then state that $(A'F)_v\circ  \tau_v=A_v$ and $\gamma'_{f(v)}\circ (\tau_v\times\id_I)=\gamma_v$  for $A=G(f,\tau)A'$.
\end{proof}

\begin{remark} We will use  the concrete formulas from the proof of Lemma \ref{lem:gaugesimplify}. Note  that the objects of the groupoid can  be obtained more easily using that  the functor $\Ob: \Grpd \to \Set$ is a right adjoint:  \begin{align*}\Ob\int_{x \in \maq} \GRPd\big(T(x),D(x)\big)\cong\int_{x \in \maq} \Ob\, \GRPd\big(T(x),D(x)\big)=\int_{x \in \maq} \hom_\Grpd\big(T(x),D(x)\big)\cong{\rm Nat}(T,D). \end{align*}\end{remark}

\subsection{The groupoid of gauge configurations and gauge transformations}
\label{subsec:gaugegrpd}

A gauge configuration on a compact stratified $n$-manifold $X$ with defect data should relate the \emph{topological content} of the stratification to its \emph{algebraic} content. 
The stratification is encoded in the graph $Q^X$ from Definition \ref{def:embquiv}.
The topological content is encoded in the fundamental groupoids assigned to different strata and the functors between them. It is given by the functor $T^X: \mathcal Q^X\to\Grpd$ from Proposition \ref{prop:tfunctor}. The algebraic content is given by the defect data for the strata and encoded in the functor $D^X:\mathcal Q^X\to \Grpd$ from Lemma \ref{cor:deltabound}. 
The  groupoid  that relates this data was constructed in Proposition \ref{prop:gconfiggrp} and Lemma \ref{lem:gaugesimplify}.

\begin{definition}\label{def:gaugegrpd} Let $X$ be a compact stratified $n$-manifold  with  classical defect data. 
The \textbf{gauge groupoid}  on $X$, with  \textbf{gauge configurations} as objects and \textbf{gauge transformations} as morphisms,
 is the groupoid
$$G\big(Q^X,T^X,D^X\big):=\int_{s \in \maq^X} \GRPd\big(T^X(s),D^X(s)\big)=\int_{s \in \maq^X} \mathcal{D}_s^{\Pi_1 (\hat{s})}. $$
For each stratum $t$ of $X$  the end defines a functor
\begin{align*}
P_t: \int_{s \in \maq^X} \mathcal{D}_s^{\Pi_1 (\hat{s})} \to \mathcal{D}_t^{\Pi_1 (\hat{t})}.
\end{align*}
\end{definition}

It follows 
from  Definition \ref{def:gaugegrpd}  that
gauge configurations and  transformations on a compact stratified $n$-manifold $X$ restrict to gauge configurations  and  transformations on its boundary $\partial X$.
 By Proposition \ref{cor:deltabound} the  defect data on $X$ defines the defect data of $\partial X$ and  a  functor 
$D^{\partial X}=D^XI_\partial: \mathcal Q^{\partial X}\to \Grpd$, where   $I_\partial: \mathcal Q^{\partial X}\to \mathcal Q^X$ is the functor from Lemma \ref{lem:inclusions are insertive}.  
Proposition \ref{prop:tfunctor} defines a natural transformation $\tau^X: T^{\partial X}\Rightarrow T^XI_\partial$. 
 Proposition \ref{prop:gconfiggrp} and Lemma \ref{lem:gaugesimplify} then imply the following,  in the notation introduced before Lemma \ref{lem:gaugesimplify}.
 
 \begin{corollary}\label{prop:projfunc} Let $X$ be a stratified $n$-manifold with  classical defect data. There is a restriction functor 
  $P_\partial: G(Q^X,T^X,D^X)\to G(Q^{\partial X}, T^{\partial X}, D^{\partial X})$ 
 that assigns
\begin{compactitem}
\item to a gauge configuration $A^X: T^X\Rightarrow D^X$ on $X$ the boundary gauge configuration 
$$A^{\partial X}=(A^XI_\partial)\circ \tau^X: T^{\partial X}\Rightarrow D^{\partial X},$$
\item to a gauge transformation $\gamma^X: A^X\Rrightarrow A'^X$ on $ X$  the boundary gauge transformation
$$\gamma^{\partial X}=\gamma^XI_\partial\circ (\tau^X\times I) : A^{\partial X}\Rrightarrow A'^{\partial X}.$$
\end{compactitem}
 \end{corollary}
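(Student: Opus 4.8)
The plan is to exhibit the restriction functor $P_\partial$ as the image, under the contravariant functor $G:\pack^{op}\to\Grpd$ of Proposition~\ref{prop:gconfiggrp}, of a single canonical morphism in the category $\pack$ of Definition~\ref{def:pack}. Concretely, I would package the boundary data as the pair $(i_\partial,\tau^X)$, where $i_\partial:Q^{\partial X}\to Q^X$ is the insertion from Lemma~\ref{lem:inclusions are insertive} and $\tau^X:T^{\partial X}\Rightarrow T^XI_\partial$ is the natural transformation from Proposition~\ref{prop:tfunctor}, and argue that this constitutes a morphism $(Q^{\partial X},T^{\partial X},D^{\partial X})\to(Q^X,T^X,D^X)$ in $\pack$.

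First I would check the two defining conditions of a $\pack$-morphism for the pair $(i_\partial,\tau^X)$. The graph map $i_\partial$ has associated functor $I_\partial:\maq^{\partial X}\to\maq^X$, and the required identity $D^{\partial X}=D^XI_\partial$ on the algebraic content is precisely the final statement of Lemma~\ref{cor:deltabound}. The required natural transformation $\tau\colon T^{\partial X}\Rightarrow T^XI_\partial$ on the topological content is exactly $\tau^X$, supplied by Proposition~\ref{prop:tfunctor}. Hence $(i_\partial,\tau^X)$ is a well-defined morphism in $\pack$, and no separate verification of naturality is needed, as both ingredients have already been established.

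Next I would apply the functor $G$ from Proposition~\ref{prop:gconfiggrp}. Since
$$
G(Q^X,T^X,D^X)=\int_{s\in\maq^X}\GRPd\big(T^X(s),D^X(s)\big),\qquad G(Q^{\partial X},T^{\partial X},D^{\partial X})=\int_{s\in\maq^{\partial X}}\GRPd\big(T^{\partial X}(s),D^{\partial X}(s)\big)
$$
are precisely the two gauge groupoids of Definition~\ref{def:gaugegrpd}, the induced morphism $G(i_\partial,\tau^X):G(Q^X,T^X,D^X)\to G(Q^{\partial X},T^{\partial X},D^{\partial X})$ is a functor between gauge groupoids, which I would define to be $P_\partial$. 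It then remains only to identify its action on objects and morphisms, and this is read off directly from part~2 of Lemma~\ref{lem:gaugesimplify}, applied to $(f,\tau)=(i_\partial,\tau^X)$: a gauge configuration $A^X:T^X\Rightarrow D^X$ is sent to $(A^XI_\partial)\circ\tau^X$, and a gauge transformation $\gamma^X:T^X\times I\Rightarrow D^X$ to $(\gamma^XI_\partial)\circ(\tau^X\times I)$, matching the formulas in the statement verbatim.

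The argument is essentially formal once the morphism in $\pack$ has been identified, so there is no serious obstacle. The only point requiring care is the source-target bookkeeping: one must confirm that the inputs $D^{\partial X}=D^XI_\partial$ and $\tau^X:T^{\partial X}\Rightarrow T^XI_\partial$ orient $(i_\partial,\tau^X)$ from the \emph{boundary} object to the \emph{bulk} object, so that contravariance of $G$ delivers a functor in the restriction direction $G(Q^X,\ldots)\to G(Q^{\partial X},\ldots)$ rather than its reverse.
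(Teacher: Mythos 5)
Your proposal is correct and matches the paper's own argument: the paper likewise obtains $P_\partial$ by observing that $(i_\partial,\tau^X)$, with $D^{\partial X}=D^XI_\partial$ from Lemma \ref{cor:deltabound} and $\tau^X$ from Proposition \ref{prop:tfunctor}, is a morphism in $\pack$, and then applying the contravariant functor $G$ of Proposition \ref{prop:gconfiggrp} together with part 2 of Lemma \ref{lem:gaugesimplify} to read off the stated formulas. Your closing remark on the contravariant orientation is exactly the right bookkeeping point, and it is consistent with the end-theoretic description the paper records in Remark \ref{rem:boundproj}.
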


\begin{remark} \label{rem:boundproj} As in Remark \ref{rem:gfunc},
the functor $P_\partial$  
from Corollary \ref{prop:projfunc} can also be constructed  with ends as the following composite
\begin{align*} G(Q^X,T^X,D^X)
& =    \int_{t \in \maq^X} \!\!\!\! \GRPd\big(T^X(t),D^X(t)\big)
    \xrightarrow{\End_{\maq^X}\big(\GRPd(\hat{\tau}_X, D^X)\big)} \int_{t \in \maq^X}  \!\!\!\!  \GRPd\big(\Lan_{I_\partial} T^{\partial X} (t) ,D^X (t) \big)\\
    & \cong \int_{s \in \maq^{\partial X}}  \!\!\!\!  \GRPd\big(T^{\partial X}(s),D^{ X}I_\partial (s)\big)
     = \int_{s \in \maq^{\partial X}} \!\!\!\!  \GRPd\big(T^{\partial X}(s),D^{ \partial X} (s))=G(Q^{\partial X}, T^{\partial X}, D^{\partial X}\big),
    \end{align*}
where we  applied Lemma~\ref{lem:enriched Kan prop} for  $\V=\Grpd$ and $\hat{\tau}^X: \Lan_{I_\partial} T^{\partial X}\Rightarrow T^{X}$ is induced by  $\tau^X: T^{\partial X} \Rightarrow T^{X}  I_{\partial}$ via the universal property of the left Kan extension.
\end{remark}

We now apply Lemma \ref{lem:gaugesimplify} and the characterisation of the functors $D^X:\maq^X\to\Grpd$   from Lemma \ref{cor:deltabound} to 
 describe the gauge groupoid more concretely.

\begin{lemma}\label{lem:gexplicit} Let $X$ be a compact stratified $n$-manifold with classical defect data.
\begin{compactenum}
\item A gauge configuration on $X$  is an assignment
of  a map $h_u:\hat u\to M_u$ and a functor $\psi_u:\Pi_1(\hat u)\to \bullet\sslash G_u$ to each $k$-stratum $u$ such that for all paths $\delta:y\to y'$ in $\hat u$
\begin{align}
\label{eq:funcmapcomp}
 h_u(y')=\psi_u([\delta])\rhd h_u(y),
\end{align}
and the following diagrams commute for all local $(k+1)$-strata $q:u\to v$
\begin{align}\label{eq:commdiaghighstrata}\vcenter{
\xymatrix{
\hat u\ar[d]_{\iota_q} \ar[r]^{h_u} & M_u \ar[d]^{f_q}\\
\hat v \ar[r]_{h_v} & M_v
}}
\qquad \textrm{ and } \qquad\vcenter{
\xymatrix{
\Pi_1(\hat u)\ar[d]_{\Pi_1(\iota_q)} \ar[r]^{\psi_u} & \bullet\sslash G_u \ar[d]^{\phi_q}\\
\Pi_1(\hat v) \ar[r]_{\psi_v} & \bullet\sslash G_v.
}}
\end{align}

\item A gauge transformation $\gamma:A=(h,\psi)\Rrightarrow A'=(h',\psi')$ is an assignment of a map $\gamma_u:\hat u\to G_u$ to each $k$-stratum $u$ such that 
for all $y,y'\in \hat u$ and paths $\delta:y\to y'$ in $\hat u$
\begin{align}\label{eq:natconc}
h'_u(y)=\gamma_u(y)\rhd h_u(y)\qquad \psi'_u([\delta])=\gamma_u(y')\cdot \psi_u([\delta])\cdot \gamma_u(y)^\inv
\end{align}
and the following diagram commutes for all local $(k+1)$-strata $q:u\to v$
\begin{align}\label{eq:eq:condgtrafonat}
\vcenter{\xymatrix{ \hat u\ar[d]_{\iota_q} \ar[r]^{\gamma_u} & G_u \ar[d]^{\phi_q}\\
\hat v \ar[r]_{\gamma_v} & G_v.
}}
\end{align}
\end{compactenum}
In particular, the groupoid   $G(\maq^X,T^X,D^X)=\mathcal A^X\sslash\mathcal G^X$ is an action groupoid, where $\mathcal A^X$ is the set of gauge transformations, and $\mathcal G^X$ the \textbf{group of gauge transformations}, defined in the proof. 
\end{lemma}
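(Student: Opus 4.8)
The plan is to start from the abstract description of $G(\maq^X,T^X,D^X)$ furnished by Lemma \ref{lem:gaugesimplify} and make every piece of data explicit, using the concrete form of $D^X$ from Lemma \ref{cor:deltabound} and of $T^X$ from Proposition \ref{prop:tfunctor}. By Lemma \ref{lem:gaugesimplify}, an object of $G(\maq^X,T^X,D^X)$ is a natural transformation $A\colon T^X\Rightarrow D^X$, that is, a family of functors $A_u\colon\Pi_1(\hat u)\to\mathcal D_u=M_u\sslash G_u$, one per stratum $u$, such that the square \eqref{eq:comm0} commutes for every local stratum $q\colon u\to v$. First I would unpack a single $A_u$: its action on objects is a map $h_u\colon\hat u\to M_u$, and post-composition with the projection $M_u\sslash G_u\to\bullet\sslash G_u$ gives a functor $\psi_u\colon\Pi_1(\hat u)\to\bullet\sslash G_u$. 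Functoriality of $A_u$ is then exactly the equivariance \eqref{eq:funcmapcomp}, and conversely any such pair $(h_u,\psi_u)$ reassembles into a functor. Projecting the naturality square \eqref{eq:comm0} onto objects and onto $\bullet\sslash G_v$, and using that $D_q=\Act(\phi_q,f_q)$ acts by $f_q$ on objects and $\phi_q$ on morphisms, splits it into the two commuting squares \eqref{eq:commdiaghighstrata}. This establishes part 1.

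For part 2, Lemma \ref{lem:gaugesimplify} describes a morphism $\gamma\colon A\Rrightarrow A'$ as a family of natural transformations $\gamma_u\colon A_u\Rightarrow A'_u$ subject to the modification condition from \eqref{eq:gaugetrafodiag0}. A natural transformation between functors into $M_u\sslash G_u$ assigns to each $y\in\hat u$ a morphism $\gamma_u(y)\colon h_u(y)\to h'_u(y)$, i.e.\ an element $\gamma_u(y)\in G_u$ with $\gamma_u(y)\rhd h_u(y)=h'_u(y)$; its naturality over a path $\delta\colon y\to y'$ yields the conjugation relation $\psi'_u([\delta])=\gamma_u(y')\,\psi_u([\delta])\,\gamma_u(y)^{\inv}$. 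These are the two equations of \eqref{eq:natconc}. The modification condition, projected onto $G$ and combined with $D_q=\Act(\phi_q,f_q)$, becomes the square \eqref{eq:eq:condgtrafonat}, giving part 2.

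Finally, to recognise $G(\maq^X,T^X,D^X)$ as an action groupoid $\mathcal A^X\sslash\mathcal G^X$, I would let $\mathcal A^X$ be the set of gauge configurations from part 1 and define $\mathcal G^X$ to be the set of families $\gamma=(\gamma_u)$ of maps $\gamma_u\colon\hat u\to G_u$ satisfying \emph{only} the compatibility \eqref{eq:eq:condgtrafonat}, with pointwise multiplication in each $G_u$. Since each $\phi_q$ is a group homomorphism, this condition is closed under products and inverses, so $\mathcal G^X$ is a group. The action on $\mathcal A^X$ is prescribed by \eqref{eq:natconc}: given $A=(h,\psi)$ and $\gamma$, set $(\gamma\rhd A)_u=(h'_u,\psi'_u)$ with $h'_u(y)=\gamma_u(y)\rhd h_u(y)$ and $\psi'_u([\delta])=\gamma_u(y')\,\psi_u([\delta])\,\gamma_u(y)^{\inv}$. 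I would check that $\gamma\rhd A$ is again a gauge configuration: the equivariance \eqref{eq:funcmapcomp} and the squares \eqref{eq:commdiaghighstrata} for $(h',\psi')$ follow from those for $(h,\psi)$ together with \eqref{eq:eq:condgtrafonat} and the equivariance $f_q(g\rhd m)=\phi_q(g)\rhd f_q(m)$ of a morphism in $\Mod$ (Definition \ref{def:mod}); the group-action axioms are immediate.

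The main point, and the only place needing care, is the identification of morphisms: for fixed $A$ I would show that $\gamma\mapsto(\gamma_u(y))$ is a bijection between $\{\gamma\in\mathcal G^X : \gamma\rhd A=A'\}$ and the gauge transformations $A\Rrightarrow A'$ of part 2. In one direction the relations \eqref{eq:natconc} hold by the definition of $\gamma\rhd A$, and the components assemble into a genuine natural transformation, since the prescribed $\psi'_u$ is automatically a functor satisfying \eqref{eq:funcmapcomp} by a short conjugation computation using that $\psi_u$ already is. In the other direction, a part 2 gauge transformation is by definition a family $(\gamma_u)$ obeying \eqref{eq:eq:condgtrafonat}, hence an element of $\mathcal G^X$ whose target is forced to be $\gamma\rhd A$. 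Thus morphisms $A\to A'$ in $G$ are exactly the $\gamma\in\mathcal G^X$ with $\gamma\rhd A=A'$, the morphism set of $\mathcal A^X\sslash\mathcal G^X$, and compatibility of composition with the pointwise product is a direct check. The conceptual obstacle to keep in view is precisely that elements of $\mathcal G^X$ must satisfy only the source-independent condition \eqref{eq:eq:condgtrafonat}, so that one fixed group acts at every configuration, whereas the source-dependent equations \eqref{eq:natconc} are not constraints on $\mathcal G^X$ but the definition of the action.
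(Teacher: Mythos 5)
Your proof is correct and follows essentially the same route as the paper's: unpack Lemma \ref{lem:gaugesimplify} using the concrete action-groupoid form $\mathcal D_u=M_u\sslash G_u$ and the functors $D_q=\Act(\phi_q,f_q)$, splitting the naturality square \eqref{eq:comm0} and the modification condition \eqref{eq:condgtrafonat0} into the object-level and morphism-level squares. Your more explicit treatment of part 3 — isolating the source-independent condition \eqref{eq:eq:condgtrafonat} as the defining constraint on $\mathcal G^X$ and verifying closure of the action — is exactly what the paper's terse formula $\mathcal{G}^X=\int_{t\in\maq^X}(G_t)^{\hat t}$ encodes, so the two arguments coincide in substance.
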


\begin{proof}
1.~By Lemma \ref{lem:gaugesimplify} a gauge configuration $A:T^X\Rightarrow D^X$ is an assignment of a functor $A_u:\Pi_1(\hat u)\to \mathcal D_u$ to each stratum $u$ of $X$ such that the following diagram commutes for all local strata $q: u\to v$ 
\begin{align}\label{eq:natgfield}
\xymatrix{\Pi_1(\hat u) \ar[d]_{\Pi_1(\iota_q)} \ar[r]^{A_u} & \mathcal D_u\ar[d]^{D_q}\\
\Pi_1(\hat v) \ar[r]_{A_v} & \mathcal D_v.
}
\end{align}
Here, $\iota_q:\hat u\to\hat v$ is the map from \eqref{eq:commtop},  $\mathcal D_u=\Act(G_u,M_u)=M_u\sslash G_u$ is the action groupoid given by \eqref{eq:groupoiddef} and $D_q=\Act(\phi_q,f_q):\mathcal D_u\to\mathcal D_v$ is the  functor defined by \eqref{eq:locstrat} and Lemma \ref{cor:deltabound}.  

As  $\mathcal D_u=M_u\sslash G_u$ is an action groupoid,
 a functor $A_u:\Pi_1(\hat u)\to \mathcal D_u$ is equivalent to the choice of a map $h_u: \hat u\to M_u$ and a functor $\psi_u: \Pi_1(\hat u)\to \bullet\sslash G_u$ satisfying \eqref{eq:funcmapcomp}.  Condition \eqref{eq:natgfield} translates into  \eqref{eq:commdiaghighstrata}. 

2.~By the proof of Lemma \ref{lem:gaugesimplify}, a gauge transformation $\gamma:A=(h,\psi)\Rrightarrow A'=(h',\psi')$ is an assignment of a natural transformation $\gamma_u: A_u\Rightarrow A'_u$ to each $k$-stratum $u$ of $x$ such that
 for all local $(k+1)$-strata $q:u\to v$
\begin{align}\label{eq:condgtrafonat0}
D_q\gamma_u=\gamma_v \Pi_1(\iota_q).
\end{align}
A natural transformation $\gamma_u: A_u\Rightarrow A'_u$ between  functors $A_u,A'_u: \Pi_1(\hat u)\to M_u\sslash G_u$  is  equivalent to an assignment of a map $\gamma_u:\hat u\to G_u$ to each $k$-stratum $u$ that satisfies \eqref{eq:natconc}. Condition \eqref{eq:condgtrafonat0}  translates into  \eqref{eq:eq:condgtrafonat}.

3.~It is clear that the gauge configurations form a set $\mathcal A^X$. It  follows from 2.~that the gauge transformations form a group $\mathcal G^X$, with the group structure given by the pointwise multiplication of the maps $\gamma_u:\hat u\to G_u$. The action of $\mathcal G^X$ on $\mathcal A^X$ is given by \eqref{eq:natconc}, and this implies $G(Q^X,T^X,D^X)=\mathcal A^X\sslash \mathcal G^X$. 
More explicitly, the group of gauge transformations is $\mathcal{G}^X=\int_{t \in \maq^X} (G_t)^{\hat{t}}$, where $(G_t)^{\hat t}$ denotes the group of maps $\gamma_t:\hat t\to G_t$.
\end{proof}

We  now show that gauge configurations and gauge transformations for a compact stratified $n$-manifold $X$ with classical defect data  are determined uniquely by their components for strata of dimension $\geq n-1$.
For this we relate local $n$-strata $q: u\to t$ and  $(n-1)$-strata $x:u\to s$ at a $k$-stratum $u$ to paths in  $Q^X$. 

Recall from Lemma \ref{lem:repsequenceexists} that each local $l$-stratum $q: u\to v$ can be represented by a sequence of local strata, each of which  raises the dimension by one. The local strata in the sequence correspond to  edges in $Q^X$ and any such representation to a path in $Q^X$. 
Recall from Definition \ref{def:repoflocstrat} that such a representing sequence is characterised by the condition that the  maps $S^q_j: S^{loc}_j(v)\to S^{loc}_j(u)$
are given as the composites of the corresponding maps for the local strata in the sequence. 

By Lemma \ref{lem:gexplicit},  for each local stratum  $q:u\to v$ \emph{that raises degree by one},   the components of  gauge configurations and gauge transformations for $u$ and $v$   are related by the diagrams \eqref{eq:commdiaghighstrata} and \eqref{eq:eq:condgtrafonat}, which involve the maps $\iota_q:\hat u\to\hat v$ from \eqref{eq:commtop} and the maps $f_q: M_u\to M_v$ and the group homomorphisms $\phi_q: G_u\to G_v$ from \eqref{eq:locstrat}.  
We thus need to relate the maps $\iota_r$, $f_r$ and group homomorphisms $\phi_r$ for a local stratum $r$ to the  ones for local strata in its representing sequences.

\begin{lemma}\label{lem:iota-agrees} Let $s$ be a $j$-stratum of a compact stratified $n$-manifold $X$,  $p:s\to t$  a  local $k$-stratum  and  $q: t\to u$ and $r:s\to u$ local $l$-strata such that $s\xrightarrow{p} t \xrightarrow{q} u$ represents $r: s\to u$. 
Then the associated maps from  \eqref{eq:fpdef} and \eqref{eq:commtop}  satisfy  
$$
\iota_r=\iota_q\circ \iota_p: \hat s\to \hat u, \qquad f_r=f_q\circ f_p: M_s\to M_u, \qquad \phi_r=\phi_q\circ \phi_p: G_s\to G_u.
$$
\end{lemma}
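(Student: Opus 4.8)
The plan is to split the three identities into an algebraic pair ($f_r=f_q\circ f_p$ and $\phi_r=\phi_q\circ\phi_p$) and a topological one ($\iota_r=\iota_q\circ\iota_p$), because the first two follow formally from the defining diagrams, whereas the last needs genuine geometric input. For the algebraic part I would argue straight from the characterisation \eqref{eq:fpdef}: the map $f_p\colon M_s\to M_t$ is uniquely determined by $\pi_x\circ f_p=\pi_{S^p_{n-1}(x)}$ for every local $(n-1)$-stratum $x\colon t\to s'$, and $\phi_p\colon G_s\to G_t$ by $\pi_q\circ\phi_p=\pi_{S^p_n(q)}$. Since $s\xrightarrow{p}t\xrightarrow{q}u$ represents $r$, Definition \ref{def:repoflocstrat} gives $S^r_j=S^p_j\circ S^q_j$ for all $j$. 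Then for each local $(n-1)$-stratum $y\colon u\to s'$,
\begin{align*}
\pi_y\circ(f_q\circ f_p)=\pi_{S^q_{n-1}(y)}\circ f_p=\pi_{S^p_{n-1}(S^q_{n-1}(y))}=\pi_{S^r_{n-1}(y)}=\pi_y\circ f_r,
\end{align*}
and since a map into a product is determined by its projections, $f_q\circ f_p=f_r$. The identity $\phi_q\circ\phi_p=\phi_r$ is obtained verbatim with $n-1$ replaced by $n$. This step is routine.

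For the topological identity I would first record that, with respect to the induced triangulations, all three maps $\iota_p,\iota_q,\iota_r$ are simplicial and affine on simplices, so it suffices to check that $\iota_q\circ\iota_p$ and $\iota_r$ agree on each simplex of $\hat s$. Combining the commuting squares \eqref{eq:commtop} for $p$, $q$ and $r$ with the evident factorisation $\iota_{su}=\iota_{tu}\circ\iota_{st}$ of closure inclusions $\bar s\subset\bar t\subset\bar u$ yields
\begin{align*}
f_u\circ(\iota_q\circ\iota_p)=\iota_{tu}\circ f_t\circ\iota_p=\iota_{tu}\circ\iota_{st}\circ f_s=\iota_{su}\circ f_s=f_u\circ\iota_r.
\end{align*}
Hence, for every simplex $\hat\sigma$ of $\hat s$, the simplices $\iota_q(\iota_p(\hat\sigma))$ and $\iota_r(\hat\sigma)$ of $\hat u$ have the same image under $f_u$; they are two lifts of one simplex $\sigma$ of $\bar u$, and all that remains is to show they are the \emph{same} lift.

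The hard part is precisely this last step, since $f_u$ is not injective on $\partial\hat u$: distinct lifts of $\sigma$ in $\hat u$ correspond to the distinct local $l$-strata at $s$ whose associated stratum is $u$. I would resolve it by exploiting the locality of the whole construction. Fixing $x\in\mathring\sigma\cap s$ and a special neighbourhood $U_x\cong cS\times\R^{\dim s}$ as in Definition \ref{def:homog}, the strata $t$ and $u$ are cut out in $U_x$ by the unique representatives of $p$, $q$ and $r$, and the recipe defining $\iota_\bullet$ selects a simplex entirely according to which $l$-stratum of $U_x$ the relevant simplices meet — recalling that the gluing defining $\hat u$ identifies exactly those simplex-lifts whose representatives meet a common stratum. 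Using the representing condition $S^r_j=S^p_j\circ S^q_j$ I would show that entering the $p$-sheet of $\hat t$ and then the $q$-sheet of $\hat u$ lands in the sheet indexed by $r$, which is the one $\iota_r$ picks out. The main technical burden is to make the correspondence ``lift of $\sigma$ in $\hat u$ $\leftrightarrow$ local $l$-stratum at $s$ over $u$'' precise and to verify that the combinatorial selection rule composes; once this is done, the two lifts coincide, giving $\iota_q\circ\iota_p=\iota_r$ and completing the proof.
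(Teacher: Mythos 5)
Your proposal is correct and follows essentially the same route as the paper: the identities $f_r=f_q\circ f_p$ and $\phi_r=\phi_q\circ\phi_p$ are read off from $S^r_j=S^p_j\circ S^q_j$ and the product projections in \eqref{eq:fpdef}, and the identity $\iota_r=\iota_q\circ\iota_p$ is checked simplexwise by localising at a point $x\in\mathring\sigma\cap s$ with a special neighbourhood $U_x$. The ``technical burden'' you defer is discharged in the paper exactly along the lines you indicate, and more cheaply than you fear: one chooses the auxiliary point $y\in\mathring\tau$ (needed to evaluate $\iota_q$) inside $U_x$ with a nested neighbourhood $U_y\subset U_x$ as in the proof of Lemma \ref{lem:stratinj}, whereupon $S^r_l=S^p_l\circ S^q_l$, applied to the unique local $l$-stratum at $u$, says precisely that the representative of $q$ at $y$ is, viewed in $U_x$, the representative of $r$ at $x$ --- so the full correspondence between lifts of $\sigma$ in $\hat u$ and local $l$-strata at $s$ over $u$ is never needed, only that both selection rules single out the same $l$-simplexes $\hat\chi$.
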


\begin{proof} The second and third identity follow directly from the definition of the maps $f_x: M_v\to M_w$ and the group homomorphisms $\phi_x: G_v\to G_w$  for a local stratum $x: v\to w$ in \eqref{eq:fpdef}, from \eqref{eq:locstrat} and from the identity $S_j^r=S_j^p\circ S_j^q$, which holds by definition of a representing sequence of local strata.

To see that the first identity holds, consider a triangulation $T$ of $X$ that contains all skeleta of the stratification as subcomplexes. 
For a $j$-simplex $\hat \sigma$ of $\hat s$,  $k$-simplex $\hat \tau$ of $\hat t$ and  $l$-simplex $\hat \chi$ of $\hat u$, we denote by $\sigma=f_s(\hat\sigma)$, by $\tau=f_t(\hat\tau)$ and $\chi=f_u(\hat\chi)$ the associated simplexes in $\bar s$, $\bar t$ and $\bar u$.

Then
for each $j$-simplex $\hat\sigma$ of $\hat s$  and each  $k$-simplex $\hat\tau$  in $\hat t$ such that $\tau$ contains $\sigma$ as a $j$-face and intersects a representative of $p$ at a point  $x\in \mathring \sigma$, the image $\iota_p(\hat\sigma)$ is the corresponding $j$-face of $\hat \tau$. Recall that all those $j$-faces correspond to a single $j$-simplex in $\hat t$. 
Likewise,  for each $k$-simplex $\hat\tau$ of $\hat t$  and each  $l$-simplex $\hat\chi$  in $\hat u$ such that $\chi$ contains $\tau$ as a $k$-face and intersects a representative of $q$ at a point  $y\in \mathring \tau$,  the image $\iota_q(\hat\tau)$ is the corresponding $k$-face of $\hat \chi$.

For each $j$-simplex $\hat\sigma$ in $\hat s$ and each $k$-simplex $\hat \tau$ with $\iota_p(\hat\sigma)\subset \hat \tau$, we can choose  the point $y\in\mathring \tau$ such that it is  contained in a sufficiently small neighbourhood $U_x$ of some point $x\in \mathring \sigma$ as in
Definition \ref{def:homog} and in the proof of Lemma \ref{lem:stratinj}. 
The $j$-simplex $\iota_q\circ \iota_p(\hat\sigma)$ is then a $j$-face of an $l$-simplex $\hat \chi$  in $\hat u$ such that $\chi$ contains  $\sigma$ as  a $j$-face
and such that $\chi$ intersects a representative of $p$ at $y$. As $p$ and $q$ represent $r$  at  $y\in U_x$, this representative is also a representative of $r$ at $x$.
Hence, $\iota_q\circ \iota_p(\hat\sigma)=\iota_r(\hat\sigma)$.  
\end{proof}

\begin{proposition} \label{eq:gaugeconc}Let $X$ be a compact stratified $n$-manifold with classical defect data. 
\begin{compactenum}
\item A gauge configuration on $X$ is an assignment
 \begin{compactitem}
\item of a functor $A_t: \Pi_1(\hat t)\to \bullet\sslash G_t$ to each $n$-stratum $t$,
\item of a map  $h_s: \hat s\to M_s$ to each $(n-1)$-stratum $s$,
\end{compactitem}
such that for each path $\delta: y\to y'$ in $\hat s$ and the inclusions $\iota_{l(s)}$, $\iota_{r(s)}$  from Definition \ref{def:reggraph} and \eqref{eq:commtop}
\begin{align}\label{eq:rectcond}
h_s(y')=A_{L(s)}( [\iota_{l(s)}\circ \delta])\rhd h_s(y)\lhd A_{R(s)}( [\iota_{r(s)}\circ \delta])^\inv.
\end{align}

\item A gauge transformation on $X$ is an assignment of a map $\gamma_t:\hat t\to G_t$ to each $n$-stratum $t$ of $X$. It acts on a gauge configuration according to
\begin{align}\label{eq:gtrafocond}
A_t([\delta])\mapsto \gamma_t(x')\cdot A_t([\delta])\cdot \gamma_t(x)^\inv\qquad\qquad 
h_s(y)\mapsto \gamma_{L(s)}\circ \iota_{l(s)}(y)\rhd h_s(y)\lhd \gamma_{R(s)}\circ \iota_{r(s)}(y)^\inv
\end{align}
for each $n$-stratum $t$, $(n-1)$-stratum $s$, path $\delta: x\to x'$ in $\hat t$ and $y\in \hat s$.
\end{compactenum}
\end{proposition}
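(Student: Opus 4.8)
The plan is to deduce the whole statement from the explicit description of the gauge groupoid in Lemma~\ref{lem:gexplicit}. There a gauge configuration is a family $(h_u,\psi_u)$ consisting of a map $h_u\colon\hat u\to M_u$ and a functor $\psi_u\colon\Pi_1(\hat u)\to\bullet\sslash G_u$ for every stratum $u$, subject to \eqref{eq:funcmapcomp} and the two squares \eqref{eq:commdiaghighstrata}, while a gauge transformation is a family $\gamma_u\colon\hat u\to G_u$ subject to \eqref{eq:natconc} and \eqref{eq:eq:condgtrafonat}. For an $n$-stratum $t$ one has $M_t=\bullet$, so $h_t$ is trivial and I simply set $A_t:=\psi_t$. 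The content of the proposition is then that all of this data is already forced by its restriction to the $n$- and $(n-1)$-strata (and, for gauge transformations, to the $n$-strata alone).

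First I would record the product structure from \eqref{eq:groupoiddef}: a $k$-stratum $u$ has $M_u=\prod_{x\colon u\to s}M_s$ over its local $(n-1)$-strata and $G_u=\prod_{q\colon u\to t}G_t$ over its local $n$-strata, and by \eqref{eq:fpdef} the structure maps $f_x\colon M_u\to M_s$ and $\phi_q\colon G_u\to G_t$ attached to such local strata are exactly the projections onto the corresponding factors. Since the squares \eqref{eq:commdiaghighstrata} are stated only for the edges of $Q^X$, the next step is to promote them to the ``long'' local strata $x\colon u\to s$ and $q\colon u\to t$: using Lemma~\ref{lem:repsequenceexists} I choose a representing one-step sequence and compose the squares along it, and Lemma~\ref{lem:iota-agrees} identifies the composites of the one-step maps with $\iota_x,f_x$ respectively $\iota_q,\phi_q$. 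Combining this with the projection property forces the factors
\[ h_u=\big\langle h_s\circ\iota_x\big\rangle_{x\colon u\to s},\qquad \psi_u=\big\langle A_t\circ\Pi_1(\iota_q)\big\rangle_{q\colon u\to t}, \]
which gives uniqueness. Specialising to an $(n-1)$-stratum $s$, where $G_s=G_{L(s)}\times G_{R(s)}$ with $\phi_{l(s)},\phi_{r(s)}$ the two projections, yields $\psi_s=\big(A_{L(s)}\Pi_1(\iota_{l(s)}),A_{R(s)}\Pi_1(\iota_{r(s)})\big)$; substituting this into \eqref{eq:funcmapcomp} for $s$, with the convention $(g_L,g_R)\rhd m=g_L\rhd m\lhd g_R^\inv$, produces exactly condition \eqref{eq:rectcond}.

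For the converse I would start from functors $A_t$ on the $n$-strata and maps $h_s$ on the $(n-1)$-strata satisfying \eqref{eq:rectcond}, define $h_u,\psi_u$ by the two product formulas above, and verify the hypotheses of Lemma~\ref{lem:gexplicit}. The edge-compatibility \eqref{eq:commdiaghighstrata} for an edge $q\colon u\to v$ holds in each factor because of the identity $\iota_{x^q}=\iota_x\circ\iota_q$ (Lemma~\ref{lem:iota-agrees} applied to $u\xrightarrow{q}v\xrightarrow{x}s$). The one genuinely substantial check is \eqref{eq:funcmapcomp} at a general stratum: read off in the $x$-factor, using the action \eqref{eq:grpdactdef} together with $\iota_{lx}=\iota_{l(s)}\circ\iota_x$ and $\iota_{rx}=\iota_{r(s)}\circ\iota_x$ for the left and right local $n$-strata $lx=S^x_n(l(s))$, $rx=S^x_n(r(s))$, it reduces to \eqref{eq:rectcond} applied to the path $\iota_x\circ\delta$ in $\hat s$. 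I expect this componentwise reduction, and in particular the precise identification of which local $n$-strata lie to the left and to the right of $x$, to be the main obstacle, since it is where the action structure and the maps $S^x_n$ must be tracked carefully.

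Finally the gauge transformations, which are easier because only the group factor is present. By \eqref{eq:eq:condgtrafonat} and the projection property of $\phi_q$, the factors of $\gamma_u$ are forced to be $\gamma_t\circ\iota_q$, so $\gamma_u=\langle\gamma_t\circ\iota_q\rangle_{q\colon u\to t}$ is determined by the unconstrained maps $\gamma_t$ on the $n$-strata, with consistency across edges again supplied by $\iota_{x^q}=\iota_x\circ\iota_q$. Restricting the action \eqref{eq:natconc} to the $n$- and $(n-1)$-strata and substituting $\psi_t=A_t$ and $\gamma_s=(\gamma_{L(s)}\iota_{l(s)},\gamma_{R(s)}\iota_{r(s)})$ then yields the two formulas of \eqref{eq:gtrafocond}; as $\mathcal G^X$ carries the pointwise multiplication, this description is compatible with the group structure.
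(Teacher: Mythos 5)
Your proposal is correct and follows essentially the same route as the paper's proof: both reduce to the explicit description in Lemma~\ref{lem:gexplicit}, promote the edge diagrams \eqref{eq:commdiaghighstrata} to arbitrary local strata via Lemmas~\ref{lem:repsequenceexists} and \ref{lem:iota-agrees}, exploit that $f_x$ and $\phi_q$ are product projections to force $h_u$ and $\psi_u$ (resp.\ $\gamma_u$) from the data on the $(n-1)$- and $n$-strata, and verify the converse componentwise using \eqref{eq:grpdactdef} and the composition identities. The step you flag as the main obstacle, tracking $lx=S^x_n(l(s))$ and $rx=S^x_n(r(s))$ through the action, is exactly what the paper handles via \eqref{eq:snpid} and the commuting diagram \eqref{eq:locstrat}.
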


\begin{proof}
1.~We use  Lemma \ref{lem:gexplicit} to describe the functors $A_u: \Pi_1(\hat u)\to\mathcal D_u$ for strata $u$ of dimension $\geq n-1$. 

 For  each $n$-stratum $t$ Lemma \ref{lem:gexplicit} gives just a functor $A_t=\psi_t: \Pi_1(\hat t)\to \bullet\sslash G_t$. For each $(n-1)$-stratum $s$, the group $G_s$ is given by  $G_s=G_{L(s)}\times G_{R(s)}$, where $l(s): s\to L(s)$ and $r(s): s\to R(s)$ are the two local $n$-strata at $s$.
  The functors $D_{l(s)}: M_s\sslash G_{L(s)}\times G_{R(s)}\to \bullet\sslash G_{L(s)}$ and $D_{r(s)}: M_s\sslash G_{L(s)}\times G_{R(s)}\to \bullet\sslash G_{R(s)}$ send each element of $M_s$ to $\bullet$ and each morphism $(g_L,g_R): m\to g_L\rhd m\lhd g_R^\inv$ to $g_L$ and $g_R$, respectively. 
 
 The first diagram in \eqref{eq:commdiaghighstrata} commutes trivially. The second diagram in \eqref{eq:commdiaghighstrata} for $q=l(s)$ and $q=r(s)$ is  equivalent to  the condition
 $\psi_s([\delta])=(\psi_{L(s)}([\iota_{l(s)}\circ \delta]), \psi_{R(s)}([\iota_{r(s)}\circ \delta]))$ for all paths $\delta:y\to y'$ in $\hat s$. This shows that $\psi_s$ is determined uniquely by $\psi_{L(s)}$ and $\psi_{R(s)}$. Condition \eqref{eq:funcmapcomp}  becomes condition \eqref{eq:rectcond}.

2.~We use the description of  gauge transformations from Lemma \ref{lem:gexplicit} to describe the natural transformations $\gamma_u: A_u\Rightarrow A'_u$ for strata of dimension $\geq n-1$. 

The natural transformation $\gamma_t: A_t\Rightarrow A'_t$  for an $n$-stratum $t$ is simply a map $\gamma_t:\hat t\to G_t$ that satisfies $A'_t([\delta])=\gamma_t(x')\cdot A_t([\delta])\cdot \gamma_t(x)^\inv$ for all paths $\delta:x\to x'$ in $\hat t$. This gives the first formula in \eqref{eq:gtrafocond}.

For an $(n-1)$-stratum $s$ with local $n$-strata $l(s): s\to L(s)$ and $r(s): s\to R(s)$ the natural transformation $\gamma_s: A_s\Rightarrow A'_s$ is a map $\gamma_s:\hat s\to G_{L(s)}\times G_{R(s)}$ that satisfies  \eqref{eq:natconc}.  Diagram \eqref{eq:eq:condgtrafonat} for $q=l(s)$ and $q=r(s)$ implies
$\gamma_{L(s)}\circ \iota_{l(s)}=\pi_{L(s)}\circ \gamma_s$ and $\gamma_{R(s)}\circ \iota_{r(s)}=\pi_{R(s)}\circ \gamma_s$. With  \eqref{eq:natconc} this yields the second formula in \eqref{eq:gtrafocond}.

3.~We show that every gauge configuration $A: T^X\Rightarrow D^X$ is characterised uniquely by its components $A_r: \Pi_1(\hat r)\to\mathcal D_r$ for strata $r$ of dimension $\geq n-1$.

We choose for each local $(n-1)$-stratum $x:u\to s$ and each local $n$-stratum $q:u\to t$ representing sequences 
\begin{align}\label{eq:repseqs}
u=u_k\xrightarrow{x_{k+1}} u_{k+1}\to \ldots\to u_{n-2}\xrightarrow{x_{n-1}} u_{n-1}=s \qquad  u=v_k\xrightarrow{q_{k+1}} v_{k+1}\to \ldots\to v_{n-1}\xrightarrow{q_{n}} v_{n}=t,
\end{align}
in which each local stratum raises the degree by one.
By Lemma \ref{lem:iota-agrees} we then have 
\begin{align}\label{eq:comppose}
&\iota_x=\iota_{x_{n-1}}\circ \ldots\circ \iota_{x_{k+1}} & &f_x=f_{x_{n-1}}\circ \ldots\circ f_{x_{k+1}} & &\phi_x=\phi_{x_{n-1}}\circ \ldots\circ \phi_{x_{k+1}}\\ 
&\iota_q=\iota_{q_{n}}\circ \ldots\circ \iota_{q_{k+1}}
 & &f_q=f_{q_{n}}\circ \ldots\circ f_{q_{k+1}}
& &\phi_q=\phi_{q_{n}}\circ \ldots\circ \phi_{q_{k+1}}\nonumber
\end{align} 
for the associated maps $f_x: M_u\to M_s$ and the group homomorphism $\phi_q: G_u\to G_t$  from  \eqref{eq:fpdef}.
Composing the  commuting diagrams \eqref{eq:commdiaghighstrata} for the local strata $x_j$ and $q_j$ then yields commuting diagrams
\begin{equation}\label{eq:commdiaghighstrata2}
\begin{gathered}
\xymatrix{ \hat u \ar[d]_{\iota_x}\ar[r]^{h_u} & M_u \ar[d]^{f_x}\\
\hat s \ar[r]_{h_s} & M_s
}\end{gathered}
\qquad\textrm{ and }\qquad 
\begin{gathered}
\xymatrix{ \Pi_1(\hat u) \ar[d]_{\Pi_1(\iota_q)}\ar[r]^{\psi_u} & \bullet\sslash G_u \ar[d]^{\phi_q}\\
\Pi_1 (\hat{t})
\ar[r]_{\psi_t} & \bullet\sslash G_t}\end{gathered}
\end{equation}
Here, the maps $f_x: M_u\to M_s$ and $\phi_q: G_u\to G_t$ are simply the projection maps for the product.
By the universal property of the product, specifying the composites $f_x\circ h_u: \hat u\to M_s$ for all local $(n-1)$-strata $x:u\to s$ and the composites $\phi_q\circ \psi_u: \Pi_1(\hat u)\to G_t$ for local $n$-strata $q:u\to t$ determines the map $h_u:\hat u\to M_u$ and the functor $\psi_u: \Pi_1(\hat u)\to G_u$ from Lemma \ref{lem:gexplicit}. This shows that the functors $A_u:\Pi_1(\hat u)\to\mathcal D_u$ for any stratum $u$ are uniquely determined by the maps $h_s:\hat s\to M_s$ for $(n-1)$-strata $s$ and functors $\psi_t:\Pi_1(\hat t)\to \bullet\sslash G_t$ for $n$-strata $t$. 

Conversely, given maps $h_s:\hat s\to M_s$ for all $(n-1)$-strata $s$ and functors $\psi_t:\Pi_1(\hat t)\to \bullet\sslash G_t$ for all $n$-strata $t$ that satisfy \eqref{eq:rectcond}, we can define 
the maps $h_u:\hat u\to  M_u$ and functors $\psi_u:\Pi_1(\hat u)\to \bullet\sslash G_u$  via \eqref{eq:commdiaghighstrata2}. Identities \eqref{eq:grpdactdef}, \eqref{eq:rectcond} and \eqref{eq:comppose} then imply \eqref{eq:funcmapcomp}, which yields a functor $A_u:\Pi_1(\hat u)\to \mathcal D_u$ for each stratum $u$. 
Identities 
 \eqref{eq:comppose} and Lemma \ref{lem:iota-agrees} then ensure that they combine into a natural transformation  $A: T^X\Rightarrow D^X$.

4.~We show that a gauge  transformation is determined  by its components $\gamma_t: A_t\Rightarrow A'_t$ for $n$-strata $t$. 

Choose for each local $n$-stratum $q:u\to t$ a representing sequence as in \eqref{eq:repseqs}. 
Then the map $\iota_q:\hat u\to\hat t$ and group homomorphism $\phi_q: G_u\to G_t$   are given as in \eqref{eq:comppose}, and combining the commuting  diagrams \eqref{eq:eq:condgtrafonat}  for the local strata $q_j$ yields a corresponding diagram for $q$. This gives
 \begin{align}\label{eq:gammacond}
\phi_q \circ \gamma_u=\gamma_t \circ \iota_q
\end{align}
for all local $n$-strata $q:u\to t$. As $\phi_q: G_u\to G_t$ is the projection map for the product,  this determines $\gamma_u$.  

Conversely, given maps $\gamma_t: \hat t\to G_t$ for all $n$-strata $t$, we can define maps $\gamma_u:\hat u\to G_u$ for all strata $u$ by \eqref{eq:gammacond}.  Lemma \ref{lem:iota-agrees} and identities \eqref{eq:grpdactdef}, \eqref{eq:comppose} and \eqref{eq:gtrafocond} then imply \eqref{eq:natconc}. This shows that the maps $\gamma_u:\hat u\to G_u$
define a natural transformation $\gamma_u: A_u\Rightarrow A'_u$. 
Lemma \ref{lem:iota-agrees} and 
\eqref{eq:comppose} then imply \eqref{eq:eq:condgtrafonat} for all local strata $q:u\to v$ and yield a gauge transformation $\gamma:A\Rrightarrow A'$.
\end{proof}

Proposition \ref{eq:gaugeconc} shows again that the groupoid $G\big(Q^X,T^X,D^X\big)$ of gauge configurations and gauge transformations indeed an action groupoid,  $\mathcal A^X\sslash \mathcal G^X$. 
More specifically, 
the group of gauge transformations 
 is the product of the groups of maps  $\gamma_t: \hat t\to G_t$ over  all  $n$-strata $t$ of $X$  
$$\mathcal G^X=\prod_{t\in S_n^X} (G_t)^{\hat t}.$$
The functor $P_\partial: \mathcal A^X\sslash \mathcal G^X\to \mathcal A^{\partial X}\sslash \mathcal G^{\partial X}$ from Corollary \ref{prop:projfunc} defines a map $P_\partial: \mathcal A^X\to \mathcal A^{\partial X}$ and a group homomorphism  
$P_\partial: \mathcal G^X\to \mathcal G^{\partial X}$. The latter forgets the group elements for  $n$-strata that are not associated with a boundary $(n-1)$-stratum. For 
a boundary $(n-1)$-stratum $s$ with associated
$n$-stratum $t=\iota_{n-1}(s)$, the functor $P_\partial$
 restricts the map 
$\gamma_t:\hat t\to G_t$ to   $\gamma_s=\gamma_t\circ \iota_{s\partial}:\hat s\to G_s$, where  $\iota_{s\partial}: \hat s\to \hat t$  is the map from \eqref{eq:stratboundemb}.

Conversely, each boundary gauge transformation $\gamma^{\partial X}\in \mathcal G^{\partial X}$ lifts to a gauge transformation $\gamma^X\in \mathcal G^{ X}$ with $P_\partial(\gamma^X)=\gamma^{\partial X}$. The associated maps $\gamma_t: \hat t\to G_t$ for an $n$-stratum $t$ of $X$ assign the unit element of $G_t$ to each point of  $\hat t$ that is not in the image of a map $\iota_{s\partial}: \hat s\to \hat t$. For a point $x=\iota_{s\partial}(y)$, 
they are given  by $\gamma_t(x)=\gamma_s(y)$, as $\iota_{s\partial} : \hat{s} \to \hat{t}$ is injective. Hence, $P_\partial : \mathcal A^X\sslash \mathcal G^X\to  \mathcal A^{\partial X}\sslash \mathcal G^{\partial X}$ is a fibration of groupoids. 

If $M$ is a stratified 3-manifold with boundary $\partial M= \Sigma_0\amalg \Sigma_1$ and projection functors $P_j: \mathcal A^M\sslash \mathcal G^M\to \mathcal A^{\Sigma_j}\sslash \mathcal G^{\Sigma_j}$,  then we have $\mathcal A^{\partial M}\sslash \mathcal G^{\partial M}=\mathcal A^{\Sigma_0\amalg \Sigma_1}\sslash \mathcal G^{\Sigma_0\amalg \Sigma_1}\cong \mathcal A^{\Sigma_0}\sslash \mathcal G^{\Sigma_0}\times\mathcal A^{\Sigma_1}\sslash \mathcal G^{\Sigma_1}$, an isomorphism of categories and hence a fibration, and $P_\partial=\langle P_0,P_1\rangle:\mathcal A^M\sslash \mathcal G^M\to \mathcal A^{\Sigma_0}\sslash \mathcal G^{\Sigma_0} \times \mathcal A^{\Sigma_1}\sslash \mathcal G^{\Sigma_1}$. We obtain the following corollary.

\begin{corollary}\label{cor: span fib} Any compact stratified 3-manifold $M$  with classical defect data and boundary   
 $\partial M=\Sigma_0\amalg \Sigma_1$ defines a fibrant span of groupoids
$$
\mathcal A^{\Sigma_0}\sslash \mathcal G^{\Sigma_0} \xleftarrow{P_0} \mathcal A^M\sslash \mathcal G^M\xrightarrow{P_1}  \mathcal A^{\Sigma_1}\sslash \mathcal G^{\Sigma_1}.
$$
\end{corollary}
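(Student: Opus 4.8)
The plan is to unwind the definition of a fibrant span and reduce the statement to the fibration property of the boundary restriction functor, which has essentially been established in the discussion preceding the corollary. Recall that the span is fibrant precisely when the pairing functor $\langle P_0,P_1\rangle\colon \mathcal A^M\sslash \mathcal G^M\to (\mathcal A^{\Sigma_0}\sslash \mathcal G^{\Sigma_0})\times(\mathcal A^{\Sigma_1}\sslash \mathcal G^{\Sigma_1})$ is a fibration of groupoids. So the first step is to identify this pairing functor with the restriction functor $P_\partial$ from Corollary \ref{prop:projfunc}.

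First I would check that the gauge-groupoid assignment turns disjoint unions into products. Since $\partial M=\Sigma_0\amalg\Sigma_1$, the associated graded graph splits as $Q^{\partial M}=Q^{\Sigma_0}\amalg Q^{\Sigma_1}$, and the functors $T^{\partial M},D^{\partial M}$ restrict to the corresponding functors on each summand. As an end over a coproduct indexing category is the product of the ends over the summands, this yields a canonical isomorphism $\mathcal A^{\partial M}\sslash \mathcal G^{\partial M}\cong (\mathcal A^{\Sigma_0}\sslash \mathcal G^{\Sigma_0})\times(\mathcal A^{\Sigma_1}\sslash \mathcal G^{\Sigma_1})$, under which $P_\partial$ becomes exactly $\langle P_0,P_1\rangle$, as noted just before the corollary. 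Hence it suffices to prove that $P_\partial$ is a fibration.

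To verify the lifting property, I would use the explicit description of gauge configurations and transformations in Proposition \ref{eq:gaugeconc}. Given an object $A\in \mathcal A^M$ and a morphism $\gamma^{\partial M}\colon B'\to P_\partial(A)$ in the target, that is, a boundary gauge transformation with target $P_\partial(A)$, I would construct a lift $\gamma^M\in\mathcal G^M$ with target $A$ by the recipe already indicated: on each $n$-stratum $t$ of $M$ set $\gamma_t$ equal to the unit of $G_t$ at points of $\hat t$ lying outside the images of all boundary inclusions $\iota_{s\partial}\colon \hat s\to \hat t$, and set $\gamma_t(\iota_{s\partial}(y))=\gamma_s(y)$ otherwise. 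Injectivity of each $\iota_{s\partial}$ makes this well-defined, and the conditions of Proposition \ref{eq:gaugeconc} hold because they are trivial in the interior and inherited from $\gamma^{\partial M}$ on the boundary; by construction $P_\partial(\gamma^M)=\gamma^{\partial M}$, which is the required lift.

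The main obstacle is precisely this verification that the lift is a bona fide gauge transformation: one must confirm that the piecewise definition patches to a map $\gamma_t\colon\hat t\to G_t$ and that the resulting family satisfies the equivariance and naturality conditions of Proposition \ref{eq:gaugeconc}. All of this rests on the injectivity of the boundary embeddings $\iota_{s\partial}$ and on compatibility of the framings and orientations under the boundary stratification, and it is carried out in the discussion preceding the statement. Given that the fibration property of $P_\partial$ is established there, the corollary follows immediately by combining it with the disjoint-union-to-product identification of the first step.
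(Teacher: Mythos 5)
Your proposal is correct and takes essentially the same route as the paper: the identification of $\langle P_0,P_1\rangle$ with $P_\partial$ via the splitting $\mathcal A^{\partial M}\sslash \mathcal G^{\partial M}\cong (\mathcal A^{\Sigma_0}\sslash \mathcal G^{\Sigma_0})\times(\mathcal A^{\Sigma_1}\sslash \mathcal G^{\Sigma_1})$, together with the explicit lift of boundary gauge transformations (unit of $G_t$ off the images of the maps $\iota_{s\partial}$, and $\gamma_t(\iota_{s\partial}(y))=\gamma_s(y)$ using injectivity of $\iota_{s\partial}$), is exactly the argument in the paragraph preceding the corollary. The only remark is that the ``main obstacle'' you flag is in fact vacuous, since by Proposition \ref{eq:gaugeconc} a gauge transformation is an \emph{arbitrary} family of maps $\gamma_t:\hat t\to G_t$ on the $n$-strata, with no equivariance constraint to check.
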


This corollary and its derivation from Proposition \ref{eq:gaugeconc} show that working with \emph{fibrant} spans of groupoids is not merely a choice of technique, but has a direct gauge theoretical interpretation.  The span of groupoids in Corollary \ref{cor: span fib} relates the gauge groupoid of 
a cobordism $M: \Sigma_0\to\Sigma_1$ to the gauge groupoids of its boundary surfaces $\Sigma_0$ and $\Sigma_1$. That this span is fibrant corresponds to the fact  that every boundary gauge transformation extends to a gauge transformation of the bulk.

\subsection{Reduced gauge groupoids}
\label{subsec:redgaugegrpd}

For computations and examples, it is convenient to work with fundamental groupoids with chosen basepoints instead of full fundamental groupoids.
This requires a coherent choice of basepoints for all strata and a counterpart of Proposition \ref{prop:tfunctor} for fundamental groupoids with basepoints.

We denote by   $\Top_B$ the category whose objects are pairs $(X,U)$ of a topological space $X$ and a subspace $U\subset X$ and whose morphisms $f: (X, U)\to (Y,V)$ are continuous maps $f: X\to Y$ with $f(U)\subset V$. The  functor $\Pi_1: \Top_B\to \Grpd$ sends a pair $(X,U)$ to the full subgroupoid $\Pi_1(X,U)\subset \Pi_1(X)$ with object set $U$ and  a morphism $f: (X,U)\to (Y,V)$ to the functor $\Pi_1(f): \Pi_1(X,U)\to \Pi_1(Y,V)$. 

To formulate a counterpart of Proposition \ref{prop:tfunctor} for fundamental groupoids with basepoints, one needs a choice of basepoints in $\hat s$ for each stratum $s$ 
 that is compatible with the maps $\iota_q:\hat s\to \hat t$  for local strata $q:s\to t$ from \eqref{eq:commtop} and with the maps $\iota_{s\partial}:\hat s\to \hat t$ for boundary strata $s$ from \eqref{eq:stratboundemb}.

\begin{definition} \label{def:basepointset}A  \textbf{coherent basepoint set} for a compact stratified $n$-manifold $X$ is a finite  set $B\subset X$ that satisfies  $V_s=f_s^\inv(B)\neq \emptyset$ for all strata and boundary strata $s$ of $X$.
\end{definition}

Note that the commuting diagrams \eqref{eq:commtop} and  \eqref{eq:stratboundemb} ensure that any coherent basepoint set $B\subset X$ satisfies
\begin{compactitem}
\item $\iota_q(V_s)\subset V_t$ for all $k$-strata  $s$ and local $(k+1)$-strata $q:s\to t$ at $s$,
\item $\iota_{s\partial}(V_s)\subset V_t$ for each boundary stratum $s$ and associated $(k+1)$-stratum $t$ from \eqref{eq:embedbound}.
\end{compactitem}

\begin{example} \label{ex:finestrat}For a compact $n$-manifold $X$ with a fine stratification the set $B=X^0\cup\partial X^0$ is a coherent basepoint set. The associated vertex sets
\begin{align}\label{eq:vertexset}
V_s=f_s^\inv(X^0\cup \partial X^0)\subset \hat s,
\end{align}
are (i) non-empty by assumption, satisfy (ii) due to diagram \eqref{eq:commtop} and (iii) due to  diagram \eqref{eq:stratboundemb}.
\end{example}

\begin{proposition}\label{prop:fincase} Let $X$ be a  compact stratified   $n$-manifold with a coherent basepoint set. 

There is  a functor $T^{X}_\bullet: \mathcal Q^X\to \Grpd$ that sends
\begin{compactitem}
\item  a $k$-stratum $s$ to the groupoid $\Pi_1(\hat s, V_s)$,
\item  a local $(k+1)$-stratum $q:s\to t$  to the functor $\Pi_1(\iota_q): \Pi_1(\hat s, V_s)\to \Pi_1(\hat t, V_t)$,
\end{compactitem}
and a natural transformation $\tau^{X\bullet}: T^{\partial X}_{\bullet}\Rightarrow T^{X}_{\bullet} I_\partial$ with components
\begin{align}\label{eq:tnatdefdisc}
\tau^{X\bullet}_s=\Pi_1(\iota_{s\partial}): \Pi_1(\hat s, V_s)\to \Pi_1(\hat t, V_t).
\end{align}  
The inclusion functors $\iota_s: \Pi_1(\hat s, V_s)\to \Pi_1(\hat s)$ are equivalences of categories and define a natural transformation $\iota^X: T^{X\bullet}\Rightarrow T^X$ such that the following diagram commutes
$$
\xymatrix{ T^{\partial X\bullet}  \ar@{=>}[d]_{\tau^{X\bullet}} \ar@{=>}[r]^{\iota^{\partial X}} & T^{\partial X} \ar@{=>}[d]^{\tau^X}\\
T^{X\bullet}I_\partial \ar@{=>}[r]_{\iota^XI_\partial} & T^XI_\partial.
}
$$
\end{proposition}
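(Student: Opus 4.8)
The plan is to obtain this pointed refinement of Proposition \ref{prop:tfunctor} by lifting the constructions there along the forgetful functor $U\colon\Top_B\to\Top$, $(Z,C)\mapsto Z$, using that the three conditions in Definition \ref{def:basepointset} are exactly what makes the relevant continuous maps into morphisms of pairs. Concretely, I would first promote the functor $B^X\colon\maq^X\to\Top$ of Proposition \ref{prop:tfunctor} to a functor $B^X_\bullet\colon\maq^X\to\Top_B$ sending a $k$-stratum $s$ to the pair $(\hat s,V_s)$ and a local $(k+1)$-stratum $q\colon s\to t$ to $\iota_q\colon(\hat s,V_s)\to(\hat t,V_t)$. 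This is well defined precisely because condition (ii), $\iota_q(V_s)\subset V_t$, says that $\iota_q$ is a morphism in $\Top_B$; functoriality is inherited from $B^X$, since $U B^X_\bullet=B^X$. Post-composing with $\Pi_1\colon\Top_B\to\Grpd$ then yields $T^{X\bullet}=\Pi_1 B^X_\bullet$ with the stated values.

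Next I would lift the boundary comparison. Condition (iii), $\iota_{s\partial}(V_s)\subset V_t$, makes each $\iota_{s\partial}$ a morphism of pairs, so the family $\beta^X_s=\iota_{s\partial}$ from \eqref{eq:tnatdef} lifts to a natural transformation $\beta^{X\bullet}\colon B^{\partial X}_\bullet\Rightarrow B^X_\bullet I_\partial$ of functors valued in $\Top_B$; its naturality squares are the upper quadrilaterals of \eqref{eq:stratboundemb} read in $\Top_B$, which commute because they already commute in $\Top$. Setting $\tau^{X\bullet}=\Pi_1\beta^{X\bullet}$ gives the natural transformation with components \eqref{eq:tnatdefdisc}, and by construction $U\beta^{X\bullet}=\beta^X$. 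For the inclusions, I would observe that for every pair $(Z,C)$ the full-subgroupoid inclusion $\Pi_1(Z,C)\hookrightarrow\Pi_1(Z)$ is natural in $(Z,C)\in\Top_B$, hence assembles into a natural transformation $\iota\colon\Pi_1(-,-)\Rightarrow\Pi_1 U$ of functors $\Top_B\to\Grpd$. Whiskering with $B^X_\bullet$ and $B^{\partial X}_\bullet$ produces $\iota^X=\iota B^X_\bullet\colon T^{X\bullet}\Rightarrow T^X$ and $\iota^{\partial X}=\iota B^{\partial X}_\bullet$, with the stated components $\iota_s\colon\Pi_1(\hat s,V_s)\to\Pi_1(\hat s)$.

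It remains to check that each $\iota_s$ is an equivalence. It is fully faithful, being the inclusion of a full subgroupoid on the object set $V_s$, so the point is essential surjectivity, i.e.\ that $V_s$ meets every path-component of $\hat s$. Here I would use condition (i), $V_s\neq\emptyset$, together with the fact that $\hat s$ is a compact PL manifold whose interior is PL homeomorphic to the connected stratum $s$ and is dense in $\hat s$; hence $\hat s$ is connected, and being a manifold it is locally path-connected, so path-connected. Thus every point of $\hat s$ is isomorphic in $\Pi_1(\hat s)$ to some point of $V_s$, giving essential surjectivity.

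Finally, the commuting square of natural transformations is nothing but the interchange law for the horizontal composite $\iota\ast\beta^{X\bullet}\colon T^{\partial X\bullet}\Rightarrow T^X I_\partial$: its two factorisations read $(\Pi_1 U\beta^{X\bullet})\circ(\iota B^{\partial X}_\bullet)=(\iota B^X_\bullet I_\partial)\circ(\Pi_1\beta^{X\bullet})$, which under the identifications $T^{\partial X\bullet}=\Pi_1 B^{\partial X}_\bullet$, $T^{X\bullet}I_\partial=\Pi_1 B^X_\bullet I_\partial$, $T^X=\Pi_1 U B^X_\bullet$, $\tau^X=\Pi_1 U\beta^{X\bullet}$ and $\tau^{X\bullet}=\Pi_1\beta^{X\bullet}$ is exactly the asserted identity $\tau^X\circ\iota^{\partial X}=\iota^X I_\partial\circ\tau^{X\bullet}$. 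The only step that is more than bookkeeping is the equivalence claim, and even there the entire content is the connectedness of $\hat s$; I expect no genuine obstacle, since the three conditions of Definition \ref{def:basepointset} have been tailored precisely to force each of the constructions above to go through.
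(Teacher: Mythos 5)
Your proposal is correct and follows exactly the route the paper intends: the paper states Proposition \ref{prop:fincase} without a written-out proof, having set up precisely the machinery you use, namely the category $\Top_B$ and the functor $\Pi_1:\Top_B\to\Grpd$, with the three conditions of Definition \ref{def:basepointset} tailored so that $B^X$ and $\beta^X$ lift to $\Top_B$. Your verification of the one nontrivial point, essential surjectivity of $\iota_s$ via the connectedness and local path-connectedness of the compact PL manifold $\hat s$ (whose interior is dense and PL homeomorphic to the connected stratum $s$) together with $V_s\neq\emptyset$, is also correct.
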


Given a stratification with a coherent basepoint set, we  obtain a reduced  gauge groupoid  by replacing the functor $T^X:\mathcal Q^X\to \Grpd$ from Proposition \ref{prop:tfunctor} by the functor
$T^X_\bullet:\mathcal Q^X\to \Grpd$   in Definition \ref{def:gaugegrpd}.

\begin{definition} \label{def:redgrpd}Let $X$ be a compact  stratified  $n$-manifold  with  a coherent basepoint set. 
The \textbf{reduced gauge groupoid} on $X$ is the groupoid
$\mathcal A^{X \bullet}\sslash \mathcal G^{ X \bullet}=G(Q^X,T^X_\bullet, D^X)$.
\end{definition}

As in Proposition \ref{eq:gaugeconc} one finds that  this groupoid is  an action groupoid and that reduced gauge configurations and reduced gauge transformations are characterised uniquely by their components for $n$- and $(n-1)$-strata.

\begin{corollary}\label{cor:redstrata} Let $X$ be a compact stratified $n$-manifold $X$ with defect data and a coherent basepoint set.
\begin{compactenum}
\item  A reduced gauge configuration on $X$  is an assignment of 
\begin{compactitem}
\item  a functor $A_t:\Pi_1(\hat t, V_t)\to \bullet\sslash G_t$ to each $n$-stratum $t$,
\item  a map $h_s: V_s\to M_s$ to each $(n-1)$-stratum $s$,
\end{compactitem}
 satisfying \eqref{eq:rectcond} for all $y,y'\in V_s$ and paths $\delta: y\to y'$ in $\hat s$.
 
 \item 
A reduced gauge transformation is an assignment of a map $\gamma_t: V_t\to G_t$ to each $n$-stratum $t$ that acts according to \eqref{eq:gtrafocond} for all paths $\delta: x\to x'$ in $\hat t$ with $x,x'\in V_t$ and all $y\in V_s$.
\end{compactenum}
\end{corollary}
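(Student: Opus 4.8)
The plan is to run the argument of Proposition \ref{eq:gaugeconc} essentially verbatim, with the single change that the functor $T^X$ is replaced throughout by the functor $T^X_\bullet$ from Proposition \ref{prop:fincase}. Concretely, this replaces each groupoid $\Pi_1(\hat s)$ by its full subgroupoid $\Pi_1(\hat s, V_s)$, whose object set is the finite set $V_s=f_s^\inv(B)$. Since Lemma \ref{lem:gaugesimplify} describes $G(Q,T,D)$ for an arbitrary object $(Q,T,D)$ of $\pack$, it applies directly to $(Q^X, T^X_\bullet, D^X)$ and shows that a reduced gauge configuration is a natural transformation $A\colon T^X_\bullet\Rightarrow D^X$, i.e.\ an assignment to each stratum $u$ of a functor $A_u\colon \Pi_1(\hat u, V_u)\to \mathcal D_u=M_u\sslash G_u$ making the naturality squares \eqref{eq:natgfield} commute; here $\Pi_1(\iota_q)$ is understood as the restricted functor $\Pi_1(\hat u, V_u)\to \Pi_1(\hat v, V_v)$, which is well defined precisely because condition (ii) of Definition \ref{def:basepointset} gives $\iota_q(V_u)\subset V_v$. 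Decomposing each $A_u$ through the action-groupoid structure of $\mathcal D_u$ exactly as in Lemma \ref{lem:gexplicit}, one sees that $A_u$ is the same datum as a map $h_u\colon V_u\to M_u$ together with a functor $\psi_u\colon \Pi_1(\hat u, V_u)\to \bullet\sslash G_u$ satisfying the analog of \eqref{eq:funcmapcomp} for paths $\delta$ between points of $V_u$, and the commuting diagrams \eqref{eq:commdiaghighstrata} now only involve this restricted data.

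The reduction to the components for strata of dimension $\geq n-1$ then proceeds exactly as in parts 3 and 4 of the proof of Proposition \ref{eq:gaugeconc}. For each local $(n-1)$-stratum $x\colon u\to s$ and each local $n$-stratum $q\colon u\to t$ I would fix representing sequences as in \eqref{eq:repseqs}, which exist by Lemma \ref{lem:repsequenceexists}. Lemma \ref{lem:iota-agrees} is a statement about the maps $\iota_q$, $f_q$ and $\phi_q$ themselves, so the decompositions \eqref{eq:comppose} hold unchanged; restricting the induced functors $\Pi_1(\iota_x),\Pi_1(\iota_q)$ to the subgroupoids with object sets $V_u, V_s, V_t$ (again using condition (ii)) yields the commuting diagrams \eqref{eq:commdiaghighstrata2} with $\hat u$ replaced by $V_u$ and $\Pi_1(\hat u)$ by $\Pi_1(\hat u, V_u)$. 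As $M_u=\prod_x M_s$ and $G_u=\prod_q G_t$ are products and the maps $f_x$, $\phi_q$ are the corresponding projections, the universal property of the product shows that $h_u$ and $\psi_u$ are determined by the composites $f_x\circ h_u=h_s\circ\iota_x$ and $\phi_q\circ\psi_u=\psi_t\circ\Pi_1(\iota_q)$, hence by the maps $h_s\colon V_s\to M_s$ for $(n-1)$-strata and the functors $A_t=\psi_t\colon \Pi_1(\hat t, V_t)\to\bullet\sslash G_t$ for $n$-strata. Conversely, given such data satisfying \eqref{eq:rectcond} for all $y,y'\in V_s$ and paths $\delta\colon y\to y'$ in $\hat s$, formulas \eqref{eq:commdiaghighstrata2} define $h_u$ and $\psi_u$ for every stratum, and \eqref{eq:grpdactdef}, \eqref{eq:comppose} together with Lemma \ref{lem:iota-agrees} imply \eqref{eq:funcmapcomp} and the naturality of the resulting $A\colon T^X_\bullet\Rightarrow D^X$. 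This proves part 1; part 2 is obtained identically, replacing the functors $A_u$ by the natural transformations $\gamma_u$, the condition \eqref{eq:rectcond} by \eqref{eq:gtrafocond}, and using \eqref{eq:gammacond} in place of \eqref{eq:commdiaghighstrata2}.

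The only point that requires care is the compatibility of the whole argument with the passage to finite basepoint sets, and this is exactly what the three conditions of Definition \ref{def:basepointset} guarantee: condition (i) ensures each $V_s$ is nonempty, condition (ii) ensures that every functor $\Pi_1(\iota_q)$ restricts to a functor between the groupoids of the form $\Pi_1(\hat s, V_s)$, so that $T^X_\bullet$ and all the diagrams above are well defined, and condition (iii) provides the analogous compatibility for the boundary inclusions $\iota_{s\partial}$. Beyond verifying these restrictions, there is no new content: every diagram and every universal-property argument in the proof of Proposition \ref{eq:gaugeconc} is insensitive to whether the object set of the relevant fundamental groupoid is all of $\hat s$ or the finite subset $V_s$. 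I therefore expect the main obstacle to be purely bookkeeping rather than a genuine mathematical difficulty.
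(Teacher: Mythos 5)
Your proposal is correct and coincides with the paper's own argument: the paper disposes of Corollary \ref{cor:redstrata} with the single remark ``As in Proposition \ref{eq:gaugeconc} one finds\ldots'', i.e.\ by rerunning the proof of Proposition \ref{eq:gaugeconc} with $T^X$ replaced by $T^X_\bullet$, which is precisely what you spell out. Your only addition is the explicit check that condition (ii) of Definition \ref{def:basepointset} makes the restricted functors $\Pi_1(\iota_q):\Pi_1(\hat u,V_u)\to \Pi_1(\hat v,V_v)$ well defined, which is exactly the right point of care and is already built into Proposition \ref{prop:fincase}.
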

We will now show that the reduced gauge groupoid of a compact stratified $n$-manifold $X$ is equivalent to its gauge groupoid. This will allow us to compute  defect TQFTs by choosing coherent basepoint sets and working with  reduced gauge groupoids. Moreover, it shows that all gauge groupoids are essentially finite.

\begin{proposition} \label{rem:simplegauge} Let $X$ be a compact   stratified  $n$-manifold  with  classical defect data and a  coherent basepoint set. 
Then the groupoids $\mathcal A^X\sslash \mathcal G^X$ and $\mathcal A^{X\bullet}\sslash \mathcal G^{X\bullet}$ are equivalent.
\end{proposition}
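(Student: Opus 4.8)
The plan is to realize the comparison as the functor induced by a single morphism in $\pack$ and then to verify directly, using the explicit descriptions of (reduced) gauge configurations and transformations, that it is an equivalence. By Proposition \ref{prop:fincase} the inclusions $\iota^X_s\colon\Pi_1(\hat s,V_s)\to\Pi_1(\hat s)$ assemble into a natural transformation $\iota^X\colon T^X_\bullet\Rightarrow T^X$ each of whose components is an equivalence of groupoids, so $\phi:=(\id_{Q^X},\iota^X)\colon (Q^X,T^X_\bullet,D^X)\to (Q^X,T^X,D^X)$ is a morphism in $\pack$, and Proposition \ref{prop:gconfiggrp} yields a functor $G(\phi)\colon \mathcal A^X\sslash\mathcal G^X\to\mathcal A^{X\bullet}\sslash\mathcal G^{X\bullet}$. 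By Lemma \ref{lem:gaugesimplify} this functor is precomposition with $\iota^X$: it sends a gauge configuration $A\colon T^X\Rightarrow D^X$ to $A\circ\iota^X$ and a gauge transformation to its whiskering with $\iota^X$. Under the concrete descriptions of Proposition \ref{eq:gaugeconc} and Corollary \ref{cor:redstrata} this is exactly the map restricting the components $A_t,h_s,\gamma_t$ from $\hat t,\hat s$ to the basepoint sets $V_t,V_s$. I would therefore show that $G(\phi)$ is surjective on objects and fully faithful.

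For surjectivity on objects I would start from a reduced configuration $A^\bullet$, given by functors $A^\bullet_t\colon\Pi_1(\hat t,V_t)\to\bullet\sslash G_t$ and maps $h^\bullet_s\colon V_s\to M_s$ subject to \eqref{eq:rectcond}. Since $\iota^X_t$ is a full subgroupoid inclusion that is essentially surjective, each $A^\bullet_t$ extends \emph{strictly} to $A_t\colon\Pi_1(\hat t)\to\bullet\sslash G_t$ with $A_t\circ\iota^X_t=A^\bullet_t$: choosing for every $x\in\hat t$ an isomorphism $\theta_x\colon x\to b_x$ to a chosen basepoint $b_x\in V_t$ of its path-component, with $\theta_x=\id$ when $x\in V_t$, one sets $A_t([\delta])=A^\bullet_t(\theta_{x'}\circ[\delta]\circ\theta_x^{-1})$, which is functorial and restricts to $A^\bullet_t$ on $V_t$ because fullness keeps $\theta_{x'}\circ[\delta]\circ\theta_x^{-1}$ inside the subgroupoid. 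I would then define $h_s\colon\hat s\to M_s$ by transporting $h^\bullet_s$ out of the basepoints via \eqref{eq:rectcond}: for $y\in\hat s$ and a path $\delta\colon y_0\to y$ from a basepoint $y_0\in V_s$ in the same component, $h_s(y)=A_{L(s)}([\iota_{l(s)}\circ\delta])\rhd h^\bullet_s(y_0)\lhd A_{R(s)}([\iota_{r(s)}\circ\delta])^{-1}$. By construction $G(\phi)(A)=A^\bullet$ on the nose and, by Proposition \ref{eq:gaugeconc}, $A$ is a genuine gauge configuration.

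Full faithfulness I would handle in the same spirit. A gauge transformation between $A$ and $A'$ is determined by its maps $\gamma_t\colon\hat t\to G_t$ on $n$-strata, and the relation $\gamma\rhd A=A'$ together with \eqref{eq:gtrafocond} forces $\gamma_t(x)=A'_t([\delta])\,\gamma_t(x_0)\,A_t([\delta])^{-1}$ for any path $\delta\colon x_0\to x$ with $x_0\in V_t$. This single formula shows both that $\gamma_t$ is determined by its restriction $\gamma_t|_{V_t}$ (faithfulness) and that any reduced gauge transformation $(\bar\gamma_t\colon V_t\to G_t)$ relating $G(\phi)(A)$ and $G(\phi)(A')$ extends to a full $\gamma_t$ with $\gamma_t|_{V_t}=\bar\gamma_t$ (fullness). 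Combined with surjectivity on objects, this makes $G(\phi)$ fully faithful and essentially surjective, hence an equivalence.

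The main obstacle, and the only place where the hypotheses are genuinely used, is the \emph{well-definedness} of the two path-transport extensions, i.e.\ their independence of the chosen paths. For $h_s$, replacing $\delta\colon y_0\to y$ by another path $\delta'$ changes the transported value by the holonomy of the loop $\ell\colon y_0\to y_0$ formed from $\delta$ and $\delta'$, and independence is precisely condition \eqref{eq:rectcond} applied to $\ell$. This application is legitimate because $\ell$ is a morphism of $\Pi_1(\hat s,V_s)$ (both endpoints lie in $V_s$); because the coherence of basepoints in Definition \ref{def:basepointset} guarantees that $\iota_{l(s)}\circ\ell$ and $\iota_{r(s)}\circ\ell$ are loops at basepoints of $\widehat{L(s)}$, $\widehat{R(s)}$, on which $A_{L(s)},A_{R(s)}$ agree with $A^\bullet_{L(s)},A^\bullet_{R(s)}$; and because $V_s$ meets every path-component of $\hat s$, which is exactly what makes $\iota^X_s$ an equivalence. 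The analogous loop computation, using the reduced form of \eqref{eq:gtrafocond}, would settle the well-definedness of the extended $\gamma_t$ and its correct action on the $(n-1)$-stratum data. Once these are in place, the remaining checks that $A$ and $\gamma$ satisfy the full conditions \eqref{eq:rectcond}, \eqref{eq:natconc} and \eqref{eq:gtrafocond} are routine, and I expect no further difficulty.
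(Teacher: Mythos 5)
Your proposal is correct and follows essentially the same route as the paper's proof: the paper also works with the restriction functor $R(A)=A\circ\iota^X$ (your $G(\phi)$ is just a categorical repackaging of this via Proposition \ref{prop:gconfiggrp}), proves strict surjectivity on objects by extending $A^\bullet_t$ and $h^\bullet_s$ through chosen paths to basepoints, and establishes full faithfulness via the same conjugation formula $\gamma_t(x)=A'_t([\alpha_x])\,\gamma_t(v_x)\,A_t([\alpha_x])^{-1}$, with path-independence handled exactly as you indicate by \eqref{eq:rectcond} on loops with endpoints in the basepoint set.
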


\begin{proof} 
There is a canonical restriction functor $R:\mathcal A^X\sslash \mathcal G^X\to \mathcal A^{X\bullet}\sslash \mathcal G^{X\bullet}$ that sends
\begin{compactitem}
\item a gauge configuration $A:T^X\Rightarrow D^X$ to  $R(A)=A\circ \iota^X: T^{X\bullet}\Rightarrow D^X$,
\item a gauge transformation $\gamma: T^X\times I\Rightarrow D^X$ to  $R(\gamma)=\gamma\circ (\iota^X\times I): T^{X\bullet}\times I\Rightarrow D^X$,
\end{compactitem}
where $\iota^X: T^{X\bullet}\Rightarrow T^X$ is the natural transformation from Proposition \ref{prop:fincase}. 
It restricts
\begin{compactitem}
\item the functors $A_t: \Pi_1(\hat t)\to \bullet\sslash G_t$ for each $n$-stratum $t$ to functors $A^\bullet_t: \Pi_1(\hat t, V_t)\to \bullet\sslash G_t$,
\item the maps $h_s:\hat s\to M_s$ for each $(n-1)$-stratum $s$ to   maps $h^\bullet_s: V_s\to M_s$,
\item the maps $\gamma_t: \hat t\to G_t$ for each $n$-stratum $t$ to maps $\gamma^\bullet_t: V_t\to G_t$. 
\end{compactitem}

To show that $R$ is essentially surjective and fully faithful
we choose 
\begin{compactitem}
\item for each $n$-stratum $t$ and $x\in \hat t$  a path $\alpha_x: v_x\to x$ in $\hat t$ with $v_x\in V_t$ and   $\alpha_x$ constant for   $x\in V_t$,
 \item for each $(n-1)$-stratum $s$ and $y\in \hat s$  a path
$\beta_y: v_y\to y$ in $\hat s$ with $v_y\in V_s$  and $\beta_y$ constant for  $y\in V_s$.
\end{compactitem}

1.~To see that $R$ is essentially surjective, we define for each $n$-stratum $t$ and  functor  $A^\bullet_t: \Pi_1(\hat t, V_t)\to \bullet\sslash G_t$ a functor $A_t: \Pi_1(\hat t)\to \bullet\sslash G_t$  by setting for each path $\alpha:x\to y$
$$A_t([\alpha])=A_t^\bullet([\alpha_y^\inv\circ\alpha\circ \alpha_x]).$$  Note that this implies $A_t([\alpha_x])=1$ for all $x\in \hat t$ and $A_t([\alpha])=A_t^\bullet([\alpha])$ for each path $\alpha:x\to y$ with $x,y\in V_t$.

If $h_s^\bullet: V_s\to M_s$  satisfies \eqref{eq:rectcond} for all paths  $\beta: y\to y'$ in $\hat s$ with $y,y'\in V_s$,
 we obtain a map $h_s:\hat s\to M_s$ that satisfies \eqref{eq:rectcond} for all paths  $\beta: y\to y'$ in $\hat s$  by setting
\begin{align*}
h_s(y)
&=A_{L(s)}([\iota_{l(s)}\circ \beta_y])\rhd h^\bullet_s(v_y)\lhd A_{R(s)}([ \iota_{r(s)}\circ \beta_y])^\inv,
 \end{align*}
 where $\iota_{l(s)}$ and $\iota_{r(s)}$ are the maps  from \eqref{eq:commtop} for the local $n$-strata $l(s): s\to L(s)$ and $r(s): s\to R(s)$ from Definition \ref{def:reggraph}.
 By construction, this yields $h_s(v)=h_s^\bullet(v)$ for all $v\in V_s$. 
  Note also that $h_s(y)$ does not depend on the choice of the path $\beta_y$, because $h_s^\bullet$ satisfies \eqref{eq:rectcond} for paths with endpoints in $V_s$. This shows that for every gauge configuration $A^\bullet:T^{X\bullet}\Rightarrow D^X$ there is a gauge configuration $A:T^{X}\Rightarrow D^X$ with $R(A)=A^\bullet$.

2. To see that $R$ is fully faithful, note that any gauge transformation $\gamma: A\Rrightarrow A'$ is given by maps $\gamma_t: \hat t\to G_t$ for each $n$-stratum $t$ that satisfy
$A'_t([\alpha])=\gamma_t(y)\cdot A_t([\alpha])\cdot \gamma_t(x)^\inv$ for all paths $\alpha:x\to y$ in $\hat t$. This implies 
\begin{align}
\gamma_t(x)=A'([\alpha_x])\cdot \gamma_t(v_x)\cdot A([\alpha_x])^\inv
\end{align}
for all $x\in \hat t$. Thus, the maps $\gamma_t$ are determined uniquely by the associated maps  $R(\gamma)_t$ and the functors $A_t,A'_t:\Pi_1(\hat t)\to \bullet\sslash G_t$. Hence, gauge transformations $\gamma: A\Rrightarrow A'$ are in bijection with  gauge transformations $R(\gamma): R(A)\Rrightarrow R(A')$. 
This shows that $R$ is fully faithful.
\end{proof}

\begin{corollary}\label{cor:essfinite} For each compact  stratified $n$-manifold $X$ the groupoid $\mathcal A^X\sslash \mathcal G^X$ of gauge configurations and gauge transformations on $X$ is essentially finite.
\end{corollary}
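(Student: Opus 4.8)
The plan is to deduce essential finiteness from Proposition \ref{rem:simplegauge} by showing that the \emph{reduced} gauge groupoid is honestly finite, once a coherent basepoint set is available. Since essential finiteness is invariant under equivalence of groupoids, and Proposition \ref{rem:simplegauge} provides an equivalence $\mathcal A^X \sslash \mathcal G^X \simeq \mathcal A^{X\bullet}\sslash \mathcal G^{X\bullet}$ as soon as $X$ carries a coherent basepoint set, it suffices to carry out two steps: first exhibit a coherent basepoint set for an arbitrary compact stratified $n$-manifold, and then verify that the reduced gauge groupoid has finitely many objects and morphisms.

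For the first step I would construct a coherent basepoint set directly, since Example \ref{ex:finestrat} only covers fine stratifications. Choose one interior point $b_s$ in each stratum $s$ of $X$ and one interior point in each boundary stratum of $\partial X$, and let $B$ be their union; this is finite because a stratification of a compact manifold has finitely many strata (Section \ref{subsec:strats}), as does the induced boundary stratification. Condition (i) of Definition \ref{def:basepointset} then holds because $f_s\colon \hat s \to \bar s$ is surjective and $b_s \in s \subset \bar s$ forces $V_s = f_s^{-1}(B) \neq \emptyset$, and similarly for boundary strata. The key observation is that conditions (ii) and (iii) are automatic for any set of the form $f_s^{-1}(B)$: the commuting diagrams \eqref{eq:commtop} and \eqref{eq:stratboundemb} give $f_t \circ \iota_q = \iota_{st}\circ f_s$ with $\iota_{st}$ the inclusion of closures, so for $v \in V_s$ one has $f_t(\iota_q(v)) = f_s(v) \in B$, whence $\iota_q(v) \in f_t^{-1}(B) = V_t$, and the same argument applies to $\iota_{s\partial}$.

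For the second step I would use Corollary \ref{cor:redstrata}: a reduced gauge configuration is a family of functors $A_t\colon \Pi_1(\hat t, V_t)\to \bullet\sslash G_t$ over the $n$-strata $t$ together with maps $h_s\colon V_s \to M_s$ over the $(n-1)$-strata $s$ satisfying \eqref{eq:rectcond}, and a reduced gauge transformation is a family of maps $\gamma_t\colon V_t \to G_t$. Each $V_s$ is finite (as $B$ is finite and $f_s$ is injective on the interior of each simplex of the finite complex $\hat s$, hence finite-to-one), and each $M_s$ and $G_t$ is finite by the defect data. Thus there are finitely many maps $h_s$ and finitely many maps $\gamma_t$, so the group of reduced gauge transformations $\mathcal G^{X\bullet} = \prod_{t}(G_t)^{V_t}$ is finite and the reduced groupoid has finitely many morphisms. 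For the functors $A_t$, note that $\hat t$ is the geometric realisation of a finite simplicial complex, so $\pi_1(\hat t)$ is finitely generated and $\Pi_1(\hat t, V_t)$ is a finitely generated groupoid; a functor from a finitely generated groupoid into the finite groupoid $\bullet\sslash G_t$ is determined by the images of a finite generating set, of which there are only finitely many (the count is bounded by $|G_t|^{|V_t|-1}\cdot|\Hom(\pi_1(\hat t),G_t)|$). Hence $\mathcal A^{X\bullet}$ is finite, and $\mathcal A^{X\bullet}\sslash\mathcal G^{X\bullet}$ is a finite groupoid. Proposition \ref{rem:simplegauge} then gives that $\mathcal A^X\sslash\mathcal G^X$ is equivalent to a finite groupoid, i.e.\ essentially finite.

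The main obstacle I anticipate is the existence of coherent basepoint sets for stratifications that are not fine: strata such as isolated loops need not contain any vertex in their closure, so one cannot simply take $B = X^0\cup\partial X^0$ as in Example \ref{ex:finestrat}. This difficulty dissolves once one notices that the compatibility conditions (ii) and (iii) hold automatically for preimages $f_s^{-1}(B)$, leaving only the nonemptiness condition (i), which is trivially arranged by choosing an interior point in every stratum and boundary stratum. The remaining quantitative input is the finiteness of $\Hom(\pi_1(\hat t),G_t)$, resting on finite generation of the fundamental group of the compact polyhedron $\hat t$ together with finiteness of $G_t$.
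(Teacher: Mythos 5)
Your proof is correct and follows essentially the same route as the paper's: reduce via Proposition \ref{rem:simplegauge} to the reduced gauge groupoid for a coherent basepoint set, then count — the maps $h_s\colon V_s\to M_s$ and $\gamma_t\colon V_t\to G_t$ form finite sets, and the functors $A_t\colon \Pi_1(\hat t,V_t)\to\bullet\sslash G_t$ are finite in number because $\hat t$ is a compact polyhedron with finitely presented fundamental group while $G_t$ is finite. Your one genuine addition is the explicit existence proof for coherent basepoint sets on arbitrary (not necessarily fine) compact stratified $n$-manifolds — the paper exhibits one only in the fine case (Example \ref{ex:finestrat}) and tacitly assumes existence in this proof — and your observation that conditions (ii) and (iii) of Definition \ref{def:basepointset} hold automatically for preimage sets $V_s=f_s^{-1}(B)$, by the commuting diagrams \eqref{eq:commtop} and \eqref{eq:stratboundemb}, closes that small gap cleanly.
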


\begin{proof} By Proposition \ref{rem:simplegauge}  the groupoid $\mathcal A^X\sslash \mathcal G^X$ is equivalent to the  groupoid $\mathcal A^{X\bullet}\sslash \mathcal G^{X\bullet}$ for each coherent basepoint set.  It is thus sufficient to choose a coherent basepoint set and to show that $\mathcal A^{X\bullet}\sslash \mathcal G^{X\bullet}$ is  finite.

By definition of a coherent basepoint set, all sets $V_s$ for strata $s$ of $X$ are finite. Because all spaces  $\hat t$ for $n$-strata  $t$ of $X$  are compact PL manifolds with boundary, their fundamental groups are finitely presented. For this note that a compact PL manifold with boundary has a finite triangulation.  Contracting the 1-skeleton of such a triangulation gives a finite presentation of its fundamental group.  With the finiteness of the groups $G_t$, this implies that the number of functors $A_t:\Pi_1(\hat t, V_t)\to \bullet\sslash G_t$ is finite. 
Due to the finiteness of the sets $M_s$ assigned to the $(n-1)$-strata $s$, there is only a finite number of maps $h_s:V_s\to M_s$. This shows that $\mathcal A^{X\bullet}\sslash \mathcal G^{X\bullet}$ has only a finite number of objects.  Likewise, the number of maps $\gamma_t: V_t\to G_t$ is finite for each $n$-stratum $t$, which shows that all Hom-sets of $\mathcal A^{X\bullet}\sslash \mathcal G^{X\bullet}$ are finite.
\end{proof}

\subsection{Geometrical and combinatorial description}
\label{subsec:geomdescript}

If a compact   $n$-manifold $X$   is equipped with a fine stratification, it is possible to give a purely combinatorial description of the associated reduced gauge groupoid  in terms of graph gauge configurations and graph gauge transformations on the thickening $X_{th}$. Note  that in this case there are no isolated loops.

Recall from Table \ref{tab:strat}
that each (boundary) vertex  of $X_{th}$  is contained in a unique $n$-stratum $t$ and (boundary) vertices of $X_{th}$ in  $t$ are in bijection with elements of the set $V_t$ from \eqref{eq:vertexset}.

Recall also that the (boundary) edges  of $X_{th}$ are partitioned into two sets, black and red ones. Each black (boundary) edge of $X_{th}$ is contained in a unique $n$-stratum $t$ and corresponds to a unique  (boundary) edge $e$ of $X$ with $e\subset \bar t$.  
Red (boundary) edges  of $X_{th}$ are edges between a thickened vertex $v$ and a thickened $(n-1)$-stratum $s$ of $X$. They correspond to pairs $(v, q)$ with a local $(n-1)$-stratum  $q:v\to s$. Thus, the red edges in the  thickening of an $(n-1)$-stratum $s$ are in bijection with elements of the set $V_s$ from \eqref{eq:vertexset}.

Recall also that each (boundary) edge  $e$ of $X$ with $e\subset \bar s$ for an $(n-1)$-stratum $s$ is thickened to a green rectangle 
with two red edges $s(e), t(e)$ at the start and target end and two black edges  $l(e), r(e)$ parallel to $e$ that are contained in the $n$-strata $L(s)$, $R(s)$. 
Each black and red edge is contained in a unique rectangle of this type.
For edges $e$ of a 3-manifold $X$, the orientation of the black edges is fixed by the orientation of $e$. For boundary edges and edges of a surface $\Sigma$ it is  arbitrary. All red edges are oriented by the normals.
\begin{align}\label{eq:defectrect}
\begin{tikzpicture}[scale=.6, baseline=(current  bounding  box.center)]
\draw[color=green, fill=green, fill opacity=.15, line width=0pt] (0,0)--(0,2)--(4,2)--(4,0)--(0,0);
\draw[color=red, line width=1.5pt] (0,0)--(0,2) node[sloped, pos=0.5, allow upside down]{\arrowOut}  node[pos=.5, anchor=east]{$s(e)$};
\draw[color=red, line width=1.5pt] (4,0)--(4,2) node[sloped, pos=0.5, allow upside down]{\arrowOut}  node[pos=.5, anchor=west]{$t(e)$};
\draw[color=black, line width=2pt, ->,>=stealth] (.5,1)--(3.5,1)  node[pos=.5, anchor=north]{$e$};
\draw[color=red, line width=1pt,->,>=stealth] (2,1)--(2,1.5);
\draw[color=black, line width=1.5pt] (0,2)--(4,2) node[sloped, pos=0.5, allow upside down]{\arrowOut}  node[pos=.5, anchor=south]{$l(e)\subset L(s)$};
\draw[color=black, line width=1.5pt] (0,0)--(4,0) node[sloped, pos=0.5, allow upside down]{\arrowOut}  node[pos=.5, anchor=north]{$r(e)\subset R(s)$};
\draw[fill=black](0,2) circle(.08) ;
\draw[fill=black](4,2) circle(.08) ;
\draw[fill=gray, color=black](0,0) circle(.08) ;
\draw[fill=gray, color=black](4,0) circle(.08) ;
\node at (0,2)[anchor=south east]{$(s,l)$};
\node at (4,2)[anchor=south west]{$(t,l)$};
\node at (0,0)[anchor=north east]{$(s,r)$};
\node at (4,0)[anchor=north west]{$(t,r)$};
\end{tikzpicture}
\end{align}

\begin{definition} \label{def:graphgt} Let $X$ be a compact   $n$-manifold  with  a fine stratification and classical defect data.
\begin{enumerate}
\item A \textbf{graph gauge configuration} on $X_{th}$ is an assignment of the following data to the edges of $X_{th}$
\begin{compactitem}
\item to each black edge $f$  contained in an $n$-stratum $t$ a group element $g_f\in G_t$,
\item 
to each  red edge $f$  in  the boundary of a thickened $(n-1)$-stratum $s$ an element $m_f\in M_s$, 
\end{compactitem}
such that 
\begin{compactenum}[(i)]
\item the oriented ordered product of the group elements on each blue and white  plane is trivial.
\item 
for each edge  $e\subset \bar s$ one has
$m_{t(e)}=g_{l(e)}\rhd m_{s(e)}\lhd g_{r(e)}^\inv$. 
\end{compactenum}

\item  A \textbf{graph gauge transformation} on $X_{th}$ is an assignment of an element $g_v\in G_t$ to each vertex $v$ of $X_{th}$ with $v\in t$. It acts
on a gauge configuration by 
\begin{align}
\label{eq:gaugeactconc}
&g_{l(e)}\mapsto g_{(t,l)}\cdot g_{l(e)}\cdot g_{(s,l)}^\inv & &g_{r(e)}\mapsto g_{(t,r)}\cdot g_{r(e)}\cdot g_{(s,r)}^\inv\\
&m_{s(e)}\mapsto g_{(s,l)}\rhd m_{s(e)}\lhd g_{(s,r)}^\inv & &m_{t(e)}\mapsto g_{(t,l)}\rhd m_{t(e)}\lhd g_{(t,r)}^\inv.  \nonumber
\end{align}
\end{enumerate}
\end{definition}

\begin{figure}
\begin{center}
\def\svgwidth{.6\columnwidth}
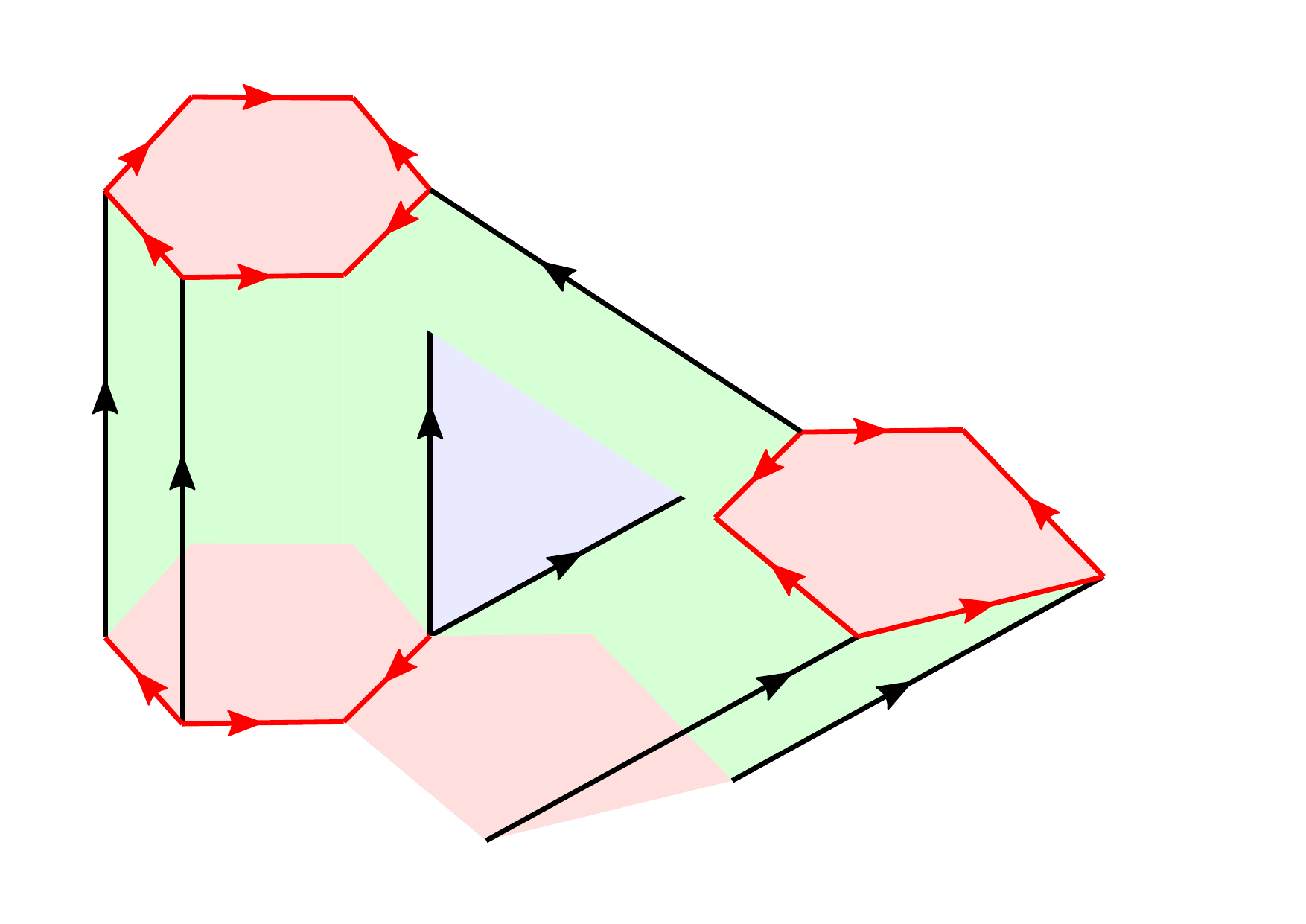
\end{center}
\vspace{-1cm}
\caption{Labelling the edges of the thickening $X_{th}$ with elements of the groups $G_{L(p)}$, $G_{R(p)}$ and the\newline  $G_{L(p)}\times G_{R(p)}^{op}$-set $M_p$. The labels are related by the  conditions:\newline
 $m_v=g_d\rhd m_w\lhd h_d^\inv$, \;
 $m_w=g_f\rhd m_u\lhd h_f^\inv$,  \;
 $m_v=g_e\rhd m_u\lhd h_{e}^\inv$,  \;
 $g_e=g_d\cdot g_f$, \;
 $h_e=h_d\cdot h_f$.
}
\label{fig:defect_label}
\end{figure}

A graph gauge configuration on the thickening  $X_{th}$ of a stratification is shown in Figure \ref{fig:defect_label}. The action of a graph gauge transformation is illustrated in Figure \ref{fig:defect_gauge}. Note that graph gauge configurations and graph gauge transformations in the sense of Definition \ref{def:graphgt} are a straightforward generalisation of the usual notion of a flat graph connection and graph gauge transformation for a finite group. 

If $X$ is equipped with a fine stratification, then we can consider its coherent basepoint set from Example \ref{ex:finestrat} and  the associated groupoid $\mathcal A^{X\bullet}\sslash \mathcal G^{X\bullet}$ of reduced gauge configurations and gauge transformations from Definition \ref{def:redgrpd}, which is equivalent to the groupoid $\mathcal A^X\sslash \mathcal G^X$  by Proposition \ref{rem:simplegauge}. We then have

\begin{corollary} \label{cor:redpoly} Let $X$ be a compact   $n$-manifold  with  a fine stratification and classical defect data. 
The groupoid $\mathcal A^{X\bullet}\sslash \mathcal G^{X\bullet}$  is isomorphic to the groupoid of graph gauge configurations and  transformations on $X_{th}$.
\end{corollary}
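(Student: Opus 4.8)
The plan is to start from the explicit description of the reduced gauge groupoid in Corollary~\ref{cor:redstrata} and to match it, datum by datum, with the graph gauge configurations and transformations of Definition~\ref{def:graphgt}, using the combinatorial dictionary for the thickening $X_{th}$ collected in Table~\ref{tab:strat}. The basic bijections are read off from that table and from \eqref{eq:vertexset}: the vertices of $X_{th}$ lying in an $n$-stratum $t$ are in bijection with the basepoint set $V_t$; the red edges lying in the thickened $(n-1)$-stratum $s$ are in bijection with $V_s$; and the black edges lying in $t$ provide the generators needed to present $\Pi_1(\hat t,V_t)$. I would then define the comparison on objects by sending a reduced gauge configuration $(A_t,h_s)$ to the graph gauge configuration with $g_f:=A_t([f])$ for each black edge $f$ in $t$ (viewed as a path between two points of $V_t$) and $m_f:=h_s(y_f)$, where $y_f\in V_s$ is the basepoint corresponding to the red edge $f$; on morphisms it sends $(\gamma_t)$ to the graph gauge transformation with $g_v:=\gamma_t(v)$ for each vertex $v\in V_t$.

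The heart of the argument is the identification, for each $n$-stratum $t$, of functors $A_t\colon \Pi_1(\hat t,V_t)\to \bullet\sslash G_t$ with families $\{g_f\in G_t\}$ indexed by the black edges of $t$ that satisfy condition (i) of Definition~\ref{def:graphgt} on the blue planes (resp.\ white planes when $n=2$). For this I would show that the two-dimensional subcomplex of $t_{th}$ formed by the black edges and the blue (resp.\ white) planes contained in $\bar t$ is a deformation retract of $\hat t$ rel $V_t$; this is a local statement that can be checked directly from the standard neighbourhood models of Figure~\ref{fig:stratsph} and the thickening construction. A van Kampen theorem for the fundamental groupoid on the set of basepoints $V_t$ (Brown~\cite{Br}) then presents $\Pi_1(\hat t,V_t)$ with the black edges as generators and the oriented boundary products of the blue (resp.\ white) planes as relations. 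Since every black edge joins two points of $V_t$, a functor out of a groupoid given by such a presentation into $\bullet\sslash G_t$ is exactly an assignment of a group element to each generator under which every relator is sent to the identity, i.e.\ exactly a family $\{g_f\}$ obeying condition (i). This yields the bijection on objects in the bulk of each region; the surface case is identical with white planes in place of blue.

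The remaining verifications are routine matchings. Specialising \eqref{eq:rectcond} to a path $\delta$ running along an edge $e\subset\bar s$ identifies $A_{L(s)}([\iota_{l(s)}\circ\delta])=g_{l(e)}$ and $A_{R(s)}([\iota_{r(s)}\circ\delta])=g_{r(e)}$ via \eqref{eq:commtop} and the green rectangle \eqref{eq:defectrect}, and identifies $h_s$ at the two endpoints with $m_{s(e)}$ and $m_{t(e)}$; thus \eqref{eq:rectcond} becomes precisely condition (ii) of Definition~\ref{def:graphgt}, and conversely (ii) for all edges $e$ forces \eqref{eq:rectcond} for all paths, since the edges generate $\Pi_1(\hat s,V_s)$ and the action of $G_{L(s)}\times G_{R(s)}^{op}$ on $M_s$ is functorial. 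For the morphisms, setting $g_v=\gamma_t(v)$ turns the first formula of \eqref{eq:gtrafocond} into the black-edge transformation rule and the second into the red-edge rule of \eqref{eq:gaugeactconc}, as illustrated in Figure~\ref{fig:defect_label}. Composition of reduced gauge transformations and of graph gauge transformations is in both cases the pointwise product of the maps $v\mapsto g_v$, so the comparison respects composition and identities; being bijective on objects and fully faithful, it is an isomorphism of groupoids.

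The step I expect to be the main obstacle is the geometric and homotopical input in the second paragraph: verifying that the black edges together with the blue (resp.\ white) planes form a deformation retract of $\hat t$ rel $V_t$, so that they give a complete and non-redundant presentation of $\Pi_1(\hat t,V_t)$, with no extra relations contributed by the green or red planes on $\partial t_{th}$ and none missing. This is where the precise combinatorics of the thickening and the local neighbourhood models must be used with care; once it is established, everything else reduces to bookkeeping with the already-available formulas of Corollary~\ref{cor:redstrata}.
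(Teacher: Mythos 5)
Your overall route is the same as the paper's: translate the data of Corollary~\ref{cor:redstrata} edge-by-edge and vertex-by-vertex into the data of Definition~\ref{def:graphgt} via the combinatorics of $X_{th}$, with the only substantive input being a presentation of $\Pi_1(\hat t,V_t)$ by black edges modulo plaquette relations. However, the lemma you propose to supply that input is false in the case $n=3$: the subcomplex of $t_{th}$ formed by the black edges and the blue planes (together with their vertices) is the \emph{entire} boundary $\partial t_{th}$, a polyhedral $2$-sphere, and the $3$-ball $t_{th}\cong\hat t$ does not deformation retract onto its boundary sphere, rel $V_t$ or otherwise (this would give $H_2(B^3)\cong H_2(S^2)\neq 0$; it is the standard no-retraction fact behind Brouwer). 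So the step you yourself flagged as the main obstacle would fail as stated. (For $n=2$ your claim is vacuously fine, since the white plane is the whole polygon $t_{th}$.)

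The repair is immediate and is in effect what the paper does. You do not need a deformation retraction, only that the inclusion of the $2$-subcomplex induces an isomorphism $\Pi_1(\partial t_{th},V_t)\to\Pi_1(\hat t,V_t)$, which holds because $\hat t$ is obtained from $\partial t_{th}$ by attaching a single $3$-cell, and attaching cells of dimension $\geq 3$ does not change the fundamental groupoid. Even more simply: since the stratification is fine, $\hat t$ is an $n$-ball and $\partial t_{th}$ a polyhedral $(n-1)$-sphere, so both fundamental groupoids on $V_t$ are indiscrete, and simple connectivity of the sphere shows that functors $\Pi_1(\hat t,V_t)\to\bullet\sslash G_t$ are exactly assignments of elements $g_f\in G_t$ to black edges with trivial oriented product around each blue (resp.\ white) plane, i.e.\ condition (i) of Definition~\ref{def:graphgt}. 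The paper's proof invokes precisely this ball/sphere structure of the thickening and no retraction. Your remaining verifications --- matching \eqref{eq:rectcond} with condition (ii) via the green rectangles \eqref{eq:defectrect}, the generation of $\Pi_1(\hat s,V_s)$ by lifted edges (where consistency around $\partial\hat s$ is automatic because $A_{L(s)}$, $A_{R(s)}$ kill loops in the simply connected balls $\hat L(s)$, $\hat R(s)$), and the identification of \eqref{eq:gtrafocond} with \eqref{eq:gaugeactconc} on morphisms --- agree with the paper's argument.
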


\begin{proof} By Proposition \ref{rem:simplegauge} a reduced gauge configuration is given by 
\begin{compactitem}
\item  a functor $A_t:\Pi_1(\hat t, V_t)\to \bullet\sslash G_t$ for each $n$-stratum $t$,
\item  a map $h_s: V_s\to M_s$ for each $(n-1)$-stratum $s$,
\end{compactitem}
 satisfying \eqref{eq:rectcond} for all $y,y'\in V_s$ and paths $\delta: y\to y'$ in $\hat s$. 
 
 A reduced gauge transformation is given by a map $\gamma_t: V_t\to G_t$ for each $n$-stratum $t$ and acts on a reduced gauge configuration by \eqref{eq:gtrafocond} for each $y\in V_s$ and path $\delta:x\to x'$ with $x,x'\in V_t$.
 
 As the stratification is fine,  the thickening of an $n$-stratum $t$ is an $n$-ball, whose boundary   is a polyhedral $(n-1)$-sphere with vertex set $V_t$ and the black edges in $t$ as edges. 
 Each thickened $(n-1)$-stratum $s$ is a polygonal cylinder with vertex set $V_s\times\{0,1\}$. Its red edges connect vertices $(v,0)$ and $(v,1)$ for $v\in V_s$. Its remaining edges are black edges shared with the thickenings of  $L(s)$ and $R(s)$. 
 This implies that reduced gauge configurations on $X$ are in bijection with assignments
 \begin{compactitem}
 \item of group elements $g_f=A_t([f])$ to each black edge $f$ of $X_{th}$ with $f\subset t$,
 \item elements $m_f=h_s(v)$ to each red edge $f$ of $X_{th}$  that connects $(v,0)$ and $(v,1)$ for $v\in V_s$.
 \end{compactitem}
 Condition \eqref{eq:rectcond} then becomes condition (ii) in Definition \ref{def:graphgt}. The functoriality of $A_t$ is equivalent to condition (i) in Definition \ref{def:graphgt}
 As vertices of $X_{th}$ in an $n$-stratum $t$ are in bijection with elements of $V_t$, a gauge transformation is simply an assignment of an element $g_v\in G_t$ to each vertex of $X_{th}$ contained in $t$.  Formula \eqref{eq:gtrafocond} for the action of gauge transformation then becomes \eqref{eq:gaugeactconc}.
\end{proof}

\begin{figure}
\begin{center}
\begin{tikzpicture}[scale=.6]
\draw[color=green, fill=green, fill opacity=.15, line width=0pt] (0,0)--(0,2)--(4,2)--(4,0)--(0,0);
\draw[color=red, line width=1.5pt] (0,0)--(0,2) node[sloped, pos=0.5, allow upside down]{\arrowOut}  node[pos=.5, anchor=east]{$m_{s(e)}$};
\draw[color=red, line width=1.5pt] (4,0)--(4,2) node[sloped, pos=0.5, allow upside down]{\arrowOut}  node[pos=.5, anchor=west]{$m_{t(e)}$};
\draw[color=black, line width=2pt,->,>=stealth] (.5,1)--(3.5,1)  node[pos=.5, anchor=north]{$e$};
\draw[color=red, line width=1pt,->,>=stealth] (2,1)--(2,1.5);
\draw[color=black, line width=1.5pt] (0,2)--(4,2) node[sloped, pos=0.5, allow upside down]{\arrowOut}  node[pos=.5, anchor=south]{$g_{l(e)}$};
\draw[color=black, line width=1.5pt] (0,0)--(4,0) node[sloped, pos=0.5, allow upside down]{\arrowOut}  node[pos=.5, anchor=north]{$g_{r(e)}$};
\draw[fill=black](0,2) circle(.08) node[anchor=south east]{$g_{(s,l)}$};
\draw[fill=black](4,2) circle(.08) node[anchor=south west]{$g_{(t,l)}$};
\draw[fill=gray, color=black](0,0) circle(.08) node[anchor=north east]{$g_{(s,r)}$};
\draw[fill=gray, color=black](4,0) circle(.08) node[anchor=north west]{$g_{(t,r)}$};
\end{tikzpicture}
\qquad 
\begin{tikzpicture}[scale=.6]
\draw[color=green, fill=green, fill opacity=.15, line width=0pt] (10,0)--(10,2)--(14,2)--(14,0)--(10,0);
\draw[color=red, line width=1.5pt] (10,0)--(10,2) node[sloped, pos=0.5, allow upside down]{\arrowOut}  node[pos=.5, anchor=east]{$g_{(s,l)}\rhd m_{s(e)}\lhd g_{(t,r)}^\inv\;$};
\draw[color=red, line width=1.5pt] (14,0)--(14,2) node[sloped, pos=0.5, allow upside down]{\arrowOut}  node[pos=.5, anchor=west]{$\;g_{(t,l)}\rhd m_{t(e)}\lhd g_{(t,r)}^\inv$};
\draw[color=black, line width=2pt,->,>=stealth] (10.5,1)--(13.5,1)  node[pos=.5, anchor=north]{$e$};
\draw[color=red, line width=1pt,->,>=stealth] (12,1)--(12,1.5);
\draw[color=black, line width=1.5pt] (10,2)--(14,2) node[sloped, pos=0.5, allow upside down]{\arrowOut}  node[pos=.5, anchor=south]{$g_{(t,l)}\cdot g_{l(e)}\cdot g_{(s,l)}^\inv$};
\draw[color=black, line width=1.5pt] (10,0)--(14,0) node[sloped, pos=0.5, allow upside down]{\arrowOut}  node[pos=.5, anchor=north]{$g_{(t,r)}\cdot g_{r(e)}\cdot g_{(s,r)}^\inv$};
\draw[fill=black](10,2) circle(.08) ;
\draw[fill=black](14,2) circle(.08) ;
\draw[fill=gray, color=black](10,0) circle(.08);
\draw[fill=gray, color=black](14,0) circle(.08) ;
\end{tikzpicture}
\end{center}$\quad$\\[-10ex]
\caption{Action of a gauge transformation  for green planes in the thickening of an edge $e$.}
\label{fig:defect_gauge}
\end{figure}
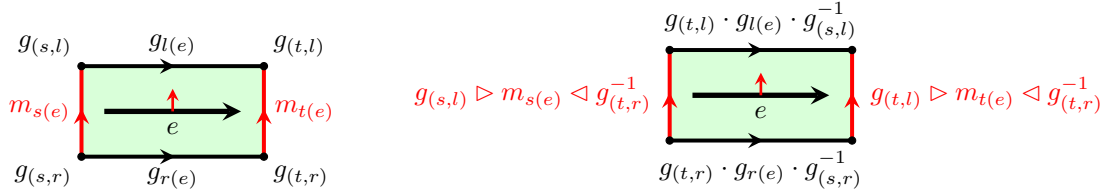

The reduced counterparts of the restriction functors  $P_t:\mathcal A^X\sslash \mathcal G^X\to \mathcal D_t^{\Pi_1(\hat t)}$ and   $P_\partial: \mathcal A^X\sslash \mathcal G^X\to \mathcal A^{\partial X}\sslash \mathcal G^{\partial X}$  from Definition \ref{def:gaugegrpd}  and  Corollary \ref{prop:projfunc} also have a  geometric interpretation.
 They restrict the labels on  edges and vertices of $X_{th}$  to  those edges and vertices  that are contained   in the thickening of a stratum $t$ and in the boundary of $X_{th}$, respectively. 
For a  vertex $v$ of $X$  these are the red edges and vertices in its thickening. For an edge $e$  these are the  red edges at its starting and target vertex  and the black edges that connect them.

It is possible to reduce the action groupoid for a compact $n$-manifold $X$ with a fine stratification and classical defect data even further. 
As all strata of $X$ are balls, one can apply  reduced gauge transformations to  achieve that 
$A^\bullet_t([\alpha])=1$ for all paths $\alpha: x\to y$ in $\hat t$ with $x,y\in V_t$ and for all $n$-strata $t$. Condition \eqref{eq:rectcond} then states that the maps $h_s^\bullet:V_s\to M_s$ are constant. This amounts to  labelling all  black edges of $X_{th}$ by units of the associated groups and  all red edges in the closure of an $(n-1)$-stratum $s$ by the same element $m_s$. The reduced gauge transformations that preserve these restrictions are the ones given by constant maps $\gamma_t: V_t\to G_t$ for each $n$-stratum $t$. 
Applying Proposition \ref{rem:simplegauge} then yields

\begin{corollary}\label{cor:actred} Let $X$ be a compact  $n$-manifold with a fine stratification and classical defect data. 
The groupoid $\mathcal A^X\sslash \mathcal G^X$ of gauge configurations and gauge configurations on $X$ is equivalent to the action groupoid
\begin{align}\label{eq:fullred}
\mathcal A^0\sslash \mathcal G^0=\big(\Pi_{s\in S^X_{n-1}} M_s\big)\sslash \big(\Pi_{t\in S^X_n} G_t\big).
\end{align}
\end{corollary}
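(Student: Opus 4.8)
The plan is to reduce to the reduced gauge groupoid and then gauge-fix. By Proposition \ref{rem:simplegauge}, the groupoid $\mathcal A^X\sslash\mathcal G^X$ is equivalent to the reduced gauge groupoid $\mathcal A^{X\bullet}\sslash\mathcal G^{X\bullet}$ for the coherent basepoint set $B=X^0\cup\partial X^0$ of Example \ref{ex:finestrat}, so it suffices to exhibit an equivalence between $\mathcal A^{X\bullet}\sslash\mathcal G^{X\bullet}$ and the action groupoid $\mathcal A^0\sslash\mathcal G^0$ of \eqref{eq:fullred}. Throughout I would use the explicit description from Corollary \ref{cor:redstrata}, by which a reduced gauge configuration is a pair $(A_t,h_s)$ of functors $A_t\colon\Pi_1(\hat t,V_t)\to\bullet\sslash G_t$ for $n$-strata $t$ and maps $h_s\colon V_s\to M_s$ for $(n-1)$-strata $s$ subject to \eqref{eq:rectcond}, and a reduced gauge transformation is a family $\gamma_t\colon V_t\to G_t$ acting by \eqref{eq:gtrafocond}.

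First I would construct a functor $\Phi\colon\mathcal A^0\sslash\mathcal G^0\to\mathcal A^{X\bullet}\sslash\mathcal G^{X\bullet}$ realising $\mathcal A^0\sslash\mathcal G^0$ as the subgroupoid of \emph{gauge-fixed} configurations: $\Phi$ sends an object $(m_s)_s$ to the configuration with $A_t$ trivial (every path sent to $1\in G_t$) and $h_s\equiv m_s$ constant, and a morphism $(g_t)_t$ to the gauge transformation with constant components $\gamma_t\equiv g_t$. I would check that \eqref{eq:rectcond} holds for this configuration (it collapses to $m_s=1\rhd m_s\lhd 1^{-1}$) and that, by \eqref{eq:gtrafocond}, $\Phi\big((g_t)_t\big)$ keeps every $A_t$ trivial and sends $h_s\equiv m_s$ to the constant map with value $g_{L(s)}\rhd m_s\lhd g_{R(s)}^{-1}$; since constant maps multiply componentwise, $\Phi$ is a well-defined functor intertwining the $\prod_t G_t$-action on $\prod_s M_s$ with the gauge action.

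The substance of the argument is to show $\Phi$ is an equivalence. For essential surjectivity I would gauge-fix an arbitrary $(A_t,h_s)$: since the stratification is fine, each $\hat t$ is a PL ball and hence simply connected, so $\Pi_1(\hat t,V_t)$ is codiscrete on the nonempty set $V_t$; choosing a basepoint $v_t$ and writing $\kappa^t_x\colon v_t\to x$ for the unique morphism, the transformation $\gamma_t(x):=A_t(\kappa^t_x)^{-1}$ trivialises $A_t$, because $A_t([\delta])=A_t(\kappa^t_{x'})A_t(\kappa^t_x)^{-1}$ for $\delta\colon x\to x'$ and \eqref{eq:gtrafocond} then returns $1$. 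After applying it, \eqref{eq:rectcond} forces $h_s(y')=h_s(y)$ along any path in $\hat s$; as $\hat s$ is a connected $(n-1)$-ball, $h_s$ is constant with some value $m_s\in M_s$, so the result is $\Phi\big((m_s)_s\big)$. For full faithfulness I would observe that a gauge transformation between two gauge-fixed configurations must keep $A_t$ trivial, which by \eqref{eq:gtrafocond} and connectedness of $\hat t$ forces each $\gamma_t$ to be constant, say $\gamma_t\equiv g_t$; the remaining condition on the $h_s$ is then exactly $g_{L(s)}\rhd m_s\lhd g_{R(s)}^{-1}=m'_s$, identifying the $\Hom$-sets on both sides. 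Composing this equivalence with Proposition \ref{rem:simplegauge} yields the claim.

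I expect the main obstacle to be topological rather than algebraic: one must correctly invoke the ball structure of the spaces $\hat t$ and $\hat s$ supplied by the fineness hypothesis, so that $\Pi_1(\hat t,V_t)$ is codiscrete and the maps $h_s$ become constant after gauge-fixing. Once these facts are in hand, the verifications that $\Phi$ is well defined, essentially surjective and fully faithful are routine bookkeeping with \eqref{eq:rectcond} and \eqref{eq:gtrafocond}. The equivalence produced this way is precisely the further reduction sketched informally before the statement, in which $A_t$ is gauge-fixed to the identity and each $h_s$ to a single element $m_s$.
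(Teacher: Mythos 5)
Your proposal is correct and follows essentially the same route as the paper: the paper's own argument is exactly the gauge-fixing sketch in the paragraph preceding the corollary (use the ball structure from fineness to trivialise each $A^\bullet_t$, deduce from \eqref{eq:rectcond} that each $h_s^\bullet$ is constant, note the residual gauge transformations are the constant ones, and conclude via Proposition \ref{rem:simplegauge}). You have merely made this precise by packaging it as a functor $\Phi$ and verifying essential surjectivity and full faithfulness, which is a faithful elaboration rather than a different approach.
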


This reduction procedure can also be understood geometrically. Considering only gauge configurations on $X_{th}$ that assign trivial group elements to all black edges amounts to contracting all black edges in  $X_{th}$.  This contracts each thickened $n$-stratum  to a point, each thickened $(n-1)$-stratum to an edge, each thickened $(n-2)$-stratum to a polygon and leaves the thickenings of $(n-3)$-strata unchanged. As a consequence, we obtain a graph with vertices corresponding to contracted $n$-strata and red edges dual to the $(n-1)$-strata of $X$. The normals of the $(n-1)$-strata equip this graph with an edge orientation. 

\begin{definition} Let $X$ be a compact  $n$-manifold with a fine stratification and classical defect data and $\Gamma$   the  1-skeleton of its dual  with the induced edge orientation.
\begin{compactenum}
\item A \textbf{fully reduced graph gauge configuration} on $X$ is an assignment of an element $m_s\in M_s$ to the  edge of $\Gamma$ dual to an $(n-1)$-stratum $s$.
\item A \textbf{fully reduced graph gauge transformation} on $X$ is an assignment of an element $g_t\in G_t$
 to the vertex of $\Gamma$ dual to an $n$-stratum $t$ of $X$. It acts on a gauge configuration by
 $$
 m_s\mapsto g_{L(s)}\rhd m_s\lhd g_{R(s)}^\inv.
 $$ 
 \end{compactenum}
\end{definition}

Clearly, fully reduced gauge configurations and gauge transformations on $X$ form a groupoid, namely the action groupoid $\mathcal A^0\sslash \mathcal G^0$ from Corollary \ref{cor:actred}. We can then rephrase  Corollary \ref{cor:actred} as follows.

\begin{corollary}\label{cor:redggrpd} Let $X$ be a compact  $n$-manifold with a fine stratification and classical defect data. Then the groupoid $\mathcal A^X\sslash \mathcal G^X$
of gauge configurations and gauge transformations on $X$ is equivalent to the groupoid of fully reduced graph gauge configurations and  transformations on $X$.
\end{corollary}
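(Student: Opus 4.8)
The plan is to show that the groupoid of fully reduced graph gauge configurations and transformations on $X$ is \emph{literally} the action groupoid $\mathcal A^0\sslash \mathcal G^0$ appearing in Corollary \ref{cor:actred}, after which the statement follows at once from that corollary together with transitivity of equivalences of groupoids. There is no genuine obstacle here; the work is purely a matter of unwinding the Poincaré duality between the fine stratification and the graph $\Gamma$.

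First I would identify the two underlying sets. By definition $\Gamma$ is the 1-skeleton of the dual stratification, so its vertices are dual to the $n$-strata $t\in S^X_n$ and its edges are dual to the $(n-1)$-strata $s\in S^X_{n-1}$. Hence an assignment of an element $m_s\in M_s$ to each edge of $\Gamma$ is the same datum as an element of $\mathcal A^0=\prod_{s\in S^X_{n-1}} M_s$, and an assignment of $g_t\in G_t$ to each vertex of $\Gamma$ is the same datum as an element of $\mathcal G^0=\prod_{t\in S^X_n} G_t$. Under this identification the composition of fully reduced graph gauge transformations, being pointwise multiplication over the vertices of $\Gamma$, agrees with the group multiplication of $\mathcal G^0$.

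Next I would check that the two group actions coincide. The edge of $\Gamma$ dual to an $(n-1)$-stratum $s$ is oriented by the normal of $s$, and its two endpoints are exactly the vertices dual to the $n$-strata $L(s)$ and $R(s)$ at the target and source ends of that normal (Definition \ref{def:orstrat}). Thus the action $m_s\mapsto g_{L(s)}\rhd m_s\lhd g_{R(s)}^\inv$ prescribed for a fully reduced graph gauge transformation is precisely the componentwise action of $\mathcal G^0$ on $\mathcal A^0$ that defines $\mathcal A^0\sslash \mathcal G^0$ in Corollary \ref{cor:actred}. Consequently the groupoid of fully reduced graph gauge configurations and transformations on $X$ is isomorphic---indeed equal, under the above identifications---to $\mathcal A^0\sslash \mathcal G^0$, which is the content already recorded in the remark preceding the statement.

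Finally, Corollary \ref{cor:actred} asserts that $\mathcal A^X\sslash \mathcal G^X$ is equivalent to $\mathcal A^0\sslash \mathcal G^0$. Composing this equivalence with the isomorphism just established yields the desired equivalence between $\mathcal A^X\sslash \mathcal G^X$ and the groupoid of fully reduced graph gauge configurations and transformations on $X$. The only point requiring any care is the combinatorial verification that Poincaré duality sends $(n-1)$- and $n$-strata to the edges and vertices of $\Gamma$ compatibly with the normal orientations, so that the source/target assignment on $\Gamma$ matches the roles of $R(s)$ and $L(s)$; everything else is a restatement of Corollary \ref{cor:actred}.
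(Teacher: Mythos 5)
Your proposal is correct and follows exactly the paper's route: the paper likewise observes that the fully reduced graph gauge groupoid is, by definition and Poincar\'e duality of the fine stratification, literally the action groupoid $\mathcal A^0\sslash \mathcal G^0$ of Corollary \ref{cor:actred}, and then restates that corollary. Your extra check that the edge orientation induced by the normals matches the $L(s)$/$R(s)$ roles in the action is the same (implicit) verification the paper relies on.
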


\section{Construction of the defect TQFT}
\label{sec:defect}

In this section we use the gauge groupoids of compact  stratified $n$-manifolds with defect data to construct a defect TQFT. 
We first show in Section \ref{subsec:classdeftqft} that the assignments of gauge groupoids to defect surfaces and defect cobordisms 
 define a symmetric monoidal functor $C:\mathrm{Cob}_3^{\mathrm{def}}\to \mathrm{span}^{\mathrm{fib}}(\Grpd)$.

We then consider for each defect cobordism $\Sigma_0\xrightarrow{\iota_0}M\xleftarrow{\iota_1} \Sigma_1$  the graph defined by its defect vertices and defect edges, with endpoints on the stratified surfaces $\Sigma_0$ and $\Sigma_1$. We show in Section \ref{subsec:edgeproj} that  each defect edge $e$ of $M$ defines projection functors from the gauge groupoid $\mathcal A^M\sslash \G^M$  to the action groupoid $\mathcal D_e=M_e\sslash G_e$ of the edge and natural transformations between them. 
In Subsection \ref{subsec:quantumtqft} we then combine these natural transformations  with the natural transformations at the defect vertices. 
This defines a graph coloured by representations of the gauge groupoid $\mathcal A^M\sslash\G^M$ and intertwiners between them. 

Its evaluation  assigns to each surface $\Sigma_j$ a functor $F_{\Sigma_j}: \mathcal A^{\Sigma_j}\sslash \mathcal G^{\Sigma_j}\to \Vect_\C$ and to the cobordism $M$ a natural transformation $\mu^M:F_{\Sigma_0}P_0\Rightarrow F_{\Sigma_1}P_1$, where $P_i:\A^M\sslash \G^M\to \A^{\Sigma_i}\sslash \G^{\Sigma_i}$ are the projection functors from 
Corollary \ref{cor: span fib}.
We then show that this defines a symmetric monoidal functor $G:\mathrm{Cob}_3^{\mathrm{def}}\to \mathrm{span}^{\mathrm{fib}}(\repGrpd)$. Composing it with the symmetric monoidal functor $L:\mathrm{span}^{\mathrm{fib}}(\repGrpd)\to \Vect_\C$ from Proposition \ref{prop:lfunctor}  defines the defect TQFT.

\subsection{Fibrant spans of groupoids from defect cobordisms}
\label{subsec:classdeftqft}

We consider for each stratified surface or cobordism $X$ with classical defect data the associated gauge groupoid $\mathcal A^X\sslash \G^X$ from Definition \ref{def:gaugegrpd}. We show that this assignment of gauge groupoids to stratified surfaces and cobordisms defines a symmetric monoidal functor from the defect cobordism category $\mathrm{Cob}_3^{\mathrm{def}}$ from Definition \ref{def:coddef} into the category $\mathrm{span}^{\mathrm{fib}}(\Grpd)$  from Lemma \ref{def:spangrpd}. Recall from Corollary \ref{cor:essfinite} that the gauge groupoid of any surface or cobordism  is essentially finite and from Corollary \ref{cor: span fib}  that a stratified compact 3-manifold $M$ with classical  defect data and boundary  $\partial M\cong\Sigma_0\amalg\Sigma_1$ defines a fibrant span 
of essentially finite groupoids.

\begin{theorem} \label{th:classcobfunc} There is a symmetric monoidal functor   $C: \mathrm{Cob}_3^{\mathrm{def}}\to \mathrm{span}^{\mathrm{fib}}(\Grpd)$, that assigns
\begin{compactitem}
\item  to a stratified surface $\Sigma$ with  defect data its gauge groupoid $\mathcal A^\Sigma\sslash\mathcal G^\Sigma$, 
\item  to  a defect cobordism $\Sigma_0\xrightarrow{\iota_0} M\xleftarrow{\iota_1} \Sigma_1$ the equivalence class of the fibrant span of groupoids
\begin{align*}
\mathcal A^{\Sigma_0}\sslash\G^{\Sigma_0} \xleftarrow{P_0} \mathcal A^M\sslash\G^M\xrightarrow{P_1} \mathcal A^{\Sigma_1}\sslash\G^{\Sigma_1}.
\end{align*}
\end{compactitem}
\end{theorem}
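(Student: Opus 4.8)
The plan is to realise the assignment $C$ through the auxiliary category $\pack$ of Definition \ref{def:pack} and the functor $G\colon \pack^{\op}\to\Grpd$ of Proposition \ref{prop:gconfiggrp}. Each stratified surface or cobordism $X$ with defect data determines an object $\mathcal P^X=(Q^X,T^X,D^X)$ of $\pack$, and $G(\mathcal P^X)=\A^X\sslash\G^X$ is its gauge groupoid. By Corollary \ref{cor:essfinite} this groupoid is essentially finite, so $C$ is well defined on objects. For a defect cobordism $\Sigma_0\xrightarrow{\iota_0}M\xleftarrow{\iota_1}\Sigma_1$ the boundary insertion of Lemma \ref{lem:inclusions are insertive}, together with the natural transformation $\tau^M$ of Proposition \ref{prop:tfunctor}, gives a morphism $(i_\partial,\tau^M)\colon \mathcal P^{\partial M}\to\mathcal P^M$ in $\pack$; applying $G$ yields the projection $P_\partial=\langle P_0,P_1\rangle$, and Corollary \ref{cor: span fib} guarantees that the resulting span $\A^{\Sigma_0}\sslash\G^{\Sigma_0}\xleftarrow{P_0}\A^M\sslash\G^M\xrightarrow{P_1}\A^{\Sigma_1}\sslash\G^{\Sigma_1}$ is fibrant and built from essentially finite groupoids. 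Independence of the chosen representative of a cobordism class follows because an isomorphism $\phi\colon M\to M'$ of stratified manifolds respecting defect data induces an isomorphism $Q^M\cong Q^{M'}$ compatible with $T$ and $D$, hence an isomorphism in $\pack$ and, via $G$, an isomorphism $\A^M\sslash\G^M\cong\A^{M'}\sslash\G^{M'}$ over $\A^{\Sigma_0}\sslash\G^{\Sigma_0}\times\A^{\Sigma_1}\sslash\G^{\Sigma_1}$, which is precisely an equivalence $\sim_{\fib}$.

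Next I would verify that $C$ preserves identities. For the cylinder $\Sigma\times[0,1]$ each $k$-stratum $s$ of $\Sigma$ produces a $(k+1)$-stratum $s\times[0,1]$ with $\widehat{s\times[0,1]}\cong\hat s\times[0,1]$ and identical defect data, so $\Pi_1(\widehat{s\times[0,1]})\cong\Pi_1(\hat s)\times\Pi_1[0,1]$ and $\mathcal D_{s\times[0,1]}=\mathcal D_s$. Since $(-)^{\Pi_1[0,1]}$ is right adjoint to $(-)\times\Pi_1[0,1]$ it preserves the defining end, and by the exponential law \eqref{eq:tensor-hom-grpd} this gives a canonical isomorphism
\[
\A^{\Sigma\times[0,1]}\sslash\G^{\Sigma\times[0,1]}=\int_{s}\mathcal D_s^{\Pi_1(\hat s)\times\Pi_1[0,1]}\cong\Bigl(\int_{s}\mathcal D_s^{\Pi_1(\hat s)}\Bigr)^{\Pi_1[0,1]}=\bigl(\A^{\Sigma}\sslash\G^{\Sigma}\bigr)^{\Pi_1[0,1]},
\]
under which the two boundary projections become $\Pi_1(\iota_0)^*$ and $\Pi_1(\iota_1)^*$. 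Lemma \ref{lem:pi1interval} then identifies this span with the identity morphism $[\G\xleftarrow{p_0}\G^I\xrightarrow{p_1}\G]$ in $\mathrm{span}^{\mathrm{fib}}(\Grpd)$.

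The main step, and the principal obstacle, is compatibility with composition, i.e.\ that gluing cobordisms corresponds to the pullback of fibrant spans. Given composable cobordisms $M$ (from $\Sigma_0$ to $\Sigma_1$) and $N$ (from $\Sigma_1$ to $\Sigma_2$), I would show that the gluing $M\cup_{\Sigma_1}N$ induces a pushout square in $\pack$: on the graph and defect-data layers the strata of $M\cup_{\Sigma_1}N$ are the strata of $M$ and of $N$ amalgamated along those of $\Sigma_1$, while on the topological layer each glued stratum $t=t_M\cup_{\hat s}t_N$ has $\hat t=\hat t_M\cup_{\hat s}\hat t_N$, so that the groupoid van Kampen theorem (valid here because the chosen triangulation makes $\hat s\hookrightarrow\hat t_M,\hat t_N$ a cofibration) yields $\Pi_1(\hat t)\cong\Pi_1(\hat t_M)\sqcup_{\Pi_1(\hat s)}\Pi_1(\hat t_N)$. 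Applying $G\colon\pack^{\op}\to\Grpd$, which converts these amalgamations into the pullback of hom-groupoids through the end by the Kan-extension computation of Remark \ref{rem:boundproj} and the appendix Lemma \ref{lem:enriched Kan prop}, turns the pushout into the pullback
\[
\A^{M\cup_{\Sigma_1}N}\sslash\G^{M\cup_{\Sigma_1}N}\;\cong\;\bigl(\A^M\sslash\G^M\bigr)\times_{\A^{\Sigma_1}\sslash\G^{\Sigma_1}}\bigl(\A^N\sslash\G^N\bigr).
\]
Equivalently, using the explicit description in Proposition \ref{eq:gaugeconc}, a gauge configuration on $M\cup_{\Sigma_1}N$ is exactly a matched pair of gauge configurations on $M$ and $N$ agreeing on $\Sigma_1$, hence an object of the pullback groupoid, and the same holds for gauge transformations. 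Fibrancy of all spans (Corollary \ref{cor: span fib}) ensures this ordinary pullback computes the composition in $\mathrm{span}^{\mathrm{fib}}(\Grpd)$ as in Lemma \ref{def:spangrpd}, compatibly with the outer projections to $\Sigma_0$ and $\Sigma_2$. The delicate points are the bookkeeping of the cut-and-glue construction of the spaces $\hat t$ and the uniform applicability of van Kampen to all strata, including the compatibility of basepoints when passing to the reduced gauge groupoid of Proposition \ref{rem:simplegauge}.

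Finally, I would treat the symmetric monoidal structure. For a disjoint union $X\amalg Y$ the graded graph, the functors $T$ and $D$, and hence the defining end all decompose as a product, giving a natural isomorphism $\A^{X\amalg Y}\sslash\G^{X\amalg Y}\cong(\A^X\sslash\G^X)\times(\A^Y\sslash\G^Y)$, with the empty surface sent to the terminal groupoid $\bullet$. Since the monoidal structure on $\mathrm{span}^{\mathrm{fib}}(\Grpd)$ is the product of groupoids, these isomorphisms assemble into a symmetric monoidal structure on $C$, the coherence and symmetry axioms being immediate from the universal properties of products and ends.
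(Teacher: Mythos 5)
Your proposal is correct and takes essentially the same approach as the paper: the identity axiom via the exponential adjunction identifying $\A^{\Sigma\times[0,1]}\sslash\G^{\Sigma\times[0,1]}\cong(\A^\Sigma\sslash\G^\Sigma)^{\Pi_1[0,1]}$ together with Lemma \ref{lem:pi1interval}, and the composition axiom via the pushout of graded graphs, the Seifert--van Kampen theorem for the glued spaces $\hat t$, and the end converting the pushout of (Kan-extended) $T$-functors into a pullback, which is exactly the content of Theorem \ref{thm:main-end}. The one step you compress is the verification that $\Pi_1$ commutes with the left Kan extensions $\Lan_{J_x}B_x$ entering the pushout formula for $T_{012}$ (the paper handles this by passing to simplicial sets, where $\Pi_1$ becomes a left adjoint), since your van Kampen argument alone covers the amalgamation of the spaces but not this commutation.
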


\begin{proof}
1.~It is clear that $C$ sends equivalent defect cobordisms to isomorphic and hence equivalent spans of essentially finite groupoids.  
Although the functors $T^{\Sigma_0}$, $T^{\Sigma_1}$ and $T^M$ are defined by choosing a triangulation, they are triangulation-independent up to natural isomorphisms, as discussed in Subsection \ref{sec:fundgrpd}.

2.~We show that $C$ respects identity morphisms:

We consider the cylinder cobordism $\Sigma \xrightarrow{\iota_0} \Sigma\times[0,1]\xleftarrow{\iota_1} \Sigma$.  As $M:=\Sigma\times[0,1]$ is equipped with the induced stratification, where each stratum $s$ of $\Sigma$ corresponds to a unique stratum $s\times[0,1]$ of $M$,
we can identify $Q^M=Q^\Sigma$ and take the insertions  for both boundary components as $\id:Q^\Sigma\to Q^M$. We then have $D^M=D^\Sigma$. 

By the construction  in Section \ref{sec:fundgrpd}, for each stratum $s$ of $\Sigma$ with associated space $\hat s$,  the corresponding space for the stratum $s\times[0,1]$ in $M$ is given by $\hat s\times[0,1]$. As  $\Pi_1:\Top\to \Grpd$ preserves products, 
we have $T^M=T^\Sigma\times\Pi_1[0,1]:\maq^M\to \Grpd$. The natural transformations $i^j: T^\Sigma\Rightarrow T^\Sigma\times\Pi_1[0,1]$ have components $i^j_s=\id_{\Pi_1(\hat s)}\times \{j\}:\Pi_1(\hat s)\to \Pi_1(\hat s)\times\Pi_1[0,1]$ for $j=0,1$.  Using the fact that $\Grpd$ is cartesian closed and that the functor $\GRPd(\G,-): \Grpd \to \Grpd$ is a right adjoint for any groupoid $\G$, we then obtain
\begin{align*}
\mathcal A^M\sslash \G^M&=\int_{x \in Q^\Sigma} \GRPd\big(T^{\Sigma}(x)\times \Pi_1[0,1], D^{\Sigma}(x)\big )\cong 
 \int_{x \in Q^\Sigma}  \GRPd\big(  \Pi_1[0,1], \GRPd(T^{\Sigma}(x), D^{\Sigma}(x))\big)\\
&\cong\GRPd\left(\Pi_1[0,1], \int_{x \in Q^\Sigma} \!\!\!\! \GRPd(T^{\Sigma}(x), D^{\Sigma}(x))\right)=(\mathcal A^\Sigma\sslash \G^\Sigma)^{\Pi_1[0,1]}.
\end{align*}
As the projections $P_j:\mathcal A^M\sslash \G^M\to\mathcal A^\Sigma\sslash \G^\Sigma$ are induced by the natural transformations $\iota^j:T^\Sigma\Rightarrow T^\Sigma\times\Pi_1[0,1]$, they are identified with the functors 
 $\Pi_1(\iota_0)^*,\Pi_1(\iota_1)^*: (\mathcal A^\Sigma\sslash \G^\Sigma)^{\Pi_1[0,1]}\to (\mathcal A^\Sigma\sslash \G^\Sigma)^\bullet\cong \mathcal A^\Sigma\sslash \G^\Sigma$ in \eqref{eq:def_ell}. This shows that  $C$ sends the equivalence class of the cylinder cobordism to the equivalence class of the span from \eqref{eq:def_ell} in $\mathrm{span}^{\mathrm{fib}}(\Grpd)$ for $\mathcal G=\mathcal A^\Sigma\sslash \mathcal G^\Sigma$. 
By Lemma \ref{lem:pi1interval}  this is the identity morphism in $\mathrm{span}^{\mathrm{fib}}(\Grpd)$.

\medskip
3.~We show that $C$ respects the composition of morphisms:

We glue cobordisms $\Sigma_0\xrightarrow{f_0} M_{01}\xleftarrow{f_1}\Sigma_1$ and $\Sigma_1\xrightarrow{f'_1} M_{12}\xleftarrow{f_2} \Sigma_2$  to a cobordism $\Sigma_0 \xrightarrow{j_0} M_{012}\xleftarrow{j_2} \Sigma_2$ along the stratified surface $\Sigma_1$. 
This amounts to gluing for each $k$-stratum $s_1$ of $\Sigma_1$ the associated $(k+1)$-strata $s_{01}$ of $M_{01}$ and $s_{12}$ of $M_{12}$ along $s_1$.

We denote by $Q_j:=Q^{\Sigma_j}$ for $j=0,1,2$ and $Q_j:=Q^{M_j}$ for $j=01,12, 012$ the associated graded graphs and by $\maq_j$ the associated free categories. By Lemma \ref{lem:inclusions are insertive}, the maps $f_0$,
$f_1$, $f'_1$  and $f_2$ define insertions, which we denote by the same symbols.

\begin{lemma}\label{lem:graphglue} Gluing $M_{01}$ and $M_{12}$ along $\Sigma_1$  yields the following commuting diagrams, 
where  the middle diamonds are pushouts:
\begin{align}\label{eq:pushq1}
\begin{gathered}
\xymatrix@C=18pt{ Q_0\ar[dr]^{f_0} \ar@/_2pc/[ddrr]_{j_0} & &  Q_1\ar[dl]_{f_1} \ar[dr]^{f'_1} \ar[dd]^{j_1} &&  Q_2 \ar[dl]_{f_2}  \ar@/^2pc/[ddll]^{j_2} \\ &  Q_{01}\ar[dr]_{j_{01}} && \ Q_{12} \ar[dl]^{j_{12}} \\
&& Q_{012}}
\end{gathered}
\quad\textrm{ and }\quad
\begin{gathered}
\xymatrix@C=18pt{ \mathcal Q_0\ar[dr]^{F_0} \ar@/_2pc/[ddrr]_{J_0} & & \mathcal Q_1\ar[dl]_{F_1} \ar[dr]^{F'_1} \ar[dd]^{J_1} && \mathcal Q_2 \ar[dl]_{F_2}  \ar@/^2pc/[ddll]^{J_2} \\ & \mathcal Q_{01}\ar[dr]_{J_{01}} && \mathcal Q_{12} \ar[dl]^{J_{12}} \\
&& \mathcal Q_{012}.}
\end{gathered}
\end{align}
 {All} arrows in the first diagram are insertions, and in the second discrete opfibrations.
\end{lemma}

\begin{proof}
{The second diagram is obtained by applying the free category functor   $L: \Graph\to \mathrm{Cat}$ to the first diagram}. {As}
a left adjoint, the  functor  $L: \Graph\to \mathrm{Cat}$ preserves pushouts, and by Lemma \ref{lem:I-faithfulandInsertive} it sends insertions to discrete opfibrations. Hence, it is sufficient to prove that the middle diamond in the first diagram 
is a pushout and all arrows are insertions.

The first statement follows, because the strata of the manifolds $M_{01}$ and $M_{12}$ are glued
 along points $x\in \Sigma_1$ to form the strata of $M_{012}$.
 This shows 
 that the vertex set of the graph $Q_{012}$ associated to $M_{012}$ is indeed the pushout of the vertex sets of the graphs $Q_{01}$ and $Q_{12}$ along $Q_1$.

 To show the corresponding statement for their edge sets, consider a stratum $s$ of $\Sigma_1$.
 Because all bundles of local strata are trivial, the sets of edges of  $Q_{01}$, $Q_{12}$, $Q_1$ and $Q_{012}$ starting at $f_1(s)$, $f_1'(s)$,  $s$, and $j_1(s)$ are all canonically identified with the sets of strata in any  special neighbourhood of any point on these strata. 
 For any point $x\in s$, forming the pushout of special neighbourhoods $U_{01}$ of $f_1(x)$ in $M_{01}$ and $U_{12}$ of $f'_{1}(x)$ in $M_{12}$ 
 with $U_{01}\cap \partial M_{01}=f_1(U)$ and $U_{12}\cap \partial M_{12}=f'_1(U)$ for a special neighbourhood $U$ of $x$ in $\Sigma_1$ gives a special neighbourhood of $j_1(x)$ in $M_{012}$. This shows that  the diamond at the left of \eqref{eq:pushq1} is a pushout.

Consider  a local $(k+1)$-stratum $q:s\to t$ at a {$k$-stratum $s$} of $M_{012}$. If $s=j_1(s_1)$ for a $(k-1)$-stratum $s_1$  of $\Sigma_1$, then, by construction,  $q:s\to t$   corresponds to a unique local $(k+1)$-stratum $q_{01}:f_1(s_{1})\to t_{01}$, a unique local $(k+1)$-stratum  $q_{12}:f_1'(s_{1})\to t_{12}$ and a unique local $k$-stratum $q_1:s_1\to t_1$. {If $j_1^\inv(s)=\emptyset$, then $q$ corresponds either to a unique local $(k+1)$-stratum $q_{01}: f_1(s_1)\to t_{01}$ or to a unique local $(k+1)$-stratum $q_{12}: f'_1(s_1)\to t_{12}$.}
This  shows that maps 
$j_{01}$, $j_1$ and $j_{12}$ are  insertions.
As $f_0,f_1,f'_1, f_2$ are insertions by assumption and composites of insertions are insertions, this also holds for the maps $j_0$ and $j_2$. 
 \end{proof}

From \eqref{eq:pushq1} we  then have  the following diagrams of categories, functors and natural transformations 
\begin{equation}\label{diags:TandD-0}
\begin{gathered}\hskip-1cm
\xymatrix@C=20pt{&\mathcal{Q}_0\ar@/_2pc/[ddrr]_{T_0}
\xtwocell[ddrr]{}<>{  ^<3> \tau_0 }
\ar[dr]^{F_0}&& \mathcal{Q}_1\ar[dl]_{F_1}
\xtwocell[dd]{}<>{  <3> \tau_1 }
\xtwocell[dd]{}<>{  ^<-3> {\tau_1'} }
\ar[dr]^{F_1'} \ar[dd]|{T_1} &&\mathcal{Q}_2\ar[dl]_{F_2}\ar@/^2pc/[ddll]^{T_2}
\xtwocell[ddll]{}<>{  <-3> \tau_2 } \\
&&\ar[dr]_{T_{01}} \mathcal{Q}_{01} && \mathcal{Q}_{12}\ar[dl]^{{T_{12}}}\\
&&&\Grpd
} \xymatrix@C=20pt{&\mathcal{Q}_0\ar[dr]^{F_0}  \ar@/_2pc/[ddrr]_{D_0}
&& \mathcal{Q}_1\ar[dl]_{F_1}
\ar[dr]^{F_1'} \ar[dd]|{D_1} &&\mathcal{Q}_2\ar[dl]_{F_2}   \ar@/^2pc/[ddll]^{D_2}\\
&&\ar[dr]_{D_{01}} \mathcal{Q}_{01} && \mathcal{Q}_{12}\ar[dl]^{{D_{12}}}\\
&&&\Grpd
},\end{gathered}
\end{equation}
where  
$D_j:=D^{\Sigma_j}, T_j:=T^{\Sigma_j}$ for $j=0,1,2$ and
$D_j:=D^{M_j},T_j:=T^{M_j}$ for $j=01,12$. The natural transformations $\tau_j$ and $\tau'_j$ are the ones from Proposition \ref{prop:tfunctor}. 
The diagram on the right commutes. The arrows in the one on the left are related by natural transformations, as indicated in the diagram.
Applying Theorem \ref{thm:main-end}  for $\V=\Grpd$ yields the following commuting diagram, in which the middle diamond is a pullback
\begin{equation}\label{eq:spandiagcom}
\vcenter{\xymatrix{&\End_{\maq_0}[T_0,D_0]  & \End_{\maq_1}[T_1,D_1] &  \End_{\maq_2}[T_2,D_2]\\
& \End_{\maq_{01}}[T_{01}, D_{01}] \ar@{->}[ru]_{\hat\T_1}  \ar@{->}[u]^{ \hat \T_0} & & \End_{\maq_{12}}[T_{12}, D_{12}] \ar@{->}[u]_{\hat \T_2}  \ar@{->}[lu]^{\hat \T'_1}\\
& &\End_{\maq_{012}}[T_{012}, D_{012}] \ar@{->}[ru]_{\hat I_{12}}  \ar@{->}[lu]^{ \hat I_{01}}
}}
\end{equation}
where we abbreviate
$$
\End_\maq [T,D]:=\int_{s\in \maq} \GRPd\big(T(s), D(s)\big).
$$
The  functor $D_{012}:\maq_{012}\to \Grpd$ in \eqref{eq:spandiagcom} is induced by the  pushout from the  commuting diagram  \eqref{eq:pushq1} and the diagram  on the right in \eqref{diags:TandD-0}
and  hence satisfies $D_{012} J_{01}=D_{01}$ and $D_{012}J_{12}=D_{12}$.  The functor $T_{012}:\maq_{012}\to \Grpd$ in \eqref{eq:spandiagcom} is given by
formula \eqref{eq:deft012} in the proof of Theorem \ref{thm:main-end} as
 \begin{align}\label{eq:deft0122}
T_{012}=\Lan_{J_{01}} T_{01}\!\!\!\!\coprod_{\Lan_{J_1} T_1} \!\!\!\!\Lan_{J_{12}} T_{12}: \mathcal Q_{012}\to \Grpd.
\end{align}
We now use  Definition \ref{def:gaugegrpd}  of the gauge groupoid and compare the definitions of $\hat\T_0$, $\hat\T_1$, $\hat \T'_1$, $\hat \T_2$ in   the proof of Theorem \ref{thm:main-end} via  \eqref{eq:t0def} and \eqref{eq:totherdef} with the  boundary projections in Remark \ref{rem:boundproj} and Corollary \ref{cor: span fib}. This allows us to rewrite diagram \eqref{eq:spandiagcom} as
\begin{equation}\label{eq:spandiagcom2}\vcenter{
\xymatrix{& \A^{\Sigma_0}\sslash \G^{\Sigma_0}  & \A^{\Sigma_1}\sslash \G^{\Sigma_1}  &  \A^{\Sigma_2}\sslash \G^{\Sigma_2} \\
& \A^{M_{01}}\sslash \G^{M_{01}}  \ar@{->}[ru]_{P_1}  \ar@{->}[u]^{P_0} & & \A^{M_{12}}\sslash \G^{M_{12}} \ar@{->}[u]_{P_2}  \ar@{->}[lu]^{P'_1}\\
& &\End_{\maq_{012}}[T_{012}, D_{012}] \ar@{->}[ru]_{\hat I_{12}}  \ar@{->}[lu]^{ \hat I_{01}}
}}
\end{equation}
The functor $D_{012}:\maq_{012}\to \Grpd$  in diagrams \eqref{eq:spandiagcom} and \eqref{eq:spandiagcom2} coincides with the functor $D^{M_{012}}:\maq_{012}\to\Grpd$. This follows, because the defect data on the strata of $M_{01}$ and $M_{12}$ that are glued along strata of $\Sigma_1$ matches, and by construction of the functor $D^X:\maq^X\to \Grpd$ in Lemma \ref{cor:deltabound}. 

The functors $\hat I_{01}$ and $\hat I_{12}$ are induced by the ends from the natural transformations $I_{01}: \Lan_{J_{01}}T_{01}\Rightarrow T_{012}$ and $I_{12}:\Lan_{J_{12}}T_{12}\Rightarrow T_{012}$ defined by the pushout \eqref{eq:deft0122}. 

It  remains to show that the functor $T_{012}:\maq_{012}\to\Grpd$ in diagrams \eqref{eq:spandiagcom} and \eqref{eq:spandiagcom2} coincides with  the functor $T^{M_{012}}:\maq_{012}\to \Grpd$ of the defect cobordism $M_{012}$.  

{For this, we consider  the   functors $B^{X}:\maq^X\to \Top$ from Proposition \ref{prop:tfunctor} with $T^X=\Pi_1B^X$ and  abbreviate $B_j=B^{\Sigma_j}$ for $j=0,1,2$ and $B_{j}:=B^{M_j}$ for $j=01,12$. The  natural transformations from Proposition \ref{prop:tfunctor}  are denoted $\beta_1: B_{1}\Rightarrow B_{01} F_1$ and $\beta'_1: B_1\Rightarrow B_{12}F'_1$. The  counterparts of the 
natural transformations $\T_1$ and $\T'_1$  are   $\B_1:\Lan_{J_1} B_1\Rightarrow \Lan_{J_{01}}B_{01}$ and $\B'_1:\Lan_{J_1} B_1\Rightarrow \Lan_{J_{12}}B_{12}$, given by  \eqref{eq:t0def} and \eqref{eq:totherdef}.}

\begin{lemma} \label{lem:bfunctor} The functor $B^{M_{012}}: \maq_{012}\to \Top$ is the pushout in the functor category $\Grpd^{\maq_{012}}$ of $\Lan_{J_{01}} B_{01}$ and $\Lan_{J_{12}}B_{12}$ along the natural transformations $\B_1: \Lan_{J_{1}}B_1\Rightarrow \Lan_{J_{01}} B_{01}$ and $\B'_1:\Lan_{J_1}B_1\Rightarrow \Lan_{J_{12}} B_{12}$, 
$$
B^{M_{012}}\cong\Lan_{J_{01}} B_{01}\coprod_{\Lan_{J_1}B_1} \Lan_{J_{12}}B_{12}.
$$
\end{lemma}

\begin{proof}
Choose a triangulation of $M_{012}$  that induces triangulations of $M_{01}$, $M_{12}$ and $\Sigma_0$, $\Sigma_1$, $\Sigma_2$,  such that all skeleta of the stratifications are subcomplexes.
By the construction in Section \ref{sec:fundgrpd}, the space $\hat s$ for each stratum $s$ of $M_{012}$ is  
the pushout
\begin{align}\label{eq:pushouLem1}
\vcenter{\xymatrix@C=40pt{
\hat s & & \coprod_{s_{01}\in j_{01}^\inv (s)} \hat s_{01} \ar[ll]\\
\coprod_{s_{12}\in j_{12}^\inv(s)} \hat s_{12} \ar[u]& &\coprod_{s_1\in j_1^\inv(s)}\hat s_1, \ar[u]_-{(\B_1)_{s}=\langle \iota_{s_1\partial}\rangle} \ar[ll]^{(\B'_1)_{s}=\langle \iota'_{s_1\partial}\rangle}
}}
\end{align}
where $\iota_{s_1\partial}=(\beta_1)_{s_1}: \hat s_1\to \hat s_{01}$ and $\iota'_{s_1\partial}=(\beta'_1)_{s_1}:\hat s_1\to \hat s_{12}$ are the components of the natural transformations $\beta_1:B_1\Rightarrow B_{01}F_1$ and $\beta'_1: B_1\Rightarrow B_{12}F'_1$  from Proposition \ref{prop:tfunctor} and $\langle \iota_{s_1\partial}\rangle$ and $\langle \iota_{s_2\partial}\rangle$ are induced by them via   the universal properties of the coproducts. 
This shows in particular that the maps $(\B_1)_s$ and $(\B'_1)_s$ are embeddings of subcomplexes for finite simplicial complexes.

By Lemma \ref{lem:graphglue}, the functors $J_1$, $J_{01}$ and $J_{12}$ are discrete opfibrations. By comparing with Lemma \ref{lem:kanweakinsertion} we can rewrite diagram \eqref{eq:pushouLem1} in terms of the functors $B^X:\maq^X\to\Top$ from Proposition \ref{prop:tfunctor} as 
\begin{align}\label{eq:Kanexpression}
\vcenter{\xymatrix{ B^{M_{012}}(s) &   \Lan_{J_{01}} B_{01}(s) \ar[l]_{i^{01}_s}\\
\Lan_{J_{12}}B_{12}(s) \ar[u]^{i^{12}_s}& \Lan_{J_1}B_1(s). \ar[u]_{(\B_1)_s} \ar[l]^{(\B'_1)_s}}
}
\end{align}
The map  between the spaces $\Lan_{J_x}B_x(s)$ and $\Lan_{J_x}B_x(t)$ for $x=1,01,12$  associated to a local stratum  $q:s\to t$ of $M_{012}$
coincides with the ones in the commuting diagram 
in  Lemma \ref{lem:kanweakinsertion}  that describes the Kan extension on the morphisms. Combining the commuting  diagrams \eqref{eq:pushouLem1} for a local stratum $q:s\to t$ of $M_{012}$ then yields
that the map $B^{M_{012}}(q)=\iota_q:\hat s\to\hat t$  is induced by the universal property of this pushout.
\end{proof}

\begin{lemma} The functor $T^{M_{012}}:\maq_{012}\to \Grpd$ is given by the pushout
$$
T^{M_{012}}=T_{012}\cong\Lan_{J_{01}} T_{01}\!\!\!\!\coprod_{\Lan_{J_1} T_1} \!\!\!\!\Lan_{J_{12}} T_{12}
$$
\end{lemma}

\begin{proof} By the proof of Lemma \ref{lem:bfunctor} 
the components of the natural transformations $\B_1: \Lan_{J_1}B_1\Rightarrow \Lan_{J_{01}}B_{01}$ and $\B'_1:\Lan_{J_1}B_1\Rightarrow \Lan_{J_{12}}B_{12}$ in \eqref{eq:Kanexpression} are embeddings of subcomplexes for a finite simplicial complex. Hence, 
  they are cofibrations. The Seifert van Kampen theorem for fundamental groupoids, as stated in \cite[9.1.2]{Br} {implies for each stratum $s$ of $M_{012}$ we have}  a pushout in $\Grpd$
  \begin{align*}\vcenter{\xymatrix@C=40pt{ \Pi_1 B^{M_{012}}(s) &   \Pi_1  \Lan_{J_{01}} B_{01}(s) \ar[l]_{i^{01}_s}\\
\Pi_1  \Lan_{J_{12}}B_{12}(s) \ar[u]^{i^{12}_s}& \Pi_1 \Lan_{J_1}B_1(s). \ar[u]_{\Pi_1 (\B_1)_s} \ar[l]^{\Pi_1 (\B'_1)_s}}
}
\end{align*}
As colimits in functor categories are objectwise, this yields
$$
T^{M_{012}}=\Pi_1B^{M_{012}}=(\Pi_1\Lan_{J_{01}}B_{01})\amalg_{\Pi_1\Lan_{J_1}B_1} (\Pi_1\Lan_{J_{12}}B_{12}).
$$
It remains to show that $\Pi_1:\Top\to\Grpd$ preserves the Kan extensions $\Lan_{J_x} B_x$ for $x=1,01,12$, that is,
 $\Pi_1 \Lan_{J_x} B_x \cong  \Lan_{J_x}\Pi_1 B_x$ for $x=1,01,12$. As the maps $j_x:Q_x\to Q_{012}$ are insertions by Lemma \ref{lem:graphglue},  Lemma \ref{lem:kanweakinsertion} implies  $\Lan_{J_x} F_x(v')=\amalg_{v\in j_x^\inv(v')} F_x(v)$ for any functor $F_x: \maq_x\to\mac$ into a cocomplete category $\mac$ and any vertex $v'$ of $Q_{012}$. As $\Pi_1:\Top\to\Grpd$ preserves coproducts, the claim follows.
 \end{proof}

Thus, we have  $T^{M_{012}}=T_{012}$. This allows us
 to replace  $\End_{Q_{012}}[T_{012}, D_{012}]=\mathcal A^{M_{012}}\sslash \G^{M_{012}}$ in diagram \eqref{eq:spandiagcom2} and shows that $C$ sends composites of cobordisms to the pullbacks of the associated spans.
Hence, we have a functor $C: \mathrm{Cob}_3^{\mathrm{def}}\to \mathrm{span}^{\mathrm{fib}}(\Grpd)$. It is clear  that $C$ is symmetric monoidal.
\end{proof}

\subsection{Projection functors for edges and natural transformations between them}
\label{subsec:edgeproj}

As the second step in the construction of the defect TQFT, we  investigate the  groupoids associated with the defect edges of a compact stratified $n$-manifold $X$. For each defect edge $e$ we  construct projection functors to the groupoids $\mathcal D_e$ and  natural transformations between them. 

Recall from Definition \mbox{\ref{def:gaugegrpd}} that each stratum $s$ of $X$ gives rise to a functor, $P_s:\mathcal A^X\sslash \G^X\to \mathcal D_e^{\Pi_1(\hat e)}$.

\begin{lemma} \label{cor:indstrata-tweaked} 
Let $X$ be a compact  stratified $n$-manifold  with classical defect data. 
\begin{compactitem}
\item For each   edge $e$ and $x \in \hat{e}$ we have a functor $R_e^x:\mathcal A^X\sslash \mathcal G^X\to \mathcal D_e$.
\item  
For each edge $e$ 
that is not an isolated loop, we have 
functors $P_e^s, P_e^t:\mathcal A^X\sslash \mathcal G^X\to \mathcal D_e$ 
such that the following diagrams commute
\begin{align}\label{eq:projddiag-in}
&(i)\begin{gathered}
\xymatrix{ & \mathcal A^X\sslash \mathcal G^X \ar[ld]_{P^s_e} \ar[d]^{P_v} \\
\mathcal D_e & \ar[l]^{D_{s(e)}}\mathcal D_v.
}
\end{gathered} &  
&(ii) \begin{gathered}
\xymatrix{  \mathcal A^X\sslash \mathcal G^X \ar[d]_{P_v} \ar[rd]^{P_e^t}\\
\mathcal D_v \ar[r]_{D_{t(e)}} & \mathcal D_e.
}
\end{gathered}\\
&(iii) 
\begin{gathered}
\xymatrix{ \mathcal A^X\sslash \mathcal G^X \ar[d]_{P_e^s} \ar[r]^{P_\partial}& \mathcal A^{\partial X}\sslash \mathcal G^{\partial X} \ar[ld]^{P_v^{\partial X}}\\
\mathcal D_e^{X}=\mathcal D_v^{\partial X}
}
\end{gathered}
& &(iv)
\begin{gathered}
\xymatrix{ \mathcal A^X\sslash \mathcal G^X \ar[d]_{P_e^t} \ar[r]^{P_\partial}& \mathcal A^{\partial X}\sslash \mathcal G^{\partial X} \ar[ld]^{P_v^{\partial X}}\\
\mathcal D_e^{ X}=\mathcal D_v^{\partial X},}
\end{gathered}\nonumber
\end{align}
\begin{compactenum}[(i)]
\item if $e$ is outgoing at a vertex $v$ of $X$ with associated  local stratum $s(e): v \to e$,
\item if $e$ is incoming at a vertex $v$ of $X$ with  associated local stratum $t(e): v \to e$,
\item if $e$ is outgoing at a boundary vertex $v$ of $\partial X$,
\item if $e$ is incoming at a boundary vertex $v$ of $\partial X$.
\end{compactenum}
\end{compactitem}
\end{lemma}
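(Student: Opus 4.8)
The plan is to obtain all of the required functors by evaluating the end-projection $P_e\colon\mathcal A^X\sslash\mathcal G^X\to\mathcal D_e^{\Pi_1(\hat e)}$ from Definition~\ref{def:gaugegrpd} at suitable points of $\hat e$. For any $x\in\hat e$ the inclusion $\{x\}\hookrightarrow\hat e$ induces a functor $\bullet\cong\Pi_1(\{x\})\to\Pi_1(\hat e)$, and precomposition with it yields an evaluation functor $\ev_x\colon\mathcal D_e^{\Pi_1(\hat e)}\to\mathcal D_e$ sending a functor $F\colon\Pi_1(\hat e)\to\mathcal D_e$ to $F(x)$ and a natural transformation $\eta$ to its component $\eta_x$. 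I would then set $R_e^x:=\ev_x\circ P_e$. Using the explicit descriptions in Lemma~\ref{lem:gexplicit} and Proposition~\ref{eq:gaugeconc}, this sends a gauge configuration $A$ to $A_e(x)\in M_e$ and a gauge transformation $\gamma$ to $\gamma_e(x)\in G_e$; functoriality is immediate, as $R_e^x$ is a composite of functors, and no condition on $e$ is needed here.

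If $e$ is not an isolated loop, then $\hat e$ is a compact PL $1$-manifold with boundary, hence an interval, whose two boundary points $x_s,x_t\in\partial\hat e$ map under $f_e$ to the source and target ends of $e$, distinguished by the orientation of $e$. I would define $P_e^s:=R_e^{x_s}$ and $P_e^t:=R_e^{x_t}$. To verify diagram~(i), let $e$ be outgoing at an interior vertex $v$ with source-end local stratum $q=s(e)\colon v\to e$; by the construction in Section~\ref{sec:fundgrpd} the map $\iota_q\colon\hat v\to\hat e$ carries the single point $\ast$ of $\hat v$ to $x_s$. Naturality of $A$, i.e.\ the commuting square~\eqref{eq:natgfield} for $q$, gives $D_{s(e)}\circ A_v=A_e\circ\Pi_1(\iota_q)$, and evaluating at $\ast$ yields $D_{s(e)}(A_v(\ast))=A_e(x_s)=R_e^{x_s}(A)$. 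Under $\mathcal D_v^{\Pi_1(\hat v)}\cong\mathcal D_v$ this reads $D_{s(e)}\circ P_v=P_e^s$; the same computation on morphisms, using the naturality condition~\eqref{eq:condgtrafonat0}, gives the claim functorially. Diagram~(ii) is identical upon replacing $x_s,s(e),P_e^s$ by $x_t,t(e),P_e^t$.

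For diagrams~(iii) and~(iv) I would instead invoke the boundary-restriction functor $P_\partial$ of Corollary~\ref{prop:projfunc}. If $e$ is outgoing at a boundary vertex $v$, then $\iota_0(v)=e$ in~\eqref{eq:embedbound}, so $I_\partial(v)=e$ and $\mathcal D_e^X=\mathcal D_v^{\partial X}$ by Lemma~\ref{cor:deltabound}. The component of $A^{\partial X}=P_\partial(A)$ at $v$ is $A^{\partial X}_v=A_e\circ\Pi_1(\iota_{v\partial})$, where $\tau^X_v=\Pi_1(\iota_{v\partial})$ from~\eqref{eq:tnatdef} and $\iota_{v\partial}\colon\hat v\to\hat e$ from~\eqref{eq:stratboundemb} again sends $\ast$ to $x_s$. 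Hence $P_v^{\partial X}(P_\partial(A))=A^{\partial X}_v(\ast)=A_e(x_s)=P_e^s(A)$, giving diagram~(iii); diagram~(iv) follows the same way with the target endpoint $x_t$.

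The only genuine obstacle is the bookkeeping behind the endpoint identifications: one must confirm that the construction of $\iota_q$ and $\iota_{v\partial}$ in Section~\ref{sec:fundgrpd} indeed places the vertex (respectively boundary-vertex) point at the orientation-determined endpoint of $\hat e$, and that this is consistent in the degenerate cases where both ends of $e$ meet the same vertex, or one end is interior while the other lies on $\partial X$. Once this is pinned down, each commutativity assertion collapses to the naturality square~\eqref{eq:natgfield} together with the defining formula for $P_\partial$, so no further computation is required.
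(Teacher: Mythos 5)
Your proposal is correct and follows essentially the same route as the paper: both define $R_e^x$ as the composite $\Pi_1(\iota_x)^*\circ P_e$ of the end projection with evaluation at $x\in\hat e$, set $P_e^s,P_e^t$ to be the evaluations at the two endpoints of $\hat e\cong[0,1]$, obtain (i)/(ii) from the compatibility $D_q\circ P_v=\Pi_1(q)^*\circ P_e$ built into the end (your component-level computation via \eqref{eq:natgfield} and \eqref{eq:condgtrafonat0} is the same fact), and obtain (iii)/(iv) from Corollary~\ref{prop:projfunc} together with the identification of $\iota_{v\partial}$ with $\iota_0$ or $\iota_1$. The endpoint bookkeeping you flag is handled in the paper exactly as you suggest, by observing that the local $1$-strata $s(e),t(e):v\to e$ correspond to the inclusions $\iota_0,\iota_1:\bullet\to[0,1]$ under the construction of Section~\ref{sec:fundgrpd}.
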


\begin{proof} 
For a stratum $t$  of $X$ or of $\partial X$ we denote by $P_t: \mathcal A^X\sslash\mathcal G^X\to \mathcal D_t^{\Pi_1(\hat t)}$
or $P^{\partial X}_t: \mathcal A^{\partial X}\sslash\mathcal G^{\partial X}\to \mathcal D_t^{\Pi_1(\hat t)}$
the functor from Definition \ref{def:gaugegrpd}. 
The functor $R^x_e$ for an edge $e$ is then given as the composite
\begin{align}\label{eq:defRex}
R^x_e:\mathcal A^X\sslash \mathcal G^X\xrightarrow{P_e} \mathcal D_e^{\Pi_1(\hat e)} \xrightarrow{\Pi_1(\iota_x)^*} \mathcal D_e^\bullet\cong \mathcal D_e,
\end{align}
where $\iota_x:\bullet\to \hat e$, $\bullet \mapsto x$. 
If $e$ is not an isolated loop, we identify $\hat e\cong [0,1]$ and define
\begin{align}\label{eq:pestdef}
P^s_e=R^0_e:\mathcal A^X\sslash \mathcal G^X\xrightarrow{P_e} \mathcal D_e^{\Pi_1(\hat e)} \xrightarrow{\Pi_1(\iota_0)^*}  \mathcal D_e\qquad P^t_e=R^1_e:\mathcal A^X\sslash \mathcal G^X\xrightarrow{P_e} \mathcal D_e^{\Pi_1(\hat e)} \xrightarrow{\Pi_1(\iota_1)^*}  \mathcal D_e.
\end{align}
We also have $\Pi_1(\hat v)=\Pi_1(\bullet)\cong \bullet$ for a vertex $v$ and  canonical isomorphisms $\G^{\Pi_1(\hat v)}\cong  \G$ for each groupoid $\G$.

Cases (i) and (ii): If $v=s(e)$ is the starting vertex of the edge $e$, the associated 1-stratum $q: v\to e$ corresponds to the inclusion $\iota_0: \bullet \to [0,1]$, and if $v=t(e)$ is the target vertex, it corresponds to the inclusion $\iota_1: \bullet \to [0,1]$. We then have $T^X(q)=\Pi_1(\iota_0): \bullet\to \Pi_1[0,1]$ for $v=s(e)$ and and $T^X(q)=\Pi_1(\iota_1): \bullet\to \Pi_1[0,1]$ for $v=t(e)$.
The outer paths in the following diagram  commute by definition of the end, and the inner vertical arrow is $P_e^s$ for $v=s(e)$ or $P_e^t$ for $v=t(e)$
\begin{equation}\label{eq: commun-Ps}
\xymatrix
{  & \mathcal{A}^X\sslash \mathcal{G}^X\ar[dl]_{P_v} \ar[dr]^{P_e} \ar[d]^{P_e^{v}}\\
            \mathcal{D}_v^{\Pi_1 (\hat{v})}\cong \mathcal D_v\ar[r]_{D_q}    &\mathcal D_e\cong \mathcal{D}_e^{\Pi_1 ({\hat{v}})}   &\mathcal{D}_e^{\Pi_1 (\hat{e})}.   \ar[l]^{\Pi_1(q)^*}}
\end{equation}
Cases (iii) and (iv): If $e$ is incident at a boundary vertex $v$,  
the associated map $\iota_{v\partial}: \bullet\to [0,1]$ from \eqref{eq:stratboundemb} is given by $\iota_{v\partial}=\iota_0: \bullet\to [0,1]$, if $v$ is the starting vertex of $e$, and by $\iota_{v\partial}=\iota_1: \bullet\to [0,1]$, if $v$ is the target vertex of $e$. Hence, the associated functor $\tau_v: \Pi_1(\hat v)\to \Pi_1(\hat e)$ is given by $\tau_v=\Pi_1(\iota_0)$ and $\tau_v=\Pi_1(\iota_1)$, respectively.  Corollary \ref{prop:projfunc}  and Remark \ref{rem:boundproj} then yield commuting diagrams 
\begin{align*}
 \xymatrix{ \mathcal A^X\sslash \mathcal G^X \ar[r]^{P_e} \ar[rd]^{P_e^t}\ar[d]_{P_\partial} & \mathcal D_e^{\Pi_1[0,1]} \ar[d]^{\Pi_1(\iota_0)^*}\\
 \mathcal{A}^{\partial X}\sslash\mathcal{G}^{\partial X} \ar[r]_{P_v^{\partial X}}& \mathcal D_e=\mathcal D_v
 }   \qquad  \xymatrix{ \mathcal A^X\sslash \mathcal G^X \ar[r]^{P_e} \ar[rd]^{P_e^s}\ar[d]_{P_\partial} & \mathcal D_e^{\Pi_1[0,1]} \ar[d]^{\Pi_1(\iota_1)^*}\\
 \mathcal{A}^{\partial X}\sslash\mathcal{G}^{\partial X} \ar[r]_{P_v^{\partial X}}& \mathcal D_e=\mathcal D_v.
 }   
\end{align*}
\end{proof}

\begin{lemma} \label{cor:indstrata-tweaked-nat} Let $X$ be a compact stratified $n$-manifold  with classical defect data and  $e$ an edge of $X$. 
\begin{compactitem}
\item Each path $\gamma:x\to y$ in $\hat e$ {defines} a natural transformation $H_e^{[\gamma]}: R_e^x\Rightarrow R_e^y$ that depends only on the homotopy class of $\gamma$ and satisfies for all paths $\delta:y\to z$ in $\hat e$
$$H^{[\delta]\cdot[\gamma]}_e=H_e^{[\delta]}\circ H^{[\gamma]}_e.$$ 
\item  Each edge $e$ that is incident to at least one vertex or boundary vertex {defines}
\begin{compactenum}[(i)]
    
\item   a unique natural transformation $P_e^d: P_e^s\Rightarrow P_e^t$,
    \item   for any  $x\in \hat e$  unique  natural transformations $H^0_{x}: P_e^s \Rightarrow R_e^x$ and $H^1_x: R_e^x \Rightarrow  P_e^t$ with $H^1_x \circ H^0_x=P_e^d$. 
    \end{compactenum}
\end{compactitem}
\end{lemma}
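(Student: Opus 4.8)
The plan is to produce all the required $2$-cells by precomposing the functor $P_e\colon\mathcal A^X\sslash\mathcal G^X\to\mathcal D_e^{\Pi_1(\hat e)}$ from Definition \ref{def:gaugegrpd} with natural transformations coming from morphisms of the fundamental groupoid $\Pi_1(\hat e)$, via the operation $(-)^*$ recalled in Section \ref{sec:groupoidbasic}. Given a path $\gamma\colon x\to y$ in $\hat e$, its homotopy class is a morphism $[\gamma]\colon x\to y$ of $\Pi_1(\hat e)$, equivalently a natural transformation $[\gamma]\colon\Pi_1(\iota_x)\Rightarrow\Pi_1(\iota_y)$ between the functors $\Pi_1(\iota_x),\Pi_1(\iota_y)\colon\bullet\to\Pi_1(\hat e)$. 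Applying the functor $M$ of Section \ref{sec:groupoidbasic} with $\G=\bullet$, $\H=\Pi_1(\hat e)$ and $\K=\mathcal D_e$ yields a natural transformation $[\gamma]^*\colon\Pi_1(\iota_x)^*\Rightarrow\Pi_1(\iota_y)^*$ of functors $\mathcal D_e^{\Pi_1(\hat e)}\to\mathcal D_e$, with components $[\gamma]^*_f=f([\gamma])$. I would then set $H_e^{[\gamma]}:=[\gamma]^*P_e$, obtained by precomposing $[\gamma]^*$ with $P_e$. Since $R_e^x=\Pi_1(\iota_x)^*P_e$ by \eqref{eq:defRex}, this is precisely a natural transformation $H_e^{[\gamma]}\colon R_e^x\Rightarrow R_e^y$, and by construction it depends only on $[\gamma]$.

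For the functoriality I would use three facts: concatenation of paths corresponds to composition in $\Pi_1(\hat e)$, so $[\delta]\cdot[\gamma]=[\delta]\circ[\gamma]$; the functor $M$ preserves vertical composition, giving $([\delta]\circ[\gamma])^*=[\delta]^*\circ[\gamma]^*$; and precomposition with $P_e$ preserves vertical composition. Combining these yields $H_e^{[\delta]\cdot[\gamma]}=H_e^{[\delta]}\circ H_e^{[\gamma]}$, proving the first bullet. Here the variance is as expected: $(-)^*$ is contravariant on functors but covariant on $2$-cells, so the direction of $H_e^{[\gamma]}$ matches that of $[\gamma]$.

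For the second bullet, I would first observe that an edge incident to at least one vertex or boundary vertex is not an isolated loop, so $\hat e$ is an interval and $\Pi_1(\hat e)\cong\Pi_1[0,1]$; under this identification the two ends correspond to the objects $0$ and $1$, and $P_e^s=R_e^0$, $P_e^t=R_e^1$ as in \eqref{eq:pestdef}. Since $[0,1]$ is simply connected, $\Pi_1(\hat e)$ is a contractible groupoid, so for every $x\in\hat e$ there is a unique morphism $d\colon 0\to 1$, a unique morphism $0\to x$ and a unique morphism $x\to 1$. I would define $P_e^d:=H_e^{d}$ and, choosing representatives $\gamma_0\colon 0\to x$ and $\gamma_1\colon x\to 1$, set $H_x^0:=H_e^{[\gamma_0]}$ and $H_x^1:=H_e^{[\gamma_1]}$. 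These are well defined, and the asserted uniqueness is inherited directly from the uniqueness of the underlying morphisms of the contractible groupoid $\Pi_1(\hat e)$. Finally, the relation $H_x^1\circ H_x^0=P_e^d$ follows from the functoriality of the first bullet, $H_x^1\circ H_x^0=H_e^{[\gamma_1]\cdot[\gamma_0]}$, together with $[\gamma_1]\cdot[\gamma_0]=d$, which holds because $d$ is the unique morphism $0\to 1$.

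The argument is essentially formal, so I expect no genuine obstacle, only bookkeeping: one must keep the variance of $(-)^*$ straight, and must read the \emph{uniqueness} in the statement as the uniqueness of morphisms in the contractible groupoid $\Pi_1(\hat e)$ transported through the construction. The only external input is the identification $\Pi_1(\hat e)\cong\Pi_1[0,1]$, which is exactly what already underlies the definitions of $P_e^s$ and $P_e^t$ in Lemma \ref{cor:indstrata-tweaked}.
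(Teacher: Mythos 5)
Your proof is correct and takes essentially the same route as the paper's: the paper likewise defines $H_e^{[\gamma]}=\Pi_1([\gamma])^*P_e$ by whiskering the natural transformation $[\gamma]^*$ from Section \ref{sec:groupoidbasic} with $P_e$, and obtains $P_e^d$, $H^0_x$, $H^1_x$ by choosing paths $0\to 1$, $0\to x$ and $x\to 1$ under the identification $\Pi_1(\hat e)\cong \Pi_1[0,1]$. The only difference is presentational: you make explicit the functoriality of $(-)^*$ under vertical composition and the reading of uniqueness via the contractibility of $\Pi_1(\hat e)$, both of which the paper leaves implicit.
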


\begin{proof} The functors $R_e^x, R_e^y:\mathcal A^X\sslash\mathcal G^X\to \mathcal D_e$ from Lemma \ref{cor:indstrata-tweaked} are given by \eqref{eq:defRex}.
We define 
$$
H_e^{[\gamma]}=\Pi_1([\gamma])^*P_e: R^x_e\Rightarrow R_e^y.
$$
If $e$ is incident at at least one vertex or boundary vertex $v$, we can identify $\hat e\cong [0,1]$. The functors $P_e^s,P_e^t:\mathcal A^X\sslash\mathcal G^X\to \mathcal D_e$ from Lemma \ref{cor:indstrata-tweaked} are then given by \eqref{eq:pestdef}. We choose paths   $\delta:0\to 1$, $\delta_0:0\to x$ and $\delta_1: x\to 1$ in $[0,1]$ and set
$P_e^d=H^{[\delta]}_e$,  $H_x^0=H_e^{[\delta_0]}$ and $H_x^1=H_e^{[\delta_1]}$.
\end{proof}

\subsection{Quantum defect data and the defect TQFT}
\label{subsec:quantumtqft}

We now use the projection functors and natural transformations  from Section \ref{subsec:edgeproj} and the assignment of defect data in  Definition \ref{def:defectdata} to associate to each defect cobordism $M$ a graph that is labelled with representations of the gauge groupoid $\mathcal A^M\sslash \mathcal G^M$ and intertwiners between them.

Let $X$ be a compact stratified $n$-manifold with an assignment of defect data. Definition \ref{def:gaugegrpd} assigns   
\begin{compactitem}
\item to each edge $e$  of $X$ a functor $P_e:\mathcal A^X\sslash\mathcal G^X\to \mathcal D_e^{\Pi_1(\hat e)}$,
\item to each vertex $v$ of $X$ a  functor $P_v:\mathcal A^X\sslash \mathcal G^X\to \mathcal D_v$.
\end{compactitem}
Recall from Definition \ref{def:defectdata} that an assignment of quantum defect data assigns
\begin{compactitem}
\item  to each codim 2-stratum $u$ of $X$ a representation $\rho_u: \mathcal D_u\to \Vect_\C$,
\item  to each codim 3-stratum $v$ an  intertwiner $\sigma_v: \rho_v^{t}\Rightarrow \rho_v^{s}$, where $\rho_v^s,\rho_v^t: \mathcal D_v\to \Vect_\C$ are given by \eqref{eq:rhovdef} and 
the groupoids $\mathcal D_u=D^X(u)$ by Proposition \ref{cor:deltabound}.
\end{compactitem}
Lemma \ref{cor:indstrata-tweaked} and Lemma \ref{cor:indstrata-tweaked-nat} assign
\begin{compactitem}
\item to each edge $e$ of $X$ that is not an isolated loop functors $P_e^s,P_e^t:\mathcal A^X\sslash \mathcal G^X\to \mathcal D_e$ and a natural transformation $P_e^d:P_e^s\Rightarrow P_e^t$,
\item to each isolated loop $e$ of $X$ and each $x\in \hat e$ a functor $R_e^x:\mathcal A^X\sslash \mathcal G^X\to \mathcal D_e$ and a natural transformation $H_e^{[\delta]}: R^x_e\Rightarrow R^x_e$, where $\delta:x\to x$ is a path that goes once around $\hat e$ in the direction of its orientation. 
\end{compactitem}

We assign to each stratified surface $\Sigma$ with defect data  a functor $F_\Sigma: \mathcal A^\Sigma\sslash \mathcal G^\Sigma\to \Vect_\C$  as follows. For a vertex $v$ of $\Sigma$ we set  $F_v^{\Sigma}=\rho^*_vP_v: \mathcal A^\Sigma\sslash \mathcal G^\Sigma\to \Vect_\C$, if $v$ has negative orientation, and $F_v^\Sigma=\rho_vP_v: \mathcal A^\Sigma\sslash \mathcal G^\Sigma\to \Vect_\C$, if $v$ has positive orientation. We then  take the tensor product of these functors over all vertices of $\Sigma$
\begin{align}\label{eq:fsigmadef}
F_\Sigma=\bigotimes_{v\in S_0^\Sigma} F^{\Sigma}_v:\mathcal A^\Sigma\sslash \mathcal G^\Sigma\to \Vect_\C.
\end{align}
To each stratified cobordism $\Sigma_0\xrightarrow{i_0} M\xleftarrow{i_1} \Sigma_1$  with defect data we will assign a natural transformation $\mu^M: F_{\Sigma_0}P_0\Rightarrow F_{\Sigma_1}P_1$, where $P_j:\mathcal A^M\sslash \G^M\to \mathcal A^{\Sigma_j}\sslash \G^{\Sigma_j}$ is the projection functor from Corollary \ref{cor: span fib}. This natural transformation is constructed from a directed graph $\Gamma^M$, whose edges and vertices are coloured with representations of the groupoid $\mathcal A^M\sslash \mathcal G^M$ and intertwiners between them. 

 As the functor category $\Vect_\C^{\mathcal A^M\sslash \mathcal G^M}$ is  spherical symmetric monoidal, cf.~Section \ref{sec:groupoidbasic}, this defines a diagram for a spherical symmetric monoidal category, whose evaluation is an intertwiner between the representations $F_{\Sigma_0}P_0
 :\mathcal A^M\sslash \mathcal G^M\to \Vect_\C$ and $F_{\Sigma_1}P_1:\mathcal A^M\sslash \mathcal G^M\to \Vect_\C$.

The graph $\Gamma^M$ is constructed as follows. We consider the 1-skeleton of the stratification and   add to each edge $e$ of $M$ a bivalent vertex $v_e$. For an isolated loop $e$, we also choose a vertex $\hat v_e\in \hat e$ with $f_e(\hat v_e)=v_e$.
This replaces each edge of $M$ that is not an isolated loop   by a pair of edges corresponding to its starting and target end. All edges created in this way are equipped with the induced orientations. The open edge ends of  $\Gamma^M$  correspond to the boundary vertices of $M$.

We assign to the directed graph  $\Gamma^M$ the following data:
\begin{compactitem}
\item to each edge $e$ of $\Gamma^M$  that is outgoing at a  vertex or boundary vertex $v$ of $M$ the functor 
\begin{align}\label{eq:fes}
F_e^s:=\rho_eP_e^s: \mathcal A^M\sslash \mathcal G^M\to \Vect_\C,
\end{align}
\item to each edge $e$ of $\Gamma^M$ that is incoming at a vertex or boundary vertex $v$ of $M$ the functor 
\begin{align}\label{eq:fet}
F_e^t:=\rho_eP_e^t: \mathcal A^M\sslash \mathcal G^M\to \Vect_\C,
\end{align}
\item to each edge of $\Gamma^M$ that arises from an isolated loop,  the functor defined by \eqref{eq:defRex}
\begin{align}
F^{\hat v_e}_e:=\rho_eR_e^{\hat v_e}: \mathcal A^M\sslash \mathcal G^M\to \Vect_\C,
\end{align} 

\item to a vertex of $\Gamma^M$ that arises from a vertex $v$ of $M$ the natural transformation 
\begin{align}\label{eq:vertexornot}
\mu^v:=\sigma_v P_v: \rho_v^{t}P_v\Rightarrow\rho_v^{s}P_v,
\end{align}

\item to each  new bivalent vertex $v_e$ of $\Gamma^M$ that is not  on an isolated loop the 
natural transformation 
\begin{align}\label{eq:nonloopnat}
\mu^e:=\rho_eP_e^d:  F_e^s\Rightarrow F_e^t,
\end{align}

\item to each bivalent vertex $v_e$  on an isolated loop 
the natural transformation 
\begin{align}\label{eq:loopnat}
\rho_e H_e^{[\delta]}: F_e^{\hat v_e}\Rightarrow F_e^{\hat v_e}
\end{align} for a path $\delta: \hat v_e\to \hat v_e$ that circles $\hat e$ once in the direction of its orientation. \end{compactitem} 

This defines a graph $\Gamma^M_{col}$ coloured by objects and morphisms  in  $\Vect_\C^{\mathcal A^M\sslash \mathcal G^M}$.
Each directed graph with such a labelling
defines an equivalence class of diagrams for this  symmetric monoidal category. 

They are obtained by embedding the  coloured graph in $\mathbb R^3$ and choosing a generic projection $\pi:\mathbb R^3\to[0,1]\times[0,1]$, such that any singular point is a transversal crossing of two edges and the open edge ends end on $[0,1]\times\{1\}$ or $[0,1]\times\{0\}$. We call the former incoming and the latter outgoing edge ends.  As a consequence of the coherence theorem for spherical categories and the symmetric monoidality,  two diagrams obtained in this way evaluate to the same morphism, whenever their sets of incoming and of outgoing edges coincide and have the same linear ordering. 
Different choices of linear orderings for the edge ends lead to morphisms related by permutations of tensor factors in $\mathrm{Vect}_\C$, and different choices of incoming and outgoind edge ends are related by taking duals in $\mathrm{Vect}_\C$. The relevant information is the structure of a directed graph.

 For our defect cobordism $\Sigma_0\xrightarrow{i_0} M\xleftarrow{i_1} \Sigma_1$ 
  we choose the open ends corresponding to vertices in $\Sigma_0$ as the incoming ends in the diagram and the open ends for vertices of $\Sigma_1$ as the outgoing ends. Each vertex $u$ of the boundary $\partial M$ is incident to a unique 1-stratum $e_u$ of $M$. We put $x_u=s$ if  $e_u$ is outgoing at $u$ and
  $x_u=t$ if $e_u$ is incoming at $u$. For a vector space $V$ we put $V^u=V$, if  $u\in \Sigma_0$ with negative orientation or if $u\in \Sigma_1$ with positive orientation, and set $V^u=V^*$ otherwise. Note that negative orientation corresponds to an outgoing and positive orientation to an incoming stratum $e_u$ at $u$. 

The coloured graph $\Gamma^M_{col}$ then evaluates to the  natural transformation 
 $$\mu^M:=\langle  \Gamma^M_{col}\rangle : \bigotimes_{v\in S_0^{\Sigma_0}} \left (F_{e_v}^{x_v} \right)^v\Rightarrow \bigotimes_{u\in S_0^{\Sigma_1}}\left ( F_{e_u}^{x_u} \right )^{u}. $$
With formula \eqref{eq:fsigmadef} and
 Corollary  \ref{prop:projfunc} we  obtain 
\begin{align*}
    \bigotimes_{v\in S_0^{\Sigma_0}} \left (F_{e_v}^{x_v} \right)^v
    =\bigotimes_{v\in S_0^{\Sigma_0}}  F_v^{\Sigma_0}P_0=F_{\Sigma_0}P_0,\qquad\qquad
   \bigotimes_{u\in S_0^{\Sigma_1}}\left ( F_{e_u}^{x_u} \right )^{u}
    =\bigotimes_{u\in S_0^{\Sigma_1}}  F_u^{\Sigma_1}P_1=F_{\Sigma_1}P_1,
\end{align*}
which shows that  the evaluation of  $\Gamma^M_{col}$ is a natural transformation 
 \begin{align}\label{eq:natdef}
\mu^M=\langle \Gamma^M_{col}\rangle: F_{\Sigma_0}P_0\Rightarrow F_{\Sigma_1}P_1.
\end{align}

Although we selected a point  $\hat v_e \in \hat e$ for each isolated loop $e$ to construct the labels of $\Gamma^M_{col}$, its evaluation  does not depend on this choice. If $\hat v_e,\hat v'_e \in \hat e$ are two choices of vertices with associated loops $\delta: \hat v_e\to \hat v_e$ and $\delta': \hat v'_e\to \hat v'_e$,  then their  natural transformations from Lemma \ref{cor:indstrata-tweaked-nat} are related by 
$$H^{[\delta']}_e=H^{[\gamma]}_e\circ H^{[\delta]}_e \circ H^{[\gamma]^\inv}_e$$ for any path $\gamma:\hat v_e\to \hat v'_e$ in $\hat e$.   As the evaluation of an isolated loop is just the trace, it  is independent of  $\hat v_e$.

\begin{proposition} \label{prop:deffunc}There is a symmetric monoidal functor
$G:\mathrm{Cob}_3^{\mathrm{def}}\to \mathrm{span}^{\mathrm{fib}}(\repGrpd)$ that assigns
\begin{compactitem}
\item to a stratified surface the pair $(\mathcal A^\Sigma\sslash \mathcal G^\Sigma, F_\Sigma)$, where  $F_\Sigma: \mathcal A^\Sigma\sslash \mathcal G^\Sigma\to \Vect_\C$ is the functor from \eqref{eq:fsigmadef},
\item to a cobordism $\Sigma_0\xrightarrow{i_0} M\xleftarrow{i_1} \Sigma_1$ the pair $(\mathcal A^{\Sigma_0}\sslash \mathcal G^{\Sigma_0}\xleftarrow{P_0}\mathcal A^M\sslash \mathcal G^M\xrightarrow{P_1} \mathcal A^{\Sigma_1}\sslash \mathcal G^{\Sigma_1}, \mu^M)$ of the fibrant span assigned by Theorem \ref{th:classcobfunc} and the natural transformation  $\mu^M: F_{\Sigma_0}P_0\Rightarrow F_{\Sigma_1}P_1$ from \eqref{eq:natdef}.
\end{compactitem}
\end{proposition}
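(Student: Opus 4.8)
The plan is to build on Theorem~\ref{th:classcobfunc}, which already establishes that the assignment of fibrant spans $\A^{\Sigma_0}\sslash\G^{\Sigma_0}\xleftarrow{P_0}\A^M\sslash\G^M\xrightarrow{P_1}\A^{\Sigma_1}\sslash\G^{\Sigma_1}$ to defect cobordisms defines a symmetric monoidal functor $C\colon\mathrm{Cob}_3^{\mathrm{def}}\to\mathrm{span}^{\mathrm{fib}}(\Grpd)$. Consequently the entire span layer --- fibrancy, composition by pullback, the arrow-groupoid identities, and the monoidal structure --- is already in place, and by the description of $\mathrm{span}^{\mathrm{fib}}(\repGrpd)$ in Lemma~\ref{def:spanrepgrpd} it only remains to verify that the quantum layer behaves functorially: that the representations $F_\Sigma$ from \eqref{eq:fsigmadef} and the natural transformations $\mu^M$ from \eqref{eq:natdef} are well defined, compose correctly, send cylinders to identities, and are compatible with disjoint unions. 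First I would record well-definedness of $\mu^M$: the evaluation $\langle\Gamma^M_{col}\rangle$ of a coloured diagram in the spherical symmetric monoidal category $\Vect_\C^{\A^M\sslash\G^M}$ is independent of the diagram's planar representative, and the independence of the choice of basepoint $\hat v_e$ on isolated loops was already checked after \eqref{eq:natdef}. For an isomorphism $\phi\colon M\to M'$ of cobordisms respecting defect data, the induced isomorphism of gauge groupoids identifies $\Gamma^M_{col}$ with $\Gamma^{M'}_{col}$, so the resulting pairs satisfy the equivalence relation $\sim_{\fib}$ of Lemma~\ref{def:spanrepgrpd}; this is routine from naturality of the construction.

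The main step, and the principal obstacle, is compatibility with composition. Gluing $M_{01}$ and $M_{12}$ along $\Sigma_1$ to obtain $M_{012}$ turns each vertex $v_1$ of $\Sigma_1$ into part of a single edge $e$ of $M_{012}$, formed by joining the edge-halves of $M_{01}$ and $M_{12}$ incident to the boundary vertices over $v_1$. Accordingly the coloured graph $\Gamma^{M_{012}}_{col}$ is obtained by pulling $\Gamma^{M_{01}}_{col}$ and $\Gamma^{M_{12}}_{col}$ back along the pullback projections $T_1,T_2$ from Lemma~\ref{def:spanrepgrpd}, which realise $\A^{M_{012}}\sslash\G^{M_{012}}$ as the pullback groupoid by Theorem~\ref{th:classcobfunc}, and gluing the results along their open ends at $\Sigma_1$. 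The only discrepancy is that each glued edge $e$ carries in $\Gamma^{M_{012}}_{col}$ a single bivalent vertex, contributing $\mu^e=\rho_e P_e^d$ as in \eqref{eq:nonloopnat}, whereas the composite diagram carries two bivalent vertices, one from each half. I would reconcile these using Lemma~\ref{cor:indstrata-tweaked-nat}: choosing the gluing point $x\in\hat e$ over $v_1$, the factorisation $P_e^d=H^1_x\circ H^0_x$ together with the diagrams (iii), (iv) of Lemma~\ref{cor:indstrata-tweaked} relating $P_e^s,P_e^t,R_e^x$ to the boundary restriction $P_\partial$ and $P_v^{\partial X}$ identifies the two half-edge natural transformations, pulled back along $T_1,T_2$, with $H^0_x$ and $H^1_x$. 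Since $\rho_e$ matches the codim 2 datum $\rho_{v_1}$ of $\Sigma_1$ by the matching of defect data, the vertical composite $(\mu^{M_{12}}T_2)\circ(\mu^{M_{01}}T_1)$ coincides with $\mu^{M_{012}}$ after these identifications, and functoriality of graph evaluation in $\Vect_\C^{\A^{M_{012}}\sslash\G^{M_{012}}}$ yields the composition law of Lemma~\ref{def:spanrepgrpd}.

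To see that $G$ respects identities I would specialise to the cylinder $\Sigma\xrightarrow{\iota_0}\Sigma\times[0,1]\xleftarrow{\iota_1}\Sigma$. Its underlying span was already identified in the proof of Theorem~\ref{th:classcobfunc} with $\ell_{\A^\Sigma\sslash\G^\Sigma}$ from \eqref{eq:def_ell}, hence with the arrow-groupoid identity via Lemma~\ref{lem:pi1interval}. Each vertex $v$ of $\Sigma$ yields a codim 2 edge $v\times[0,1]$ of the cylinder labelled by $\rho_e=\rho_v$, whose bivalent vertex contributes $\rho_e P_e^d$; evaluating $\Gamma^{\Sigma\times[0,1]}_{col}$ then shows that $\mu^{\Sigma\times[0,1]}$ has components $F_\Sigma(f)$ on morphisms $f$, i.e.\ equals $F_\Sigma[\delta]^*$, which Lemma~\ref{lem:pi1interval} identifies with $\sigma^{F_\Sigma}$. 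Thus $G$ sends the cylinder to the identity morphism $[(\A^\Sigma\sslash\G^\Sigma\xleftarrow{p_0}(\A^\Sigma\sslash\G^\Sigma)^I\xrightarrow{p_1}\A^\Sigma\sslash\G^\Sigma,\sigma^{F_\Sigma})]$ on $(\A^\Sigma\sslash\G^\Sigma,F_\Sigma)$.

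Finally, compatibility with the symmetric monoidal structure is immediate: a disjoint union of surfaces has graded graph and gauge groupoid the product of those of the factors, the coloured graph $\Gamma^{M\sqcup M'}_{col}$ is the disjoint union of $\Gamma^M_{col}$ and $\Gamma^{M'}_{col}$, and \eqref{eq:fsigmadef} gives $F_{\Sigma\sqcup\Sigma'}=F_\Sigma\otimes F_{\Sigma'}$, so that $\mu^{M\sqcup M'}=\mu^M\otimes\mu^{M'}$ under the identification of ends with products. I expect the composition step to require the most care, since it is there that the edge decomposition and the associated natural transformations must be tracked precisely across the gluing interface; the identity and monoidal verifications are comparatively direct once the projection functors of Lemma~\ref{cor:indstrata-tweaked} and their transformations in Lemma~\ref{cor:indstrata-tweaked-nat} are in hand.
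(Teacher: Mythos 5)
Your proposal follows the paper's proof in all essentials: the span layer is delegated to Theorem~\ref{th:classcobfunc}, composition is proved by pulling the two coloured graphs back along the pullback projections, concatenating them along $\Sigma_1$, and reconciling the resulting pair of bivalent vertices on each glued edge with the single one in $\Gamma^{M_{012}}_{col}$ via Lemma~\ref{cor:indstrata-tweaked-nat}; the identity axiom is checked on the cylinder by identifying $\mu^{\Sigma\times[0,1]}$ with $F_\Sigma[\delta]^*$ and invoking Lemma~\ref{lem:pi1interval}; and the monoidal compatibility is dispatched as immediate. This is precisely the structure of the paper's argument.

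One step in your composition argument is, however, too special. You assume that each $1$-stratum of $M_{012}$ meeting $\Sigma_1$ is an interval obtained by joining exactly two edge-halves at a single vertex of $\Sigma_1$, and you identify the two half-edge transformations with $H^0_x$ and $H^1_x$ via $P_e^d=H^1_x\circ H^0_x$. In general an edge of $M_{01}$ or $M_{12}$ may have \emph{both} endpoints on $\Sigma_1$, so a $1$-stratum of $M_{012}$ can be a chain passing through several vertices of $\Sigma_1$, and such a chain can close up into an \emph{isolated loop}. For an isolated loop the functors $P_e^s,P_e^t$ and the transformation $P_e^d$ are not defined at all; the new bivalent vertex of $\Gamma^{M_{012}}_{col}$ is then labelled by $\rho_eH_e^{[\delta]}$ from \eqref{eq:loopnat} and evaluates to a trace, so your factorisation argument does not literally apply. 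The paper's proof covers all cases uniformly: it erases every old bivalent vertex on edges intersecting $\Sigma_1$, adds one new bivalent vertex per resulting edge (labelled by \eqref{eq:nonloopnat} or \eqref{eq:loopnat} according to whether the edge is an interval or a loop), and then uses the subdivision identity $H_e^{[\delta]\cdot[\gamma]}=H_e^{[\delta]}\circ H_e^{[\gamma]}$ of Lemma~\ref{cor:indstrata-tweaked-nat}, applied to subdivisions of $[0,1]$ and of $S^1$, to match the evaluations; for the loop case the independence of the evaluation on the basepoint $\hat v_e$, recorded after \eqref{eq:natdef}, is what makes the trace well defined. Your argument is repaired by replacing the single identity $P_e^d=H^1_x\circ H^0_x$ with this general composition identity for arbitrary finite subdivisions, treating intervals and loops separately.
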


\begin{proof}  That the classical defect data defines a functor $G:\mathrm{Cob}_3^{\mathrm{def}}\to \mathrm{span}^{\mathrm{fib}}(\Grpd)$  was already shown in Theorem \ref{th:classcobfunc}. It remains to show that 
\begin{compactenum}
  \item the gluing of cobordisms induces the composition of natural transformations,
  \item  the cylinder cobordism defines the identity natural transformation. 
  \end{compactenum}

 1.~Let
  $\Sigma_0\xrightarrow{i_0} M_{01}\xleftarrow{i_1} \Sigma_1$, and
  $\Sigma_1\xrightarrow{i_1'} M_{12}\xleftarrow{i_2} \Sigma_2$  be  stratified cobordisms and
  $\Sigma_0 \xrightarrow{j_1} M \xleftarrow{j_2} \Sigma_2$ their composite with $M=M_{01}\amalg_{\Sigma_1} M_{12}$.  By  Theorem \ref{th:classcobfunc} this
  yields the following  diagram, where the top two levels commute and the top diamond is a pullback
  $$ \xymatrix{ &&& \A^{M} \sslash \G^{M} \ar[dl]_{P_{01}} \ar[dd]^{P}\ar[dr]^{P_{12}}\\
  && \A^{M_{01}}\sslash \G^{M_{01}} \ar[dl]_{P_0} \ddtwocell \omit {^<-2>\,\,\,  \mu^{M_{01}} }  \ar[dr]_{P_1}&& \A^{M_{12}}\sslash \G^{M_{12}}  \ar[dl]^{P_1'}    \ar[dr]^{P_2} \ddtwocell \omit {^<-2>\,\,\,  \mu^{M_{12}} }\\
  & \A^{\Sigma_0} \sslash \G^{\Sigma_0} \ar[drr]_{F_{\Sigma_0}}& &  \A^{\Sigma_1} \sslash \G^{\Sigma_1} \ar[d]_{F_{\Sigma_1}}& &  \A^{\Sigma_2} \sslash \G^{\Sigma_2} \ar[dll]^{F_{\Sigma_2}}
 \\ &&& \Vect_\C.  &&&& }$$
    We  show that the natural transformations for  the cobordisms from \eqref{eq:natdef} satisfy  $\mu^M=(\mu^{M_{12}}P_{12})\circ (\mu^{M_{01}}P_{01})$.
 
 Let $\Gamma^{M_{01}}_{col}$ and $\Gamma^{M_{12}}_{col}$ 
 be the coloured graphs associated to $M_{01}$ and $M_{12}$ and $\mu^{M_{01}}=\langle \Gamma^{M_{01}}_{col}\rangle$ and $\mu^{M_{12}}=\langle \Gamma^{M_{12}}_{col}\rangle$ their natural transformations from \eqref{eq:natdef}. Pre-composition with   $P_{01}$ and $P_{12}$ equips the underlying graphs  with a colouring by the groupoid $\mathcal A^M\sslash\mathcal G^M$ and defines coloured graphs  $ \Gamma^{M_{01}}_{col*}$ and $ \Gamma^{M_{12}}_{col*}$ with
 $$
 \mu^{M_{01}}P_{01}=\langle \Gamma^{M_{01}}_{col*}\rangle \qquad \textrm{ and } \qquad  \mu^{M_{12}}P_{12}=\langle \Gamma^{M_{12}}_{col*}\rangle. 
 $$
Denoting by $\Gamma^{M_{01}}_{col*}\#\Gamma^{M_{12}}_{col*}$ their concatenation along their edge ends in $\Sigma_1$, we then have
 $$
 (\mu^{M_{12}}P_{12})\circ (\mu^{M_{01}}P_{01})=\langle \Gamma^{M_{01}}_{col*}\rangle\circ \langle \Gamma^{M_{12}}_{col*}\rangle=\langle \Gamma^{M_{01}}_{col*}\#\Gamma^{M_{12}}_{col*}\rangle.
 $$
The coloured graph $\Gamma^M_{col}$ is obtained from $\Gamma^{M_{01}}_{col*}\#\Gamma^{M_{12}}_{col*}$ in two steps. {One first erases}  all bivalent vertices {on}  the edges of $M_{01}$ and $M_{12}$ that intersect $\Sigma_1$. One then  adds a bivalent vertex to all edges of the resulting new graph that now have no bivalent vertex. 
This bivalent vertex  is then labelled by (i) the natural transformation from \mbox{\eqref{eq:nonloopnat}}, if the resulting edge is  incident to at least one vertex or boundary vertex,  or  (ii) by the natural transformation from \eqref{eq:loopnat}, if it is an isolated loop. We then have
\begin{align}\label{eq:glueval}
\mu^M=\langle \Gamma^M_{col}\rangle=\langle \Gamma^{M_{01}}_{col*}\#\Gamma^{M_{12}}_{col*}\rangle.
\end{align}
This follows, because for each point $ x\in S^1$ the homotopy class  of a path based at $x$  that goes once around $S^1$ in the direction of its orientation is the composite of the homotopy classes of paths obtained by subdividing $S^1$ into a finite number of intervals. An analogous statement holds for the homotopy classes $[\gamma]:0\to 1$ of paths in $[0,1]$. Applying the identities in Lemma \ref{cor:indstrata-tweaked-nat} then gives \eqref{eq:glueval}.

2.~Let $M=\Sigma\times[0,1]$ be the cylinder cobordism with a single $(k+1)$-stratum $s\times[0,1]$ for each $k$-stratum $s$ of $\Sigma$. 
Then the coloured graph for $M$ consists of a single line with a bivalent vertex $v_e$ for each vertex $v$ of $\Sigma$.  For each vertex $v$ of $\Sigma$ and associated edge $e=v\times[0,1]$ of $\Sigma\times [0,1]$ the two ends of this line are labelled with functors $F_{e}^s=\rho_vP_{e}^s: \mathcal A^{M}\sslash \mathcal G^{M}\to \Vect_\C$ and $F_e^t=\rho_vP_{e}^t: \mathcal A^{M}\sslash \mathcal G^{M}\to \Vect_\C$. The vertex $v_e$ in the middle is labelled with the natural transformation  $\mu^{e}=\rho_vP_{e}^d: P_{e}^s\Rightarrow P_{e}^t$ from \eqref{eq:nonloopnat}.

We then have for each edge $e=v\times[0,1]$ of $M$ the groupoid  $\mathcal D_e=\mathcal D_v=M_v\sslash G_v$.  As  $e$ is incident at two boundary vertices, we can identify $\hat e\cong [0,1]$ and the inclusions  for the boundary vertices from \eqref{eq:stratboundemb} are given by  $\iota_0,\iota_1:\bullet \to [0,1]$. 

The functors
 $P_e^s, P_e^t:\mathcal A^M\sslash \mathcal G^M\to \mathcal D_v$ from Definition \ref{cor:indstrata-tweaked}  send a gauge configuration $A: T^M\Rightarrow D^M$ to $A_e(0), A_e(1)\in M_v$  and a gauge transformation $\gamma: A\Rrightarrow A'$ to the group elements $\gamma_e(0),\gamma_e(1)\in G_v$ with $\gamma_e(0)\rhd A_e(0)=A'_e(0)$ and  $\gamma_e(1)\rhd A_e(1)=A'_e(1)$, respectively. 
 
The natural transformation $P_e^d: P_e^s\Rightarrow P_e^t$ from Lemma \ref{cor:indstrata-tweaked-nat} has components $(P_e^d)_A=A_e([\delta]): A_e(0)\to A_e(1)$, where  $\delta:0\to 1$ is a path in $[0,1]$. The equivalence $\mathcal A^{M}\sslash \mathcal G^M\cong \GRPd(\Pi_1[0,1], \mathcal A^\Sigma\sslash \mathcal G^\Sigma)$ from the proof of Theorem \ref{th:classcobfunc} identifies 
\begin{compactitem}
\item the 
functors $P_e^s,P_e^t:\mathcal A^M\sslash \mathcal G^M\to \mathcal D_v$ with the functors $P_v\Pi_1(\iota_0)^*, P_v\Pi_1(\iota_1)^*: \mathcal A^\Sigma\sslash \mathcal G^\Sigma\to \mathcal D_v$ induced by the inclusions $\iota_0,\iota_1:\bullet\to [0,1]$,
\item  the 
natural transformation $P^d_e: P_e^s\Rightarrow P_e^t$ with  $P_v [\delta]^*: P_v\Pi_1(\iota_0)^*\Rightarrow P_v\Pi_1(\iota_1)^*$ induced by the homotopy class of any path $\delta:0\to 1$ in $[0,1]$.
\end{compactitem}
This gives the following commuting diagram
\begin{align*}
\xymatrix{
\mathcal A^M\sslash \mathcal G^M \ar[d]_\cong 
\ar@/^7pt/[rr]^{P_e^s} _{\Downarrow P_e^d}
\ar@/_7pt/[rr]_{P_e^t} 
&  &  {\mathcal D}_v
\\
\mathcal (A^\Sigma\sslash \mathcal G^\Sigma)^{\Pi_1[0,1]}  \ar[d]_{P_v^{\Pi_1[0,1]}}
\ar@/^7pt/[rr]^{\Pi_1(\iota_0)^*} _{\Downarrow [\delta]^*}
\ar@/_7pt/[rr]_{\Pi_1(\iota_1)^*} 
& & (\mathcal A^\Sigma\sslash \mathcal G^\Sigma)^\bullet \cong  \mathcal A^\Sigma\sslash\mathcal G^\Sigma \ar[u]_{P_v} \ar[d]^{P_v}\\
\mathcal D_v^{\Pi_1[0,1]} \ar@/^7pt/[rr]^{\Pi_1(\iota_0)^*} _{\Downarrow  [\delta]^*}
\ar@/_7pt/[rr]_{\Pi_1(\iota_1)^*}  & & \mathcal D_v^\bullet \cong  \mathcal D_v. 
}
\end{align*}
Composing with  $\rho_e=\rho_v: \mathcal D_v\to \Vect_\C$ then yields $\mu^e=\rho_vP_v [\delta]^*: \rho_vP_v\Pi_1(\iota_0)^*\Rightarrow \rho_vP_v\Pi_1(\iota_1)^*$.
Taking the tensor product over all edges $e$ of $M$ gives $\mu^M=\langle \Gamma^M_{col}\rangle= F_\Sigma [\delta]^*: F_\Sigma \Pi_1(\iota_0)^*\Rightarrow F_\Sigma\Pi_1(\iota_1)^*$, and combining this result with Lemma \ref{lem:pi1interval} proves the claim. 
\end{proof}

By composing the functor $G:\mathrm{Cob}_3^{\mathrm{def}}\to \mathrm{span}^{\mathrm{fib}}(\repGrpd)$ from Proposition \ref{prop:deffunc} with the  symmetric monoidal functor $L:\mathrm{span}^{\mathrm{fib}}(\repGrpd)\to \Vect_\C$ from Proposition \ref{prop:lfunctor} we obtain a defect TQFT.

\begin{theorem} \label{cor:limtot}The symmetric monoidal functor $\mathcal Z=LG:\mathrm{Cob}^{\mathrm{def}}_3\to \Vect_\C$ is a defect TQFT.
\end{theorem}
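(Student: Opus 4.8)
The plan is to deduce the theorem immediately from the two preceding propositions, since the only thing to check is that $\mathcal Z=LG$ is a symmetric monoidal functor with the correct source and target. By Proposition \ref{prop:deffunc} the assignment $G$ is a symmetric monoidal functor $\mathrm{Cob}_3^{\mathrm{def}}\to \mathrm{span}^{\mathrm{fib}}(\repGrpd)$, and by Proposition \ref{prop:lfunctor} the assignment $L$ is a symmetric monoidal functor $\mathrm{span}^{\mathrm{fib}}(\repGrpd)\to \Vect_\C$. First I would recall the standard fact that the composite of two functors is a functor and that the composite of two symmetric monoidal functors is again symmetric monoidal: the monoidal coherence data of $LG$ is obtained by composing the structure isomorphisms of $L$ with the image under $L$ of the structure isomorphisms of $G$, and the resulting natural isomorphisms automatically satisfy the hexagon, associativity and unit axioms because $L$ and $G$ each do and $L$ is a functor. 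Hence $\mathcal Z=LG$ is a symmetric monoidal functor.

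Next I would observe that the source of $\mathcal Z$ is the defect cobordism category $\mathrm{Cob}_3^{\mathrm{def}}$ of Definition \ref{def:coddef} and its target is $\Vect_\C$, so that $\mathcal Z$ meets the definition of a 3d defect TQFT verbatim. No further verification is required, and the theorem follows.

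The point to emphasize is that there is no genuine obstacle at this last step: all the substantive work has already been carried out. The functoriality and symmetric monoidality of $G$ — that gluing of cobordisms is sent to pullbacks of fibrant spans together with the vertical composition of the natural transformations $\mu^M$, and that the cylinder is sent to the identity morphism — is the content of Theorem \ref{th:classcobfunc} and Proposition \ref{prop:deffunc}, while the corresponding properties of $L$, in particular its compatibility with composition via the automorphism-cardinality identity of Proposition \ref{prop:omnibus} and the unit computation on the arrow groupoid, are established in Proposition \ref{prop:lfunctor}. The role of this theorem is therefore to package these results into the single statement that the composite $\mathcal Z=LG$ is a defect TQFT; the proof is a one-line composition argument.
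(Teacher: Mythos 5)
Your proposal is correct and coincides with the paper's own (essentially implicit) argument: the paper deduces Theorem \ref{cor:limtot} in one line by composing the symmetric monoidal functors $G$ from Proposition \ref{prop:deffunc} and $L$ from Proposition \ref{prop:lfunctor}, exactly as you do. Your observation that all substantive work lies in those two propositions, with the theorem merely packaging them via the standard closure of symmetric monoidal functors under composition, matches the paper's treatment.
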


We will now show how ordinary Dijkgraaf-Witten TQFT without defects is recovered from this defect TQFT by considering  surfaces  and  cobordisms with only 
codim $0$-strata.

We first consider defect surfaces and defect cobordisms with only codim $\leq 1$ strata. Such defect surfaces and defect cobordisms form a symmetric monoidal subcategory $\mathrm{Cob}_3^{\mathrm{class}}\subset \mathrm{Cob}_3^{\mathrm{def}}$. For any defect surface $\Sigma$ in this subcategory  the associated functor $F_\Sigma: \mathcal A^\Sigma\sslash\G^\Sigma\to \Vect_\C$  from \eqref{eq:fsigmadef} is the constant functor with values in $\C$. For any defect cobordism $M$ in this subcategory the associated natural transformation $\mu^M:F_{\Sigma_0}P_0\Rightarrow F_{\Sigma_1}P_1$ from \eqref{eq:natdef} is the identity natural transformation on the constant functor. 

By Remark \ref{rem:subcatrem} such representations and intertwiners form the monoidal subcategory  $\mathrm{span}^{\mathrm{fib}}(\Grpd)\subset \mathrm{span}^{\mathrm{fib}}(\repGrpd)$. The restriction of the functor $L:\mathrm{span}^{\mathrm{fib}}(\repGrpd)\to \Vect_\C$ to this subcategory is the  symmetric monoidal functor $L_{\mathrm{class}}:\mathrm{span}^{\mathrm{fib}}(\Grpd)\to \Vect_\C$ from Corollary \ref{cor:quinnclass}. We thus obtain a commuting diagram of functors, where the vertical arrows are inclusions
$$
\xymatrix{
\mathrm{Cob}_3^{\mathrm{def}} \ar[r]^{G\qquad } & \mathrm{span}^{\mathrm{fib}}(\repGrpd) \ar[r]^{\qquad L} & \Vect_\C.\\
\mathrm{Cob}_3^{\mathrm{class}} \ar@{^{(}->}[u] \ar[r]_{G_{\mathrm{class}}\qquad} & \ar@{^{(}->}[u]\mathrm{span}^{\mathrm{fib}}(\Grpd) \ar[ru]_{L_{\mathrm{class}}}
}
$$

\begin{corollary}\label{cor:classdef} The restriction of the functor $\mathcal Z:\mathrm{Cob}_3^{\mathrm{def}}\to \Vect_\C$ from Theorem \ref{cor:limtot} to the monoidal subcategory $\mathrm{Cob}_3^{\mathrm{class}}\subset \mathrm{Cob}_3^{\mathrm{def}}$ assigns
\begin{compactitem}
\item to a  surface $\Sigma$ the vector space $\mathcal Z^H(\Sigma)=\langle \pi_0(\mathcal A^\Sigma\sslash\mathcal G^\Sigma)\rangle_\C$,
\item to the equivalence class of a cobordism $\Sigma_0\xrightarrow{i_0} M\xleftarrow{i_1} \Sigma_1$ the linear map $\mathcal Z^H(M):\mathcal Z^H(\Sigma_0)\to \mathcal Z^H(\Sigma_1)$ with matrix elements
$$
\big\langle [A_1] \mid \mathcal Z^H(M)\mid [A_0]\big\rangle=\chi^\pi\big(\PC_{A_1}(\mathcal A^{\Sigma_1}\sslash \G^{\Sigma_1})\big)\cdot \chi^\pi\big(\{ A_0\mid \mathcal A^M\sslash \mathcal G^M\mid A_1\}\big).
$$
\end{compactitem}
\end{corollary}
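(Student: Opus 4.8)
The plan is to deduce the statement almost entirely from the commuting square of functors displayed just before the corollary, together with the explicit description of $L_{\mathrm{class}}$ in Corollary \ref{cor:quinnclass}. The only genuine work is to confirm that the restriction of $\mathcal Z=LG$ to $\mathrm{Cob}_3^{\mathrm{class}}$ coincides with the composite $L_{\mathrm{class}}G_{\mathrm{class}}$ along that square; once this is in hand, both bullet points are read off directly from Corollary \ref{cor:quinnclass} and its homotopy-content reformulation.

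First I would verify that $G$ restricted to $\mathrm{Cob}_3^{\mathrm{class}}$ factors through the inclusion $\mathrm{span}^{\mathrm{fib}}(\Grpd)\hookrightarrow \mathrm{span}^{\mathrm{fib}}(\repGrpd)$ of Remark \ref{rem:subcatrem}. This amounts to checking the two degenerate assertions made in the text preceding the corollary. For a stratified surface $\Sigma$ with only strata of codimension $\leq 1$ the set of vertices $S_0^\Sigma$ is empty, so the tensor product \eqref{eq:fsigmadef} is an empty tensor product in the spherical monoidal category $\Vect_\C^{\mathcal A^\Sigma\sslash \mathcal G^\Sigma}$ and hence equals its monoidal unit, the constant functor $\C$. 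For a defect cobordism $M$ with only strata of codimension $\leq 1$ the $1$-skeleton of the stratification is empty, so the graph $\Gamma^M$ has no vertices and no edges; its coloured version is the empty diagram, whose evaluation $\mu^M$ is the identity natural transformation on the constant functor. By Remark \ref{rem:subcatrem} these data are exactly an object, respectively a morphism, of $\mathrm{span}^{\mathrm{fib}}(\Grpd)$, which establishes the left-hand square and shows $G|_{\mathrm{Cob}_3^{\mathrm{class}}}$ is $G_{\mathrm{class}}$ followed by the inclusion.

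Next I would invoke the right-hand triangle of the diagram: by the discussion following Proposition \ref{prop:lfunctor}, the restriction of $L$ to the subcategory $\mathrm{span}^{\mathrm{fib}}(\Grpd)$ is precisely $L_{\mathrm{class}}$. Composing the two statements gives $\mathcal Z|_{\mathrm{Cob}_3^{\mathrm{class}}}=L_{\mathrm{class}}G_{\mathrm{class}}$. Now Corollary \ref{cor:quinnclass} identifies the values: to the surface $\Sigma$, $G_{\mathrm{class}}$ assigns the essentially finite groupoid $\mathcal A^\Sigma\sslash \mathcal G^\Sigma$ and $L_{\mathrm{class}}$ the free vector space $\langle \pi_0(\mathcal A^\Sigma\sslash \mathcal G^\Sigma)\rangle_\C$, which is the first bullet; to the cobordism, $G_{\mathrm{class}}$ assigns the fibrant span of Theorem \ref{th:classcobfunc} and $L_{\mathrm{class}}$ the linear map whose matrix elements are the sums in Corollary \ref{cor:quinnclass}. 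Rewriting those matrix elements via the homotopy-content identity stated immediately after Corollary \ref{cor:quinnclass} then yields exactly $\chi^\pi\big(\PC_{A_1}(\mathcal A^{\Sigma_1}\sslash \mathcal G^{\Sigma_1})\big)\cdot \chi^\pi\big(\{A_0\mid \mathcal A^M\sslash \mathcal G^M\mid A_1\}\big)$, which is the second bullet. For additional confidence I would cross-check directly against the definition of $L$ as $\lim$: for the constant functor $\C$ each fixed-point space $\C^{\Aut(G)}$ is all of $\C$, so $\lim\C\cong\langle \pi_0(\mathcal A^\Sigma\sslash \mathcal G^\Sigma)\rangle_\C$, and on morphisms the formula \eqref{eq:ldef} collapses to the scalar matrix elements of Corollary \ref{cor:quinnclass} because every component $\sigma_H$ equals $\id_\C$.

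The main obstacle, such as it is, is not conceptual but lies in pinning down the two empty cases rigorously: that the empty tensor product of functors into $\Vect_\C^{\mathcal A^\Sigma\sslash \mathcal G^\Sigma}$ is the monoidal unit and that the evaluation of the empty coloured diagram is the identity on that unit. Everything else is a direct assembly of Theorem \ref{th:classcobfunc}, Proposition \ref{prop:lfunctor} and Corollary \ref{cor:quinnclass}, so I expect the proof to be short once these degenerate identifications are made explicit.
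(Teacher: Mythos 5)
Your proposal is correct and takes essentially the same route as the paper: the corollary is established there precisely by the discussion preceding it, namely that on $\mathrm{Cob}_3^{\mathrm{class}}$ the functor $F_\Sigma$ from \eqref{eq:fsigmadef} degenerates to the constant functor $\C$ (empty tensor product, since $S_0^\Sigma=\emptyset$) and $\mu^M$ to the identity (the coloured graph $\Gamma^M$ is empty, as the $1$-skeleton vanishes), so that $\mathcal Z$ restricts to $L_{\mathrm{class}}G_{\mathrm{class}}$ and the two bullet points are read off from Corollary \ref{cor:quinnclass} and its homotopy-content reformulation. Your explicit treatment of the degenerate identifications and the cross-check against \eqref{eq:limexp} and \eqref{eq:ldef} merely make precise what the paper leaves implicit.
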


Here, the notation is as in Corollary \ref{cor:quinnclass}.
This defect TQFT with only codim $\leq 1$ strata was investigated by Fuchs, Schweigert and Valentino  in \cite{FSV} in the more general case of twisted Dijkgraaf-Witten TQFT, where  2-strata and 3-strata are equipped with cocycles.  The TQFT from Corollary \ref{cor:classdef} agrees with the results in \cite{FSV}, if the latter are restricted to trivial cocycles. 
In particular, the graphical calculus for group cocycles  in \cite[Section 3.4]{FSV} is also defined in the case where all group cocycles are trivial, and then reduces to our diagrammatic description from Section \ref{subsec:geomdescript}.

Ordinary Dijkgraaf-Witten TQFT without defects corresponds to defect surfaces and defect cobordisms with only codim $0$-strata. Such defect surfaces and defect cobordisms have as defect data a single finite group $H$ for each codim 0-stratum.   This defines for each finite group $H$ a symmetric 
 monoidal subcategory $\mathrm{Cob}^H_3\subset \mathrm{Cob}_3^{\mathrm{class}}$, in which all 0-strata are labelled with $H$.  The restriction of the defect TQFT from 
 Corollary \ref{cor:classdef} to this subcategory is given by Quinn's finite total homotopy TQFT \cite{Q,FMP} for the classifying space of a finite group. This is precisely Dijkgraaf-Witten TQFT with a trivial cocycle.
The latter is given by the formulas in  Corollary \ref{cor:classdef} and the following corollary. This follows from \cite[\S 8.2]{FMP} and \cite{Mo}.

\begin{corollary} For any finite group $H$, the restriction of $G_{\mathrm{class}}: \mathrm{Cob}_3^{\mathrm{class}}\to \mathrm{span}^{\mathrm{fib}}(\Grpd)$ to the monoidal subcategory $\mathrm{Cob}_3^H\subset \mathrm{Cob}_3^{\mathrm{class}}$ assigns
\begin{compactitem}
\item to a surface $\Sigma$ the essentially finite groupoid $\mathcal A^\Sigma\sslash G^\Sigma=(\bullet\sslash H)^{\Pi_1(\Sigma)}$,
\item to the equivalence class of a cobordism $\Sigma_0\xrightarrow{i_0} M\xleftarrow{i_1} \Sigma_1$  the fibrant span of essentially finite groupoids 
$$(\bullet\sslash H)^{\Pi_1(\Sigma_0)}\xleftarrow{P_0=\Pi_1(i_0)^*} (\bullet\sslash H)^{\Pi_1(M)}\xrightarrow{P_1=\Pi_1(i_1)^*} (\bullet\sslash H)^{\Pi_1(\Sigma_1)}.$$
\end{compactitem}
The resulting  TQFT
$\mathcal Z^H:\mathrm{Cob}^H_3\to\Vect_\C$  coincides with Dijkgraaf-Witten TQFT for the finite group $H$.
\end{corollary}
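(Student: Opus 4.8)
The plan is to compute $G_{\mathrm{class}}$ explicitly on objects and morphisms of $\mathrm{Cob}_3^H$, where every stratum has codimension $0$, so that the end defining the gauge groupoid collapses to a product, and then to invoke the identification of $L_{\mathrm{class}}$ with the groupoid-valued specialization of Quinn's finite total homotopy TQFT. A surface $\Sigma$ in $\mathrm{Cob}_3^H$ has only $2$-strata, one per connected component, each labelled by $H$; hence its graded graph $Q^\Sigma$ has these components as vertices and \emph{no} edges, and the free category $\maq^\Sigma$ is discrete. Thus $D^\Sigma$ sends each component $t$ to $\mathcal D_t=\bullet\sslash H$, and since there are no $1$-strata to cut along, the construction of Section \ref{sec:fundgrpd} gives $\hat t=t$ and $T^\Sigma(t)=\Pi_1(t)$. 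Because the end over a discrete category is a product, Definition \ref{def:gaugegrpd} yields $\mathcal A^\Sigma\sslash\mathcal G^\Sigma=\prod_t \GRPd(\Pi_1(t),\bullet\sslash H)=\prod_t (\bullet\sslash H)^{\Pi_1(t)}\cong (\bullet\sslash H)^{\Pi_1(\Sigma)}$, where the last step uses $\Pi_1(\Sigma)=\coprod_t \Pi_1(t)$ together with $\GRPd(\coprod_t \Pi_1(t),\bullet\sslash H)\cong \prod_t\GRPd(\Pi_1(t),\bullet\sslash H)$. The same computation applied to a cobordism $M$, whose only strata are its $3$-dimensional components, gives $\mathcal A^M\sslash\mathcal G^M\cong(\bullet\sslash H)^{\Pi_1(M)}$, and essential finiteness of all these groupoids is Corollary \ref{cor:essfinite}.

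Next I would identify the span. The boundary of such a cobordism carries only boundary $2$-strata, and the insertion $I_\partial:\maq^{\partial M}\to\maq^M$ sends each to the component of $M$ containing it. By Proposition \ref{prop:tfunctor} the natural transformation $\tau^M:T^{\partial M}\Rightarrow T^M I_\partial$ has components $\Pi_1(\iota_{s\partial})$, the maps induced by the inclusions of boundary components into $M$. Feeding this into the description of the restriction functor in Corollary \ref{prop:projfunc} (equivalently the end-theoretic form in Remark \ref{rem:boundproj}) and applying Lemma \ref{lem:gaugesimplify}, a gauge configuration $A:\Pi_1(M)\to\bullet\sslash H$ restricts along the $j$-th boundary to $A\circ\Pi_1(i_j):\Pi_1(\Sigma_j)\to\bullet\sslash H$, so that under the identifications above $P_j=\Pi_1(i_j)^*$. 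Fibrancy of the span is immediate from Corollary \ref{cor: span fib}. Together with the first paragraph, this establishes the two bullet points of the statement.

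For the concluding claim that $\mathcal Z^H=L_{\mathrm{class}}\circ G_{\mathrm{class}}$ is Dijkgraaf-Witten theory for $H$, I would combine the explicit form of $G_{\mathrm{class}}|_{\mathrm{Cob}_3^H}$ just obtained with Corollary \ref{cor:quinnclass} and the discussion following it, which identifies $L_{\mathrm{class}}$ with the groupoid-valued case of Quinn's finite total homotopy TQFT; the matrix elements are then exactly those displayed in Corollary \ref{cor:classdef} in terms of groupoid cardinality. Under the standard equivalence between $(\bullet\sslash H)^{\Pi_1(X)}$ and the mapping groupoid of $X$ into the classifying space $BH\simeq\bullet\sslash H$ — equivalently the groupoid of principal $H$-bundles on $X$ — the assignment $\Sigma\mapsto(\bullet\sslash H)^{\Pi_1(\Sigma)}$, $M\mapsto(\Pi_1(i_0)^*,\Pi_1(i_1)^*)$ coincides with Quinn's construction evaluated at the target $BH$, which by \cite[\S 8.2]{FMP} and \cite{Mo} is precisely untwisted Dijkgraaf-Witten TQFT for $H$.

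The only non-formal step, and hence the main obstacle, is this final identification: one must carefully match the homotopy-theoretic packaging underlying Quinn's TQFT (mapping spaces into $BH$ weighted by homotopy content) with the present fibrant-span and exponential-groupoid formulation, verifying that the normalization of the weights in the matrix elements of Corollary \ref{cor:classdef} agrees with the Dijkgraaf-Witten convention. Everything preceding it is essentially bookkeeping, made routine by the collapse of the end to a product over the discrete categories $\maq^\Sigma$ and $\maq^M$, so that no genuine coherence computation is required.
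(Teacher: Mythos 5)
Your proposal is correct and takes essentially the same route as the paper: the paper likewise identifies the gauge groupoids with the exponentials $(\bullet\sslash H)^{\Pi_1(X)}$ (the end collapsing because only codimension-$0$ strata are present) and the boundary projections with $\Pi_1(i_j)^*$, and then delegates the final identification with untwisted Dijkgraaf--Witten theory to Quinn's finite total homotopy TQFT via the cited results \cite{FMP} and \cite{Mo}. Your explicit bookkeeping with the discrete category $\maq^\Sigma$ and your closing remark about matching normalisations merely spell out what the paper leaves implicit.
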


Note that in this case a coherent basepoint set is obtained by choosing a single basepoint on each connected component of a surface $\Sigma$ and a single basepoint on each connected component of the boundary $\partial M$ of a cobordism $M$. For a connected surface $\Sigma$ this  yields the reduced gauge groupoid $\mathcal A^\Sigma\sslash\G^\Sigma=\Hom(\pi_1(\Sigma), H)\sslash H$, where $H$ acts by conjugation.

\section{Examples}
\label{sec:examples}

In this section, we illustrate our formalism and the resulting defect TQFTs with examples. We first give some  examples of  defect data in Section \ref{sec:defexamples}. 
In Section \ref{subse:qudoublemods} we then compute examples of the two-dimensional part of a defect TQFT, the vector spaces assigned to  surfaces.
Section \ref{subsec:defcobs}  treats examples of  defect cobordisms.

The results in Section \ref{subse:qudoublemods} are  of independent interest in topological quantum computing.  The vector space a Dijkgraaf-Witten TQFT assigns to a surface 
is the ground state of  Kitaev's quantum double model \cite{K} and also describes the closely related Levin-Wen model \cite{LW}. Our defect TQFT gives a simple description of quantum double models with defects that can be viewed as a special case of the models introduced by Kitaev and Kong \cite{KK}. In our formalism, the  ground state of the quantum double model with defects can be computed efficiently and without choosing a triangulation.

\subsection{Defect data}
\label{sec:defexamples}

To treat examples of defect TQFTs we require some preliminaries about defect data.  First, we introduce  a notion of transparent defect  that corresponds to the absence of defects.  

\begin{definition}\label{def:transpdef} Let $X$ be a stratified $n$-manifold  with  defect data.
\begin{compactenum}
\item A \textbf{transparent}  codim 1-stratum is  labelled with a group $G$ as a $G\times G^{op}$-set.
\item A \textbf{transparent} codim 2-stratum on a codim 1-stratum labelled with a $G\times H^{op}$-set $M$ is  labelled with 
\begin{align}\label{eq:transprep}
\rho_M: M\times M\sslash  G\times H\to \Vect_\C\qquad \rho_M(m,m')=\begin{cases} \C & m=m'\\ 0 & m\neq m'\end{cases}\qquad \rho_M(g,h)=\id.
\end{align}
\item A \textbf{transparent}  codim 3-stratum on a codim 2-stratum   with  $\rho$ is  labelled
with  $\id_{\rho}:  \rho\Rightarrow \rho$.
\end{compactenum}
\end{definition}

We will also need non-transparent representations of action groupoids and intertwiners between them. 
For this, we relate them to modules  over  smash products of group algebras and their intertwiners. 
For a finite set $M$ we denote by $\C^M$ the algebra of maps $f: M\to\C$ with the pointwise multiplication and with the basis given by the maps $\delta_m: M\to \C$ with $\delta_m(n)=0$ for $n\neq m$ and $\delta_m(m)=1$. 
If $\rhd: G\times M\to M$ is an action of a finite group $G$, then $\C^M$ becomes a module algebra over the Hopf algebra $\C[G]$ with the induced $\C[G]$-action given by $g\rhd \delta_m=\delta_{g\rhd m}$ on the basis. The associated smash product $\C[G]\ltimes \C^M$ is the associative unital  algebra   $\C[G]\oo \C^M$ with the multiplication
\begin{align}\label{eq:smash}
(g\oo\delta_m)\cdot (h\oo \delta_n)=\delta_m(h\rhd n)\, gh\oo \delta_n.
\end{align}
We denote by  $\C[G]\ltimes\C^M\mathrm{-Mod}$ the category of its finite-dimensional representations  and  intertwiners.

\begin{lemma} \label{lem:smashrep}For any action $\rhd: G\times M\to M$  of a finite group $G$ on a finite set $M$ the categories
$\Vect_\C^{M\sslash G}$ and $\C[G]\ltimes\C^M\mathrm{-Mod}$ are equivalent.
\end{lemma}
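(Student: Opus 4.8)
The plan is to construct an explicit equivalence of categories between $\Vect_\C^{M\sslash G}$ and $\C[G]\ltimes\C^M\text{-Mod}$ by unpacking both sides into concrete linear-algebraic data and exhibiting mutually inverse (up to natural isomorphism) functors. First I would describe what a representation $\rho:M\sslash G\to\Vect_\C$ amounts to: it assigns a vector space $\rho(m)$ to each $m\in M$ and, to each morphism $g:m\to g\rhd m$, a linear isomorphism $\rho(g):\rho(m)\to\rho(g\rhd m)$, subject to $\rho(1)=\id$ and $\rho(gh)=\rho(g)\circ\rho(h)$ whenever the composites are defined. The natural candidate for the associated $\C[G]\ltimes\C^M$-module is the direct sum $V=\bigoplus_{m\in M}\rho(m)$. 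The idempotents $\delta_m$ act as the projections onto the summand $\rho(m)$, and a group element $g$ acts by sending the summand $\rho(m)$ to $\rho(g\rhd m)$ via $\rho(g)$ and extending linearly; concretely $g\cdot v=\rho(g)(v)$ for $v\in\rho(m)$, viewed inside $V$.

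The central verification is that this assignment respects the smash product multiplication \eqref{eq:smash}. I would check directly that for $v\in\rho(n)\subseteq V$ one has $(g\oo\delta_m)\cdot v=\delta_m(h\rhd n)$-type selection working correctly: $\delta_m$ first projects onto $\rho(m)$, then $g$ acts, reproducing the formula $(g\oo\delta_m)\cdot(h\oo\delta_n)=\delta_m(h\rhd n)\,gh\oo\delta_n$. This is the step where the functoriality $\rho(gh)=\rho(g)\rho(h)$ of the groupoid representation translates into associativity of the module action, and the support condition on $\delta_m$ matches the Kronecker delta in \eqref{eq:smash}. Conversely, given a $\C[G]\ltimes\C^M$-module $V$, the orthogonal idempotents $\delta_m$ decompose $V=\bigoplus_{m\in M}\delta_m V$, and I would set $\rho(m)=\delta_m V$; the relation $(g\oo\delta_m)\cdot(1\oo\delta_{m})$ together with the module algebra structure shows that acting by $g$ carries $\delta_m V$ isomorphically onto $\delta_{g\rhd m}V$, yielding the required isomorphisms $\rho(g)$. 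On morphisms, an intertwiner $\eta:\rho\Rightarrow\rho'$ is a family of linear maps $\eta_m:\rho(m)\to\rho'(m)$ commuting with all $\rho(g)$; under $V=\bigoplus_m\rho(m)$ this is exactly a $\C[G]\ltimes\C^M$-module homomorphism, since commuting with the $\delta_m$ forces block-diagonal form and commuting with the $g$-action gives the intertwining condition.

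I would then assemble these two assignments into functors $\Phi:\Vect_\C^{M\sslash G}\to\C[G]\ltimes\C^M\text{-Mod}$ and $\Psi$ in the other direction, and exhibit natural isomorphisms $\Psi\Phi\cong\id$ and $\Phi\Psi\cong\id$. The composite $\Psi\Phi$ sends $\rho$ to $m\mapsto\delta_m\big(\bigoplus_{n}\rho(n)\big)=\rho(m)$, which is canonically isomorphic to $\rho$, and the group actions match on the nose. The composite $\Phi\Psi$ sends a module $V$ to $\bigoplus_m\delta_m V$, which is canonically isomorphic to $V$ via the sum of the inclusions $\delta_m V\hookrightarrow V$, using that $\sum_m\delta_m=1$ in $\C^M$ so the $\delta_m V$ exhaust $V$ and are mutually orthogonal. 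Both isomorphisms are manifestly natural.

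I do not anticipate a genuine obstacle here, as the equivalence is a standard "modules over a skew/smash product are the same as equivariant sheaves over the base" statement specialized to a finite set; the only care needed is bookkeeping to confirm that the Kronecker-delta bookkeeping in \eqref{eq:smash} is precisely what encodes groupoid functoriality. The main point requiring attention is making sure the direction of the actions (left action of $G$, the role of $\delta_m(h\rhd n)$ selecting the correct summand before the group element acts) is consistent with the conventions fixed for the action groupoid $M\sslash G$ in Section \ref{sec:groupoidbasic} and for the smash product in \eqref{eq:smash}. Once those conventions are aligned, all verifications are routine linear algebra, and finiteness of $M$ and $G$ guarantees that finite-dimensionality is preserved in both directions.
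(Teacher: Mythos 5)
Your proposal is correct and follows essentially the same route as the paper: the paper's proof defines exactly your functors $\Phi$ and $\Psi$ (there called $F$ and $G$), sending $\rho$ to $\bigoplus_{m\in M}\rho(m)$ with action $(g\oo\delta_m)\rhd v=\iota_{g\rhd m}\circ\rho(g)\circ\pi_m(v)$ and, conversely, a module $V$ to $m\mapsto (1\oo\delta_m)\rhd V$, with intertwiners handled blockwise just as you describe. The only difference is that the paper states the two functors and leaves the compatibility with the smash product multiplication \eqref{eq:smash} and the natural isomorphisms $\Psi\Phi\cong\id$, $\Phi\Psi\cong\id$ implicit, whereas you spell these verifications out.
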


\begin{proof} The equivalence is given by   the functor $F:\Vect_\C^{M\sslash G}\to \C[G]\ltimes\C^M\mathrm{-Mod}$ that sends
\begin{compactitem}
\item a functor $\rho: M\sslash G\to \Vect_\C$ to the vector space $V_\rho=\oplus_{m\in M} \rho(m)$ with the $\C[G]\ltimes\C^M$-module structure
\begin{align*}
(g\oo \delta_m)\rhd v= \iota_{g\rhd m}\circ \rho(g)\circ \pi_m(v),
\end{align*}
\item  a natural transformation $\mu: \rho\Rightarrow\rho'$ to the intertwiner
\begin{align*}
\phi_\mu=\Sigma_{m\in M}\, \iota'_{m}\circ \mu_m\circ \pi_m: \oplus_{m\in M} \rho(m)\to \oplus_{m\in M}\rho'(m),
\end{align*}
\end{compactitem}
and the functor $H: \C[G]\ltimes\C^M\mathrm{-Mod}\to \Vect_\C^{M\sslash G}$ that sends
\begin{compactitem}
\item  a $\C[G]\ltimes\C^M$-module $(V,\rhd)$ to the functor $\rho: M\sslash G\to\Vect_\C$ given on the objects by 
$\rho(m)=(1\oo \delta_m)\rhd V$ and on a morphism $g: m\to g\rhd m$  by $\rho(g)=(g\oo \delta_m)\rhd-:\rho(m)\to \rho(g\rhd m)$,
\item  an intertwiner $\phi: V\to V'$ to the natural transformation $\mu:\rho\Rightarrow\rho'$ whose components are the (co)restrictions  of $\phi$ to the subspaces $V_m=(1\oo\delta_m)\rhd V$ and $V'_m=(1\oo\delta_m)\rhd' V'$.
\end{compactitem}
\end{proof}

To construct representations of an action groupoid $M\sslash G$ it is then sufficient to construct representations of the smash product $\C[G]\ltimes\C^M$. The construction of irreducible representations of such smash products   is   analogous to the construction of irreducible representations of the Drinfeld double $D(G)$  by Gould \cite{G}. 

Consider a $G$-orbit $\mathcal O$ on $M$ and a representative $m_0\in \mathcal O$ with stabiliser $N=\mathrm{Stab}(m_0)$. Fix for each element $m\in \mathcal O$ an element $h_m\in G$ with $h_m\rhd m_0=m$ such that $h_{m_0}=1$. For each representation $\sigma: N\to \mathrm{Aut}_\C(V)$ consider the induced representation of $G$ on $\C[G]\oo_{\C[N]} V$. For any basis $B$ of $V$ the elements  $h_m\oo v$ for  $m\in \mathcal O$ and $v\in B$ form  a  basis  of $\C[G]\oo_{\C[N]}V$, and its 
 $\C[G]\ltimes \C^M$-module structure  is  given by
\begin{align}\label{eq:modsmash}
(g\oo \delta_n)\rhd (h_m\oo v)=\delta_n(m)\;h_{g\rhd m}\oo \sigma(h_{g\rhd m}^\inv g h_m)v. 
\end{align}
It is easy to see that this module is irreducible if and only if $\sigma$ is irreducible.  Thus, $G$-orbits on $M$ and irreducible representations of their stabilisers define  irreducible representations of $\C[G]\ltimes\C^M$. 
By combining this result with  Lemma \ref{lem:smashrep} one obtains functors $\rho: M\sslash G\to \Vect_\C$.

\begin{corollary}\label{cor:actact} Let $\rhd: G\times M\to M$ be an action of a finite group $G$ on a finite set $M$. Let $\mathcal O$ a $G$-orbit with a representative $m_0\in \mathcal O$ and $h_m\in G$    with $h_m\rhd m_0=m$ for each $m\in \mathcal O$ such that $h_{m_0}=1$. 
Then  any  representation $\sigma:\mathrm{Stab}(m_0)\to \mathrm{Aut}_\C(V)$ defines a functor $\rho_\sigma: {M\sslash G}\to \Vect_\C$ with
\begin{compactitem}
\item $\rho_\sigma(m)=V$ for each $m\in \mathcal O$ and $\rho_\sigma(m)=0$ for $m\notin\mathcal O$,
\item $\rho_\sigma(g)(v)=\sigma(h^\inv_{g\rhd m}gh_m)v$ for a morphism $g: m\to g\rhd m$, $m\in \mathcal O$, and $\rho(g)=\id_0$ if $m\notin \mathcal O$.
\end{compactitem}
\end{corollary}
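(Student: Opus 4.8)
The plan is to deduce Corollary~\ref{cor:actact} by combining Lemma~\ref{lem:smashrep} with the explicit module structure in \eqref{eq:modsmash}. First I would observe that \eqref{eq:modsmash} defines a genuine $\C[G]\ltimes\C^M$-module structure on the induced representation $W=\C[G]\oo_{\C[N]} V$, where $N=\mathrm{Stab}(m_0)$; this is a routine verification using the multiplication rule \eqref{eq:smash} and the fact that $\C^M$ acts by the idempotents $\delta_n$. The elements $h_m\oo v$ for $m\in\mathcal O$, $v\in B$ form a basis of $W$ because $\{h_m\}_{m\in\mathcal O}$ is a system of coset representatives for $G/N$. I would then apply the functor $G:\C[G]\ltimes\C^M\mathrm{-Mod}\to\Vect_\C^{M\sslash G}$ from Lemma~\ref{lem:smashrep} to this module and identify the resulting functor with $\rho_\sigma$.

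Concretely, the second step is to compute the value of $G(W)$ on objects and morphisms. On objects, $G(W)(m)=(1\oo\delta_m)\rhd W$, and from \eqref{eq:modsmash} with $g=1$ this subspace is spanned by $h_m\oo v$ for $v\in B$ when $m\in\mathcal O$ and is zero otherwise; hence $G(W)(m)\cong V$ for $m\in\mathcal O$ (via $v\mapsto h_m\oo v$) and $G(W)(m)=0$ for $m\notin\mathcal O$. On a morphism $g\colon m\to g\rhd m$ with $m\in\mathcal O$, Lemma~\ref{lem:smashrep} gives $G(W)(g)=(g\oo\delta_m)\rhd-$, and plugging into \eqref{eq:modsmash} yields $(g\oo\delta_m)\rhd(h_m\oo v)=h_{g\rhd m}\oo\sigma(h_{g\rhd m}^\inv g h_m)v$. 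Under the identifications $G(W)(m)\cong V$ and $G(W)(g\rhd m)\cong V$ just described, this is exactly $v\mapsto\sigma(h_{g\rhd m}^\inv g h_m)v$, matching the formula claimed for $\rho_\sigma(g)$. Thus $\rho_\sigma:=G(W)$ is the desired functor, and it automatically satisfies the functoriality and the conditions \eqref{eq:funcmapcomp}-style compatibilities since it arises as the image of a module under the functor $G$.

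I expect the only genuinely delicate point to be bookkeeping the basepoint normalisation $h_{m_0}=1$ and checking that the isomorphism $G(W)(m)\cong V$ is the intended one, so that the matrix $\sigma(h_{g\rhd m}^\inv g h_m)$ comes out with the correct arguments rather than an inverse or a conjugate. This is where sign/inverse conventions in the induced module \eqref{eq:modsmash} must be tracked carefully against the action-groupoid conventions $g\colon m\to g\rhd m$ fixed in Section~\ref{sec:groupoidbasic}. Everything else is a direct substitution, so I would present the argument as: (1) cite Lemma~\ref{lem:smashrep} to reduce to building a $\C[G]\ltimes\C^M$-module; (2) recall that induction from an irreducible $\sigma$ on $\mathrm{Stab}(m_0)$ yields the module \eqref{eq:modsmash}; (3) apply $G$ and read off the stated formulas. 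If desired, I would also remark that irreducibility of $\rho_\sigma$ is equivalent to irreducibility of $\sigma$, as noted just before the corollary, though this is not part of the statement to be proved.
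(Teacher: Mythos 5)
Your proposal is correct and follows exactly the route the paper intends: the corollary is stated there as an immediate consequence of the induced $\C[G]\ltimes\C^M$-module structure \eqref{eq:modsmash} combined with the equivalence of Lemma \ref{lem:smashrep}, which is precisely your steps (1)--(3), and your computation of $G(W)$ on objects and morphisms under the identification $v\mapsto h_m\oo v$ correctly reproduces the stated formulas, with functoriality inherited from the module structure. The only cosmetic imprecision is your parenthetical restriction to \emph{irreducible} $\sigma$ in step (2): the construction \eqref{eq:modsmash} and the corollary hold for arbitrary $\sigma$, irreducibility being relevant only to irreducibility of the resulting module, as you yourself note at the end.
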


\begin{example} \label{ex:actorbb}$\quad$ 
\begin{compactenum}
\item  Let $G\sslash G$ the action groupoid for the conjugation action of a finite group $G$ on itself. Then the smash product \eqref{eq:smash} is the (untwisted) Drinfeld double $D(G)$ and the irreducible representations in \eqref{eq:modsmash} coincide with the irreducible representations of $D(G)$ constructed in \cite{G}, {see  \cite{W}.}

\item  Consider a $G$-set $M$ and the $G$-set $M\times M$ with the diagonal action. The $G$-orbits $\mathcal O_m=G\rhd(m,m)$ for $m\in M$ with the trivial stabiliser representations   yield the transparent representation from \eqref{eq:transprep}.
\end{compactenum}
\end{example}

\subsection{Defect surfaces and generalised quantum double models}
\label{subse:qudoublemods}

In this section, we show that the vector space a defect TQFT assigns to a closed surface $\Sigma$ can be viewed as a generalisation of the ground state of Kitaev's quantum double model for a finite group $G$.  This model was introduced in \cite{K} as a realistic model for a topological quantum computer.

 It takes  as algebraic data a 
finite group $G$.  Its topological data is a directed  graph $\Gamma \subset \Sigma$ embedded into a closed oriented  surface  $\Sigma$ such that $\Sigma\setminus \Gamma$ is a disjoint union of discs. For simplicity, we take $\Sigma$ to be connected.
 In our setting, $\Gamma$ is the dual graph of a fine stratification of  $\Sigma$. We denote by 
$V$, $E$ and $P$ the sets of vertices, edges and plaquettes of $\Gamma$.
We say that a plaquette is $n$-valent if its dual vertex has $n$ incident edge ends.

The quantum double model associates to this data the complex vector space $\mathcal H=\otimes_{e\in E} \C[G]=\C[G^{\times E}]$, together with two families of linear endomorphisms of 
$\mathcal H$. The first family is associated with vertices $v$ of $\Sigma$ and indexed by group elements $g\in G$. It describes the action of graph gauge transformations at $v$
$$
A^g_v:\mathcal H\to\mathcal H,\quad h_e\mapsto g_{t(e)}\cdot h_e\cdot g_{s(e)}^\inv, $$
where $h_e$ denotes the group element assigned to an edge $e\in E$,  $g_v=g$ and $g_w=1$ for $w\neq v$.
The second family of endomorphisms is associated with  plaquettes $p\in P$ together with a choice of a {vertex $v_p$}  in $p$  and also indexed by  elements $g\in G$. It  assigns to an element of $G^{\times E}$ the value 1 if the  ordered oriented product $h_p$ of group elements at the edges of $p$ starting at $v_p$ satisfies $h_p=g$ and $0$ otherwise
$$
B^g_p:\mathcal H\to\mathcal H, \quad x\mapsto \delta_g(h_p)\, x.
$$
For any site {$s=(v_p,p)$, a pair of a plaquette $p$ and a chosen vertex $v_p$ at $p$,}  the maps $A_v^g$ and $B^h_p$ for $g,h\in G$ form a representation of the Drinfeld double $D(G)$ on $\mathcal H$. In terms of the parametrisation \eqref{eq:smash} it is given by
$$
(g\oo \delta_h)\rhd x=A_v^g B_p^h x.
$$
The ground state of the model corresponds to the trivial representation of $D(G)$ at each site of $\Gamma$. The  maps
$$
A_v=|G|^\inv \Sigma_{g\in G} A_v^g:\mathcal H\to\mathcal H\qquad B_p=B_p^1:\mathcal H\to\mathcal H
$$
for $v\in V$ and $p\in P$ are commuting projectors, and the  ground state of the quantum double model on $\Sigma$ is 
\begin{align}\label{eq:grdstate}
\mathcal H_{inv}=\left(\cap_{v\in V} \mathrm{im}(A_v)\right)\cap \left(\cap_{p\in P} \mathrm{im}(B_p)\right)\cong \langle \mathrm{Hom}(\pi_1(\Sigma),G)/G\rangle_\C.
\end{align}
It was shown by Balsam and Kirillov \cite{BK} in the more general setting of a finite-dimensional semisimple Hopf algebra $H$  that the ground state coincides with the vector space the  Turaev-Viro-Barrett-Westbury TQFT for $H\mathrm{-Mod}$ assigns to $\Sigma$. In our setting this is  the free vector space generated by conjugacy classes of group homomorphisms $\rho:\pi_1(\Sigma)\to G$, assigned to $\Sigma$ by the untwisted  Dijkgraaf-Witten TQFT for $G$.  

It is also shown in \cite{BK} that excitations in this model are given by irreducible representations of the Drinfeld double $D(H)$. In our situation this is the Drinfeld double $D(G)$ given by 
 \eqref{eq:smash} for the conjugation action.
Excitations are associated with sites  $s=(v,p)$ and form  irreducible representations of  $D(G)$  as in Corollary \ref{cor:actact} and Example \ref{ex:actorbb}, 1. They are  given by a conjugacy class $\mathcal C\subset G$ and a representation  of a stabiliser. 

 We will now show how the groupoid $\mathcal A\sslash \mathcal G$ of gauge configurations and gauge transformations on a stratified surface $\Sigma$, the associated functor $F_\Sigma: \mathcal A\sslash \mathcal G\to \Vect_\C$ from \eqref{eq:fsigmadef} and the  vector space 
$\mathcal Z(\Sigma)=\lim F_\Sigma$ generalise the quantum double model. More specifically, they define  a  quantum double model with excitations,  domain walls and  domain wall junctions that can be viewed as a concrete and accessible manifestation of the more general models considered by  Kitaev and Kong \cite{KK}.  
 
 We consider a fine stratification of $\Sigma$ whose dual is the directed graph $\Gamma$. Then the framing of the stratification assigns to each plaquette $p$ of $\Gamma$ a vertex $v_p$. 
By Definition \ref{def:defectdata} a  labelling with defect data assigns
\begin{compactitem}
\item a finite group $G_v$ to each vertex $v$ of $\Gamma$,
\item a finite $G_{t(e)}\times G_{s(e)}^{op}$-set $M_e$ to each edge $e$ of $\Gamma$,
\item a functor $\rho_p: \Pi_{e\in p} M_e\sslash  \Pi_{v\in p} G_v\to \Vect_\C$ to each plaquette $p$ of $\Gamma$. 
\end{compactitem}
Here,  the products are taken over all  edges and vertices in $p$, taking into account their multiplicities. A set $M_e$ or a group $G_v$ occurs as often as the path along the boundary of $p$ traverses $e$ and $v$.

By Corollary \ref{cor:redggrpd} the  groupoid of gauge configurations and gauge transformations  is equivalent to  the groupoid of  graph gauge configurations and graph gauge transformations on $\Gamma$
$$
\mathcal A\sslash \mathcal G=\left(\Pi_{e\in E} M_e\right)\sslash  \left(\Pi_{v\in V} G_v\right).
$$
\begin{compactitem}
\item A  graph gauge configuration  assigns  an element $m_e\in M_e$ to each edge $e$ of $\Gamma$.
\item A graph gauge transformation assigns an element $g_v\in G_v$ to each vertex $v$ of $\Gamma$ and acts  by
$$
m_e\mapsto g_{t(e)}\rhd m_e\lhd g_{s(e)}^\inv.
$$
\end{compactitem}
The  functor $F_\Sigma: \mathcal A\sslash \mathcal G\to \Vect_\C$ from \eqref{eq:fsigmadef} sends  a gauge configuration to the tensor product of all vector spaces  associated to the labelled plaquettes of $\Gamma$ and  a gauge transformation to a  linear map between them. The defect TQFT assigns to $\Sigma$ the vector space 
$$\mathcal Z(\Sigma)=\lim F_\Sigma.$$ 
To make contact with quantum double models, we consider the following specific cases of plaquettes, their  action groupoids $\mathcal A\sslash \mathcal G$ and the associated representations $\rho: \mathcal A\sslash \mathcal G\to \Vect_\C$:
\begin{compactitem}

\item  \textbf{flat bulk plaquette:} a plaquette with edges  labelled by a group $G$, where $\rho$ assigns the vector space $\C$ if the oriented ordered product of its group elements  is trivial  and else the null vector space. For instance, 
the flat plaquette in  \eqref{eq:flatpic} has the action groupoid $G^{\times 5}\sslash G^{\times 5}$ with
$$(v_1,\ldots, v_5)\rhd (g_1,\ldots g_5)=(v_1g_1v_2^\inv, v_2g_2v_3^\inv, v_4 g_3 v_3^\inv, v_4 g_4 v_5^\inv, v_1 g_5 v_5^\inv)$$
and  $\rho: G^{\times 5}\sslash G^{\times 5}\to \Vect_\C$ with $\rho(g_1,\ldots, g_5)=\C$ for $g_1g_2g_3^\inv g_4 g_5^\inv=1$ and $\rho(g_1,\ldots, g_5)=0$ else. 
\begin{align}\label{eq:flatpic}
\begin{tikzpicture}[scale=.6, baseline=(current  bounding  box.center)]
\draw[color=black,line width=1.5pt] (1,0)--(-1,0) node[sloped, pos=0.5, allow upside down]{\arrowOut}  node[pos=.5, anchor=north]{$g_5$};
\draw[color=black,line width=1.5pt] (1,0)--(2,1) node[sloped, pos=0.5, allow upside down]{\arrowOut}  node[pos=.5, anchor=north west]{$g_4$};
\draw[color=black,line width=1.5pt] (0,2)--(2,1) node[sloped, pos=0.5, allow upside down]{\arrowOut}  node[pos=.5, anchor=south west]{$g_3$};
\draw[color=black,line width=1.5pt] (0,2)--(-2,1) node[sloped, pos=0.5, allow upside down]{\arrowOut}  node[pos=.5, anchor=south east]{$g_2$};
\draw[color=black,line width=1.5pt] (-2,1)--(-1,0) node[sloped, pos=0.5, allow upside down]{\arrowOut}  node[pos=.5, anchor=north east]{$g_1$};
\draw[fill=black] (-1,0) circle (.1) node[anchor=north east]{$v_1$};
\draw[fill=black] (1,0) circle (.1) node[anchor=north west]{$v_5$};
\draw[fill=black] (2,1) circle (.1) node[anchor=west]{$v_4$};
\draw[fill=black] (-2,1) circle (.1) node[anchor=east]{$v_2$};
\draw[fill=black] (0,2) circle (.1) node[anchor=south]{$v_3$};
\end{tikzpicture}
\end{align}
\item  \textbf{plaquette with excitation:} a plaquette with edges  labelled by a group $G$ and  a representation $\rho$ as in Corollary \ref{cor:actact}, given by a  conjugacy class $\mathcal C$ in $G$  and a representation $\sigma$ of its stabiliser. The  representation is applied to the oriented ordered product $g_p$ of the group elements along $p$.
For instance, the plaquette in \eqref{eq:expic} has the action groupoid $G\sslash G$ for the conjugation action and $\rho: G\sslash G\to \Vect_\C$ given by  $\mathcal C\subset G$ and  $\sigma: \mathrm{Stab}(g_0)\to \mathrm{Aut}_\C(\C)$ for some $g_0\in \mathcal C$. 
\begin{align}\label{eq:expic}
\quad\nonumber\\[-15ex]
\begin{tikzpicture}[scale=.6]
\draw[line width=1.5pt] (0,0).. controls (5,-4) and (5,4) ..(0,0) node[sloped, pos=0.5, allow upside down]{\arrowOut}  node[pos=.5, anchor=west]{$\;g$};
\draw[fill=black] (0,0) circle (.1);
\draw[fill=red, color=red] (1.7,0) circle (.1) node[anchor=west]{$(\mathcal C,\sigma)$};
\end{tikzpicture}\nonumber\\[-15ex]
\end{align}

\item  \textbf{flat domain wall plaquette:} a bivalent plaquette with edges labelled by a $G\times H^{op}$-set $M$ 
and  with the transparent representation from Definition \ref{def:transpdef}. 
 The plaquette in \eqref{eq:domwall} has the groupoid $M\times M\sslash G\times H$ with $(g,h)\rhd (m,m')=(g\rhd m\lhd h^\inv, g\rhd m'\lhd h^\inv)$ and   $\rho: M\times M\sslash G\times H\to \Vect_\C$ with $\rho(m,m')=\C$ for $m=m'$ and $\rho(m,m')=0$ else. 
\begin{align}\label{eq:domwall}
\begin{tikzpicture}[scale=.6]
\draw[line width=1.5pt, color=blue] (0,0).. controls (1,1) and (2,1) ..(3,0) node[sloped, pos=0.7, allow upside down]{\arrowOut}  node[pos=.7, anchor=south west]{$m$};
\draw[line width=1.5pt, color=blue] (0,0).. controls (1,-1) and (2,-1) ..(3,0) node[sloped, pos=0.7, allow upside down]{\arrowOut}  node[pos=.7, anchor=north west]{$m'$};
\draw[fill=gray, color=gray] (0,0) circle (.1);
\draw[fill=black] (3,0) circle (.1);
\node at (-.1,0) [color=gray, anchor=east]{$h$};
\node at (3.1,0) [color=black, anchor=west]{$g$};
\draw[line width=1pt, dashed, color=blue] (1.5,-1.5)--(1.5,1.5) node[anchor=south]{$M$};
\node at (3,1.5)[anchor=south]{$G$};
\node at (0,1.5)[anchor=south, color=gray]{$H$};
\end{tikzpicture}
\end{align}

\item  \textbf{domain wall plaquette with domain wall excitation:} bivalent plaquette with edges  labelled by  a $G\times H^{op}$-set $M$ and  with a functor $\rho: M\times M\sslash G\times H\to \Vect_\C$ as in Corollary \ref{cor:actact}. The plaquette in \eqref{eq:domwallexc}  has  the action groupoid $M\times M\sslash G\times H$ for  $(g,h)\rhd (m,m')=(g\rhd m\lhd h^\inv, g\rhd m'\lhd h^\inv)$ and the functor $\rho: M\times M\sslash G\times H\to \Vect_\C$ given by a $G\times H$-orbit $\mathcal O\subset M\times M$ and a representation $\sigma: \mathrm{Stab}(m_0,m'_0)\to \mathrm{Aut}_\C(V)$ for an element $(m_0,m'_0)\in \mathcal O$. 
\begin{align}\label{eq:domwallexc}
\begin{tikzpicture}[scale=.6]
\draw[line width=1.5pt, color=blue] (0,0).. controls (1,1.5) and (3,1.5) ..(4,0) node[sloped, pos=0.7, allow upside down]{\arrowOut}  node[pos=.7, anchor=south west]{$m$};
\draw[line width=1.5pt, color=blue] (0,0).. controls (1,-1.5) and (3,-1.5) ..(4,0) node[sloped, pos=0.7, allow upside down]{\arrowOut}  node[pos=.7, anchor=north west]{$m'$};
\draw[fill=gray, color=gray] (0,0) circle (.1);
\draw[fill=black] (4,0) circle (.1);
\node at (-.1,0) [color=gray, anchor=east]{$h$};
\node at (4.1,0) [color=black, anchor=west]{$g$};
\draw[line width=1pt, dashed, color=blue] (2,-2)--(2,2) node[anchor=south]{$M$};
\node at (4,2)[anchor=south]{$G$};
\node at (0,2)[anchor=south, color=gray]{$H$};
\draw[fill=blue, color=blue] (2,0) circle (.12) node[anchor=west]{$(\mathcal O,\sigma)$};
\end{tikzpicture}
\end{align}

\item  \textbf{domain wall plaquette with bulk excitation:} they describe the fusing of bulk excitations into domain walls, which were investigated in \cite{KK} and \cite{FSV}. 
The plaquette in \eqref{eq:bulkexc} has  the action 
groupoid $G\times M\times M\sslash  G\times H$ with the group 
action $(g,h)\rhd (c,m,m')=(gcg^\inv, g\rhd m\lhd h^\inv, g\rhd m'\lhd h^\inv)$ and the functor $\rho: G\times M\times M\sslash  G\times H\to \Vect_\C$ given by a conjugacy class $\mathcal C\subset G$ and a representation $\sigma: \mathrm{Stab}(g_0)\to \mathrm{Aut}_\C(V)$ for $g_0\in \mathcal C$: 
\begin{align*}
\rho(c,m,m')&=\begin{cases} 
V & m'=c\rhd m, c\in \mathcal C\\
0 & \text{else}
\end{cases}\\
\rho(g,h)&=\begin{cases} \sigma(h_{gcg^\inv}^\inv g h_c) &  m'=c\rhd m, c\in \mathcal C\\
\id_0 & \text{else}
 \end{cases}\quad\text{for}\;(g,h): (c,m,m')\to (g,h)\rhd (c,m,m').
\end{align*}
Here,  $h_g\in G$ are fixed group elements with $h_g g_0 h_g^\inv=g$ for $g\in \mathcal C$ and $h_{g_0}=1$, as in Corollary \ref{cor:actact}.
\begin{align}\label{eq:bulkexc}
\begin{tikzpicture}[scale=.7]
\draw[line width=1.5pt, color=blue] (0,0).. controls (1,1.8) and (4,1.8) ..(5,0) node[sloped, pos=0.4, allow upside down]{\arrowOut}  node[pos=.4, anchor=south]{$m$};
\draw[line width=1.5pt, color=blue] (0,0).. controls (1,-1.8) and (4,-1.8) ..(5,0) node[sloped, pos=0.4, allow upside down]{\arrowOut}  node[pos=.4, anchor=north]{$m'$};
\draw[line width=1.5pt] (5,0).. controls (2,2) and (2,-2)..(5,0) node[sloped, pos=0.3, allow upside down]{\arrowOut}  node[pos=.3, anchor=south]{$c$};
\draw[fill=gray, color=gray] (0,0) circle (.1);
\draw[fill=black] (5,0) circle (.1);
\draw[fill=red, color=red] (3,0) circle (.1) node[anchor=west]{$(\mathcal C,\sigma)$};
\node at (-.1,0) [color=gray, anchor=east]{$h$};
\node at (5.1,0) [color=black, anchor=west]{$g$};
\draw[line width=1pt, dashed, color=blue] (2.5,-2)--(2.5,2) node[anchor=south]{$M$};
\node at (4,2)[anchor=south]{$G$};
\node at (1,2)[anchor=south, color=gray]{$H$};
\end{tikzpicture}
\end{align}

\item  \textbf{junction of domain walls:} a plaquette whose edges are labelled with elements of distinct sets with group actions and equipped with a representation of the associated action groupoid.  The action groupoid for the plaquette \eqref{eq:junc} is $(M\times N\times P\times Q)\sslash  (G\times H\times K\times L)$ with the group action
$$
(g,h,k,l)\rhd (m,n,p,q)=(g\rhd m\lhd l^\inv, h\rhd n\lhd g^\inv, h\rhd p\lhd k^\inv, l\rhd q\lhd k^\inv)
$$
and $\rho$ is a representation $\rho:(M\times N\times P\times Q)\sslash  (G\times H\times K\times L)\to \Vect_\C$. 
\begin{align}\label{eq:junc}
\begin{tikzpicture}[scale=.7]
\draw[color=blue, line width=1.5pt] (0,0)--(3,0) node[sloped, pos=0.3, allow upside down]{\arrowOut}  node[pos=.3, anchor=north]{$m$};
\draw[color=violet, line width=1.5pt] (3,0)--(3,3) node[sloped, pos=0.3, allow upside down]{\arrowOut}  node[pos=.3, anchor=west]{$n$};
\draw[color=cyan, line width=1.5pt] (0,3)--(0,0) node[sloped, pos=0.7, allow upside down]{\arrowOut}  node[pos=.7, anchor=east]{$q$};
\draw[color=red, line width=1.5pt] (0,3)--(3,3) node[sloped, pos=0.3, allow upside down]{\arrowOut}  node[pos=.3, anchor=south]{$p$};
\draw[color=blue, line width=1pt, dashed] (1.5,1.5)--(1.5,-.5) node[anchor=north]{$M$};
\draw[color=red, line width=1pt, dashed] (1.5,1.5)--(1.5,3.5) node[anchor=south]{$P$};
\draw[color=violet, line width=1pt, dashed] (1.5,1.5)--(3.5,1.5) node[anchor=west]{$N$};
\draw[color=cyan, line width=1pt, dashed] (1.5,1.5)--(-.5,1.5) node[anchor=east]{$Q$};
\draw[color=black, fill=black](1.5,1.5) circle (.1) node[anchor=south east]{$\rho$};
\draw[color=black, fill=black](3,3) circle (.1) node[anchor=south west]{$h$};
\node at (4.5,3.5) [anchor=south west]{$H$};
\draw[color=black, fill=black](3,0) circle (.1) node[anchor=north west]{$g$};
\node at (4.5,-.5) [anchor=north west]{$G$};
\draw[color=black, fill=black](0,3) circle (.1) node[anchor=south east]{$k$};
\node at (-1.5,3.5) [anchor=south east]{$K$};
\draw[color=black, fill=black](0,0) circle (.1) node[anchor=north east]{$l$};
\node at (-1.5,-.5) [anchor=north east]{$L$};
\end{tikzpicture}
\end{align}

\end{compactitem}

We now compute the vector space $\mathcal Z(\Sigma)$ of the defect TQFT for different stratifications of surfaces. The first example shows that for a surface with transparent defects the result is the ground state of the quantum double model or, equivalently, the vector space assigned by a Dijkgraaf-Witten TQFT.

\bigskip
\begin{example}\label{ex:groundstate}  \textbf{(Ground state of Kitaev's quantum double model)}

 \textbf{Stratification and defect data:}\\
Let $\Sigma$ be a surface with a fine stratification and dual graph $\Gamma$, such that 
\begin{compactitem}
\item every vertex $v$ of $\Gamma$ is labelled with the same finite group $G$,
\item every edge $e$ of $\Gamma$ is labelled with $G$ as an $G\times G^{op}$-set,
\item every $n$-valent plaquette $p$ of $\Gamma$ is labelled with  $\rho_p: G^{\times n}\sslash  G^{\times n}\to \Vect_\C$
that assigns the vector space $\C$ if the ordered oriented product of the group elements at edges of $p$ is trivial and else $0$.
\end{compactitem} 
 \textbf{Gauge configurations and gauge transformations:}\\
The groupoid of gauge configurations and of gauge transformations is
$$
\mathcal A\sslash \mathcal G=G^{\times E}\sslash G^{\times V}.
$$
\begin{compactitem}
\item A gauge configuration is an assignment of an element $g_e\in G$ to each edge $e$ of $\Gamma$.
\item A  gauge transformation is an assignment of an element $g_v\in G$ to each vertex $v$ of $\Gamma$ and acts by
$$
g_e\mapsto g_{t(e)}\cdot g_e\cdot g_{s(e)}^\inv.
$$ 
\end{compactitem}
 \textbf{Vector space for the defect surface:}\\
The functor $F_\Sigma:\mathcal A\sslash \mathcal G\to\Vect_\C$ assigns to a gauge configuration the  null vector space  unless
the ordered oriented product of the group elements vanishes 
at all plaquettes and in that case  the vector space $\C$. 
It follows that  $\mathcal Z(\Sigma)=\lim F_\Sigma$ is given by
 $$
\mathcal Z(\Sigma)= \langle \pi_0(\mathcal A\sslash \mathcal G)\rangle_\C= \langle \mathrm{Hom}(\pi_1(\Sigma), G)/G\rangle_\C.
$$
\end{example}

We now consider an example where  all edges of $\Gamma$ are labelled again by a single group $G$, but where some plaquettes are equipped with non-trivial representations of the associated action groupoids. This corresponds to quantum double models with excitations that are treated in \cite{BK} and \cite{KK}.

\bigskip
\begin{example}  \textbf{(Excitations in  quantum double models)}

 \textbf{Stratification:}\\
Let $\Sigma$ be an oriented surface of genus $g$ with  marked points $x_1,\ldots, x_n$ and $\Sigma^*=\Sigma\setminus\{x_1,\ldots, x_n\}$. Choose a directed graph $\Gamma$ consisting of loops $m_i$ 
that go counterclockwise around $x_i$  
and   $a$- and $b$-cycles  $a_1,b_1,\ldots, a_g,b_g$, all based at  $x\in \Sigma^*$, such that  
$$
\pi_1(\Sigma^*,x)=\langle a_1,b_1,\ldots, a_g,b_g, m_1,\ldots, m_n\mid m_1\cdots m_n[a_1,b_1]\cdots[a_g,b_g]=1\rangle.
$$
Then $\Gamma$ has $n$  plaquettes $p_i$ bordered by the loops $m_i$ and a plaquette $p$ bordered by $m=m_1\cdots m_n[a_1,b_1]\cdots[a_g,b_g]$.

 \textbf{Defect data:}
We assign 
\begin{compactitem}
\item to each edge $m_1,\ldots, m_n, a_1,\ldots, b_g$ a copy of the same finite group $G$,
\item to the plaquette $p_i$ a representation $\rho_i: G\sslash G\to \mathrm{Vect}_\C$  as in Corollary \ref{cor:actact},  given by 
a conjugacy class $\mathcal C_i\subset G$ and a representation $\sigma_i: N_i\to \mathrm{Aut}_\C(V_i)$  of  $N_i=\mathrm{Stab}(g_i)$ for some $g_i\in \mathcal C_i$,
\item to the plaquette $p$ the representation $\rho: G\sslash G\to \mathrm{Vect}_\C$ given by the conjugacy class $\{1\}\subset G$ and the trivial representation $\sigma: G\to \mathrm{Aut}_\C(\C)$.
\end{compactitem}
 \textbf{Gauge configurations and gauge transformations:}\\
The groupoid of reduced gauge configurations and gauge transformations is given as follows:
\begin{compactitem}
\item  A  gauge configuration is an assignment  of an element of $G$ to each loop $m_1,\ldots, m_n,a_1,\ldots, b_g$.
\item The group of  gauge transformations is $G$ and acts  by simultaneous conjugation. 
\end{compactitem}
 \textbf{Vector space of the defect surface:}\\
The  functor $F_\Sigma: \mathcal A^\Sigma\sslash \mathcal G^\Sigma\to \Vect_\C$ assigns to a gauge configuration $A$ the null vector space, unless the group elements for  $m_i$ and $m$ are all in  the assigned conjugacy classes. In this case, one has  $F_\Sigma(A)=V_1\oo \ldots\oo V_n$.

Thus, gauge configurations that contribute to $\lim F_\Sigma$ are in bijection with elements of the set
 $\mathrm{Hom}_*(\pi_1(\Sigma^*,x), G)$  of group homomorphisms $f:\pi_1(\Sigma^*,x)\to G$ with $f(m_i)\in \mathcal C_i$.
With formula  \eqref{eq:limexp} we obtain
\begin{align*}
\mathcal Z(\Sigma)=\lim F_\Sigma&=\!\!\!\!\!\!\!\!\!\!\!\!\!\bigoplus_{A\in \mathcal R(\mathrm{Hom}_*(\pi_1(\Sigma^*,x), G))}\!\!\! \!\!\!\!\!\!\!\!\!\!(V_1\oo\ldots\oo V_n)^{\mathrm{Aut}(A)}
\cong(V_1\oo \ldots\oo V_n)\oo_{\C[G]} \langle  \mathrm{Hom}_*(\pi_1(\Sigma^*,x), G)\rangle_\C,
\end{align*}
where the direct sum runs over a set of representatives of the conjugacy classes in $\mathrm{Hom}_*(\pi_1(\Sigma,x), G)$ and the actions of $\C[G]$ are induced by the conjugation action of $G$ and  by Corollary \ref{cor:actact}. This coincides with the result of \cite{BK} and \cite{KK} if one restricts their results to representation categories of finite groups.
\end{example}

In the next example we compute the vector space $\mathcal Z(\Sigma)$ in the presence of a domain wall and a domain wall excitation. Here, it is more efficient to  work with a   coherent basepoint set.

\bigskip
\begin{example}  \textbf{(Domain walls and excitations on domain walls)}

 \textbf{Stratification:}\\
Let $\Sigma$ be a surface of genus $g$, $e$ an oriented separating curve on $\Sigma$  such that $\Sigma\setminus e=\Omega_1\amalg \Omega_2$ and $v$ a vertex on $e$, as in Figure \ref{fig:separating}. 
Consider the  stratification  of $\Sigma$ with
\begin{compactitem}
\item 2-strata $t_i=\Omega_i$ with    $\hat t_i=\overline \Omega_i$,
\item 1-stratum $s=e\setminus v$ with   $\hat s= [0,1]$,
\item 0-stratum $v$ with $\hat v=\{v\}$, 
\end{compactitem}
that corresponds to the graded graph
\begin{align*}
\begin{tikzpicture}[scale=.6]
\draw[fill=black](0,0) circle (.1);
\node at (0,-.1) [anchor=north]{$v$};
\draw[fill=black](0,2) circle (.1);
\node at (0,2.1) [anchor=south]{$s$};
\draw[fill=black](1,3) circle (.1);
\node at (1.1, 3)[anchor=west]{$t_2$};
\draw[fill=black](-1,3) circle (.1);
\node at (-1.1, 3)[anchor=east]{$t_1$};
\draw[line width=1pt] (0,0).. controls (1,.5) and (1,1.5).. (0,2) node[sloped, pos=0.5, allow upside down]{\arrowOut} node[pos=.5, anchor=west]{$\iota^1_v$};
\draw[line width=1pt] (0,0).. controls (-1,.5) and (-1,1.5).. (0,2) node[sloped, pos=0.5, allow upside down]{\arrowOut} node[pos=.5, anchor=east]{$\iota^0_v$};
\draw[line width=1pt] (0,2)--(1,3) node[sloped, pos=0.5, allow upside down]{\arrowOut} node[pos=.5, anchor=north west]{$\iota_2$};
\draw[line width=1pt] (0,2)--(-1,3) node[sloped, pos=0.5, allow upside down]{\arrowOut} node[pos=.5, anchor=north east]{$\iota_1$};
\end{tikzpicture}
\end{align*}
The set $\{v\}$ defines a coherent basepoint set.  The maps $\iota^i_{v}:\hat v\to \hat s$, $v\mapsto i$   send  $v$ to the endpoints  of $[0,1]$.
The maps $\iota_i: \hat s\to \hat t_i$  from  \eqref{eq:commtop}  send both endpoints of $[0,1]$ to $v\in \overline \Omega_i$. 

 \textbf{Defect data:}\\
Orient the normal of $e$ such that it points towards $\Omega_1$ and  assign:
\begin{compactitem}
\item to the 2-strata $t_1$, $t_2$  finite groups $G_1$, $G_2$,
\item to the 1-stratum $s$  a finite $G_1\times G_2^{op}$-set $M$,
\item to the vertex $v$  a representation $\rho: M\times M\sslash G_1\times G_2\to \Vect_\C$ given by a $G_1\times G_2$-orbit $\mathcal O\subset M\times M$ and a representation $\sigma: N\to \mathrm{Aut}_\C(V)$ of a stabiliser.
\end{compactitem} 
 \textbf{Gauge configurations and gauge transformations:}\\
 The groupoid of reduced gauge configurations and gauge transformations on $\Sigma$ is given as follows:
 \begin{compactitem}
 \item a gauge configuration  by group homomorphisms $\rho_i: \pi_1(\overline \Omega_i, v)\to G_i$ and  $m_s, m_t\in M$ such that
 \begin{align}\label{eq:mtcond}
 m_t=\rho_1(e)\rhd m_s\lhd \rho_2(e)^\inv,
 \end{align}
 \item a  gauge transformation  by a pair $(g_1,g_2)\in G_1\times G_2$  acting on a gauge configuration as
 \begin{align}\label{gtrex}
 \rho_i\mapsto g_i\cdot \rho_i\cdot g_i^\inv,\qquad m_s\mapsto g_1\rhd m_s\lhd g_2^\inv, \qquad m_t\mapsto g_1\rhd m_t\lhd g_2^\inv.
 \end{align}
 \end{compactitem}
 As $m_t$ is determined by $m_s$ and $\rho_1,\rho_2$ via \eqref{eq:mtcond}, we can  omit $m_t$ and set $m=m_s$.
  
   \textbf{Vector space of the defect surface:}\\
  The functor $F_\Sigma:\mathcal A^\Sigma\sslash \mathcal G^\Sigma\to \Vect_\C$ assigns to a gauge configuration $A=(\rho_1,\rho_2, m)$ the vector space
$V$ if $m\in M_{\rho_1,\rho_2}$ and else the null vector space, where
$$M_{\rho_1,\rho_2}=\{m\in M\mid (m,\rho_1(e)\rhd m\lhd \rho_2(e)^\inv)\in \mathcal O\}.$$  
We denote by 
$\mathcal R$  a set of representatives of the orbits of the $G_1\times G_2$-action \eqref{gtrex} on the set of triples $(\rho_1,\rho_2, m)$ with $m\in M_{(\rho_1,\rho_2)}$. 
Then 
Formula
 \eqref{eq:limexp}  yields the  vector space
 \begin{align*}
 \mathcal Z(\Sigma)=\lim F_\Sigma=\!\!\!\!\bigoplus_{(\rho_1,\rho_2,m)\in \mathcal R} \!\!\!\!V^{\mathrm{Aut}(\rho_1,\rho_2, m)}.
 \end{align*}
 
 \textbf{Transparent point defect:}\\
If $\rho: M\times M\sslash G\times H\to\Vect_\C$ is the transparent representation with $\rho(m,m)=\C$ and $\rho(m,m')=0$ for $m\neq m'$, then $M_{\rho_1,\rho_2}=M^{\pi_1(e,v)}$ is the fixed point set of the $\Z=\pi_1(e, v)$-action    
 $$\rhd: \Z\times M\to M, \qquad n\rhd m=\rho_1(e)^n\rhd m\lhd \rho_2(e)^{-n}.$$ 
Denoting by $\mathcal A^*$ the the set of triples $(\rho_1,\rho_2, m)$ of  group homomorphisms $\rho_i:\pi_1(\overline \Omega_i,v)\to G_i$  and a fixed point $m\in M^{\pi_1(e,v)}$ and by 
$\sim$ the equivalence relation  given by \eqref{gtrex}, we obtain  
  \begin{align*}
 \mathcal Z(\Sigma)=\lim F_\Sigma=\langle \mathcal A^*/\sim\rangle_\C.
 \end{align*}
   \textbf{Transparent point and line defect:}\\
 For  $G_1=G_2=M=G$  the condition $\rho_1(e)\rhd m\lhd \rho_2(e)^\inv=m$ states that  $\rho_1(e), \rho_2(e)\in G$ are conjugate. By applying the gauge transformations \eqref{gtrex} one can achieve $\rho_1(e)=\rho_2(e)$ and  combine  $\rho_1, \rho_2$ into a  group homomorphism $\rho:\pi_1(\Sigma,v)\to G$. 
 Then  $\mathcal Z(\Sigma)$ is the  ground state from Example \ref{ex:groundstate}.
 
  \textbf{Impermeable line defect:}\\ If $M=\bullet$  and  $v$ carries the transparent representation, then 
  gauge configurations are given by group homomorphisms $\rho_i: \pi_1(\overline \Omega_i,v)\to G_i$ for $i=1,2$ and gauge transformations act  as in \eqref{gtrex}. This yields
 $$
 \mathcal Z(\Sigma)=\lim F_\Sigma=\langle \Hom(\pi_1(\overline \Omega_1, v), G_1)/G_1\rangle_\C\otimes \langle \Hom(\pi_1(\overline \Omega_2,v), G_2)/G_2\rangle_\C.
 $$
\end{example}

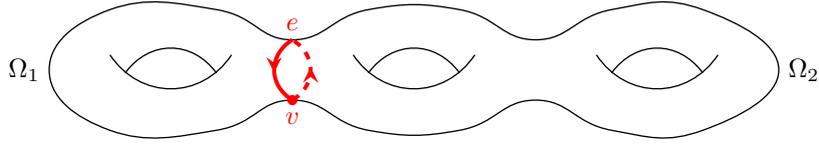
\begin{figure}
\begin{align*}
\begin{tikzpicture}[scale=.4]
\draw[line width=.5pt, color=black] plot [smooth cycle, tension=.8] coordinates 
      {(0,0) (2,2) (5.5,2) (8,1) (10.5,2) (13.5,2) (16,1)  (18.5,2) (21.5,2) (24,0)
(21.5,-2) (18.5,-2) (16,-1)(13.5,-2) (10.5,-2) (8,-1)    (5.5,-2) (2,-2)};
      \draw[line width=.5pt, ] (2,.5)..controls (3,-1) and (5,-1)  ..(6,.5);
      \draw[line width=.5pt, ] (2.5,-.1)..controls (3.3,1) and (4.7,1)  ..(5.5,-.1);
            \draw[line width=.5pt] (10,.5)..controls (11,-1) and (13,-1)  ..(14,.5);
      \draw[line width=.5pt] (10.5,-.1)..controls (11.3,1) and (12.7,1)  ..(13.5,-.1);
        \draw[line width=.5pt] (18,.5)..controls (19,-1) and (21,-1)  ..(22,.5);
      \draw[line width=.5pt] (18.5,-.1)..controls (19.3,1) and (20.7,1)  ..(21.5,-.1);
      \node at (24,0) [anchor=west]{$\Omega_2$};
       \node at (0,0) [anchor=east]{$\Omega_1$};
      \draw[ line width=1.5pt, color=red] (8,1).. controls (7.2,.5) and (7.2,-.5) ..(8,-1) node[sloped, pos=0.5, allow upside down]{\arrowOut};
      \draw[style=dashed, line width=1.5pt, color=red] (8,-1).. controls (8.8,-.5) and (8.8,.5) ..(8,1) node[sloped, pos=0.5, allow upside down]{\arrowOut};
      \node at (8,1)[color=red, anchor=south]{$e$};  
      \draw[color=red, fill=red] (8,-1) circle (.15) ;
      \node at (8,-1.1)[anchor=north, color=red]{$v$};
\end{tikzpicture}
\end{align*}
\caption{Separating curve $e$ on a surface $\Sigma$ with $\Sigma\setminus e=\Omega_1\amalg \Omega_2$ and a vertex $v$ on $e$.}
\label{fig:separating}
\end{figure}

\subsection{Defect cobordisms}
\label{subsec:defcobs}

In this section we compute examples of  the linear maps associated with defect cobordisms $\Sigma_0\xrightarrow{\iota_0} M\xleftarrow{\iota_1}\Sigma_1$. The simplest case is {a closed 3-manifold $M$} with $\Sigma_0=\Sigma_1=\emptyset$. In this case one has $\mathcal Z(\Sigma_0)=\mathcal Z(\Sigma_1)=\C$ and  the linear map $\mathcal Z(M):\C\to \C$ is a complex number that depends on the manifold $M$ as well as the defect data of the defect strata inside it.

\begin{example}  \textbf{(Closed 3-manifolds with defects)}

Let $M$ be closed stratified 3-manifold, viewed as a defect cobordism. 
Then  $\mathcal A^{\partial M}\sslash \mathcal G^{\partial M}=\mathcal A^\emptyset\sslash \mathcal G^\emptyset=\bullet$ is the terminal groupoid, and the
 functor from \eqref{eq:fsigmadef}
for the boundary   is  $F_{\partial M}=F_\emptyset=\C:\bullet \to \C$.  
  
  The  natural transformation $\mu^M:\C\Rightarrow \C$ 
  from \eqref{eq:natdef} and Proposition \ref{prop:deffunc} is  a map $\mu^M:\mathcal A^M\to \C$, $A\mapsto \mu^M_A$ that assigns to every gauge configuration a number in $\C$ and is constant on the gauge equivalence classes.
The functor $\mathcal Z:\mathrm{Cob}^{\mathrm{def}}_3\to \Vect_\C$ from Theorem \ref{cor:limtot}   averages $\mu^M$ over the gauge equivalence classes of gauge configurations on $M$.
Proposition \ref{prop:lfunctor} and the orbit stabiliser theorem yield 
\begin{align}
\mathcal Z(M)=\!\!\!\!\!\!\!\!\sum_{A\in \mathcal R(\mathcal A^M\sslash \mathcal G^M)} \! \frac{\mu^M_A}{|\mathrm{Aut}(A)|}=|\mathcal G_M|^\inv\!\!\sum_{A\in \mathcal A^M} \mu^M_A.
\end{align}
If $M$ has only 2- and 3-strata, then  $\mu^M:\A^M\to \C$ is constant, and $\mathcal Z(M)$ counts the number of gauge equivalence classes of gauge configurations on $M$, weighted by the cardinality of their automorphism groups.
\end{example}

A specific example of a closed 3-manifold with only 2- and 3-strata is one obtained by gluing two handlebodies along a genus $g$-defect surface. This yields a  simple  expression for the associated manifold invariant. 

\bigskip
\begin{example}  \textbf{(Closed 3-manifold by gluing handlebodies)}

 \textbf{Stratification:}\\
Let $M$ be a 3-manifold obtained by gluing two handlebodies $H_1,H_2$ of genus $g$ along their boundaries $\partial H_1=\partial H_2=\Sigma$. 
This defines a stratification of $M$ with
\begin{compactitem}
\item  two 3-strata $t_i=\mathring H_i$  with associated spaces $\hat t_i=H_i$,
\item a single 2-stratum $s=\Sigma$ with $\hat s=s=\Sigma$,
\item  the maps $\iota_i: \hat s\to \hat t_i$ from \eqref{eq:commtop}  given by the inclusions $\iota_i:\Sigma\to H_i$.
\end{compactitem}
 Any point $x\in \Sigma$ defines a coherent basepoint set and group homomorphisms $\pi_1(\iota_i): \pi_1(\Sigma,x)\to \pi_1(H_i,x)$.

 \textbf{Defect data:}\\
Let the normal of $\Sigma$ point towards $H_1$ and label
\begin{compactitem} 
\item  the 3-strata $\mathring H_i$  with finite groups $G_i$,
\item  the  2-stratum $\Sigma$ with a finite $G_1\times G_2^{op}$-set $N$. 
\end{compactitem}
 \textbf{Gauge configurations and gauge transformations:}\\
The groupoid of reduced gauge configurations and transformations with respect to $\{x\}$ is given as follows:
\begin{compactitem}
\item A  gauge configuration  is a triple $(\rho_1,\rho_2, n)$ of
group homomorphisms $\rho_i:\pi_1(H_i,x)\to G_i$ and  a fixed point $n\in N$ under the group action
\begin{align}\label{eq:act}
\rhd: \pi_1(\Sigma,x)\times N\to N, \qquad (\lambda, n)\mapsto \rho_1\circ \pi_1(\iota_1)(\lambda)\rhd n\lhd \rho_2\circ \pi_1(\iota_2) (\lambda)^\inv. 
\end{align}
\item A  gauge transformation is an element $g=(g_1,g_2)\in G_1\times G_2$ and acts by
\begin{align}\label{eq:gactt}
\rho_i\mapsto g_i\cdot \rho_i\cdot g_i^\inv,\qquad n\mapsto g_1\rhd n\lhd g_2^\inv.
\end{align}
\end{compactitem}
 \textbf{Linear map for the defect cobordism:}\\
As  $\pi_1(H_i,x)\cong F_{g}$, conjugacy classes of group homomorphisms $\rho_i:\pi_1(H_i,x)\to G_i$ are simply conjugacy classes  $C_i\subset G_i^{\times g}$ under simultaneous conjugation. Denoting by $N^{(\rho_1,\rho_2)}$ the fixed point set of  \eqref{eq:act}, we obtain with
  Proposition \ref{prop:lfunctor} and the orbit stabiliser theorem
\begin{align}\label{eq:statsumsurf}
\mathcal Z(M)&=\sum_{ [(\rho_1,\rho_2,n)]}   | \mathrm{Aut}(\rho_1,\rho_2,n)|^\inv
=\frac 1 {|G_1|\cdot |G_2|}\sum_{\rho_1\in G_1^{\times g}}\sum_{\rho_2\in G_2^{\times g}} |N^{(\rho_1,\rho_2)}|.
\end{align}
 \textbf{Transparent defect surface:}\\
If $G_1=G_2=N=G$ with $g_1\rhd m\lhd g_2^\inv=g_1\cdot m\cdot g_2^\inv$, then  \eqref{eq:act} states that  $\rho_1$ and $\rho_2$  induce conjugate group homomorphisms $\rho_i\circ \pi_1(\iota_i):\pi_1(\Sigma,x)\to G$ and hence define a group homomorphism $\rho: \pi_1(M,x)\to G$. Formula \eqref{eq:statsumsurf} then reduces to the formula for untwisted Dijkgraaf-Witten theory without defects
$$
\mathcal Z(M)=|G|^\inv \cdot \left|\mathrm{Hom}(\pi_1(M),G)\right|.
$$
 \textbf{Impermeable defect surface:}\\
For the terminal set $N=\bullet$ formula \eqref{eq:statsumsurf} reduces to 
$
\mathcal Z(M)= |G_1|^{g-1}\cdot|G_2|^{g-1}=\mathcal Z(H_1)\cdot\mathcal Z(H_2)
$.
\end{example}

We now consider an example of a 3-manifold $X$ with a non-trivial boundary $\partial X$, viewed as a defect cobordism $\partial X\xrightarrow{\iota} X\leftarrow \emptyset$.
 In particular, it shows how defect lines on transparent defect planes are associated with representation of the Drinfeld double $D(G)$.

\bigskip
\begin{example}\label{ex:moebius}  \textbf{(Solid torus with two  defect planes and an isolated loop)}

We consider the defect cobordism $X=D^2\times S^1$ with $\partial X=S^1\times S^1$  
obtained by gluing the top and bottom face of the following cylinder along the identity map
\begin{align*}
\begin{tikzpicture}[scale=.6, baseline=(current  bounding  box.center)]
\draw[color=black, line width=1pt] (0,0) ellipse (2cm and 1cm);
\draw[color=red, line width=1pt] (-2,4)--(2,4);
\draw[color=red, line width=1pt] (-2,0)--(2,0);
\draw[color=red, draw=none,  fill=blue, fill opacity=.2] (0,4)--(2,4)--(2,0)--(0,0)--(0,4);
\draw[color=red, draw=none,  fill=green, fill opacity=.2] (2,4)--(-2,4)--(-2,0)--(2,0)--(2,4);
\node at (-1.2,4)[anchor=south, color=red]{$s_2$};
\node at (1.2,4)[anchor=south, color=red]{$s_1$};
\draw[color=red,->,>=stealth, line width=1pt] (1,4)--(.4, 3.6);
\draw[color=red,->,>=stealth, line width=1pt] (-1,4)--(-.4, 4.4);
\draw[color=blue, line width=2pt] (0,4)--(0,0)  node[sloped, pos=0.5, allow upside down]{\arrowOut}  node[pos=.5, anchor=west]{$r$};  
\draw[color=red, line width=1pt] (-2,4)--(-2,0);
\draw[color=red, line width=1pt] (2,4)--(2,0);
\node at (0, -1.5){$t_1$};
\node at (0, 5.5){$t_2$};
\draw[color=black, line width=1pt] (0,4) ellipse (2cm and 1cm);
\draw[color=black, fill=black] (0,4) circle (.15) node[anchor=south]{$q_3$};
\draw[color=black, fill=black] (0,0) circle (.15) node[anchor=north]{$q_3$};
\draw[color=red, fill=red] (2,4) circle (.15) node[anchor=south west]{$q_1$};
\draw[color=red, fill=red] (2,0) circle (.15) node[anchor=north west]{$q_1$};
\draw[color=red, fill=red] (-2,4) circle (.15) node[anchor=south east]{$q_2$};
\draw[color=red, fill=red] (-2,0) circle (.15) node[anchor=north east]{$q_2$};
\end{tikzpicture}
\end{align*}
 \textbf{Stratification:} The stratification has a single  1-stratum $r$, two 2-strata $s_1,s_2$ and two 3-strata $t_1,t_2$. Its boundary 
1-strata are $u_1=s_1\cap \partial X$, $u_2=s_2\cap \partial X$ and its boundary 2-strata $v_1=t_1\cap \partial X$, $v_2=t_2\cap \partial X$. 
The spaces associated to the strata and boundary strata of $X$ are given by
\begin{compactitem}
\item $r=\hat r=S^1$ is a circle, 
\item $\hat s_1=\hat s_2=[0,1]\times S^1$ are annuli,
\item $\hat t_1=\hat t_2=D^2_+\times S^1$, where $D_+^2$ is a closed half-disc,
\item $u_1=\hat u_1=u_2=\hat u_2=S^1$ are circles,
\item $\hat v_1=\hat v_2=[0,1]\times S^1$are annuli.
\end{compactitem}
The point $q_3\in r$ and  $q_1,q_2\in \partial X$ define a coherent basepoint set for $X$.

 \textbf{Classical defect data:} The strata of $X$ are labelled as follows
\begin{compactitem}
\item the 3-strata $t_1,t_2$ with finite groups $G_1$, $G_2$,
\item  the 2-strata $s_1$, $s_2$ with finite $G_1\times G_2^{op}$-sets $M_1$, $M_2$,
\item  the 1-stratum $r$ with a representation $\phi: M_1\times M_2\sslash G_1\times G_2\to \Vect_\C$.
\end{compactitem}

 \textbf{Gauge configurations and gauge transformations:}\\
A gauge configuration for the coherent basepoint set consists of
\begin{compactitem}
\item functors $\rho_j:\Pi_1(\hat t_j,\{q_1,q_2,q_3\})\to \bullet\sslash G_j$ for $j=1,2$,
\item elements $m_1\in M_1$, $m_2\in M_2$ for $q_1,q_2$ and $m'_1\in M_1$, $m'_2\in M_2$ for $q_3$ that satisfy 
\begin{align*}
&\rho_1(\sigma_1)\rhd m_1\lhd \rho_2(\sigma_1)^\inv=m_1, & &\rho_2(\sigma_2)\rhd m_2\lhd \rho_1(\sigma_2)^\inv=m_2,\\ 
&\rho_1(\tau_1)\rhd m_1\lhd \rho_2(\tau_1)^\inv=m'_1, & &\rho_2(\tau_2)\rhd m_2\lhd\rho_1(\tau_2)^\inv=m'_2,
\end{align*}
for all paths  $\sigma_j: q_j\to q_j$ and $\tau_j: q_j\to q_3$  in $\hat s_j$ for $j=1,2$.
\end{compactitem}
Gauge transformations are elements $(h_1^1, h_1^2, h^3_1, h_2^1, h_2^2,h^3_2)\in G^{\times 3}_1 \times G^{\times 3}_2$, where $h^j_1\in G_1$ and $h^j_2\in G_2$ are associated to $q_j$ for $j=1,2,3$. They  act by $\rho_i(\nu)\mapsto h_i^k\cdot \rho_i(\nu)\cdot(h_i^l)^\inv$ for all paths $\nu: q_l\to q_k$ in $\hat t_i$ and by
\begin{align*}
&m_1\mapsto h_1^1\rhd m_1\lhd (h_2^1)^\inv,\quad m_2\mapsto h_2^2\rhd m_2\lhd (h_1^2)^\inv,\quad m'_1\mapsto h_1^3\rhd m'_1\lhd (h_2^3)^\inv,\quad m'_2\mapsto h_2^3\rhd m'_2\lhd (h_1^3)^\inv.
\end{align*}
 \textbf{Reduced gauge groupoid:}\\
We fix paths $\tau_j:q_j\to q_3$  in $\hat s_j$ for $j=1,2$.  By applying gauge transformations $(h_1^1,1, 1, h_2^1,1, 1)$ at $q_1$ and $(1, h_1^2, 1,1, h_2^2, 1)$ at $q_2$ we can achieve $\rho_1(\tau_j)=\rho_2(\tau_j)=1$ for $j=1,2$, which implies $m'_1=m_1$ and $m'_2=m_2$. 

This yields a reduced gauge groupoid  $\A^X\sslash \G^X=\A^{\partial X}\sslash G^{\partial X}$, where the projection functor on the boundary gauge groupoid is the identity  $P_\partial=\id:\A^X\sslash \G^X\to \A^{\partial X}\sslash \G^{\partial X}$, and whose 
 \begin{compactitem}
 \item gauge configurations are quadruples $(g_1,g_2, m_1,m_2)$ with $g_j=\rho_j(\sigma_1)\in G_j$, $m_j\in M_j$ such that 
 \begin{align}\label{eq:redgcfg}
 g_1\rhd m_1\lhd g_2^\inv=m_1\qquad g_2\rhd m_2\lhd g_1^\inv=m_2,
 \end{align}
 \item gauge transformations are elements $(h_1,h_2)\in G_1\times G_2$ and act by
 \begin{align}\label{eq:redgtrafo}
 m_1\mapsto h_1\rhd m_1\lhd h_2^\inv,\qquad m_2\mapsto h_2\rhd m_2\lhd h_1^\inv,\qquad g_1\mapsto h_1 g_1 h_1^\inv,\qquad g_2\mapsto h_2g_2h_2^\inv.
 \end{align}
 \end{compactitem}

 \textbf{Boundary vector space:} \\
The functor $F_{\partial X}:\A^{\partial X}\sslash G^{\partial X}\to \Vect_\C$ from \eqref{eq:fsigmadef} is the constant functor $F_{\partial X}=\C$ that assigns to every boundary gauge configuration the vector space $\C$ and to every boundary gauge transformation  $\id_\C$.
The defect TQFT assigns to $\partial X$ the free complex vector space generated by  the path-components of  $\A^{\partial X}\sslash G^{\partial X}$ 
\begin{align*}
&\mathcal Z(\partial X)=\lim F_{\partial X}\\
&=\langle\, \{(g_1,g_2,m_1,m_2)\in G_1\times G_2\times M_1\times M_2\mid g_1\rhd m_1\lhd g^\inv_2=m_1, g_2\rhd m_2\lhd g^\inv_1=m_2 \}\,/\,G_1\times G_2\,\rangle_\C.
\end{align*}

 \textbf{Quantum defect data:}\\
We interpret $X$ as a cobordism $\partial X\xrightarrow{\iota} X\leftarrow \emptyset$. The 
natural transformation $\mu^X: F_{\partial X}=\C\Rightarrow \C$ from \eqref{eq:natdef}   is given by the representation $\phi: M_1\times M_2\sslash G_1\times G_2\to \Vect_\C$ for the isolated loop $r$ 
and has components 
 $$\mu^X_{(g_1,g_2,m_1,m_2)}=\tr( 
  \phi_{m_1,m_2}(g_1,g_2))\in \C,$$
 where 
 $\phi_{m_1,m_2}(g_1,g_2):\phi(m_1,m_2)\to \phi(m_1,m_2)$ 
 is the linear endomorphism assigned to the automorphism  $(g_1,g_2): (m_1,m_2)\to (m_1,m_2)$   in $M_1\times M_2\sslash G_1\times G_2$.
 The linear map $\mathcal Z(X): \mathcal Z(\partial X)\to \C$ for the cobordism $\partial X\xrightarrow{\iota} X\leftarrow \emptyset$ is the image of $\mu^X: \C\Rightarrow \C$ under the functor $L$ from Proposition \ref{prop:lfunctor} and given by
$$
\mathcal Z(X): \mathcal Z(\partial X)\to \C, \quad   [(g_1,g_2,m_1,m_2)]\mapsto   \mu^X_{(g_1,g_2,m_1,m_2)}.$$
 \textbf{Transparent defect line:}\\
If we choose  $M_1=M_2=M$ and the transparent representation $\phi: M\times M\sslash G_1\times G_2\to \Vect_\C$ with $\phi(m_1,m_2)=\delta_{m_1}(m_2) \C$, then we have $\mu^X_{(g_1,g_2, m_1, m_2)}=\delta_{m_1}(m_2)$ and
$$
\mathcal Z(X): \mathcal Z(\partial X)\to \C, \quad   [(g_1,g_2,m_1,m_2)]\mapsto  \delta_{m_1}(m_2).$$
 \textbf{Transparent defect surfaces:}\\
If we choose two transparent defect surfaces,  we have
 $G_1=G_2=M_1=M_2=G$.
 Condition \eqref{eq:redgcfg} states that 
 $$
 g_2=m_2\cdot g_1\cdot m_2^\inv\qquad g_1=(m_1m_2)\cdot g_1\cdot (m_1m_2)^\inv.
 $$
 By applying a gauge transformation $h_2\in G$, we can achieve $m_2=1$ and $g_2=g_1$. 
 By setting $a:=m_1\in G$ and $b:=g_1\in G$ we transform the second condition into $[a,b]=1$ and obtain the reduced gauge groupoid
$$\A^X\sslash G^X=\A^{\partial X}\sslash G^{\partial X}=\Hom(\Z\times \Z, G)/G.$$
 The  representation  $\phi: G\times G\sslash G\times G\to \Vect_\C$ is then given by a representation $\phi': G\sslash G\to \Vect_\C$  of the action groupoid for the conjugation action of $G$ on itself.  By Example \ref{ex:actorbb} this corresponds to a representation of the Drinfeld double $D(G)$. If we choose an irreducible representation as in Corollary \ref{cor:actact} and Example \ref{ex:actorbb}  given by a conjugacy class $\mathcal C\subset G$ and a representation $\sigma: N\to \Aut_\C(V)$ we obtain
$$
\mathcal Z(X): \mathcal Z(\partial X)\to \C, \quad   [(a,b)]\mapsto  \delta_{\mathcal C}(a)\, \tr \rho_\sigma(b).$$
In particular, a transparent defect line $r$ corresponds to $\mathcal C=\{1\}$ and the trivial representation $\sigma$. In this case, the linear map associated to $X$ coincides  with the one of  Dijkgraaf-Witten theory for $G$ without defects
$$
\mathcal Z(X)=\mathcal Z_{DW}: \mathcal Z(\partial X)\to \C, \quad   [(a,b)]\mapsto  \delta_{e}(a).$$
 \textbf{Impermeable defect surfaces:}\\
If we choose sets $M_1,M_2$ with the trivial $G_1\times G_2$-actions,  a representation   $\phi: M_1\times M_2\sslash G_1\times G_2\to \Vect_\C$ is simply an assignment of a complex vector space $\phi(m_1,m_2)$ to each element $(m_1,m_2)\in M_1\times M_2$. Condition \eqref{eq:redgcfg} is trivially satisfied and  the boundary vector space is given by the 
sets  $C_{G_i}$ of conjugacy classes in $G_i$
$$
\mathcal Z(\partial X)=\langle C_{G_1}\times C_{G_2}\times M_1\times M_2\rangle_\C.
$$
The linear map for the cobordism becomes
$$
\mathcal Z(X): \mathcal Z(\partial X)\to \C, \quad   (C_{g_1},C_{g_2},m_1,m_2)\mapsto \dim_\C \phi(m_1,m_2).
$$
\end{example}

\appendix
\section{Appendix}
\label{sec:appendix}

\subsection{Kan extensions along 
 discrete opfibrations }
 
For a graph $Q$ we denote by $\maq=L(Q)$ the  free category generated by $Q$ and for a graph map $f:Q\to Q'$ by $F=L(f):\maq\to\maq'$ the induced functor between free categories, as in Section \ref{subsec:graphs}.

Recall that for
an insertion $f:Q\to Q'$  the associated functor $F:\maq\to\maq'$ is a discrete opfibration by Lemma \ref{lem:I-faithfulandInsertive}. 
The left Kan extension $\Lan_F B:\maq'\to\mac$ of a functor $B:\maq\to \mac$ into a  cocomplete category $\mac$  is then given by  Perrone and Tholen \cite[Corollary 5.8]{PT}. We specialise \cite[Corollary 5.8]{PT},  which holds more generally for Kan extensions along 
opfibrations between small categories, to the case at hand and give a direct proof for the convenience of the reader.

\begin{lemma}\label{lem:kanweakinsertion} Let $Q,Q'$ be graded
graphs, $f:Q\to Q'$ an insertion and $\mac$ a cocomplete category. 

The left Kan extension of a functor $B:\maq\to \mac$ along $F:\maq\to\maq'$ is given on  $v' \in \Ob \maq'$ by 
\begin{align*}
\Lan_F B(v')=\textstyle \coprod_{v\in f^\inv(v')} B(v),
\end{align*}
and the  components of the natural transformation $\eta: B \Rightarrow (\Lan_F B) F$ are 
the inclusions for the coproduct. 
On morphisms $q':v'\to w'$ of $\maq'$  
the functor $\Lan_F B$ is given by the commuting diagram
\begin{align*}
\xymatrix@C=60pt{\coprod_{v\in f^\inv(v')} B(v) \ar[r]^{\Lan_FB(q')} & \coprod_{w\in f^\inv(w')} B(w)\\
B(v) \ar[u]^{\eta_{v}}\ar[r]_{B(q_{v})}& B(w), \ar[u]_{\eta_{w}}
}
\end{align*}
where    $q_{v}: v\to w$ is the unique morphism starting in $v\in f^\inv(v')$ with $F(q_{v})=q'$.
\end{lemma}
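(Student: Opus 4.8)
The plan is to verify directly the universal property of the left Kan extension, which also re-establishes existence since $\mac$ is cocomplete. Write $K\colon\maq'\to\mac$ for the assignment in the statement, so $K(v')=\coprod_{v\in f^\inv(v')}B(v)$ on objects and, for a morphism $q'\colon v'\to w'$, the morphism $K(q')$ is determined by the requirement $K(q')\circ\iota_v=\iota_w\circ B(q_v)$ for each $v\in f^\inv(v')$, where $q_v\colon v\to w$ is the unique lift of $q'$ starting at $v$ supplied by the discrete opfibration $F$ from Lemma \ref{lem:I-faithfulandInsertive}. The first step is to check that $K$ is a well-defined functor, and this is the only place where the opfibration property is genuinely used. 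The lift of $1_{v'}$ starting at $v$ must be $1_v$ by uniqueness, so $K(1_{v'})=\id_{K(v')}$; and for composable $q'\colon v'\to w'$ and $p'\colon w'\to u'$ the morphism $p_w\circ q_v$ (with $w=t(q_v)$) starts at $v$ and satisfies $F(p_w\circ q_v)=p'\circ q'$, hence by uniqueness of lifts $p_w\circ q_v=(p'q')_v$. Since $B$ is a functor this yields $K(p')\circ K(q')=K(p'q')$.

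Next I would define $\eta\colon B\Rightarrow KF$ by taking $\eta_v\colon B(v)\to K(F(v))=\coprod_{w\in f^\inv(f(v))}B(w)$ to be the coproduct inclusion $\iota_v$, and check naturality. For a morphism $q\colon v\to\tilde v$ in $\maq$ with image $q'=F(q)$, the morphism $q$ is itself the unique lift of $q'$ starting at $v$, so by the defining relation for $K$ we get $K(q')\circ\iota_v=\iota_{\tilde v}\circ B(q)$, which is precisely the naturality square for $\eta$ at $q$.

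Finally I would establish the universal property. Given any functor $G\colon\maq'\to\mac$ and a natural transformation $\theta\colon B\Rightarrow GF$, the condition $(\bar\theta F)\circ\eta=\theta$ forces $\bar\theta_{f(v)}\circ\iota_v=\theta_v$ for every vertex $v$; by the universal property of the coproduct there is therefore a unique family $\bar\theta_{v'}\colon K(v')\to G(v')$ with $\bar\theta_{v'}\circ\iota_v=\theta_v$ for all $v\in f^\inv(v')$. It then remains to show that $\bar\theta$ is natural. Precomposing the two legs of the naturality square for $q'\colon v'\to w'$ with a summand inclusion $\iota_v$ reduces the claim to $G(q')\circ\theta_v=\theta_w\circ B(q_v)$ with $w=t(q_v)$, which is exactly the naturality of $\theta$ applied to $q_v\colon v\to w$, using $F(q_v)=q'$. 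This identifies $(K,\eta)$ with $(\Lan_F B,\eta)$, and reading off $K$ on objects, $K$ on morphisms, and the components of $\eta$ gives the three assertions.

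The only real subtlety is the functoriality of the lifting operation $q'\mapsto q_v$ in the first step; everything afterwards is a formal consequence of the universal property of coproducts together with the naturality of $\theta$, so I do not expect a serious obstacle. An alternative route would start from the pointwise colimit formula $\Lan_F B(v')=\colim_{(v,g)\in F/v'}B(v)$ and show that the discrete subcategory $f^\inv(v')\hookrightarrow F/v'$ is cofinal: for each object $(v,g)$ of the comma category the unique lift of $g$ exhibits exactly one object in the relevant under-category, so it is nonempty and connected, which collapses the colimit to the coproduct. I prefer the universal-property verification, as it is fully self-contained and simultaneously pins down $K$ on morphisms.
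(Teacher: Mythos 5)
Your proof is correct and takes essentially the same route as the paper's: both verify the universal property directly, with uniqueness of the mediating transformation forced by the coproduct inclusions, and existence reduced, via the unique lifts $q_v$ supplied by the discrete opfibration, to the naturality of the given transformation $\theta$ (the paper's $\gamma$) in a diagram identical to yours. The only difference is that you additionally spell out the functoriality of the candidate functor and the naturality of $\eta$, steps the paper takes as read.
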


\begin{proof} 
To show that $(\Lan_F B, \eta)$ has the universal property of the left Kan extension, let $G:\maq'\to \mac$ be a functor and $\gamma: B\Rightarrow GF$ a natural transformation. Any natural transformation $\delta: \Lan_F B\Rightarrow G$ with $(\delta F)\circ \eta=\gamma$ is determined uniquely by that condition, as $\eta$ is given by the inclusions for the coproduct. To show existence, it is sufficient to verify the naturality condition, which holds, because the following diagram commutes for any morphism $q':v'\to w'$
$$
\xymatrix{
\coprod_{v\in f^\inv(v')}  B(v) \ar[rrr]^{\Lan_F B(q')} \ar[dd]_{\delta_{v'}}
& & &  \coprod_{w\in f^\inv(w')}  B(w)  \ar[dd]^{\delta_{w'}}\\
& B(v) \ar[lu]^{\eta_v}  \ar[r]^{B(q_v)}  \ar[ld]_{\gamma_v}   & B(w) \ar[ru]_{\eta_w} \ar[rd]^{\gamma_w}\\
G(v')=GF(v) \ar[rrr]_{G(q')} & & & GF(w)=G(w').
}
$$
The upper quadrilateral commutes by definition of $\Lan_F B$ on the morphisms,  the lower one by naturality of $\gamma$ and the two triangles due to the condition $(\delta F)\circ \eta=\gamma$.
\end{proof}

\subsection{ Results used in the proof of Theorem \ref{th:classcobfunc}}

In this section, we collect some results on ends and limits of functors into a bicomplete closed symmetric monoidal category $\V$ and a result on pushouts of such functors and natural transformations for the proof of Theorem \ref{th:classcobfunc}. The main case of interest is $\V=\Grpd$.
Throughout this section, let  $\maq$ be a small category and $\V$ a bicomplete closed symmetric  monoidal category. Note that this implies that $\CAT(\maq,\V)$ is bicomplete.

\begin{lemma}\label{lem:compat} For all functors $D: \maq \to \V$ the functor
$\HOM_\V( - ,D): \CAT(\maq ,\V)^{op} \to \CAT(\maq^\op \times \maq ,\V)$
preserves limits. 
\end{lemma}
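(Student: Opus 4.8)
The plan is to reduce the statement to the pointwise level in $\V$ and then invoke the closed monoidal structure. First I would recall that, since $\V$ is bicomplete, limits and colimits in the functor categories $\CAT(\maq,\V)$ and $\CAT(\maq^\op\times\maq,\V)$ are computed objectwise; in particular a cone is a limit cone precisely when it becomes one after evaluation at every object, and a natural transformation is an isomorphism precisely when it is a pointwise isomorphism. Writing the functor of interest in its contravariant form $\HOM_\V(-,D)\colon \CAT(\maq,\V)\to \CAT(\maq^\op\times\maq,\V)$, the assertion that it preserves limits as a functor out of $\CAT(\maq,\V)^{op}$ is exactly the assertion that it sends colimits in $\CAT(\maq,\V)$ to limits in $\CAT(\maq^\op\times\maq,\V)$. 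By the objectwise description of $(\mathrm{co})$limits it suffices to verify this after evaluating at each $(v,w)\in\maq^\op\times\maq$.

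The only substantive input is the pointwise fact that, for fixed $Y\in\V$, the contravariant internal hom $\HOM_\V(-,Y)\colon\V^\op\to\V$ sends colimits to limits. This is where closedness enters. For any $A\in\V$ and any colimit $\colim_i X_i$ in $\V$, the tensor--hom adjunction together with the fact that $A\oo-$ is a left adjoint, hence preserves colimits, yields natural isomorphisms
\begin{align*}
\hom_\V\big(A,\HOM_\V(\textstyle\colim_i X_i,Y)\big)
&\cong \hom_\V\big(A\oo\textstyle\colim_i X_i,\,Y\big)
\cong \hom_\V\big(\textstyle\colim_i(A\oo X_i),\,Y\big)\\
&\cong \textstyle\lim_i \hom_\V\big(A\oo X_i,\,Y\big)
\cong \hom_\V\big(A,\textstyle\lim_i \HOM_\V(X_i,Y)\big).
\end{align*}
By the Yoneda lemma this produces a natural isomorphism $\HOM_\V(\colim_i X_i,Y)\cong \lim_i\HOM_\V(X_i,Y)$, and unwinding the construction shows that it is the canonical comparison map, so $\HOM_\V(-,Y)$ indeed carries the colimit cone to a limit cone.

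Finally I would assemble the two ingredients. For fixed $(v,w)$ the evaluation functor $\ev_v\colon\CAT(\maq,\V)\to\V$, $T\mapsto T(v)$, preserves colimits because colimits in $\CAT(\maq,\V)$ are objectwise, and the value of our functor at $(v,w)$ is the composite $\HOM_\V(-,D(w))\circ\ev_v$. Hence at each $(v,w)$ it sends colimits in $\CAT(\maq,\V)$ to limits in $\V$. Since limits in $\CAT(\maq^\op\times\maq,\V)$ are detected and computed objectwise, the canonical comparison $\HOM_\V(\colim_j T_j,D)\to\lim_j\HOM_\V(T_j,D)$ is a pointwise isomorphism, hence an isomorphism identifying the two cones, which is exactly the claim. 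The main obstacle is purely one of bookkeeping: tracking the variance and checking that the pointwise isomorphisms genuinely assemble into an isomorphism of limit \emph{cones} rather than an abstract isomorphism of objects. This is routine because every functor and every $(\mathrm{co})$limit in sight is computed objectwise and all comparison maps are natural, so the genuinely nontrivial step remains the use of the closed structure in the second paragraph.
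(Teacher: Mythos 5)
Your proof is correct and follows essentially the same route as the paper: the paper's proof likewise reduces to the pointwise fact that $\HOM_\V(-,v)\colon \V^{op}\to\V$ preserves limits for each $v\in\Ob\V$ (which the paper cites without proof and you establish via the tensor--hom adjunction and Yoneda) combined with the observation that limits in $\CAT(\maq,\V)$ and $\CAT(\maq^{op}\times\maq,\V)$ are computed objectwise. Your write-up is simply a more detailed expansion of the same argument, including the assembly of the pointwise isomorphisms into an isomorphism of limit cones, which the paper leaves implicit.
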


\begin{proof}
This is a direct consequence of the fact that   $\HOM_V(-,v): \V^{op}\to \V$  preserves limits for all $v\in\Ob\V$.
As $\V$ is bicomplete and $\maq$ small, the categories $\CAT(\maq,\V)$ and $\CAT(\maq^{op}\times\maq,\V)\cong \CAT(\maq^{op},\CAT(\maq, \V))$ are bicomplete as well, and their limits are pointwise.
\end{proof}

\begin{lemma}\label{lem:enriched Kan prop}
Let $F: \maq' \to \maq$,   $T': \maq' \to\V$ and $D\colon \maq \to\V$ be functors. There is an isomorphism
$$\int_{x' \in \maq'}\HOM_\V\big (T'(x'), DF(x') \big)\cong\int_{x \in \maq}\HOM_\V\big (\Lan_F T'(x), D(x) \big),$$
which is natural in $T'$ and in $D$.
\end{lemma}

\begin{proof} For any object $v$ in $\V$, we  have  the following bijections, which are natural in $v$:
\begin{align*}
&\hom_\V\left(v, \int_{x' \in \maq'}\HOM_\V\big (T'(x'), DF(x') \big)\right)\cong 
 \int_{x' \in \maq'}\hom_\V\left(v,\HOM_\V\big (T'(x'),DF(x') \big)\right) \\
 &\cong \int_{x' \in \maq'}\hom_\V\big(v\otimes_\V T'(x'), DF(x') \big)
 \cong \int_{x' \in \maq'}\hom_\V\left( T'(x'),\HOM_\V\big(v, DF(x')\big) \right)
  \\
 &= \int_{x' \in \maq'}\hom_\V\left( T'(x'), F^* \HOM_\V\big(v,  D(-)\big) (x') \right)
 \cong \int_{x \in \maq}\hom_\V\left( \Lan_F T'(x),\HOM_\V\big(v,  D(x)\big) \right)
 \\
 &\cong \int_{x \in \maq}\hom_\V\left( v,\HOM_\V\big( \Lan_F T'(x),  D(x)\big) \right)
 \cong\hom_\V\left ( v ,\int_{x \in \maq}\HOM_\V\big( \Lan_F T'(x),  D(x)\big) \right).
\end{align*}
Here, the first identity follows from the fact that $\hom_\V(v,-)\colon \V \to \Set $ preserves limits, the second and third from the fact that $\V$ is symmetric monoidal closed. The equality in the fourth step is the definition of $F^*$. The fifth step follows from the universal property of left Kan extension, together with the fact that for all functors $M,L\colon \maq \to \mac$  the set of natural transformations $M\Rightarrow L$ is in bijection with $\int_{x \in \maq} \hom_\mac(M(x),L(x))$. Finally we reverse the initial three steps. The claim then follows by the Yoneda lemma.
\end{proof}

\begin{remark}
 If $\V=\Grpd$, then $\HOM_\V(A,B)$ is the groupoid of functors $A \to B$, and natural transformations between them. In this case  we  have 
$$\int_{x' \in \maq'}\GRPd\big (T'(x'),F^* D(x') \big)\cong \mathrm{Nat}(T',F^* D) \cong \mathrm{Nat}(\Lan_F T', D) \cong \int_{x \in \maq} \GRPd\big (\Lan_F T'(x), D(x) \big),$$
and these bijections preserve the composition of natural transformations.
\end{remark}

We  now consider  the following diagrams of categories, functors and natural transformations,  where $\maq_j$ are cocomplete small categories and $\V$ a bicomplete closed symmetric monoidal category
\begin{equation}\label{diags:TandD}
\begin{gathered}\hskip-1cm
\xymatrix{&\mathcal{Q}_0\ar@/_2pc/[ddrr]_{T_0}
\xtwocell[ddrr]{}<>{  ^<3> \tau_0 }
\ar[dr]^{F_0}&& \mathcal{Q}_1\ar[dl]_{F_1}
\xtwocell[dd]{}<>{  <3> \tau_1 }
\xtwocell[dd]{}<>{  ^<-3> {\tau_1'} }
\ar[dr]^{F_1'} \ar[dd]|{T_1} &&\mathcal{Q}_2\ar[dl]_{F_2}\ar@/^2pc/[ddll]^{T_2}
\xtwocell[ddll]{}<>{  <-3> \tau_2 } \\
&&\ar[dr]_{T_{01}} \mathcal{Q}_{01} && \mathcal{Q}_{12}\ar[dl]^{{T_{12}}}\\
&&&\V
} \xymatrix{&\mathcal{Q}_0\ar[dr]^{F_0}  \ar@/_2pc/[ddrr]_{D_0}
&& \mathcal{Q}_1\ar[dl]_{F_1}
\ar[dr]^{F_1'} \ar[dd]|{D_1} &&\mathcal{Q}_2\ar[dl]_{F_2}   \ar@/^2pc/[ddll]^{D_2}\\
&&\ar[dr]_{D_{01}} \mathcal{Q}_{01} && \mathcal{Q}_{12}\ar[dl]^{{D_{12}}}\\
&&&\V.
}\end{gathered}
\end{equation}
The diagram on the right commutes. The arrows in the one on the left are related by natural transformations, as indicated in the diagram.
Let $\mathcal Q_{012}=\mathcal Q_{01} \amalg_{\mathcal Q_1} \mathcal Q_{12}$ be the pushout of  $F_1: \mathcal Q_1\to \mathcal Q_{01}$ and $F'_1:\mathcal Q_1\to\mathcal Q_{12}$ and consider the associated  commuting diagram 
\begin{equation}\label{eq:pushq}
\begin{gathered}
\xymatrix{ &\mathcal Q_0\ar[dr]^{F_0} \ar@/_2pc/[ddrr]_{J_0} & & \mathcal Q_1\ar[dl]_{F_1} \ar[dr]^{F'_1} \ar[dd]^{J_1} && \mathcal Q_2 \ar[dl]_{F_2}  \ar@/^2pc/[ddll]^{J_2} \\ && \mathcal Q_{01}\ar[dr]_{J_{01}} && \mathcal Q_{12} \ar[dl]^{J_{12}} \\
&&& \mathcal Q_{012},}
\end{gathered}
\end{equation}
in which all new arrows result either from the pushout  or are defined by the commutativity  of the diagram. As the diagram on the right in \eqref{diags:TandD} commutes, the universal property of the pushout induces a functor 
\begin{align}\label{eq:d012def}
D_{012}:\mathcal Q_{012}\to \V\qquad\text{with}\qquad D_{012} J_{01}=D_{01}, \quad D_{012}J_{12}=D_{12}.  
\end{align}

\begin{theorem}\label{thm:main-end} We have the following commuting diagram in $\V$, where the  middle diamond is a pullback
\begin{equation}\label{eq:thappdiagram}\begin{gathered}
\xymatrix{&\End_{\maq_0}[T_0,D_0]  & \End_{\maq_1}[T_1,D_1] &  \End_{\maq_2}[T_2,D_2]\\
& \End_{\maq_{01}}[T_{01}, D_{01}] \ar@{->}[ru]_{\hat\T_1}  \ar@{->}[u]^{ \hat \T_0} & & \End_{\maq_{12}}[T_{12}, D_{12}] \ar@{->}[u]_{\hat \T_2}  \ar@{->}[lu]^{\hat \T'_1}\\
& &\End_{\maq_{012}}[T_{012}, D_{012}] \ar@{->}[ru]_{\hat I_{12}}  \ar@{->}[lu]^{ \hat I_{01}}
}\end{gathered}
\end{equation}
$$
T_{012}=\Lan_{J_{01}} T_{01}\!\!\!\!\coprod_{\Lan_{J_1} T_1} \!\!\!\!\Lan_{J_{12}} T_{12}: \mathcal Q_{012}\to \V,
$$
and we abbreviate  $[-,-]=\HOM_\V(-,-)$ and  $\End_\maq [T,D]:=\int_{v\in \maq} \HOM_\V(T(v), D(v))$. 

\end{theorem}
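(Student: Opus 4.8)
The plan is to lift the whole of \eqref{eq:thappdiagram} into the single functor category $\CAT(\maq_{012},\V)$ and then apply a limit-preserving end functor, so that the pushout defining $T_{012}$ is carried to the asserted pullback. Since the right-hand diagram in \eqref{diags:TandD} commutes and $\maq_{012}$ is the pushout \eqref{eq:pushq}, the functor $D_{012}$ of \eqref{eq:d012def} satisfies $D_j=D_{012}J_j$ for every $j\in\{0,1,2,01,12\}$, where $J_0=J_{01}F_0$, $J_2=J_{12}F_2$ and $J_1=J_{01}F_1=J_{12}F_1'$. Lemma \ref{lem:enriched Kan prop}, applied with $F=J_j$, $T'=T_j$ and $D=D_{012}$, then supplies natural isomorphisms
$$\End_{\maq_j}[T_j,D_j]\cong \End_{\maq_{012}}\big[\Lan_{J_j}T_j,\,D_{012}\big],$$
while $\End_{\maq_{012}}[T_{012},D_{012}]$ is already in this normal form. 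My first step is to record these identifications.

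Next I would realise every structure map of \eqref{eq:thappdiagram} as the image of a natural transformation in $\CAT(\maq_{012},\V)$ under the contravariant functor $\End_{\maq_{012}}[-,D_{012}]$. Each $\tau_j$ in \eqref{diags:TandD} has a Kan-extension adjoint $\hat\tau_j$; applying $\Lan_{J_{01}}$ or $\Lan_{J_{12}}$ and using the composition law $\Lan_{J_{01}F_j}\cong\Lan_{J_{01}}\Lan_{F_j}$ yields natural transformations $\Lan_{J_0}T_0\Rightarrow\Lan_{J_{01}}T_{01}$, $\ \T_1\colon\Lan_{J_1}T_1\Rightarrow\Lan_{J_{01}}T_{01}$, $\ \T_1'\colon\Lan_{J_1}T_1\Rightarrow\Lan_{J_{12}}T_{12}$, and $\Lan_{J_2}T_2\Rightarrow\Lan_{J_{12}}T_{12}$, together with the coprojections $I_{01}\colon\Lan_{J_{01}}T_{01}\Rightarrow T_{012}$ and $I_{12}\colon\Lan_{J_{12}}T_{12}\Rightarrow T_{012}$ of the pushout. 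I would take $\hat\T_0,\hat\T_1,\hat\T_1',\hat\T_2,\hat I_{01},\hat I_{12}$ to be the images of these transformations under $\End_{\maq_{012}}[-,D_{012}]$, transported along the isomorphisms above; the transports are legitimate because the isomorphism of Lemma \ref{lem:enriched Kan prop} is natural in both arguments. The maps $\hat\T_0$ and $\hat\T_2$ land in $\End_{\maq_0}[T_0,D_0]$ and $\End_{\maq_2}[T_2,D_2]$ precisely because $D_{01}F_0=D_0$ and $D_{12}F_2=D_2$.

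The key step is then purely formal. By its definition $T_{012}=\Lan_{J_{01}}T_{01}\amalg_{\Lan_{J_1}T_1}\Lan_{J_{12}}T_{12}$ is the pushout in $\CAT(\maq_{012},\V)$ of $\Lan_{J_{01}}T_{01}\xleftarrow{\T_1}\Lan_{J_1}T_1\xrightarrow{\T_1'}\Lan_{J_{12}}T_{12}$. By Lemma \ref{lem:compat} the functor $\HOM_\V(-,D_{012})$ preserves limits, hence sends this colimit to a limit in $\CAT(\maq_{012}^{\op}\times\maq_{012},\V)$; post-composing with the end, itself a limit and so limit-preserving, shows that $\End_{\maq_{012}}[-,D_{012}]$ turns the pushout square into a pullback square. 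Under the identifications of the first step this pullback is exactly the middle diamond of \eqref{eq:thappdiagram}, which gives both its commutativity $\hat\T_1\hat I_{01}=\hat\T_1'\hat I_{12}$ (the only relation the diagram asserts, since the three top objects have no arrows between them) and its universal property.

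The step I expect to be the main obstacle is the bookkeeping in the second paragraph: checking that the concretely built maps $\hat\T_j$ and $\hat I_{jk}$ really coincide with the abstract images $\End_{\maq_{012}}[\T,D_{012}]$ after passing through the chain of Kan-extension isomorphisms. This needs the naturality of Lemma \ref{lem:enriched Kan prop} in both variables to be compatible with the composition $\Lan_{J_{01}F_1}\cong\Lan_{J_{01}}\Lan_{F_1}$ and with the adjunction units, so that e.g. the lifted transformation restricts back to $\tau_1$ along the unit. Once this compatibility is verified the pullback and commutativity claims follow immediately from the limit-preservation of $\End_{\maq_{012}}[-,D_{012}]$.
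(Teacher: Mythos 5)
Your proposal is correct and follows essentially the same route as the paper's proof: you construct the same natural transformations $\T_0,\T_1,\T_1',\T_2$ via Kan-extension adjoints and the composition isomorphism $\Lan_{J_{01}F_j}\cong\Lan_{J_{01}}\Lan_{F_j}$, turn the defining pushout of $T_{012}$ into a pullback by applying the limit-preserving composite of $\HOM_\V(-,D_{012})$ (Lemma \ref{lem:compat}) with the end functor, and identify the corners via Lemma \ref{lem:enriched Kan prop} using $D_{012}J_j=D_j$. The only difference is cosmetic ordering — you transport along the identifications of Lemma \ref{lem:enriched Kan prop} at the outset, while the paper applies them at the very end — and your worry about matching the "concretely built" maps with the abstract images is moot, since the paper simply \emph{defines} $\hat\T_j$ and $\hat I_{jk}$ as those composites.
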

\begin{proof}
1.~Via the universal property of  $\Lan_{F_0} T_0: \mathcal Q_{01}\to \V$ the 
 natural transformation $\tau_0: T_0\Rightarrow T_{01}F_0$ induces a natural transformation $\hat \tau_0: \Lan_{F_0} T_0\Rightarrow T_{01}$.  Taking its image under  $\Lan_{J_{01}}: \CAT(\mathcal Q_{01},\V)\to \CAT(\mathcal Q_{012},\V)$ and pre-composing with the natural isomorphism $\Lan_{J_{01} F_0} T_0 \cong \Lan_{J_{01}} (\Lan_{F_0} T_0)$ induced by the universal property of the Kan extension
 yields a natural transformation
 \begin{align}\label{eq:t0def} 
\T_0: \Lan_{J_0} T_0=\Lan_{J_{01} F_0} T_0 \cong \Lan_{J_{01}} (\Lan_{F_0} T_0) \xRightarrow{\Lan_{J_{01}}\hat \tau_0 }   \Lan_{J_{01}} T_{01}.
\end{align} 
We analogously define natural transformations
\begin{align}\label{eq:totherdef} 
&\T_1: \Lan_{J_1}T_1 \Rightarrow \Lan_{J_{01}}T_{01}, & 
&\T_1': \Lan_{J_1}T_1 \Rightarrow \Lan_{J_{12}}T_{12}, &
&\T_2: \Lan_{J_2}T_2 \Rightarrow \Lan_{J_{12}}T_{12}.
\end{align}
The pushout  of $\T_1: \Lan_{J_1} T_1\Rightarrow \Lan_{J_{01}} T_{01}$ and $\T'_1: \Lan_{J_1} T_1\Rightarrow \Lan_{J_{12}} T_{12}$ in $\CAT(\mathcal Q_{012},\V)$ yields a functor
\begin{align}\label{eq:deft012}
T_{012}=\Lan_{J_{01}} T_{01}\!\!\!\!\coprod_{\Lan_{J_1} T_1} \!\!\!\!\Lan_{J_{12}} T_{12}: \mathcal Q_{012}\to \V
\end{align}
and the following commuting diagram  in $\CAT(\mathcal Q_{012},\V)$, where $I_{01}$ and $I_{12}$ arise from the  pushout \eqref{eq:deft012}
\begin{equation}\label{eq:tpushout}
\begin{gathered}
\xymatrix{ \Lan_{J_0} T_0 \ar@{=>}[d]_{\T_0}  & \Lan_{J_1} T_1 \ar@{=>}[ld]_{\T_1} \ar@{=>}[rd]^{\T'_1}  & \Lan_{J_2} T_2 \ar@{=>}[d]^{\T_2}\\
 \Lan_{J_{01}}T_{01} \ar@{=>}[rd]_{I_{01}}& & \Lan_{J_{12}}T_{12} \ar@{=>}[ld]^{I_{12}}\\
 & T_{012}.
}
\end{gathered}
\end{equation}
2.~By Lemma \ref{lem:compat} the functor 
$\HOM_\V (-, D_{012}): \CAT(\maq_{012},\V)^{op} \to \CAT(\maq_{012}^{op}\times \maq_{012},\V)$ 
for the functor $D_{012}:\mathcal Q_{012}\to \V$ from \eqref{eq:d012def} sends pushouts in $\CAT(\maq_{012},\V)$ to pullbacks in $\CAT(\maq^{op}_{012}\times \maq_{012},\V)$. 
Applying it to the commuting diagram \eqref{eq:tpushout}  yields the  commuting diagram 
\begin{equation}\label{eq:tdpushout}
\begin{gathered}
\xymatrix{ [\Lan_{J_0} T_0, D_{012}]  &  [\Lan_{J_1} T_1, D_{012}]  &  [ \Lan_{J_2} T_2, D_{012}] \\
 [ \Lan_{J_{01}}T_{01}, D_{012}]  \ar@{=>}[u]^{ [\T_0, D_{012}]}  \ar@{=>}[ru]_{\qquad [\T_1, D_{012}]} & & [\Lan_{J_{12}}T_{12}, D_{012}]  \ar@{=>}[lu]^{ [\T'_{1}, D_{012}]\qquad}  \ar@{=>}[u]_{ [\T_2, D_{012}]} \\ 
 & [T_{012}, D_{012}], \ar@{=>}[ru]_{\qquad [I_{12}, D_{012}]}  \ar@{=>}[lu]^{ [I_{01}, D_{012}]\qquad}
 }
 \end{gathered}
\end{equation}
in $\CAT(\mathcal Q_{012}^{op}\times\mathcal Q_{012}, \V)$, where the middle diamond is a pullback and  $[F,G]:=\HOM_\V  (F, G): \mathcal Q^\op \times \mathcal Q \to \V$ for functors
$F,G: \mathcal Q\to \V$. 
Because the end functor $\End_{\maq_{012}}\colon \CAT( \mathcal Q_{012}^\op \times \mathcal Q_{012},\V) \to \V$ preserves  limits, applying it to \eqref{eq:tdpushout} yields the 
following commuting diagram, where the middle diamond is a pullback
\begin{equation}\label{diag:almost}
\begin{gathered}
\xymatrix{ \End_{\maq_{012}} [\Lan_{J_0} T_0, D_{012}]  &  \End_{\maq_{012}} [\Lan_{J_1} T_1, D_{012}]  &  \End_{\maq_{012}} [ \Lan_{J_2} T_2, D_{012}] \\
 \End_{\maq_{012}}[ \Lan_{J_{01}}T_{01}, D_{012}]  \ar@{->}[u]^{ \End_{\maq_{012}}[\T_0, D_{012}]}  \ar@{->}[ru]_{\qquad \End_{\maq_{012}} [\T_1, D_{012}]} & & \End_{\maq_{012}} [\Lan_{J_{12}}T_{12}, D_{012}]  \ar@{->}[lu]^{ \End_{\maq_{012}} [\T'_{1}, D_{012}]\qquad}  \ar@{->}[u]_{ \End_{\maq_{012}} [\T_2, D_{012}]} \\ 
 & \End_{\maq_{012}}[T_{012}, D_{012}]. \ar@{->}[ru]_{\qquad \End_{\maq_{012}} [I_{12}, D_{012}]}  \ar@{->}[lu]^{\End_{\maq_{012}} [I_{01}, D_{012}]\qquad}
 }
 \end{gathered}
\end{equation}
3.~We apply the natural isomorphism from Lemma \ref{lem:enriched Kan prop} to all entries in diagram \eqref{diag:almost}  except the bottom one. 
For the top left entry of \eqref{diag:almost}   this yields  the following isomorphism in $\V$
\begin{align*} \End_{\maq_{012}}[\Lan_{J_0} T_0, D_{012}] \xrightarrow{\cong} & \End_{\maq_{0}}[T_0,  D_{012} {J_0}] =\End_{\maq_{0}}[T_0,D_{0} ]
\end{align*}
where we used in the last step that $D_{012}J_0=D_0$ by \eqref{eq:d012def}. For the other entries, this is analogous. 
Composing the morphisms on the arrows with these isomorphisms then
 gives  the diagram in the  theorem.
\end{proof}

\section*{Acknowledgments} JFM is funded by EPSRC, via the Programme
Grant EP/W007509/1: Combinatorial Representation Theory: Discovering the Interfaces of Algebra with Geometry and Topology. JFM would like to express his gratitude to
 Tim Porter for discussions on monoidal model categories, fibred categories, categories of fibrant spans and homotopical aspects of defect-TQFTs,  to Fiona Torzewska for discussions on cobordism-like categories arising from cofibrant cospans of spaces and the related TQFTs, to Nicola Gambino and Marcelo Fiore for correspondence on enriched ends and coends, to Jon Woolf for discussions on stratified spaces, to Jeffrey Morton for a series of talks on formulations of extended TQFTs using  spans and cospans, and finally to Ronnie Brown for introducing him to the concept of groupoid fibrations. 

CM is grateful to  the Algebra, Geometry and Integrable Systems Group at The University of Leeds  for hosting her and funding her stay, which  was essential for this project,  with the Programme
Grant EP/W007509/1. She is also grateful to the School of Mathematics of The University of Edinburgh, in particular to Bernd Schroers,
 for hospitality and  funding her  stay, during which further work on the project was undertaken. She  thanks Claudia Scheimbauer, Fiona Torzewska and and Tashi Walde for discussions on (co)spans, \"Od\"ul Tetik for discussions on stratifications, Vincentas Mulevi\v{c}ius for discussions on defects in Dijkgraaf-Witten theory.

Both authors are grateful to Maximilian Ludwig and Max-Niklas Steffen for pointing out small mistakes. We are also very  grateful to the referees of the article for their helpful comments and suggestions for improvement.

\textbf{Open access statement:} For the purpose of open access, the authors
 have applied a Creative
 Commons Attribution (CC BY) licence to any Author Accepted Manuscript
 version arising from this
 submission.\\
\textbf{Data Access Statement:} No data was created while producing this publication.\\
\textbf{Authorship:}  All ideas were developed
by both authors in collaboration and discussion.


\begin{thebibliography}{9999999}
\bibitem[AH]{AH}  Anderson, D. R., \& Hsiang, W. C. (1980). Extending combinatorial piecewise linear structures on stratified spaces. II. Transactions of the American Mathematical Society, 260(1), 223-253.

\bibitem[AFT]{AFT} Ayala, D., Francis, J., \& Tanaka, H. L. (2017). Factorization homology of stratified spaces. Selecta Mathematica, 23(1), 293-362.

\bibitem [A\O]{AO}  Arkhipov S,  and  {\O}rsted S.  Homotopy (Co)limits via Homotopy (Co)ends in General Combinatorial Model Categories, Appl. Categ. Struct. 31, No. 6, Paper No. 47, 10 p.
\bibitem[BHW]{BHW} Baez, J. C., Hoffnung, A. E., \& Walker, C. D. (2010). Higher dimensional algebra. VII: Groupoidification. Theory and Applications of Categories, 24(18), 489-553.


\bibitem[BK1] {BK0} Balsam, B., \& Kirillov Jr, A. (2010). Turaev-Viro invariants as an extended TQFT. arXiv preprint arXiv:1004.1533. 
\bibitem[BK2]{BK} Balsam, B., \& Kirillov Jr, A. (2012). Kitaev's lattice model and Turaev-Viro TQFTs. arXiv preprint arXiv:1206.2308.


\bibitem[BMS]{BMS} Barrett, J. W., Meusburger, C., \& Schaumann, G.(2024). 
Gray categories with duals and their diagrams.
Advances in Mathematics,
450,
109740.



\bibitem[BW]{BW} Barrett, J., \& Westbury, B. (1996). Invariants of piecewise-linear 3-manifolds. Transactions of the American Mathematical Society, 348(10), 3997-4022.


\bibitem[Br]{Br} Brown R (2006), Topology and groupoids. 3rd revised, updated and extended ed. Bangor.
\bibitem[BrS]{BrS} Brown, R., Sivera, R. (2009)  Algebraic colimit calculations in homotopy theory using fibred and cofibred categories. Theory and Applications of Categories, Vol. 22, No. 8, 2009, pp. 222–251. 


\bibitem[CMS]{CMS} Carqueville, N., Meusburger, C., \& Schaumann, G. (2020). 3-dimensional defect TQFTs and their tricategories. Advances in Mathematics, 364, 107024.
\bibitem[CRS1]{CRS} Carqueville, N., Runkel, I., \& Schaumann, G. (2018). Line and surface defects in Reshetikhin-Turaev TQFT. Quantum Topology, 10(3), 399-439.
\bibitem[CRS2]{CRS2} Carqueville, N., Runkel, I., \& Schaumann, G. (2019). Orbifolds of $n$-dimensional defect TQFTs. Geometry \& Topology, 23(2), 781-864.
\bibitem[CRS3]{CRS3} Carqueville, N., Runkel, I., \& Schaumann, G. (2020). Orbifolds of Reshetikhin-Turaev TQFTs. Theory and Applications of Categories, 35(15), 513-561.
\bibitem[CMR+]{CMR+} Carqueville, N., Mulevi\v{c}ius, V., Runkel, I., Schaumann, G., \& Scherl, D. (2021). Orbifold graph TQFTs. arXiv preprint arXiv:2101.02482.

\bibitem[CM]{CM} Carqueville, N., \& M\"uller, L. (2023). Orbifold completion of 3-categories. arXiv preprint arXiv:2307.06485.

\bibitem[DW]{DW} Dijkgraaf, R., \& Witten, E. (1990). Topological gauge theories and group cohomology. Communications in Mathematical Physics, 129(2), 393-429.


\bibitem[EGNO]{EGNO} Etingof, P., Gelaki, S., Nikshych, D., \& Ostrik, V. (2016). Tensor categories (Vol. 205). American Mathematical Soc..
\bibitem[FMP1]{FMP1} Faria Martins, J. \& Porter, T. (2007). On Yetter's Invariant and an Extension of the Dijkgraaf-Witten Invariant to Categorical Groups. Theory and Applications of Categories, Vol. 18, 2007, No. 4, pp 118-150.

\bibitem[FMP2]{FMP} Faria Martins, J. \& Porter, T. (2023). A categorification of Quinn's finite total homotopy TQFT with application to TQFTs and once-extended TQFTs derived from strict omega-groupoids. arXiv preprint arXiv:2301.02491.

\bibitem[FS]{FS} Farnsteiner, J., \& Schweigert, C. (2022). Frobenius–Schur indicators and the mapping class group of the torus. Letters in Mathematical Physics, 112(2), 39.

\bibitem[FHLT]{FHLT}Freed, D. S., Hopkins, M. J., Lurie, J., \& Teleman, C. (2009). Topological quantum field theories from compact Lie groups. in P. R. Kotiuga (ed.) A celebration of the mathematical legacy of Raoul Bott, CRM Proc.
Lecture Notes, vol. 50, Amer. Math. Soc., Providence, RI, 2010, pp. 367--403
(2010),  arXiv preprint arXiv:0905.0731.

\bibitem[FMT]{FMT} Freed, D. S., Moore, G. W., \& Teleman, C. (2022). Topological symmetry in quantum field theory. arXiv preprint arXiv:2209.07471.

\bibitem[FQ]{FQ} Freed, D. S., \& Quinn, F. (1993). Chern-Simons theory with finite gauge group. Communications in Mathematical Physics, 156(3), 435-472.


\bibitem[Fr]{Fb} Friedman, G. (2020). Singular intersection homology (Vol. 33). Cambridge University Press.

\bibitem[FFRS]{FFRS} Fr\"ohlich, J., Fuchs, J., Runkel, I., \& Schweigert, C. (2007). Duality and defects in rational conformal field theory. Nuclear Physics B, 763(3), 354-430.

\bibitem[FPSV]{FPSV}Fuchs, J., Priel, J., Schweigert, C., \& Valentino, A. (2015). On the Brauer groups of symmetries of abelian Dijkgraaf–Witten theories. Communications in Mathematical Physics, 339(2), 385-405.

\bibitem[FSS]{FSS} Fuchs, J., Schaumann, G., \& Schweigert, C. (2017). A trace for bimodule categories. Applied Categorical Structures, 25(2), 227-268.

 \bibitem[FSV1]{FSV0} Fuchs, J., Schweigert, C., \& Valentino, A. (2013). Bicategories for boundary conditions and for surface defects in 3-d TFT. Communications in Mathematical Physics, 321, 543-575.



\bibitem[FSV2]{FSV} Fuchs, J., Schweigert, C., \& Valentino, A. (2014). A geometric approach to boundaries and surface defects in Dijkgraaf-Witten theories. Communications in Mathematical Physics, 332, 981-1015.


\bibitem[FSWY]{FSWY} Fuchs, J., Schweigert, C., Wood, S., \& Yang, Y. (2023). Algebraic structures in two-dimensional conformal field theory. arXiv preprint arXiv:2305.02773.

\bibitem[G-CKT]{G-CKT} G\'alvez--Carrillo, I; Kock, J.; Tonks, A. (2018). Homotopy linear algebra. Proc. R. Soc. Edinb., Sect. A, Math. 148, No. 2, 293--325 (2018).

\bibitem[G]{G} Gould, M. D. (1993). Quantum double finite group algebras and their representations. Bulletin of the Australian Mathematical Society, 48(2), 275-301.

\bibitem[Ha]{Ha} Haugseng, R. (2018). Iterated spans and classical topological field theories. Mathematische Zeitschrift, 289, 1427-1488.

\bibitem[Hir]{Hirschhorn}  Hirschhorn P. S., Model categories and their localizations. Providence, RI: American Mathematical Society (AMS) (2003).

\bibitem[H]{Hollander} Hollander, S. (2008). A homotopy theory for stacks. Israel Journal of Mathematics, 163, 93-124.

\bibitem[Hv]{Hovey} Hovey, M. (2007). Model categories (No. 63). American Mathematical Soc.

\bibitem[Hu]{Hus}Husemoller, D. (1966). Fibre bundles (Vol. 5). New York: McGraw-Hill.

\bibitem[JLSS]{JLSS}Jordan, D., Le, I., Schrader, G., \& Shapiro, A. (2021). Quantum decorated character stacks. arXiv preprint arXiv:2102.12283.


\bibitem[KS]{KS} Kapustin, A., \& Saulina, N. (2011). Surface operators in 3d topological field theory and 2d rational conformal field theory. Mathematical foundations of quantum field theory and perturbative string theory, 83, 175-198.

\bibitem[KM]{KM} Keller, C., \& M\"uller, L. (2021). Finite symmetries of quantum character stacks. arXiv preprint arXiv:2107.12348.

\bibitem[Ke]{Ke} Kelly, G. M. (2005).
Basic concepts of enriched category theory. Repr. Theory Appl. Categ. 2005, No. 10, 1-136 (2005). 

\bibitem[K]{K} Kitaev, A. Y. (2003). Fault-tolerant quantum computation by anyons. Annals of Physics, 303(1), 2-30.

\bibitem[KK]{KK} Kitaev, A., \& Kong, L. (2012). Models for gapped boundaries and domain walls. Communications in Mathematical Physics, 313(2), 351-373.

\bibitem[KMRS]{KMRS} Koppen, V., Mulevi\v{c}ius, V., Runkel, I., \& Schweigert, C. (2022). Domain walls between 3d phases of Reshetikhin-Turaev TQFTs. Communications in Mathematical Physics, 396(3), 1187-1220.

\bibitem[LW]{LW} Levin, M. A., \& Wen, X. G. (2005). String-net condensation: A physical mechanism for topological phases. Physical Review B Condensed Matter and Materials Physics, 71(4), 045110.

\bibitem[L]{L} Lischka, F. (2021) Generalised 6j-symbols over the category of $G$-graded vector spaces, Master thesis, Friedrich-Alexander Universit\"at Erlangen-N\"urnberg. arXiv preprint arXiv:2409.09055.

\bibitem[LR]{LR} Loregian, F., \& Riehl, E. (2020). Categorical notions of fibration. Expositiones Mathematicae, 38(4), 496-514.

\bibitem[Lu]{Lurie} Lurie J., Higher topos theory. Princeton, NJ: Princeton University Press (2009).


\bibitem[ML]{ML} Mac Lane S., Categories for the working mathematician. 2nd ed. New York, NY: Springer (1998)


\bibitem[M]{M} Meusburger, C. (2023). State sum models with defects based on spherical fusion categories. Advances in Mathematics, 429, 109177.

\bibitem[Mo]{Mo} Morton, J. C. (2015). Cohomological twisting of 2-linearization and extended TQFT. Journal of Homotopy and Related Structures, 10(2), 127-187.



\bibitem[PT]{PT} Perrone, P., \& Tholen, W. (2022). Kan extensions are partial colimits. Applied Categorical Structures, 30(4), 685-753.

 \bibitem[P]{P}Petit, J. (2006). The invariant of Turaev-Viro from Group category. arXiv preprint math/0608614.


\bibitem[Pr]{Pr} Priel, J. (2016). Symmetries of 3d-TQFTs and the Brauer-Picard Group (Doctoral dissertation, Staats-und Universit\"atsbibliothek Hamburg Carl von Ossietzky).


\bibitem[Q]{Q} Quinn, F. (1995). Lectures on axiomatic topological quantum field theory. Geometry and quantum field theory (Park City, UT, 1991), 1, 323-453.


\bibitem[RT]{RT} Reshetikhin, N., \& Turaev, V. G. (1991). Invariants of 3-manifolds via link polynomials and quantum groups. Inventiones mathematicae, 103(1), 547-597.

\bibitem[Ri]{Riehl-CHT}  Riehl, E. (2014). Categorical homotopy theory (Vol. 24). Cambridge University Press.
\bibitem[RS]{RS} Rourke, C. P., \& Sanderson, B. J. (2012). Introduction to piecewise-linear topology. Springer Science \& Business Media.

\bibitem[S13]{S13} Schaumann, G. (2013). Traces on module categories over fusion categories. Journal of Algebra, 379, 382-425.
\bibitem[S15]{S15} Schaumann, G. (2015). Pivotal tricategories and a categorification of inner-product modules. Algebras and Representation Theory, 18(6), 1407-1479.

\bibitem[S]{S} Siebenmann, L. C. (1972). Deformation of homeomorphisms on stratified sets. Commentarii Mathematici Helvetici, 47(1), 123-163.

\bibitem[SW1]{SW1} Schweigert, C., \& Woike, L. (2019). Orbifold construction for topological field theories. Journal of Pure and Applied Algebra, 223(3), 1167-1192.
\bibitem[SW2]{SW2} Schweigert, C., \& Woike, L. (2017). A parallel section functor for 2-vector bundles. Theory and Applications of Categories, 33(23), 644-690.

\bibitem[To] {Tor} Torzewska, F. (2024). Topological quantum field theories and homotopy cobordisms. Applied Categorical Structures, 32(4), 22.
\bibitem[Tr]  {T} Trova, F. (2016). Nakayama categories and groupoid quantization. arXiv preprint arXiv:1602.01019.

\bibitem[TV]{TV} Turaev, V. G., \& Viro, O. Y. (1992). State sum invariants of 3-manifolds and quantum 6j-symbols. Topology, 31(4), 865-902.

\bibitem[TVr]{TVr} Turaev, V.G.,\& Virelizier, A. (2010). On two approaches to 3-dimensional TQFTs. arXiv preprint arXiv:1006.3501.


\bibitem[TVr2]{TVr2}Turaev, V. G., \& Virelizier, A. (2017). Monoidal categories and topological field theory (Vol. 322, pp. xii+-523). Basel: Birkh\"auser.


\bibitem[W]{W} Willerton, S. (2008). The twisted Drinfeld double of a finite group via gerbes and finite groupoids. Algebraic \& Geometric Topology, 8(3), 1419-1457.
\end{thebibliography}
\end{document}